\documentclass[psamsfonts]{amsart}
\usepackage[utf8]{inputenc}
\usepackage{amsfonts}
\usepackage{hyperref}
\hypersetup{
pdftitle={Cohomology of Hyperquot Schemes on Curves and Shifted Yangians},
pdfsubject={Mathematics, Algebraic Geometry, Representation Theory, Hyperquot Schemes, Yangians, Moduli of sheaves},
pdfauthor={Archi Kaushik}}
\usepackage{amsmath}
\usepackage{amssymb}
\usepackage{xcolor}
\usepackage{pdfpages}
\usepackage{amsthm}
\usepackage{pdflscape}
\usepackage{pgfplots}
\usepackage{mathrsfs}
\usepackage{pst-node}
\usepackage{tikz-cd}
\usepackage{rotating}

\usepackage{mathrsfs}
\usepackage{amsmath}
\newtheorem{thm}{Theorem}[section]
\newtheorem{cor}[thm]{Corollary}
\newtheorem{prop}[thm]{Proposition}
\newtheorem{lem}[thm]{Lemma}

\usepackage{enumitem}

\theoremstyle{definition}
\newtheorem{defn}[thm]{Definition}

\theoremstyle{remark}
\newtheorem{rem}[thm]{Remark}

\makeatletter
\let\c@equation\c@thm
\makeatother
\numberwithin{equation}{section}

\title [Cohomology of Hyperquot Schemes on Curves and Shifted Yangians]{The cohomology of Hyperquot Schemes on curves via shifted Yangians in type A}
 
\author{Archi Kaushik}

\begin{document}

\maketitle
\begin{abstract}
Let $V$ be a vector bundle of rank $r$ on a smooth projective complex curve $C$. The Hyperquot scheme $\text{F}^{n}\text{Quot}\,(V)$ is the moduli space of length $n$ flags of rank $r$ sub-sheaves of $V$. This article has two main results:
\begin{enumerate}

    \item We show that a certain shifted Yangian of $\mathfrak{sl}_{n+1}$ acts on $H^{*}\left(\text{F}^{n}\text{Quot}\,(V)\right)$ by correspondences.
    \item We define a family of $rn$ commuting Yangian operators which yields a natural basis for $H^{*}\left(\text{F}^{n}\text{Quot}\,(V)\right)$.
\end{enumerate}
  This generalises the work \cite{MaNe1} of Marian and Neguț, who proved the above results in the case $n=1$.
  The new feature, which makes this generalisation possible, is the use of so called skew-nested Quot schemes. The rank $1$ versions of these spaces, skew-nested Hilbert schemes, have been recently introduced by Sergej Monavari in the context of refined DT theory of local curves \cite{MoRefinedDT}. In the present article, skew-nested Quot schemes appear as correspondences associated with iterated commutators of Yangian elements. 
\end{abstract}
\tableofcontents

\section{Introduction}
\subsection{Moduli spaces of sheaves and representation theory.}
The purpose of this article is to obtain a precise description of the singular cohomology of Hyperquot schemes on curves. To do this, the perspective we take is to rigidify the vector space structure of the cohomology, by upgrading it to a representation of an algebra. This is a well-known principle in geometric representation theory which emerged in the independent works of Grojnowski \cite{Groj} and Nakajima \cite{Nak1} on the Hilbert scheme of points on a surface.
To provide some context to our results, let us briefly recall their classical works, in the formulation of \cite{Nak1}.\\\\
Given a smooth projective complex surface $S$, Nakajima arranges the various Hilbert schemes of points of $S$ together:
\[
\text{Hilb}(S):=\bigsqcup_{n\geq 0}\text{Hilb}_{n}(S)
\]
and defines $\mathbb{Q}-$linear operators:
\begin{equation}\label{Nakajima Operators}
a_{k}(\gamma):H^{*}(\text{Hilb}(S))\rightarrow H^{*}(\text{Hilb}(S))
\end{equation}
for all $k\in \mathbb{Z}$ and $\gamma\in H^{*}(S)$, using natural geometric correspondences. The main results of \cite{Nak1} are that the operators $a_{k}(\gamma)$ satisfy the defining relations of the infinite dimensional Heisenberg Lie algebra $\widehat{\mathfrak{gl}}_{1}$ 
and that the representation thus obtained is identified with the well-known Fock space.\\\\
In particular, if $k,l>0$ and $\{\gamma_{1},...,\gamma_{t}\}$ is a basis for $H^{*}(S)$, then the operators $a_k(\gamma_{i})$ and $a_{l}(\gamma_{j})$ commute and the set
\begin{equation}\label{Nakajima basis}
\left\{a_{k_l}(\gamma_{i_l}) ... a_{k_2}(\gamma_{i_2}) a_{k_1}(\gamma_{i_1})|0\rangle\right\}_{l\geq0}
\newline
\end{equation}
forms a basis of $H^{*}(\text{Hilb}(S))$, where $|0\rangle$ is a generator of the one dimensional vector space $H^{*}(\text{Hilb}_{0}(S)).$\\\\
The canonical Nakajima basis \eqref{Nakajima basis} makes it clear, that by using the operators $a_{k}(\gamma)$, the cohomology of $\text{Hilb}_{n}(S)$ can generated by the cohomologies of $\text{Hilb}_{m}(S)$ with $m<n$, inductively using the $\widehat{\mathfrak{gl}}_{1}$ action.\\\\
Therefore, not only does the Heisenberg algebra action categorify the Poincaré polynomial (generating series of the Betti numbers) of $\text{Hilb}(S)$, it yields a powerful tool to probe the cohomology of $\text{Hilb}(S)$. This was subsequently exploited to understand the intersection theory \cite{Lehn1}, cup product \cite{LehnSorger1}, quantum cohomology \cite{OkPa}, Gromov-Witten theory \cite{OberdieckGWK3}, Chow ring \cite{MauNeg} etc. of $\text{Hilb}(S).$
\subsection{Quot schemes of points on curves and shifted Yangians of $\mathfrak{sl}_{2}$}
Let $C$ be a fixed smooth projective curve over $\mathbb{C}$ and let $V$ be a fixed rank $r$ vector bundle on $C$. For $d\geq 0$, the Grothendieck Quot scheme $\text{Quot}_d(V)$ is the fine moduli space which parameterises rank $r$ sub-bundles 
\[
{E}\subseteq V
\] 
such that the torsion sheaf $V/{E}$ has length $d$. Let $\rho$ and $\pi$ be the projection maps from $\text{Quot}_{d}(V)\times C$ to the first and second factor respectively:
\[\begin{tikzcd}[cramped]
	& {\text{Quot}_d(V)\times C} \\
	{\text{Quot}_d(V)} && C.
	\arrow["\rho"', from=1-2, to=2-1]
	\arrow["\pi", from=1-2, to=2-3]
\end{tikzcd}\]
The space $\text{Quot}_{d}(V)\times C$ carries a universal inclusion
\[
\mathcal{E}\subseteq \pi^{*}(V)
\] 
and the space $\text{Quot}_{d}(V)$ is a smooth projective variety of dimension $rd$. Quot schemes on curves generalise the notion of symmetric powers of $C$ and are closely related to the moduli spaces of vector bundles on $C$ (see e.g. \cite{BehrendDhillon}, \cite{BifetGhioneLetizia} and \cite{HoskinsSimon2}).\\\\
On the other hand, Yangians are associative algebras; for a Lie algebra $\mathfrak{g}$, they arise as deformations of the universal enveloping algebra of the Lie algebra $\mathfrak{g}[t]$. First defined in \cite{Drin} for simple Lie algebras, different versions of Yangians are now ubiquitous in geometric representation theory (see e.g. \cite{Var} and \cite{MauOko}). In this article, we will make use of shifted Yangians that were defined in \cite{BrKl} and are generalisations of usual Yangians (see Definition \ref{Shifted Yangian defn}). \\\\
In the recent work \cite{MaNe1}, Marian and Neguț take the Grojnowski-Nakajima point of view to study the cohomology of Quot schemes by realising the cohomology of
\[
\text{Quot}(V):=\bigsqcup_{d\geq0}\text{Quot}_{d}(V),
\]
as a representation of $Y_{\hbar}^{r}({\mathfrak{sl}_{2}})$, a certain shifted (truncated) Yangian of $\mathfrak{sl}_{2}$.\\\\
A key role in the construction of their action is played by the nested Quot scheme $\text{Quot}_{d,d+1}$, with the following description of its closed points:
\[
\left\{E_{2}\subseteq E_{1}\subseteq V \mid \text{rank }E_{i}=r, \text{ length} \left(\frac{V}{E_{1}}\right)=d,\,\text{length} \left(\frac{V}{E_{2}}\right)=d+1 \right\}
\]
and which fits into the following diagram:

\begin{equation}\label{Marian Negut Correspondence}
\begin{tikzcd}[cramped]
	& {\text{Quot}_{d,d+1}(V)} \\
	{\text{Quot}_{d}(V)} & C & {\text{Quot}_{d+1}(V)}
	\arrow["{\pi_{-}}"', from=1-2, to=2-1]
	\arrow["{\pi_{C}}"', from=1-2, to=2-2]
	\arrow["{\pi_{+}}", from=1-2, to=2-3].
\end{tikzcd}
\end{equation}
\newline
Here, $\pi_{\pm}$ are the natural forgetful maps and $\pi_{C}$ remembers the support point of the length one sheaf $E_{1}/E_{2}.$ In fact, the morphism $\pi_{-}\times\pi_{C}$ realises $\text{Quot}_{d,d+1}(V)$ as the total space of the projective bundle
\[
\mathbb{P}(\mathcal{E})\rightarrow\text{Quot}_{d}\times C.
\]
As a consequence, $\text{Quot}_{d,d+1}(V)$ carries a tautological exact sequence:
\begin{equation}\label{exact sequence on Quot d,d+1}
0\rightarrow\mathcal{G}\rightarrow(\pi_{-}\times\pi_{C})^{*}(\mathcal{E})\rightarrow\mathscr{L}\rightarrow0,
\end{equation}
\noindent where $\mathscr{L}$ is the tautological line bundle $\mathcal{O}(1)$ and $\mathcal{G}$ is a rank $r-1$ vector bundle on on $\text{Quot}_{d,d+1}(V)$. Let us denote the first Chern class of $\mathscr{L}$ by $\lambda\in H^{2}(\text{Quot}_{d,d+1}(V)).$\\\\
On the algebraic side, the shifted Yangian $Y_{\hbar}^{r}({\mathfrak{sl}_{2}})$ has a presentation in terms of generators and relations:
\[
Y_{\hbar}^{r}({\mathfrak{sl}_{2}}):= \frac{\mathbb{Q}[\hbar]\left\langle e^{(v)},f^{(v)},m^{(t)} \right\rangle_{v\geq0,\,t\in\{1,...,r\}   }}{\text{Several relations, given in Definition 1 of \cite{MaNe1}}},
\]
where $\hbar$ is a formal parameter. Marian and Neguț define linear operators
\[
H^{*}(\text{Quot}(V))\rightarrow H^{*}(\text{Quot}(V)\times C),
\]
corresponding to the generators of $Y_{\hbar}^{r}({\mathfrak{sl}_{2}})$; For $t\in\{1,...,r\}$, there are multiplication operators:
\[
m^{(t)}:=c_{t}(\mathcal{E})\cdot\rho^{*}(-):H^{*}(\text{Quot}_d(V))\rightarrow H^{*}(\text{Quot}_d(V)\times C).
\]
Also, for $v\geq 0$, there are raising operators:
\[
e^{(v)}:=(\pi_{+}\times\pi_{C})_{*}(\lambda^{v}\cdot\pi_{-}^{*}(-)):H^{*}(\text{Quot}_d(V))\rightarrow H^{*}(\text{Quot}_{d+1}(V)\times C)
\]
and lowering operators:
\[
f^{(v)}:=(\pi_{-}\times\pi_{C})_{*}(\lambda^{v}\cdot\pi_{+}^{*}(-)):H^{*}(\text{Quot}_{d+1}(V))\rightarrow H^{*}(\text{Quot}_{d}(V)\times C),
\]
which are defined using powers of $\lambda\in H^{2}(\text{Quot}_{d,d+1}(V))$.\\\\
The first main result of \cite{MaNe1} is that the operators $m^{(t)}, e^{(v)}$ and $f^{(v) }$ satisfy the defining relations of $Y_{\hbar}^{r}({\mathfrak{sl}_{2}})$. That is, they define a $\mathbb{Q}-$linear assignment:
\begin{equation}\label{quot scheme action homomorphism}
    \varphi: Y_{\hbar}^{r}({\mathfrak{sl}_{2}})\rightarrow \text{Hom}(H^{*}(\text{Quot}(V)),H^{*}(\text{Quot}(V)\times C))
\end{equation}
which satisfies several natural properties (see Definition \ref{Action defn} for more details).
\begin{rem}
Using the Künneth decomposition $$H^{*}(\text{Quot}(V)\times C)=H^{*}(\text{Quot}(V))\otimes H^{*}(C),$$ the map $\varphi$ should be thought of as a family of actions of $Y_{\hbar}^{r}({\mathfrak{sl}_{2}})$ on $\text{Quot}(V)$, with $H^{*}(C)$ as a parameter space. Indeed, for every element $q\in Y_{\hbar}^{r}({\mathfrak{sl}_{2}}),$ we obtain operators 
\begin{equation}\label{how to get endomorphisms from Andrei action.}
   q(\gamma):=\rho_{*}(\pi^{*}(\gamma)\cdot q(-)):H^{*}(\text{Quot}(V))\rightarrow H^{*}(\text{Quot}(V)), 
\end{equation}
one for every $\gamma\in H^{*}(C)$. In fact, the Grojnowski-Nakajima action can also be restated in the language of \eqref{quot scheme action homomorphism}, that is, as a $\mathbb{Q}$-linear map:
\[
\psi:\widehat{\frak{gl}}_{1}\rightarrow \text{Hom}(H^{*}(\text{Hilb}(S)),\,H^{*}(\text{Hilb}(S)\times S))
\]
and the operators \eqref{Nakajima Operators} can be obtained by the procedure \eqref{how to get endomorphisms from Andrei action.}. In some contexts, for example in \cite{MauNeg} while working with Chow groups, the homomorphism $\psi$ packages strictly more information than the operators \eqref{Nakajima Operators}.\\
\end{rem}
\noindent Then, in order to probe the cohomology of $\text{Quot}(V)$, Marian and Neguț proceed to define a family of $r$ operators, using the rank $r-1$ vector bundle $\mathcal{G}$ on $\text{Quot}_{d,d+1}$ (cf. the exact sequence \eqref{exact sequence on Quot d,d+1});
\begin{equation}\label{Marian Negut commuting family of operators}
    a^{(w)}:= (\pi_{+}\times\pi_{C})_{*}(c_{w}(\mathcal{G})\cdot\pi_{-}^{*}(-)):H^{*}(\text{Quot}_{d}(V))\rightarrow H^{*}(\text{Quot}_{d+1}(V)\times C),
\end{equation}
for $w\in \{0,...,r-1\}$. The above operators can also be realised algebraically, that is, they belong to the image of the map $\varphi$ in equation \ref{quot scheme action homomorphism}, and correspond to some linear combinations of products of the form $e^{(i)} m^{(j)}$ in $Y_{\hbar}^{r}({\mathfrak{sl}_{2}})$ (see Remark 1 in \cite{MaNe1}).\\\\
It is then proved in \cite{MaNe1} that the operators $a^{(w)}$ commute with each other and this family of commuting operators is then used to construct a natural basis for $H^{*}(\text{Quot}(V))$. More precisely, if $\gamma_{1},...,\gamma_{2g+2}$ is a basis of $H^{*}(C)$, then 
\begin{equation}\label{Marian Negut Basis}
\left\{a^{(k_l)}(\gamma_{i_l}) ... a^{(k_2)}(\gamma_{i_2}) a^{(k_1)}(\gamma_{i_1})|0\rangle\right\}_{l\geq0}
\end{equation}
\noindent is a basis of $H^{*}(\text{Quot}(V))$. As before, the vacuum vector $|0\rangle$, is a generator of the one dimensional vector space $H^{*}(\text{Quot}_{0}(V))$. This basis mirrors the natural basis \eqref{Nakajima basis} for $\text{Hilb}(S)$, but in this case, one has to dig a bit deeper into the algebra that acts, to find a commuting family of operators which yields a basis.\\\\
In a subsequent work \cite{MaNe2}, Marian and Neguț, were able to categorify the situation one step further and obtained a semi-orthogonal decomposition for the derived category of $\text{Quot}_{d}(V)$ (see also \cite{TodaSOD}), which enabled them to prove a conjecture (see \cite{Krug} and \cite{OpreaSinha}) on the cohomology of certain tautological vector bundles on $\text{Quot}_{d}(V)$. 
\subsection{Hyperquot schemes of points on curves and shifted Yangians of $\mathfrak{sl}_{n+1}$ } For the remainder of the article, we fix a smooth projective complex curve $C$ and a vector bundle $V$ on $C$, of rank $r$.\\\\
Given a positive integer $n$ and an $n-$tuple of non-negative integers $\vec{d}=(d_1,...,d_n)$, we define the Hyperquot scheme $F^n\text{Quot}_{\vec{d}}\,(V)$, to be the moduli space parameterising flags of sub-sheaves of $V$, with the following description of its closed points:
\[
\left\{E_n\subseteq ...\subseteq E_1\subseteq V\mid \text{rank }E_i=r,\,\text{length}\left(\frac{V}{E_i}\right)=d_i \text{ for all }i\in\{1,...,n\}\right\}.
\]
Clearly the space $F^n\text{Quot}_{\vec{d}}\,(V)$ is non-empty if and only if the sequence $(d_1,...,d_n)$ is non-decreasing. In this case $F^n\text{Quot}_{\vec{d}}\,(V)$ is a smooth projective variety of dimension $rd_n$.\\\\
Let $\rho$ and $\pi$ be the projection maps from $F^n\text{Quot}_{\vec{d}}\,(V)\times C$ to the first and second factor respectively. The product $F^n\text{Quot}_{\vec{d}}\,(V)\times C$ carries a universal flag of locally-free sheaves:
\begin{equation}\label{Universal flag}
    \mathcal{E}_{n}\subseteq \mathcal{E}_{n-1}\subseteq...\mathcal{E}_1\subseteq \mathcal{E}_{0}=\pi^{*}(V).
\end{equation}
On the other hand, just as in the $\mathfrak{sl}_{2}$ case, the shifted Yangian of $\mathfrak{sl}_{n+1}$ has a generators and relations presentation:

\[
Y_{\hbar}^{r}({\mathfrak{sl}_{n+1}}):= \frac{\mathbb{Q}[\hbar]\left\langle e_{k}^{(v)},f_{k}^{(v)},m_k^{(t)}\right\rangle_{k\in\{1,...,n\},\,v\geq 0,\,t\in\{1,...,r\}}}{\text{Relations in Definition }\ref{Shifted Yangian defn}.}.
\]
\newline
Following the approach of \cite{MaNe1}, in order to understand the geometric representation theory of Hyperquot schemes, we consider:
\[
F^n\text{Quot}(V):=\bigsqcup\limits_{\vec{d}\in \mathbb{N}^{n}}F^n\text{Quot}_{\vec{d}}\,(V)
\]
and define operators:
\begin{equation}\label{e,f,m}
    e_{k}^{(v)},f_{k}^{(v)},m_k^{(t)}:H^{*}(F^n\text{Quot}(V))\rightarrow H^{*}(F^n\text{Quot}(V)\times C)
\end{equation}
for all $k\in\{1,...,n\}$, $v\geq 0$ and $t\in\{1,...,r\}$. First, we define the multiplication operators:
\[
m_{k}^{(t)}:=c_{t}(\mathcal{E}_{k})\cdot\rho^{*}(-): H^{*}(F^{n}\text{Quot}_{\vec{d}}\,(V))\rightarrow H^{*}(F^{n}\text{Quot}_{\vec{d}}\,(V)\times C),
\]
where $c_{u}(\mathcal{E}_{k})$ is the $u^{\text{th}}$ Chern class of the universal bundle $\mathcal{E}_{k}$. Then,
consider a new $n-$tuple $\vec{d}'_{k}:=(d_1,...,d_{k-1},d_{k}+1,d_{k+1},...,d_n)$ and the $(n+1)-$ tuple \\
$\vec{d}''_{k}:=(d_1,...,d_{k-1},d_{k},d_{k}+1,d_{k+1},...,d_n)$. We have the following diagram, analogous to Diagram \ref{Marian Negut Correspondence} for Hyperquot schemes:

\begin{equation}\label{correspondence diagram defining e,f}
    \begin{tikzcd}
    &F^{n+1}\text{Quot}_{{\vec{d}''}_{k}}\,(V) \arrow[swap]{dl}{\pi_{-}}\arrow{d}{\pi_{C}}\arrow{dr}{\pi_{+}}&\\
    F^n\text{Quot}_{\vec{d}}\,(V) &  C   &F^{n}\text{Quot}_{{\vec{d}'}_{k}}\,(V)\\
\end{tikzcd}.
\end{equation}
Given a closed point
\begin{equation}\label{closed poit of Hyperquot correspondence}
({F}_{n+1}\subseteq ...\subseteq{F}_{k+1}\subseteq{F}_{k}\subseteq...\subseteq{F}_{1}\subseteq V)\in F^{n+1}\text{Quot}_{{\vec{d}''}_{k}}\,(V),
\end{equation}
the morphisms $\pi_{-}$ and $\pi_{+}$ in Diagram \ref{correspondence diagram defining e,f} respectively forget the locally free sheaves $F_{k+1}$ and $F_{k}$, whereas the map $\pi_C$ remembers the support point of the length one sheaf $F_{k}/F_{k+1}$. In fact, the variety $F^{n+1}\text{Quot}_{{\vec{d}''}_{k}}\,(V)$ carries a tautological line bundle 
$\mathscr{L}_k$
whose fibre over a point \eqref{closed poit of Hyperquot correspondence}
is canonically identified with $F_{k}/F_{k+1}$. Subsequently, let us define
\[
\lambda_{k}:=c_{1}(\mathscr{L}_{k})\in H^{2}(F^{n+1}\text{Quot}_{{\vec{d}''}_{k}}\,(V)).
\]
Using powers of $\lambda_{k}$, for $v\geq 0$, we define the creation/annihilation operators;
\begin{equation}\label{definition of e operator}
e^{(v)}_{k}:=(\pi_{+}\times \pi_{C})_{*}(\lambda_{k}^{v}\cdot\pi_{-}^{*}(-)):H^{*}(F^{n}\text{Quot}_{\vec{d}}\,(V))\rightarrow H^{*}(F^{n}\text{Quot}_{\vec{d}'_{k}}\,(V)\times C).
\end{equation}
\begin{equation}\label{definition of f operator}
f^{(v)}_{k}:=(\pi_{-}\times \pi_{C})_{*}(\lambda_{k}^{v}\cdot\pi_{+}^{*}(-)):H^{*}(F^{n}\text{Quot}_{\vec{d}'_{k}}\,(V))\rightarrow H^{*}(F^{n}\text{Quot}_{\vec{d}}\,(V)\times C).
\end{equation}
\begin{rem}
 In some contexts, it will be convenient to consider the operators $e_k^{(v)},f_{k}^{(v)}, m^{(t)}_{k}$ as $v$ and $t$ vary. So we define the following power series for all $k\in \{1,...,n\}$:
\begin{equation}\label{currents}
e_{k}(z):=\sum\limits_{v=0}^{\infty}\frac{e_{k}^{(v)}}{z^{v+1}},\,\,\,\,\,\,f_{k}(z):=\sum\limits_{v=0}^{\infty}\frac{f_{k}^{(v)}}{z^{v+1}},\,\,\,\,\,\,m_{k}(z):=\sum_{t=0}^{r}(-1)^{t}m_{k}^{(t)}z^{r-t}.
\end{equation} 
Above, we make the convention that $m_{k}^{(0)}:=\rho^{*}$ for all $k\in\{1,...,n\}$. Also, given a vector bundle $E$ on $F^n\text{Quot}(V)\times C$, such as the universal bundles (\ref{Universal flag}), we denote the total Chern class of $E$ by
\begin{equation}\label{total chern class definition}
c({E},z):=\sum\limits_{s=0}^{r}z^{r-s}(-1)^{s}c_s(E).
\end{equation}
\end{rem}
\noindent Our first main result is that the operators $e_{k}^{(v)},f_{k}^{(v)}$ and $m_{k}^{(t)}$ satisfy the defining relations of $Y_{\hbar}^{r}({\mathfrak{sl}_{n+1}})$ (see Definition \ref{Shifted Yangian defn}). Let us state this in more precise terms:\\\\
Let $K_C\in H^{2}(C)$ be the canonical class of the curve $C$ and $\delta\in H^{2}(C\times C)$ be the diagonal class. We will also denote the pull-back of $\delta$ to $H^{*}(F^{n}\text{Quot}(V)\times C\times C))$ by the same notation and will also use the standard notation $\delta_{i,j}$ for the Kronecker delta but this will not cause any confusion with the diagonal class. Section \ref{section: The Yangian action} will be devoted to the proof of the following result:
\begin{thm}\emph{}\label{Yangian relations satisfied}\\
    \begin{enumerate}
\item  For all $i,j\in\{1,...,n\}$ and $s,t\geq 0$, we have the following equalities of $\mathbb{Q}-$ linear homomorphisms 
\[
H^{*}\left(F^{n}\text{Quot}(V))\rightarrow H^{*}(F^{n}\text{Quot}(V)\times C\times C\right):
\]

\begin{equation}\label{mm commutation action}
    \left[m_{i}^{(s)}, m_{j}^{(t)}\right]=0
\end{equation}

\begin{equation}\label{ee commutation 1 action}
    {\left[e_{i}^{(s+1)}, e_{i}^{(t)}\right]}-{\left[e_{i}^{(s)}, e_{i}^{(t+1)}\right]}=-\delta \cdot\left(e_{i}^{(s)}e_{i}^{(t)}+e_{i}^{(t)}e_{i}^{(s)}\right).
\end{equation}

\begin{equation}\label{ee commutation 2 action}
    {\left[e_{i}^{(s)}, e_{i-1}^{(t+1)}\right]}{}-{\left[e_{i}^{(s+1)}, e_{i-1}^{(t)}\right]}{}= -\delta \cdot e_{i}^{(s)} e_{i-1}^{(t)}.
\end{equation}

\begin{equation}\label{e commutation 3 action}
    \left[e_{i}^{(s)}, e_{j}^{(t)}\right]=0  \,\,\,\text{ for }|i-j|>1.
\end{equation}

\begin{equation} \label{em relations action}
    {\left[m_{i}^{(s)}, e_{j}^{(t)}\right]}=\delta_{i,j}\cdot\delta\cdot\left(\sum_{l=0}^{s-1}e_{j}^{(t+l)}m_{i}^{(s-l-1)}(-1)^{l+1}\right).
\end{equation}

\begin{equation}\label{ff commutation 1 action}
    {\left[ f_{i}^{(t)},f_{i}^{(s+1)}\right]}-{\left[f_{i}^{(t+1)},f_{i}^{(s)}\right]}=-\delta\cdot \left(f_{i}^{(s)}f_{i}^{(t)}+f_{i}^{(t)}f_{i}^{(s)}\right).
\end{equation}

\begin{equation}\label{ff commutation 2 action}
    {\left[f_{i}^{(s+1)}, f_{i-1}^{(t)}\right]}-{\left[f_{i}^{(s)}, f_{i-1}^{(t+1)}\right]}= -\delta \cdot f_{i}^{(t)} f_{i-1}^{(s)}
\end{equation}

\begin{equation}\label{f commutation 3 action}
    \left[f_{i}^{(s)}, f_{j}^{(t)}\right]=0  \,\,\,\text{ for }|i-j|>1.
\end{equation}

\begin{equation}\label{fm relations action}
    {\left[ f_{j}^{(t)},m_{i}^{(s)}\right]}=\delta_{i,j}\cdot\delta\cdot\left(\sum_{l=0}^{s-1}m_{i}^{(s-l-1)}f_{j}^{(t+l)}(-1)^{l+1}\right).
\end{equation}

\begin{equation}\label{ef relations action}
    {\left[e_{i}^{(s)}, f_{j}^{(t)}\right]}=-\delta_{i,j}\cdot\delta\cdot h_{_{i}}^{(s+t+1-r\cdot \delta_{i,n})},
    \end{equation}
where 
\[
h_i(z):=\sum\limits_{l=0}^{\infty}\frac{h_{i}^{(l)}}{z^{l+\delta_{i,n}\cdot r}}:=\frac{c(\mathcal{E}_{i+1},z)c(\mathcal{E}_{i-1},z+K_C)}{c(\mathcal{E}_{i},z)c(\mathcal{E}_{i},z+K_C)}.
\]
As before, the locally-free sheaves $\mathcal{E}_{i}$ are universal sheaves on $F^{n}\text{Quot}(V)$ (cf. \eqref{Universal flag}) and we define $c(\mathcal{E}_{n+1},z)=1.$ \\\\
In the above relations \eqref{mm commutation action}-\eqref{ef relations action}, we make the convention that the operators with superscripts $(s)$ and $(t)$ contribute to the first and second factor of $C\times C$ respectively.\\\\
\item\label{Serre relations action} For all $i,j\in\{1,...,n\}$ such that $|i-j|=1$ and for all $ s,t,u\geq 0$, we have the following identities of linear maps 
\[
H^{*}\left(F^{n}\text{Quot}(V)\right)\rightarrow H^{*}\left(F^{n}\text{Quot}(V)\times C\times C\times C\right):
\]
        
\begin{equation}\label{e Serre relations action}
    \left[e_{i}^{(s)},\left[e_{i}^{(t)}, e_{j}^{(u)}\right]\right]+\left[e_{i}^{(t)},\left[e_{i}^{(s)}, e_{j}^{(u)}\right]\right]=0.
\end{equation}

and

\begin{equation}\label{f Serre relations action}
    \left[f_{i}^{(s)},\left[f_{i}^{(t)}, f_{j}^{(u)}\right]\right]+\left[f_{i}^{(t)},\left[f_{i}^{(s)}, f_{j}^{(u)}\right]\right]=0.
\end{equation}
As before, we make the convention that the operators with superscripts $(u),\,(s)$ and $(t)$ contribute to the first, second, and third factor of $C\times C\times C$ respectively.

    \end{enumerate}

\end{thm}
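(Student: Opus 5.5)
We will prove the relations by reducing everything to the geometry of nested correspondences and to computations on projective bundles, following the strategy of \cite{MaNe1}. The basic structural fact we will use is that the correspondence $F^{n+1}\text{Quot}_{\vec{d}''_k}(V)$ in Diagram \eqref{correspondence diagram defining e,f} is, via $\pi_-\times\pi_C$, the projectivisation $\mathbb{P}(\mathcal{E}_k|_{\mathcal{E}_{k+1}\text{-compatible}})$. More precisely, it is the locus in $\mathbb{P}(\mathcal{E}_k)$ over $F^n\text{Quot}_{\vec d}\times C$ of quotients $\mathcal{E}_k\to \mathscr{L}_k$ whose kernel still contains $\mathcal{E}_{k+1}$, i.e. the zero locus of the induced section of $\mathcal{H}om(\mathcal{E}_{k+1},\mathscr{L}_k)$. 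Dually, via $\pi_+\times\pi_C$, it is the locus in $\mathbb{P}(\mathcal{E}_{k-1}/\!\!\,\text{-type})$ of subsheaves $F_k\supset E'_k$ with $F_k\subseteq E_{k-1}$, described as a projective bundle of lines in the fibre of $\mathcal{E}_{k-1}/\mathcal{E}'_k$-type data twisted by $K_C$. This gives a push-pull formula: for a class pulled back along $\pi_-$,
\[
(\pi_-\times\pi_C)_*(\lambda_k^v)=\left[\frac{c(\mathcal{E}_{k+1},z)}{c(\mathcal{E}_k,z)}\right]_{z^{-v-1}} ,
\]
and the analogous formula for $\pi_+\times\pi_C$ with $\mathcal{E}_{k-1}$ and the shift $z+K_C$. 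These two residue formulas, together with the identity $\mathcal{E}'_k=\mathcal{E}_k-\mathscr{L}_k\cdot\delta$ in K-theory on the correspondence, are the inputs for everything.

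First, \eqref{mm commutation action} is trivial since the $m$'s are cup products. Next, \eqref{em relations action} and \eqref{fm relations action} follow from the K-theoretic relation $c(\mathcal{E}'_k,z)=c(\mathcal{E}_k,z)\,\frac{z-\lambda_k\cdot(\text{diag})}{z}$ restricted along the graph of $\pi_C$. Expanding this relation and comparing coefficients produces the sum $\sum_l (-1)^{l+1}e^{(t+l)}m^{(s-l-1)}\delta$. The relations \eqref{e commutation 3 action} and \eqref{f commutation 3 action} hold because, for $|i-j|>1$, the two compositions are computed on the same fibre product $F^{n+2}$-type nested scheme, which is a transverse fibre product in both orders. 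The same-index relations \eqref{ee commutation 1 action} and \eqref{ff commutation 1 action} reduce to the $n=1$ computation of \cite{MaNe1}. The reason is that the composition $e_i^{(s)}e_i^{(t)}$ is supported on the scheme of flags $F_i\supset F'_i\supset F''_i$ with the other terms fixed. Locally this is the Quot-scheme situation for the bundle $E_{i-1}$ with the extra closed condition $\supseteq E_{i+1}$, and that condition enters symmetrically in both orders and cancels from the commutator. The relation \eqref{ef relations action} follows from comparing $e_if_j$ and $f_je_i$. For $i\ne j$ both are given by the same smooth fibre product. For $i=j$ the two fibre products agree off the diagonal of $C\times C$, and the excess is supported on the diagonal. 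There it is computed by the two residue formulas above, whose product is exactly $h_i(z)$. The term $r\delta_{i,n}$ comes from $c(\mathcal{E}_{n+1},z)=1$ shifting degrees.

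The genuinely new relations are \eqref{ee commutation 2 action}, \eqref{ff commutation 2 action} and the Serre relations, which involve adjacent indices. For \eqref{ee commutation 2 action}, the two compositions $e_ie_{i-1}$ and $e_{i-1}e_i$ live respectively on the schemes where we first lower $E_{i-1}$ and then $E_i$, or the reverse. Here we will introduce the skew-nested Quot scheme $Z$ parametrising $E'_{i-1}\subset E_{i-1}$ and $E'_i\subset E_i$ with $E'_i\subseteq E'_{i-1}$ but without requiring $E_i\subseteq E'_{i-1}$. Both compositions are pushforwards from closed subschemes of $Z$. We will then show that $Z$ is smooth of expected dimension, being an iterated projective bundle or zero locus of a regular section. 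Over $C\times C$ minus the diagonal the two subschemes coincide with $Z$. Their difference is thus a class on the diagonal, which we compute as $-\delta\, e_i^{(s)}e_{i-1}^{(t)}$ by comparing $\lambda_i-\lambda_{i-1}$ with the defining section.

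For the Serre relations, we will iterate this construction: we will use the skew-nested scheme associated to $[e_i,[e_i,e_j]]$ and the symmetry in $s\leftrightarrow t$, which corresponds to swapping the two $\mathscr{L}_i$'s. We expect the Serre relations, and specifically proving the smoothness/expected dimension of the doubly skew-nested scheme and controlling its diagonal contributions, to be the main obstacle. We will try first to exhibit it as a tower of projective bundles over $F^n\text{Quot}\times C^3$, so that the symmetrisation kills the diagonal terms by an antisymmetry argument.
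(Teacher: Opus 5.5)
Your proposal has two genuine gaps, plus some smaller problems.

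\textbf{The relation \eqref{ee commutation 2 action}.} Your plan rests on a false claim. The scheme $Z$ you introduce (pairs $E'_{i-1}\subset E_{i-1}$ at $p$ and $E'_i\subset E_i$ at $q$, with $E'_i\subseteq E'_{i-1}$) is the paper's $\mathcal{Y}^{i-1,i}_{\vec d}$, and it is not smooth. It has two irreducible components of the same dimension:
\begin{enumerate}
\item the hyperquot scheme $F^{n+2}\text{Quot}_{\vec d^{\bullet}_{i-1,i}}(V)$, i.e.\ the locus $E_i\subseteq E'_{i-1}$, which supports $e_ie_{i-1}$;
\item the locus $\{p=q\}$, which is the whole skew-nested scheme $\mathcal{Z}^{i-1,i}_{\vec d}$. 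Indeed, if $p=q$ and $E_i\not\subseteq E'_{i-1}$, you are forced to take $E'_i=E_i\cap E'_{i-1}$, and this family is full-dimensional.
\end{enumerate}
The two components meet along the locus $J$ where $E'_i\subset E_i\subseteq E'_{i-1}\subset E_{i-1}$ with all quotients at one point. You are right that the two supports agree over $p\neq q$. But the commutator is not an excess term of a smooth space: it is an entire component lying over $\Delta\subset C\times C$. If $Z$ were smooth, hence irreducible since it is connected, the closed subscheme $F^{n+2}\text{Quot}$ of the same dimension would be all of $Z$, and you would get $[e_i^{(s)},e_{i-1}^{(t)}]=0$.

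What replaces smoothness is that $Z$ is l.c.i.\ and reduced. It is an intersection, of expected codimension, of two products of hyperquot schemes in $F^n\text{Quot}_{\vec d}\times F^n\text{Quot}_{\vec d'_i}\times F^n\text{Quot}_{\vec d_{i-1,i}}$. Hence $[Z]=[F^{n+2}\text{Quot}]+[\mathcal{Z}^{i-1,i}_{\vec d}]$ computes $e_{i-1}e_i$, and $[e_i^{(s)},e_{i-1}^{(t)}]$ is given by $-k_*(\lambda_i^s\lambda_{i-1}^t[\mathcal{Z}^{i-1,i}_{\vec d}])$. The relation then follows from the identity $(\lambda_{i-1}-\lambda_i)[\mathcal{Z}^{i-1,i}_{\vec d}]=\delta\cdot[F^{n+2}\text{Quot}]$, with both sides equal to $[J]$:
\begin{enumerate}
\item on $\mathcal{Z}^{i-1,i}_{\vec d}$, $J$ is the zero locus of $\mathscr{L}_i\to\mathscr{L}_{i-1}$;
\item on $F^{n+2}\text{Quot}$, $J$ is the preimage of the diagonal.
\end{enumerate}
Note that it is this shifted difference of commutators, not the commutator itself, that equals $-\delta\cdot e_i^{(s)}e_{i-1}^{(t)}$.

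\textbf{The Serre relations.} The key idea is missing, and the route you sketch cannot work as stated. The four terms live on two different spaces:
\begin{enumerate}
\item $e_j[e_j,e_{j-1}]$ lives on $\mathcal{X}^{j-1,j}_{\vec d}$, which is smooth only via the blow-up description of skew-nested schemes;
\item $[e_j,e_{j-1}]e_j$ lives on $\mathcal{W}^{j-1,j}_{\vec d}$, which for $r>1$ is singular (only reduced, irreducible, l.c.i.), so it is not an iterated projective bundle over anything smooth.
\end{enumerate}
Moreover, the cancellation pairs an $\mathcal{X}$-term with a $\mathcal{W}$-term, so no antisymmetry is visible on either space alone.

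The paper passes to a common smooth birational model $\mathcal{U}^{j-1,j}_{\vec d}$. It remembers both intermediate sheaves $F_j,F'_j$ between $H_j$ and $E_j$ (the two orders of removing $q$ and $p$), and maps birationally by $\eta,\theta$ onto $\mathcal{X}$ and $\mathcal{W}$. On it one uses the divisor $\text{Diag}=\{F_j=F'_j\}$, which has three properties:
\begin{enumerate}
\item $[\text{Diag}]=\lambda_1-\tilde\lambda_2=\tilde\lambda_1-\lambda_2$;
\item on $\text{Diag}$ the two maps to $F^n\text{Quot}\times F^n\text{Quot}\times C^3$ coincide;
\item on $\text{Diag}$, $\mathscr{L}_a=\tilde{\mathscr{L}}_a$ for $a=1,2$.
\end{enumerate}
A binomial expansion then shows that the difference is divisible by $[\text{Diag}]$ and can be transported across it.

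\textbf{Smaller issues.}
\begin{enumerate}
\item The same $\mathcal{U}^{j,j}$/$\text{Diag}$ argument, with $\eta_*[\text{Diag}]$ and $\theta_*[\text{Diag}]$ equal to pullbacks of $\delta$, is what proves \eqref{ee commutation 1 action}. Your reduction to $n=1$ is not an argument: for $j<n$, $E_j$ is squeezed between $E_{j+1}$ and $E_{j-1}$, so fibrewise you are not in a Quot scheme of a vector bundle, and the extra condition does not ``cancel''.
\item For \eqref{ef relations action}, agreement off the diagonal of $C\times C$ is not enough to reduce to the residue computation of $[e_j(w),f_j(z)]\cdot 1$. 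You need agreement off the diagonal $F^n\text{Quot}\times C\hookrightarrow (F^n\text{Quot})^2\times C^2$, i.e.\ wherever $E_j\neq E'_j$, including $p=q$. This follows from the unique lifts $E_j+E'_j$ and $E_j\cap E'_j$ to $\tilde{\mathcal{U}}^j_{\vec d}$, which give $\lambda_2=\lambda'_1$ and $\lambda'_2=\lambda_1$ there. Only then is the commutator multiplication by a class, and hence determined by its value on the fundamental class.
\item The identity you quote for \eqref{em relations action} should read $c(\mathcal{F}_k,z)=c(\mathcal{E}_k,z)\frac{z-\lambda_k+\delta}{z-\lambda_k}$.
\end{enumerate}
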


\begin{cor}
    In view of the definition of the shifted Yangian $Y_{\hbar}^{r}({\mathfrak{sl}_{n+1}})$ in Definition \ref{Shifted Yangian defn}, Theorem \ref{Yangian relations satisfied} implies that the operators \ref{e,f,m} yield a $\mathbb{Q}-$ linear map
    \[
    \alpha:Y_{\hbar}^{r}({\mathfrak{sl}_{n+1}})\rightarrow Hom\left(H^{*}\left(F^n\text{Quot}(V)\right),H^{*}\left(F^n\text{Quot}(V)\times C\right)\right)
    \]
that defines an action in the sense of Definition \ref{Action defn}.
\end{cor}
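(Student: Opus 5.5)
The corollary is formal once Theorem \ref{Yangian relations satisfied} is available. The plan is to check, against Definition \ref{Action defn}, that the geometric operators \eqref{e,f,m} determine a well-defined $\mathbb{Q}$-linear assignment on $Y_{\hbar}^{r}({\mathfrak{sl}_{n+1}})$.

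First I would recall the meaning of ``action'' in Definition \ref{Action defn}, following \cite{MaNe1}. It is not an algebra homomorphism to endomorphisms. Instead it is a rule that assigns to each word in the generators $e_k^{(v)},f_k^{(v)},m_k^{(t)}$ an operator
\[
H^{*}(F^n\text{Quot}(V))\rightarrow H^{*}(F^n\text{Quot}(V)\times C^{\ell}),
\]
where $\ell$ is the length of the word. The composite of two such operators is formed by pulling back along the projections $C^{a}\times C^{b}\to C^{a}$, $C^{b}$ and composing, with each letter placed in its own factor of $C$. The parameter $\hbar$ acts through the diagonal class $\delta$ (and $K_C$), as in the relations. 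One then requires that every defining relation of Definition \ref{Shifted Yangian defn}, after substituting $\hbar\mapsto\delta$ in the appropriate factors and matching the convention on which factor each superscript occupies, hold as an identity of such operators.

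Concretely, I would define $\alpha$ on the free algebra $\mathbb{Q}[\hbar]\langle e_{k}^{(v)},f_{k}^{(v)},m_k^{(t)}\rangle$ by sending each generator to the corresponding operator \eqref{e,f,m} and each monomial to the composite above. Extending linearly gives a map from the free algebra. It then suffices to show that the two-sided ideal of relations maps to zero in the sense of Definition \ref{Action defn}. For the generators of this ideal, I would go through the relations of Definition \ref{Shifted Yangian defn} one at a time and match each with its counterpart among \eqref{mm commutation action}--\eqref{ef relations action} and the Serre relations \eqref{e Serre relations action}, \eqref{f Serre relations action}.

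Two checks need care in this matching. First, the coefficient $h_i^{(l)}$ in the $[e,f]$ relation should be the one defined by the Cartan series of the shifted Yangian. On the geometric side this series is the ratio of Chern polynomials $\frac{c(\mathcal{E}_{i+1},z)c(\mathcal{E}_{i-1},z+K_C)}{c(\mathcal{E}_i,z)c(\mathcal{E}_i,z+K_C)}$. Expanding it in the generators $m_k^{(t)}$ shows that $h_i^{(l)}$ is indeed the image of the algebraic element $h_i^{(l)}$, and that the shift $r\cdot\delta_{i,n}$ matches the shift in Definition \ref{Shifted Yangian defn}, because $c(\mathcal{E}_{n+1},z)=1$ and $c(\mathcal{E}_0,z)=c(\pi^*V,z)$.

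Second, the ideal generated by the relations must also map to zero, not only the relations themselves. This follows because left and right multiplication by a generator is compatible with the composition rule: if an operator identity holds on $C^{\ell}$, composing it on either side with another correspondence and pulling back $\delta$ preserves the identity, by the projection formula and base change for the product projections. With this compatibility, $\alpha$ descends to the quotient $Y_{\hbar}^{r}({\mathfrak{sl}_{n+1}})$ and defines an action.

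The main obstacle is bookkeeping rather than geometry. It lies in making the factor conventions and the specialisation of $\hbar$ to $\delta$ match exactly between Theorem \ref{Yangian relations satisfied} and Definition \ref{Shifted Yangian defn}, especially signs and the identification of the $h_i$ series. I would settle this first by writing out the $n=1$ case and comparing it with the corresponding statement in \cite{MaNe1}.
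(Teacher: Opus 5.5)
Your overall route is the same as the paper's, which simply asserts that the corollary follows once the relations of Theorem \ref{Yangian relations satisfied} are verified. But the corollary is nothing more than matching that theorem against Definition \ref{Action defn}, and you misremember that definition. As a result, the matching step you describe would fail as stated.

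In Definition \ref{Action defn}, every element of $Y_{\hbar}^{r}(\mathfrak{sl}_{n+1})$ is sent to an operator $H^{*}(F^n\text{Quot}(V))\to H^{*}(F^n\text{Quot}(V)\times C)$ with a \emph{single} factor $C$. Products are composed in separate factors and then restricted to the diagonal (property (2)). The parameter $\hbar$ acts by multiplication by $K_C$ (property (3)), not by $\delta$.

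Your $\alpha$ sends a word of length $\ell$ to an operator into $H^{*}(F^n\text{Quot}(V)\times C^{\ell})$, so it is not a map of this kind. Your rule ``$\hbar\mapsto\delta$'' is also ill-posed: $h_i^{(l)}$ is a sum of words of different lengths, and $C^{\ell}$ has no preferred pair of factors. It is moreover wrong. From \eqref{ef relations} it would predict $[e_i^{(s)},f_i^{(t)}]=+\delta\cdot h_i^{(\cdot)}$, with $h_i$ built from $c(\mathcal{E}_{i-1},z+\delta)$. But \eqref{ef relations action} has $-\delta$ and $z+K_C$, and the same sign flip separates \eqref{ee commutation 1} from \eqref{ee commutation 1 action}.

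The correct dictionary is as follows.
\begin{enumerate}
\item Properties (1)--(3) make $\Phi$, on the free $\mathbb{Q}[\hbar]$-algebra, the algebra homomorphism into $\mathrm{Hom}(H^{*}(F^n\text{Quot}(V)),H^{*}(F^n\text{Quot}(V)\times C))$. Here the target carries the associative product $A*B=(\mathrm{Id}\otimes\Delta^{*})\circ(A\otimes\mathrm{Id}_C)\circ B$, unit $\rho^{*}$, and $\hbar\mapsto K_C\cdot\rho^{*}$. So the two-sided ideal takes care of itself once the relations map to zero.
\item The relations do map to zero. Apply $\Delta^{*}$ to the identities of Theorem \ref{Yangian relations satisfied} and use $\Delta^{*}\delta=c_1(T_C)=-K_C$ (equivalently $\delta(\delta+K_C)=0$). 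This turns $-\delta$ into $K_C=\Phi(\hbar)$, and it accounts for the systematic sign difference between the theorem and Definition \ref{Shifted Yangian defn}.
\item Property (4) is the only non-formal condition, and your proposal never addresses it. For the commutator combinations that occur in the relations, it is exactly Theorem \ref{Yangian relations satisfied} read on $C\times C$ before restriction, via $\Delta_{*}\Delta^{*}(-)=\delta\cdot(-)$.
\end{enumerate}

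Individual commutators need one more input. Take $[e_i^{(s)},e_i^{(t)}]$: the theorem only controls it through the combinations in \eqref{ee commutation 1 action}. You also need that the geometric commutator is pushed forward from the diagonal of $C\times C$. This follows from the $\text{Diag}$ computation in the proof of Proposition \ref{e_i,e_i relations}, since $\lambda_1^{t}\lambda_2^{s}-\tilde{\lambda}_1^{s}\tilde{\lambda}_2^{t}$ is divisible by $[\text{Diag}]$. Such classes are $s^{*}$-invariant, and that pins each commutator down.
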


\subsection{The commuting family of operators $a_{k}^{(v)}$ and a natural basis} In analogy to the natural bases \eqref{Nakajima basis} and \eqref{Marian Negut Basis} of $H^{*}(\text{Hilb}(S))$ and $H^{*}(\text{Quot}(V))$ respectively, we would like to construct a family of commuting Yangian operators on $H^{*}(F^{n}\text{Quot}\,(V))$, such that applying arbitrary products of these operators to the vacuum vector $$|0\rangle\in H^{*}( F^{n}\text{Quot}_{(0,...,0)}(V)),$$ yields a basis for the cohomology of $H^{*}(F^{n}\text{Quot}\,(V))$. Furthermore, we would like that such a class of operators specialises to the operators \eqref{Marian Negut commuting family of operators} of  Marian and Neguț in the case $n=1$.\\\\
Let us now explain how we go about defining such operators. Let $\vec{d}=(d_1,...,d_n)$ be an $n-$tuple of non-negative integers. Given $k\in\{1,...,n\}$, we define the $n-$tuple
\[
\vec{d}_{k,n} := (d_1,...,d_{k-1},d_k+1,d_{k+1}+1,...d_{n}+1).
\]
Consider the moduli space $\mathcal{Z}^{k,n}_{\vec{d}}$, a sub-scheme  of $F^{n}\text{Quot}_{\vec{d}}\,(V)\times F^{n}\text{Quot}_{\vec{d}_{k,n}}(V)$, which parameterises diagrams of inclusions of rank $r$ sub-sheaves of $V$ on $C$ of the following form:
\begin{equation}\label{double nested quot}
\begin{tikzcd}
	{E_{n}} & {...\,\,\,E_{k+2}} & {E_{k+1}} & {E_k} & {...\,\,\,E_2} & {E_1} \\
	{F_{n}} & {...\,\,\,F_{k+2}} & {F_{k+1}} & {F_k}
	\arrow[hook, from=1-1, to=1-2]
	\arrow[hook, from=1-2, to=1-3]
	\arrow[hook, from=1-3, to=1-4]
	\arrow[hook, from=1-4, to=1-5]
	\arrow[hook, from=1-5, to=1-6]
	\arrow["p", hook, from=2-1, to=1-1]
	\arrow[ hook, from=2-1, to=2-2]
	\arrow["p", hook, from=2-2, to=1-2]
	\arrow[hook, from=2-2, to=2-3]
	\arrow["p", hook, from=2-3, to=1-3]
	\arrow[hook, from=2-3, to=2-4]
	\arrow["p", hook, from=2-4, to=1-4].
\end{tikzcd}
\end{equation}
\newline
That is, the space $\mathcal{Z}^{k,n}_{\vec{d}}$ parameterises pairs of length $n$ flags
\[
(E_n\subseteq...\subseteq E_1\subseteq V)\in F^{n}\text{Quot}_{\vec{d}}\,(V)  \text{ and } (F_n\subseteq...\subseteq F_1\subseteq V)\in F^{n}\text{Quot}_{\vec{d}_{k,n}}\,(V),
\]
such that $E_i=F_i$ for $i=1,...,k-1$ and $E_j/F_j$ is a length one sheaf supported at the same, but arbitrary, point $p\in C$ for all $j=k,...,n$.\\\\
We will show that $\mathcal{Z}^{k,n}_{\vec{d}}$ is a smooth projective variety of dimension $r(d_{n}+1)$ (Proposition \ref{proposition: smoothness and dimension of moduli space Z}) and construct a rank $r-1$ vector bundle $\mathcal{G}^{k,n}_{\vec{d}}$ on $\mathcal{Z}^{k,n}_{\vec{d}}$ (Subsection \ref{subsection:Some sheaves on Z}).
Note that we have the following diagram: 
\begin{equation}\label{Diagram for Moduli space: Z}
    \begin{tikzcd}
    &\mathcal{Z}^{k,n}_{\vec{d}} \arrow[swap]{dl}{q_{-}}\arrow{d}{q_{C}}\arrow{dr}{q_{+}}&\\
    F^n\text{Quot}_{\vec{d}}\,(V) &  C   &F^n\text{Quot}_{\vec{d}_{k,n}}(V).\\
\end{tikzcd}.
\end{equation}
The maps $q_{-}$ and $q_+$ are the obvious forgetful maps and the map $q_C$ sends a point of $\mathcal{Z}^{k,n}_{\vec{d}}$, which is of the form \eqref{double nested quot}, to $p\in C$. Then we use the Chern classes of $\mathcal{G}^{k,n}_{\vec{d}}$ to define:

\begin{equation}\label{Operators a's definition}
    a_{k}^{(v)}:= (q_{+}\times q_{C})_{*}(c_{v}(\mathcal{G}^{k,n}_{\vec{d}})\cdot q_{-}^{*}(-)):H^{*}(F^n\text{Quot}_{\vec{d}}\,(V))\rightarrow H^{*}(F^n\text{Quot}_{\vec{d}_{k,n}}(V)\times C),
    \newline
\end{equation}
for $1\leq k\leq n$ and $0\leq v \leq r-1.$
As before in \ref{currents}, we also define the polynomial
\begin{equation}\label{a current}
    a_{k}(z):=\sum\limits_{v=0}^{r-1}(-1)^{v}a_{k}^{(v)}z^{r-1-v}.
\end{equation}

\begin{rem}\label{remark: a's as iterated commutators}
    Given $k\in \{1,...,n\}$, the operators $a_{k}^{(v)}$ correspond to Yangian elements defined by the following iterated commutator of generators:
    \[
    a_{k}(z)=\left[...\left[\left[e_{k}(z)m_{k}(z),e_{k+1}^{(0)}\right],e_{k+2}^{(0)}\right],...,e_{n}^{(0)}\right].
    \]
    We would not require this algebraic characterisation of the operators $a_{k}^{(v)}$ in the proofs of our results, except in the case $k=n$, where we will see that this follows from equation \ref{equality in cohomology:a=em}. However, the interested reader can show the general case by following the strategy in the proof of Corollary \ref{corollary on the operator [e_i,e_i-1]} by using an induction on $k$ and equation \ref{equality in cohomology:a=em}.
    \newline
\end{rem}
\noindent We will now describe the natural basis of $H^{*}(F^{n}\text{Quot}(V))$ which the operators $a_{k}^{(v)}$ produce.
First, given a linear operator
\[
T:H^{*}(F^n\text{Quot}(V))\rightarrow H^{*}(F^n\text{Quot}(V)\times C^{n})
\]
and a class $\gamma\in H^{*}(C^{n})$, we define the operator
\begin{equation}\label{operators coloured with cohomology classes T(gamma)}
T(\gamma):H^{*}(F^n\text{Quot}(V))\rightarrow H^{*}(F^n\text{Quot}(V))    
\end{equation}
to be the composition

\[
\begin{tikzcd}[cramped]
	{H^{*}(F^{n}\text{Quot}(V))} & {H^{*}(F^{n}\text{Quot}(V)\times C^{n})} \\
	& {H^{*}(F^{n}\text{Quot}(V)\times C^{n})} & {H^{*}(F^{n}\text{Quot}(V)).}
	\arrow["T", from=1-1, to=1-2]
	\arrow["{\cdot\pi^{*}(\gamma)}", from=1-2, to=2-2]
	\arrow["{\rho_{*}}", from=2-2, to=2-3]
\end{tikzcd}
\]
\newline
Above, we denote the projection maps from $F^{n}\text{Quot}(V)\times C^{n}$ to the first and second factor by $\rho$ and $\pi$ respectively.
Also, observe that the Hyperquot scheme $F^n\text{Quot}_{(0,...,0)}(V)$ is a point and let
\[
|0\rangle\in H^{*}(F^n\text{Quot}_{(0,...,0)}(V))
\]
be the unit class.
As before, let $\gamma_1,...,\gamma_{2g+2}$ be a basis of $H^{*}(C)$. The content of Section \ref{section: natural basis} will be the proofs of the following two theorems:

\begin{thm}\label{a's commute intro}
    For all $j,k\in\{1,...,n\}$ and $u,v\in\{0,...,r-1\}$ we have the following equality of operators $H^{*}(F^n\text{Quot}(V))\rightarrow H^{*}(F^n\text{Quot}(V)\times C\times C)$:
    \[
    a_{j}^{(u)} a_{k}^{(v)} =a_{k}^{(v)} a_{j}^{(u)}.
    \]
    Above, we make the convention that the operators $a_{j}^{(u)}$ and $a_{k}^{(v)}$ respectively contribute to the first and second factors of $C\times C$ on both sides of the equality. 

\end{thm}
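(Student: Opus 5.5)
We compare the two composite correspondences directly, following the strategy of \cite{MaNe1} in the case $n=1$. Write both sides as correspondences between $F^n\text{Quot}_{\vec{d}}\,(V)$ and $F^n\text{Quot}_{\vec{d}_{j,k}}(V)\times C\times C$. Here $\vec{d}_{j,k}$ is obtained from $\vec{d}$ by adding $1$ to entries $\geq j$ and another $1$ to entries $\geq k$. By symmetry we may assume $j\leq k$.

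\textbf{Step 1: set up the composite correspondences.} The composition $a_{j}^{(u)}a_{k}^{(v)}$ is pushed forward from the fibre product $\mathcal{Z}^{k,n}_{\vec{d}}\times_{F^n\text{Quot}}\mathcal{Z}^{j,n}_{\vec{d}_{k,n}}$. This space parametrises three flags $E_\bullet\supseteq F_\bullet\supseteq G_\bullet$, with
\begin{itemize}
\item $E_i/F_i$ of length one at a point $p$ for $i\geq k$;
\item $F_i/G_i$ of length one at a point $p'$ for $i\geq j$.
\end{itemize}
The opposite order gives a space $W'$ of flags $E_\bullet\supseteq F'_\bullet\supseteq G_\bullet$ with the roles of $(j,p)$ and $(k,p')$ exchanged.

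By the base change and transversality results established for $\mathcal{Z}$ (smoothness and dimension, Proposition \ref{proposition: smoothness and dimension of moduli space Z}), I expect both fibre products to be smooth of the expected dimension $r(d_n+2)$ near the generic locus. The first task is to show that the class being pushed forward is $c_u(\mathcal{G}^{j})c_v(\mathcal{G}^{k})$ times the virtual/fundamental class, with no excess contribution. If needed, this is done by presenting each $\mathcal{Z}$ locally as an iterated projective bundle, namely a tower of $\mathbb{P}$ of kernels as in the $n=1$ case.

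\textbf{Step 2: a common roof.} Both $W$ and $W'$ map to the space $\overline{W}$ of pairs $(E_\bullet,G_\bullet)$ together with $(p,p')$, where $G_i\subseteq E_i$ has colength $2$ for $i\geq k$ and colength $1$ for $j\leq i<k$, and the quotients are supported at $\{p,p'\}$. Away from the diagonal $p\neq p'$, both maps $W\to\overline{W}$ and $W'\to\overline{W}$ are isomorphisms, because the intermediate flag $F_\bullet$ is forced: $F_i=E_i\cap(\text{the part of }G_i\text{ away from }p')$, and similarly for $F'_\bullet$. Moreover, on this open locus the bundles $\mathcal{G}^j$ and $\mathcal{G}^k$ on $W$ match the corresponding bundles on $W'$, since they are built from the local kernels at the separated points. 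Hence the two operators agree after restriction to the complement of the diagonal $\Delta\subset C\times C$.

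\textbf{Step 3: the diagonal contribution (main obstacle).} The difference of the two sides is a class supported over $\Delta$, so it equals $\delta\cdot X$ for some operator $X$. The hard part is showing $X=0$.

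My plan is to work with the generating polynomials $a_j(z)a_k(w)$. Using Remark \ref{remark: a's as iterated commutators} for $k=n$ (equation \ref{equality in cohomology:a=em}), together with the Yangian relations of Theorem \ref{Yangian relations satisfied}, I would reduce to computing in the algebra, and carry out an induction on $n-k$.
\begin{itemize}
\item In the base case $j=k=n$, we have $a_n(z)=e_n(z)m_n(z)$ with only the index $n$ involved. Relations \eqref{ee commutation 1 action} and \eqref{em relations action} then give exactly the computation of \cite{MaNe1}: the diagonal terms produced by the $e$--$e$ relation are cancelled by those from the $m$--$e$ relation.
\item For general $j\leq k$, write $a_k=[a_k',e_n^{(0)}]$, where $a_k'$ is the corresponding operator for the space $F^{n-1}\text{Quot}$, with the last index handled through the Jacobi identity.
\end{itemize}
Then $[a_j,a_k]$ expands into terms of three kinds:
\begin{itemize}
\item $[a_j',a_k']$, which vanishes by induction;
\item terms involving $[e_n^{(0)},e_n^{(0)}]$, controlled by \eqref{ee commutation 1 action}, whose right-hand side is a multiple of $\delta$;
\item terms involving $[a'_\bullet,e_n^{(0)}]$ again.
\end{itemize}
To make the induction close, we also need that the diagonal terms appearing there are symmetric and cancel, using the Serre relations \eqref{e Serre relations action}. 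This bookkeeping, and especially tracking which factor of $C\times C$ each $\delta$ lives on, is where I expect the real difficulty.

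\textbf{Geometric alternative.} If the algebraic route becomes unwieldy, I would instead compute the excess directly over $\Delta$. There both $W$ and $W'$ become projective bundles over the common locus where $E_i/G_i$ is length $2$ at $p$. The claim is that the two pushforwards agree because the relevant Chern class integrands, $c(\mathcal{G}^j,z)c(\mathcal{G}^k,w)$, are symmetric under the natural involution exchanging the two intermediate steps of the nested flag. I would verify this by localisation on the fibre of $\mathbb{P}^1$-type towers.
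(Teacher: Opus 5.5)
\textbf{There is a genuine gap: Step 3 carries the whole content of the theorem and is not carried out.} Steps 1 and 2 give only the easy half: the two composites agree away from $p=p'$, so their difference is $\delta\cdot X$ for some correspondence $X$. That says nothing about $X$.

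Your algebraic route has two problems. It presupposes the iterated-commutator formula of Remark~\ref{remark: a's as iterated commutators}, which the paper proves only for $k=n$. It then defers exactly the point at issue: whether $[a_j,a_k]=0$ follows formally from the relations of Theorem~\ref{Yangian relations satisfied} once the diagonal classes on the different factors of $C\times C$ (and $\delta^{2}=-K_C\,\delta$) are tracked. The paper notes that replacing $\mathcal{G}_k$ by another $\mathcal{G}_l$ destroys commutativity, so such a cancellation cannot be taken for granted.

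Your geometric alternative rests on the wrong mechanism. Pulled back to a common space, the two Chern-class integrands genuinely differ, and for $j\neq k$ there is no involution exchanging the two intermediate steps. The locus over $p=p'$ is also not a projective bundle over a common base. Where a length-two quotient $E_i/G_i$ is $\mathcal{O}_p/\mathfrak{m}_p^{2}$ the intermediate sheaf is unique; where it is $\mathbb{C}_p^{\oplus 2}$ it moves in a $\mathbb{P}^1$.

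\textbf{The paper's proof rests on two ideas absent from your proposal.}

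First, it never treats a general pair $(j,k)$ geometrically. It proves only $[a_n^{(t)},a_k^{(s)}]=0$ (Proposition~\ref{Proposition: an,ak commute}), exploiting that $a_n$ alters only $E_n$. The two composites then live on $\mathcal{X}^{k,n}_{\vec d}$ (smooth) and $\mathcal{W}^{k,n}_{\vec d}$ (l.c.i.\ of expected dimension). Both are dominated birationally, via $\eta$ and $\theta$, by the smooth roof $\mathcal{U}^{k,n}_{\vec d}$, itself a space of type $\mathcal{Z}$. For $k<n$ the bundle attached to $a_k$ is the same on both sides. On the roof, the exact sequence $0\to\mathcal{G}_2\to\tilde{\mathcal{G}}_1\to\tilde{\mathscr{L}}_2\otimes\mathcal{O}_{\text{Exp}}\to 0$ shows that $c(\mathcal{G}_2,z)-c(\tilde{\mathcal{G}}_1,z)$ is $[\text{Exp}]$ times a class pulled back from $\mathcal{X}^{k,n}_{\vec d}$. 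Here $\text{Exp}=\Delta_U-\text{Diag}$ is the exceptional divisor of $\eta$, a $\mathbb{P}^1$-bundle over the codimension-two locus $\mathfrak{D}$. Since $\eta_{*}[\text{Exp}]=0$, the difference vanishes. For $k=n$ a second such sequence is used, together with $\theta_{*}[\text{Exp}]=0$. This is what kills your $X$.

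Second, the general case follows by descending induction on $l$. On $\mathcal{H}_l\cong H^{*}(F^{l}\text{Quot}(V))$ the operator $a_l$ plays the role of the top operator, so $a_l$ and $a_k$ ($k\le l$) commute there. Proposition~\ref{Span} then shows every class is a combination of $a_{k_m}\cdots a_{k_1}w$ with $k_i>l$ and $w\in\mathcal{H}_l$. That proposition relies on tautological generation and on the formula expressing $c_k((\mathcal{E}_{l-1}-\mathcal{E}_l)\otimes\omega_C^{-1})$ through $a_l b_l$. By induction, $a_l$ and $a_k$ commute past the $a_{k_i}$.

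Without this reduction you would need expected dimension, absence of excess, and a resolution with a contracted exceptional divisor for composites in which both operators modify several sheaves at once. Your Step 1 only `expects' these, and the paper deliberately avoids them.
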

\noindent In particular, this implies immediately that the $a_{k}^{(v)}(\gamma)$ commute with each other. This commutativity will be one of the major steps in the the proof of:

\begin{thm}\label{basis of cohomology intro}
    The set of elements
\[
\mathcal{B}:=\left\{a_{k_l}^{(v_l)}(\gamma_{i_l})...a_{k_2}^{(v_2)}(\gamma_{i_2})a_{k_1}^{(v_1)}(\gamma_{i_1})|0\rangle\right\}_{l\geq 0}
\]
with $k_{i}\in\{1,...,n\}$ and $v_{j}\in\{0,...,r-1\}$, forms a basis of  $H^{*}(F^{n}Quot(V))$.
\newline
\end{thm}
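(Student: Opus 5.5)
We prove Theorem \ref{basis of cohomology intro} by showing that $\mathcal{B}$ spans and that its size in each degree matches the Betti numbers of $F^n\text{Quot}_{\vec{d}}(V)$.

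\textbf{Counting.} First we compute the Poincaré polynomial of $F^n\text{Quot}_{\vec{d}}(V)$. Let $T=(\mathbb{C}^*)^r$ act on $V$ after degenerating $V$ to a split bundle $L_1\oplus\dots\oplus L_r$; this leaves the Betti numbers unchanged, by smoothness and a deformation argument. The fixed loci are products over $t=1,\dots,r$ of nested Hilbert schemes of points on $C$ for $L_t$. On a curve these are products of symmetric powers $C^{(m_1)}\times\dots\times C^{(m_n)}$, where the increments $m_i$ are the lengths of $E_{i-1}/E_{i}$ in the $t$-th summand. A Białynicki-Birula decomposition then gives
\[
\sum_{\vec{d}} P(F^n\text{Quot}_{\vec{d}}(V))\,q^{\vec{d}}
\]
as a product over $r$ colours and $n$ "levels" of generating functions of symmetric powers of $C$, with degree shifts. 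On the other side, $\mathcal{B}$ is indexed by multisets of triples $(k,v,\gamma_i)$, where $a_k^{(v)}$ raises $(d_k,\dots,d_n)$ by one. Because of Theorem \ref{a's commute intro}, the operators $a_k^{(v)}(\gamma)$ (super)commute, so ordered words reduce to symmetric/exterior monomials. Their generating function is $\prod_{k,v}$ of the symmetric-power generating function of $H^*(C)$ in the variable $q_k\cdots q_n$, with a cohomological shift given by the degree of $c_v(\mathcal{G})$ and the codimension of $\mathcal{Z}$. The degrees of $\mathcal{Z}^{k,n}_{\vec{d}}$ from Proposition \ref{proposition: smoothness and dimension of moduli space Z} pin down these shifts. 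We then check that the two generating functions agree: the colour $t$ and the index $k$ correspond to $v$ and to the step where the fixed-point flag jumps.

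\textbf{Spanning.} It remains to show that $\mathcal{B}$ spans, and for this we use a triangularity argument with respect to the torus-fixed/stratification basis. Concretely, we filter $H^*(F^n\text{Quot}_{\vec{d}}(V))$ by the Białynicki-Birula cells, or alternatively by a Leray/support filtration for the map $F^n\text{Quot}_{\vec{d}}\to \prod C^{(d_i-d_{i-1})}$ remembering the supports of the successive quotients. We then show that the leading term of $a_{k_l}^{(v_l)}(\gamma_{i_l})\cdots|0\rangle$ is the class of the closure of the corresponding cell, up to nonzero scalar, twisted by $\gamma$'s.

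The key local computation is on the open locus where all points are distinct. There $\mathcal{Z}^{k,n}$ looks locally like adding a point $p$ and choosing a line in the fibre of $E_k$ at $p$ with compatible lines for $E_{k+1},\dots,E_n$. Over distinct points, $q_+$ is birational onto its image, and $c_v(\mathcal{G})$ restricted to the fibre $\mathbb{P}^{r-1}$ produces the Schubert-type classes $h^{r-1-v}$ up to lower terms. Along the way we may reduce $a_n$ to the product $e_n m_n$ via equation \ref{equality in cohomology:a=em}, and handle general $k$ by the iterated-commutator description in Remark \ref{remark: a's as iterated commutators}.

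Combining the two steps, the elements of $\mathcal{B}$ in each $H^*(F^n\text{Quot}_{\vec{d}})$ are at least as numerous as the Betti number and span. Since commuting relations identify reorderings, we work with the reduced (unordered) set, and it is therefore a basis. Linear independence can alternatively be obtained from a nondegenerate pairing using the adjoint operators built from $f$'s together with Theorem \ref{Yangian relations satisfied}, if the triangularity is hard to make precise.

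\textbf{Main obstacle.} We expect the triangularity step to be hardest: controlling the boundary contributions where support points collide, so that the leading term of an iterated product is genuinely the expected cell class. The first attempt will be to induct on $d_n$, using a localisation to a fixed point $p\in C$ (the punctual part) and then a factorization over distinct points.
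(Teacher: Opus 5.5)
Your overall architecture is logically sound: show that $\mathcal{B}$ spans, and show that in each $H^*(F^n\text{Quot}_{\vec d}(V))$ the number of unordered (super)monomials equals the Betti number. The counting half is a known computation (a Bia\l ynicki-Birula decomposition after degenerating $V$ to a split bundle, or the motivic computation of \cite{MoRi} cited in the paper), although you only sketch the index bookkeeping. What you need from the count is the equality, i.e.\ that there are \emph{at most} as many unordered monomials as the Betti number. ``At least as numerous'' is the inequality that spanning already gives for free.

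The genuine gap is the spanning step. It carries all of the content, and the proposal does not actually supply it. You assert that, for a Bia\l ynicki-Birula or support filtration, the leading term of $a_{k_l}^{(v_l)}(\gamma_{i_l})\cdots a_{k_1}^{(v_1)}(\gamma_{i_1})|0\rangle$ is a cell-closure class twisted by the $\gamma$'s. But you give no mechanism controlling how the correspondences $\mathcal{Z}^{k,n}_{\vec d}$ meet the attracting cells. The collision of support points, which you yourself flag as the main obstacle, is exactly where such a triangularity would have to be proved. Even the local picture needs adjusting: $q_+\colon\mathcal{Z}^{k,n}_{\vec d}\to F^n\text{Quot}_{\vec d_{k,n}}(V)$ is generically finite of degree $d_k-d_{k-1}+1$, not birational. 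As written, nothing excludes that the $a$-monomials span a proper subspace.

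The paper makes spanning tractable without any cell decomposition, using two ingredients you are missing.
\begin{itemize}
\item First, $H^*(F^n\text{Quot}_{\vec d}(V))$ is generated as a ring by the tautological classes $\rho_*(c_k(\mathcal{E}_i)\cdot\pi^*\gamma)$ (Proposition~\ref{Tautological generation}). One inducts on $n$ through the smooth map $\varepsilon$: the class of the diagonal in $\mathfrak{P}_{\vec d}$ is the top Chern class of $\rho_*\mathcal{H}om(\mathcal{E}_n,\mathscr{F}'_n)$, whose K\"unneth components are tautological by Grothendieck--Riemann--Roch.
\item Second, multiplication by tautological classes is rewritten in terms of the operators themselves. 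On $\mathcal{H}_j$, multiplication by $c_k((\mathcal{E}_{j-1}-\mathcal{E}_j)\otimes\omega_C^{-1})$ equals $\sum_i(-1)^{i-k}[a_j^{(i)}b_j^{(r+k-i-2)}]_{|_{\Delta}}$; this comes from the regular section cutting out the diagonal copy of $F^{n+1}\text{Quot}$ in $\mathfrak{Z}_{\vec d}$. Also, the fundamental class of $F^n\text{Quot}_{\vec d_{j+1,n}}(V)$ is $\frac{1}{d_{j+1}-d_j+1}$ times $a_{j+1}^{(0)}(1)$ applied to that of $F^n\text{Quot}_{\vec d}(V)$.
\end{itemize}
An induction on $d_{j+1}-d_j$ along the filtration $\mathcal{H}_0\subseteq\cdots\subseteq\mathcal{H}_n$ then shows that the words with $k_l\geq\cdots\geq k_1$ span, because each lowering operator $b_{j+1}$ lands in a piece already covered by induction.

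With spanning in hand, your dimension count would be a legitimate substitute for the paper's independence argument. The paper instead pairs against products of lowering operators $b_k$, using the $[f_n,a_n^{(i)}]$ relations (Proposition~\ref{f,a commutation proposition}) and nondegeneracy of the pairing on $H^*(C^l)$. This is close to your suggested fallback, except that for $k<n$ you must use $b_k$ rather than $f_k$, because $a_k$ raises $d_k,\dots,d_n$ simultaneously.
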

\noindent As a result of Theorems \ref{basis of cohomology intro} and \ref{a's commute intro}, we obtain a precise description of the additive structure of the singular cohomology of $F^{n}\text{Quot}(V)$, which we can succinctly describe as the following expression:
 
\begin{cor} \label{description as graded vec space}
    We have an isomorphism of graded vector spaces :
    \begin{equation}
        H^{*}(F^{n}\text{Quot}(V))\cong \text{Sym}\left(\bigoplus\limits^{i=1,...,2g+2}_{\substack{{k=1,...,n} \\ {v=0,...,r-1}}}\mathbb{Q}\cdot a_{k}^{(v)}(\gamma_{i})\right)|0\rangle.
    \end{equation}
\end{cor}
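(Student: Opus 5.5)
The corollary follows directly from Theorems \ref{a's commute intro} and \ref{basis of cohomology intro}. I would take $W$ to be the $\mathbb{Q}$-span of the formal symbols $a_{k}^{(v)}(\gamma_i)$ for $k\in\{1,\dots,n\}$, $v\in\{0,\dots,r-1\}$ and $i\in\{1,\dots,2g+2\}$, graded as described below. Then define a linear map
\[
\Phi:\text{Sym}(W)\rightarrow H^{*}(F^{n}\text{Quot}(V))
\]
that sends a monomial to the corresponding product of operators applied to $|0\rangle$.

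\emph{Well-definedness.} Composing the identity of Theorem \ref{a's commute intro} with $\cdot\,\pi^{*}(\gamma\boxtimes\gamma')$ followed by $\rho_{*}$ (the Künneth argument mentioned after that theorem) shows that the endomorphisms $a_{j}^{(u)}(\gamma)$ and $a_k^{(v)}(\gamma')$ supercommute. The sign is the Koszul sign coming from the parity of $\gamma,\gamma'$. So $\text{Sym}$ must be read in the super sense: symmetric in even classes and exterior in the odd classes $\gamma_i\in H^{1}(C)$. With that reading, the assignment from the tensor algebra factors through $\text{Sym}(W)$.

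\emph{Bijectivity.} By Theorem \ref{basis of cohomology intro}, the images of monomials form the basis $\mathcal{B}$. One point needs care: $\mathcal{B}$ should be read as indexed by unordered (super-)multisets of labels $(k_j,v_j,i_j)$, which is harmless precisely because of the commutativity above. Then $\Phi$ sends the standard monomial basis of $\text{Sym}(W)$ bijectively onto $\mathcal{B}$, so $\Phi$ is a linear isomorphism.

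\emph{Grading.} It remains to check that $\Phi$ respects degrees, with $a_k^{(v)}(\gamma)$ assigned the degree by which it shifts cohomological degree. The shift is computed from the definition \eqref{Operators a's definition}. Pulling back along $q_-$ preserves degree. Multiplying by $c_v(\mathcal{G}^{k,n}_{\vec d})$ adds $2v$. Multiplying by $\pi^*\gamma$ adds $\deg\gamma$. Pushing forward along $q_+\times q_C$ and then $\rho$ adds twice the relative codimension, namely
\[
2\big(\dim F^n\text{Quot}_{\vec d_{k,n}} - \dim\mathcal{Z}^{k,n}_{\vec d}\big)+2\cdot 0 - 2 = 2(r(d_n+1)-r(d_n+1)) - 2 +\dots
\]
More carefully, the composite $\rho_*\circ(q_+\times q_C)_*=(q_+)_*$ together with the factor $\pi^*\gamma$ gives total degree $\deg\gamma+2v+2(\dim F^n\text{Quot}_{\vec d_{k,n}}-\dim \mathcal{Z})$. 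Proposition \ref{proposition: smoothness and dimension of moduli space Z} gives both dimensions as $r(d_n+1)$, so the last term vanishes. The shift is therefore $\deg\gamma+2v$, independent of $\vec d$. Hence $\Phi$ is homogeneous once $W$ carries this grading.

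No step is a real obstacle. The only subtleties are the super-sign convention, which is needed for odd $\gamma$ for Theorem \ref{a's commute intro} to yield commutation, and interpreting $\mathcal{B}$ as counting each monomial only once.
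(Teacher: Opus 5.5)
Your proposal is correct and follows the paper's own route: the paper reads this corollary off directly from Theorems \ref{a's commute intro} and \ref{basis of cohomology intro}. Your check that $a_k^{(v)}(\gamma)$ raises degree by $\deg\gamma+2v$ supplies the grading correctly, since $\dim\mathcal{Z}^{k,n}_{\vec d}=\dim F^{n}\text{Quot}_{\vec d_{k,n}}(V)=r(d_n+1)$; do delete the aborted first display with its spurious $-2$. Your point that $\text{Sym}$ must be read in the super sense (exterior on the $2g$ odd classes in $H^1(C)$), with $\mathcal{B}$ indexed by super-multisets rather than ordered words, is a genuine precision the paper leaves implicit, and the numerator $(1+t_k\cdots t_n z^{2i+1})^{2g}$ in the paper's Poincar\'e series confirms it.
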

\noindent Note that the vector space $H^{*}(F^n\text{Quot}(V))$ has a natural $\mathbb{N}-$ grading which is the cohomological grading and a $\mathbb{N}^n-$ grading given by the $n-$tuples $\vec{d}$.\\\\
Corollary \ref{description as graded vec space} allows us to easily de-categorify and recover the Poincaré polynomial of $F^{n}\text{Quot}(V)$, which can also be recovered from a motivic computation in \cite{MoRi}.

\begin{cor}
    \[
        \sum\limits_{{\vec{d}\in\mathbb{N}^n}}\sum\limits_{k=0}^{2rd_{n}} \text{dim}_{\mathbb{Q}}\,H^{k}(F^{n}\text{Quot}_{\vec{d}}\,(V))t^{\vec{d}}z^{k}
    \]
    \[
  =  \prod_{k=1}^{n}\prod_{i=0}^{r-1}\frac{\left(1+t_{k}t_{k+1}...t_{n}z^{2i+1}\right)^{2g}}{\left(1-t_{k}t_{k+1}...t_{n}z^{2i}\right)\left(1-t_{k}t_{k+1}...t_{n}z^{2i+2}\right)}.
    \]
    \newline
    Above, we use the multi-index notation $t^{\vec{d}}=t_{1}^{d_{1}}...t_{n}^{d_{n}}.$

\end{cor}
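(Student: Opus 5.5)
The Poincaré series should follow by taking the bigraded dimension of both sides of Corollary \ref{description as graded vec space}. The plan is to compute the bidegree (in $\vec d$ and in cohomological degree) of each generator $a_k^{(v)}(\gamma_i)$, and then read off the generating function of the free supercommutative algebra they span.

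\emph{Step 1: the $\vec d$-degree.} By construction $a_k^{(v)}$ maps $H^*(F^n\text{Quot}_{\vec d}(V))$ to $H^*(F^n\text{Quot}_{\vec d_{k,n}}(V))$. Hence it raises $d_k,\dots,d_n$ by one and leaves $d_1,\dots,d_{k-1}$ unchanged, so it contributes the monomial $t_kt_{k+1}\cdots t_n$.

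\emph{Step 2: the cohomological degree.} Write $\gamma_i$ with $\deg\gamma_i=2\epsilon_i$. The operator $a_k^{(v)}(\gamma)$ consists of the following pieces:
\begin{enumerate}
\item pullback by $q_-$, which has degree $0$;
\item multiplication by $c_v(\mathcal G)$, of degree $2v$;
\item multiplication by $q_C^*\gamma$;
\item pushforward along $q_+$.
\end{enumerate}
Using Proposition \ref{proposition: smoothness and dimension of moduli space Z}, the pushforward along $q_+$ shifts degree by $2(\dim F^n\text{Quot}_{\vec d_{k,n}}-\dim\mathcal Z^{k,n}_{\vec d})=2(r(d_n+1)-r(d_n+1))=0$. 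So the total degree is $2v+\deg\gamma$.

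Equivalently, push forward to $F^n\text{Quot}\times C$ first and then apply $\rho_*(\pi^*\gamma\cdot-)$. The first map has degree $2v+2$ because of the relative dimension, and the second has degree $\deg\gamma-2$; the total is again $2v+\deg\gamma$.

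With $\gamma_i$ running over a basis of $H^*(C)$ (one class in degree $0$, $2g$ in degree $1$, one in degree $2$), the generators for fixed $k,v$ have degrees $2v$ (even, once), $2v+1$ (odd, $2g$ times) and $2v+2$ (even, once).

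\emph{Step 3: Koszul signs and Sym.} Corollary \ref{description as graded vec space} should be read in the super sense. Operators coloured by odd classes have odd degree, so they anticommute, consistent with the commutativity of Theorem \ref{a's commute intro} after the sign from permuting odd classes in $H^*(C\times C)$. Thus Sym is an exterior algebra on the odd generators and a polynomial algebra on the even ones; this is also forced by the fact that $\mathcal B$ is a basis (Theorem \ref{basis of cohomology intro}). The bigraded Hilbert series of a free supercommutative algebra is a product over generators: $(1+x)$ for each odd generator of weight $x$, and $(1-x)^{-1}$ for each even generator. Applying this with the weights from Steps 1 and 2 and multiplying over $k$ and $v=i\in\{0,\dots,r-1\}$ gives exactly the stated product.

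\emph{Main obstacle.} The only delicate point is the parity/sign bookkeeping: one must check that the ordering convention in Theorem \ref{a's commute intro} yields graded-commutativity of the $a_k^{(v)}(\gamma)$, rather than plain commutativity. This is needed so that odd generators square to zero and contribute $(1+\cdot)$ factors. Beyond that, the result is a direct count.
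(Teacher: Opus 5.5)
Your proposal is correct and is the de-categorification the paper intends. Each $a_k^{(v)}(\gamma_i)$ is bihomogeneous of weight $t_kt_{k+1}\cdots t_n$ and cohomological degree $2v+\deg\gamma_i$ (your dimension count via Proposition~\ref{proposition: smoothness and dimension of moduli space Z} is right), so the stated product is the Hilbert series of the Sym description in Corollary~\ref{description as graded vec space}.

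Your insistence on reading Sym in the super sense is correct and necessary, and the paper leaves it implicit. The justification, however, is Theorem~\ref{a's commute intro} together with the Koszul sign of the swap on $C\times C$, which gives $a_k^{(v)}(\gamma)^2=0$ for odd $\gamma$. It is not the basis statement: the set $\mathfrak{A}$ in \eqref{linearly independent subset definition}, as literally written, allows repeated odd factors and needs strict inequalities for those.
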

\subsection{Related Works} 
\begin{enumerate}
    \item PBW bases for Yangians: In the work of Tsymbaliuk (for instance see Example 6.13 in \cite{Tsym1}), iterated commutators of the $e_{k}^{(v)}$ are used to define PBW bases for Yangians of $\mathfrak{sl}_{n}$. Not only do these elements share a resemblance with the algebraic description of the operators $a_{k}^{(v)}$ (cf. Remark \ref{remark: a's as iterated commutators}), one can show that the operators on $H^{*}(F^{n}\text{Quot}(V))$ associated to Tsymbaluik's PBW basis elements are also given by correspondences supported on the moduli space $\mathcal{Z}_{\vec{d}}^{i,n}$ \eqref{double nested quot}. However, these operators do not commute and are hence harder to work with for the purposes of this article.
    \item Skew-nested Quot schemes: The space $\mathcal{Z}_{\vec{d}}^{i,n}$, as well as several moduli spaces that we will define in Section $2$ (e.g. \ref{The moduli space: U}, \ref{The moduli space: W}, \ref{The moduli space: X}, \ref{Moduli space: e_i-1e_i} etc.) are examples of Skew-nested Quot schemes. Skew-nested Quot schemes are moduli spaces that parameterise ``flags of flags" of rank $r$ sub-sheaves of $V$. When rk $V=1$, skew-nested Hilbert schemes have been used by Sergej Monavari in \cite{MoRefinedDT} and \cite{Mo1} (see also \cite{DoubleNestedHilbPaper}) to study the enumerative geometry of local curves. Some of the skew-nested Quot schemes studied in this article, e.g. the space \ref{The moduli space: U}, are also reminiscent of ``moduli spaces of quadruples", that were crucially used in \cite{Neg1} (see also \cite{MaNe2} and \cite{YuZhao}). 
    \item The Cohomological Hall Algebra (CoHA) of a Curve: In an upcoming work \cite{Jindal-Lim}, Shivang Jindal and Woonam Lim show that the positive half of the action in \cite{MaNe1} can be thought of as an action of the CoHA of $0-$dimensional sheaves on a curve. The local version of this CoHA, i.e. when $C=\mathbb{A}^{1}$, is given by the CoHA associated to the stack of representations of the Jordan quiver:
\[
\begin{tikzcd}[cramped]
	\bullet
	\arrow[from=1-1, to=1-1, loop, in=55, out=125, distance=10mm]
\end{tikzcd}
.
\]
    We expect that the positive half of the action in the present article, i.e. the action of the sub-algebra of $Y_{\hbar}^{r}({\mathfrak{sl}_{n+1}})$ that is generated by the $e_{k}^{(v)}$'s, comes naturally from the CoHA associated to the stack of length-$n$ sequences of morphisms of torsion sheaves on $C$. The local version of this CoHA is the CoHA associated to the stack of representations of the following ``unframed hand-saw quiver" \cite{Naka-Handsaw}, \cite{FinkelbergRybnikovQuantization}:
\[\begin{tikzcd}[cramped]
	{v_{1}} & {...} & {v_{i}} & {v_{i+1}} & {...} & {v_{n}}
	\arrow["{b_{1}}", from=1-1, to=1-1, loop, in=55, out=125, distance=10mm]
	\arrow["{a_{1}}"', from=1-1, to=1-2]
	\arrow[from=1-2, to=1-3]
	\arrow["{b_{i}}", from=1-3, to=1-3, loop, in=55, out=125, distance=10mm]
	\arrow["{a_{i}}"', from=1-3, to=1-4]
	\arrow["{b_{i+1}}", from=1-4, to=1-4, loop, in=55, out=125, distance=10mm]
	\arrow[from=1-4, to=1-5]
	\arrow["{a_{n-1}}"', from=1-5, to=1-6]
	\arrow["{b_{n}}", from=1-6, to=1-6, loop, in=55, out=125, distance=10mm]
\end{tikzcd}\]
with the relations $a_{i}\circ {b_{i}}=b_{i+1}\circ a_{i}$ for all $i\in\{1,...,n-1\}$.
By the work of Davison on dimensional reduction (see the appendix of \cite{Davison-Tripled-COHA}), this CoHA is isomorphic to the critical CoHA associated to the tripled quiver with standard potential of the $A_{n}-$quiver. Which again, by the dimensional reduction results of $\textit{loc. cit.}$, is isomorphic to the pre-projective CoHA of the $A_{n}-$quiver, that is well known to be the positive half of the Yangian of $\mathfrak{sl}_{n+1}$ (e.g. \cite{Var},\cite{Yang-Zhao},\cite{SchVass}, \cite{Shivangthesis} and \cite{Andrei-wheel} in K-theory).
    \item Quasimaps from $\mathbb{P}^{1}$ to flag varieties: The geometric representation theory of moduli of flags of sheaves on $C=\mathbb{P}^{1}$, is well-studied in the related case of Quasimap spaces, see e.g.\cite{Finkelberg-Kuznetsov}, \cite{FFFR}, \cite{Naka-Handsaw}, \cite{Braverman-Finkelberg}, \cite{Andrei-firstpaper} and \cite{che-shen}. In particular, Yangian actions on the (equivariant) cohomology of these spaces have been constructed in \cite{BFFR}, \cite{FFNR} and \cite{Kamnitzer-BFN-Springer}. See also \cite{Chen}, \cite{ciocan-fontanine}, \cite{Kim-Bumsig}, \cite{Stromme} and \cite{sinha-ontani2025intersectiontheoryhyperquotschemes}, where the geometry of these spaces is studied. 
\end{enumerate}

\subsection{Organisation of the article}
In Section $2$, we will introduce several moduli spaces. In the first few subsections we will discuss relative versions of spaces studied in \cite{MaNe1} and then we will introduce many examples of skew-nested Quot schemes that are relevant to our study. We discuss the geometric properties of these spaces, such as smoothness, irreducibility, l.c.i.-ness, reducedness, dimension counts etc. We will also define sheaves/divisors on these spaces and record interesting identities in cohomology between their Chern classes. In the last subsection of section $2$, we discuss the formalism of correspondences and state standard results that would help us compute compositions of geometrically defined linear maps on cohomology. The results of this section will enable us to prove our main theorems in Sections $3$ and $4$.\\\\
The goal of Section $3$ would be to prove Theorem \ref{Yangian relations satisfied}, that the operators \ref{e,f,m} satisfy the relations \eqref{mm commutation action} - \eqref{f Serre relations action} and finally, in Section $4$, we will prove Theorems \ref{a's commute intro} and \ref{basis of cohomology intro}.\\\\
In order for the reader to reach the main results of the article efficiently and quickly, we recommend that the reader first familiarises themselves with Subsection \ref{subsection:formalism of correspondences}, on the formalism of correspondences. After that, one may start reading Sections $3$ and $4$. At the start of every technical subsection of these sections, we give a precise reference to the parts of Section $2$ that we will use in that particular subsection.  
\subsection{Notations and Conventions}
In this article, we will consider several moduli spaces that parameterise certain diagrams of inclusions of rank $r$ sub-sheaves of $V$. Whenever there is an instance of a labeled inclusion:
\[\begin{tikzcd}[cramped]
	{\textbf{Sheaf}_{1}} & {\textbf{Sheaf}_{2}}
	\arrow["x", hook, from=1-1, to=1-2]
\end{tikzcd}\]
in a such a diagram, we mean that $$\textbf{Sheaf}_{2}/\textbf{Sheaf}_{1}\cong \mathbb{C}_{x}.$$ That is, $\textbf{Sheaf}_{2}/\textbf{Sheaf}_{1}$ is a length 1 sheaf on $C$ supported on some point $x\in C$ (see Diagram \ref{double nested quot} for an example).
All moduli spaces considered in this article are projective schemes over $\mathbb{C}$ since they can be written as fibre products of morphisms of Hyperquot Schemes, which also explains the representability of their moduli functors. Given a morphism of schemes $f:X\rightarrow Y$ and a sheaf $F$ on $Y$, we will often denote $f^{*}(F)$ by $F$, when no confusion is likely to occur, for the sake of legibility. The singular cohomology groups in this article are always defined with $\mathbb{Q}-$coefficients.
\subsection{Acknowledgements} First and foremost, I would like to sincerely express my gratitude to Andrei Neguț, this article would not have been possible without his insight, perspective and support. I would also like to thank Alina Marian for her encouragement and useful suggestions, which I really appreciate. This article is intellectually indebted, in large part, to the article \cite{MaNe1} of Marian and Neguț.\\\\
Special thanks go to Jefferson Baudin and Sergej Monavari, several discussions with them significantly improved my understanding of the geometric aspects of this article.\\\\
I would also like to thank Camilla Felisetti, Niccolò Giacomini, Shivang Jindal, Joel Kamnitzer, Christina Kapatsori, Enrico Lampetti, Elsa Maneval, Anton Mellit, Luca Morstabilini, Aliaksandra Novik, Tudor Pădurariu, Sebastian Schlegel Mejia, Shubham Sinha, Dimitri Wyss and Yu Zhao for interesting discussions, questions and suggestions.\\
\section{Geometry of Some Nested and Skew-nested Quot Schemes.}

\subsection{The Hyperquot scheme ${\text{F}^{\text{n+1}}\text{Quot}_{{\vec{\text{d}}''_{k}}}\,(\text{V})}$}\label{subsection on moduli space: Fn+1 Quot}
Let us recall from the introduction that given an $n-$tuple $\vec{d}=(d_{1},...,d_{n})$, we can define the $n-$tuple\\ $\vec{d}'_{k}:=(d_1,...,d_{k-1},d_{k}+1,d_{k+1},...,d_n)$ and the $(n+1)-$tuple $\vec{d}''_{k}:=\\(d_1,...,d_{k-1},d_{k},d_{k}+1,d_{k+1},...,d_n)$. They give rise to the corresponding Hyperquot schemes $F^n\text{Quot}_{\vec{d}}\,(V)$,  $F^n\text{Quot}_{\vec{d}'_{k}}\,(V)$ and  $F^{n+1}\text{Quot}_{\vec{d}''_{k}}\,(V)$, which fit into the diagram \ref{correspondence diagram defining e,f}, involving morphisms $\pi_{\pm}$ and $\pi_{C}$. To ensure the non-emptiness of these spaces, we will assume that $\vec{d}$ is non-decreasing and $d_{k+1}-d_{k}>0$.\\\\
In this subsection, we will construct some sheaves on $F^{n+1}\text{Quot}_{\vec{d}''_{k}}\,(V)$, which play a key role later on, and note some useful facts about them.
Let 
\[
\mathcal{E}_{n}\subseteq ...\subseteq\mathcal{E}_{k+1}\subseteq \mathcal{F}_{k}\subseteq \mathcal{E}_{k}\subseteq...\subseteq\mathcal{E}_1\subseteq \mathcal{E}_{0}=\pi^{*}(V)
\]
be the universal flag on $F^{n+1}\text{Quot}_{\vec{d}''_{k}}\,(V)\times C$ and consider the embedding 
\[
\Delta:=Id\times\pi_{C}:F^{n+1}\text{Quot}_{\vec{d}''_{k}}\,(V)\hookrightarrow F^{n+1}\text{Quot}_{\vec{d}''_{k}}\,(V)\times C.
\]
Let 
$\mathscr{F}_{k}:=  \mathcal{E}_{k}/ \mathcal{F}_{k}$,
which is flat over $F^{n+1}\text{Quot}_{\vec{d}''_{k}}\,(V)$, and observe that 
\[
\mathscr{L}_{k}:={\mathscr{F}_{k}}_{_{|_{\Delta}}}
\] 
is a line bundle on $F^{n+1}\text{Quot}_{\vec{d}''_{k}}\,(V)$. In fact, the bundle $\mathscr{L}_{k}$ is precisely the one that was used to define the operators \ref{definition of e operator} and \ref{definition of f operator} in the introduction. By a harmless abuse of notation, we denote the pull-backs of $\mathscr{L}_{k}$ and that of it's first Chern class $\lambda_{k}$ via the natural projection map $F^{n+1}\text{Quot}_{\vec{d}''_{k}}\,(V)\times C\rightarrow F^{n+1}\text{Quot}_{\vec{d}''_{k}}\,(V)$ by the same notation. \\\\
We record the following short exact sequence on $F^{n+1}\text{Quot}_{\vec{d}''_{k}}\,(V)\times C$:
\begin{equation}\label{short exact seq on Fn+1Quot x C}
    0\rightarrow\mathcal{F}_{k}\rightarrow\mathcal{E}_{k}\rightarrow \mathscr{L}_{k}\otimes\mathcal{O}_{\Delta}\rightarrow 0,
\end{equation}
which yields the following identity in $H^{*}( F^{n+1}\text{Quot}_{\vec{d}''_{k}}\,(V)\times C)[z]$:
\begin{equation}\label{short exact seq cohomology identity on Fn+1Quot x C}
    c(\mathcal{F}_{k},z)=c(\mathcal{E}_{k},z)\cdot \frac{z-\lambda_{k}+\delta}{z-\lambda_{k}}.
\end{equation}
Where $\delta:=[\Delta]$.
\\\\
If we restrict \ref{short exact seq on Fn+1Quot x C} to $\Delta$, we obtain an exact sequence on $F^{n+1}\text{Quot}_{\vec{d}''_{k}}(V)$ :
\[
{\mathcal{F}_{k}}_{|_{\Delta}}\rightarrow{\mathcal{E}_{k}}_{|_{\Delta}}\rightarrow\mathscr{L}_{k}\rightarrow 0.
\]
Let us define 
\begin{equation}\label{definition of r-1 rk bundle G}
    \mathcal{G}_{k}:=\text{Ker }({\mathcal{E}_{k}}_{|_{\Delta}}\rightarrow\mathscr{L}_{k})=\text{Im }({\mathcal{F}_{k}}_{|_{\Delta}}\rightarrow{\mathcal{E}_{k}}_{|_{\Delta}}),
\end{equation}
and note that $\mathcal{G}_{k}$ is a rank $r-1$ vector bundle on $F^{n+1}\text{Quot}_{\vec{d}''_{k}}(V)$.

\subsection{Virtual projective bundles}\label{subsection: projective bundles}
In this subsection, we realise $F^{n+1}\text{Quot}_{\vec{d}''_{k}}(V)$ as a projective bundle of a two step complex of locally free sheaves, in two different ways and state a useful formula in cohomology. We follow the exposition in \cite{MaNe1}.\\\\
Given a locally free sheaf $A$ on $X$, recall that the projective bundle of one-dimensional quotients 
\[
\pi:\mathbb{P}_{X}(A)\rightarrow X
\]
carries a tautological line bundle $\mathcal{O}(1)$, along with a universal quotient morphism:
\[
\pi^{*}(A)\rightarrow \mathcal{O}(1).
\]
\begin{defn}\label{virtual projective bundle}
    Let $\phi:B\rightarrow A$ be a map of locally free sheaves on a variety $X$. We define the projectivisation of $\phi$, denoted by $\pi{'}:\mathbb{P}_{X}(A-B)\rightarrow X$ to be the derived zero locus of the composition 
    \[
    \pi^{*}(B)\xrightarrow{\pi^{*}(\phi)}\pi^{*}(A)\rightarrow\mathcal{O}(1).
    \]
\end{defn}
\noindent The derived zero locus coincides with the classical scheme theoretic zero locus precisely when the dimension of the latter is the expected dimension 
\[
\text{dim}\,X+\text{rank}\,A-\text{rank}\,B-1,
\]
in which case, the morphism $\pi{'}$ is automatically l.c.i..\\\\
Assuming that $\mathbb{P}_{X}(A-B)$ is of expected dimension, let us denote $\mathcal{O}(1)$, to be the pull-back of the tautological bundle on $\mathbb{P}_{X}(A)$ to $\mathbb{P}_{X}(A-B)$. Let us define $\lambda:=c_{1}(\mathcal{O}(1))\in H^{2}(\mathbb{P}_{X}(A-B))$. We have the following useful pushforward formula in cohomology (see section 2 in \cite{MaNe1}):

\begin{equation}\label{pushforward formula projective bundle}
    \pi^{'}_{*}\left[\frac{1}{z-\lambda}\right]=\left[\frac{c(B,z)}{c(A,z)}\right]_{z<0}.
\end{equation}
\newline
Here, the total Chern class $c(-,z)$ is defined as in equation \ref{total chern class definition}.
The subscript $z<0$ means that we expand the R.H.S. of the above formula in non-positive powers of $z$ and remove all terms in non-negative powers of $z$. Note that by setting $B$ to be the $0$ sheaf, we recover the well known case of a usual projective bundle of a locally free sheaf.\\\\
Now we will express the maps $\pi_{+}\times \pi_{C}$ and $\pi_{-}\times \pi_{C}$ (cf. diagram \ref{correspondence diagram defining e,f}) as certain virtual projective bundles. We do not provide proofs of the following two propositions since they are mutatis mutandis the proofs in equation (4) and Lemma 1 in \cite{MaNe1}.
\begin{prop}\label{pi-times piC as a projective bundle}
Let
\[
\mathcal{E}_{n}\subseteq ...\subseteq\mathcal{E}_{k+1}\subseteq\mathcal{E}_{k}\subseteq...\subseteq\mathcal{E}_1\subseteq \mathcal{E}_{0}=\pi^{*}(V).
\]
be the universal flag on $F^{n}\text{Quot}_{{\vec{d}}}\,(V)$. Then, the map $$\pi_{-}\times \pi_{C}:F^{n+1}\text{Quot}_{{\vec{d}''}_{k}}\,(V)\rightarrow F^{n}\text{Quot}_{\vec{d}}\,(V)\times C$$ can be identified with the virtual projective bundle $$\mathbb{P}_{F^{n}\text{Quot}_{\vec{d}}\,(V)\times C}(\mathcal{E}_{k}-\mathcal{E}_{k+1})$$  for all $k=1,...,n$ and such that the tautological bundle $\mathscr{L}_{k}$ is identified with $\mathcal{O}(1)$. Here we make the convention that $\mathcal{E}_{n+1}=0$, therefore $F^{n+1}\text{Quot}_{{\vec{d}''}_{n}}\,(V)$ is the usual projectivisation of $\mathcal{E}_{n}$ over $F^{n}\text{Quot}_{\vec{d}}\,(V)\times C$.
\end{prop}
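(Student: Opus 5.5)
We identify $F^{n+1}\text{Quot}_{\vec{d}''_{k}}(V)$ with the derived zero locus defining $\mathbb{P}_{X}(\mathcal{E}_{k}-\mathcal{E}_{k+1})$, where $X:=F^{n}\text{Quot}_{\vec{d}}(V)\times C$, by comparing functors of points. We then check that this zero locus has the expected dimension, so that it is classical and l.c.i.

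\textbf{Step 1 (the map).} Over $X\times C$ we have the universal flag pulled back from $F^n\text{Quot}_{\vec{d}}(V)$, together with the diagonal $\Delta\subset X\times C$ built from the $C$-factor of $X$. Restricting $\mathcal{E}_{k+1}\to\mathcal{E}_k$ to $\Delta$ gives a map $\phi:\mathcal{E}_{k+1}|_{\Delta}\to\mathcal{E}_k|_{\Delta}$ of rank $r$ bundles on $X$. The composition in Definition \ref{virtual projective bundle} is then $\pi^*\mathcal{E}_{k+1}|_\Delta\to\pi^*\mathcal{E}_k|_\Delta\to\mathcal{O}(1)$.

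\textbf{Step 2 (functor of points).} A $T$-point of $\mathbb{P}_X(\mathcal{E}_k|_\Delta)$ is a point $(E_\bullet,p)\in X(T)$ together with a line bundle quotient $E_k|_{p}\twoheadrightarrow L$. Such quotients correspond bijectively to subsheaves $F_k:=\ker(E_k\to (p)_*L)$ of $E_k$ with $E_k/F_k$ flat of length one, supported on the graph of $p$.

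The condition $E_{k+1}\subseteq F_k$ says exactly that the composite $E_{k+1}\to E_k\to (p)_*L$ vanishes. Since the target is supported on the graph, this is equivalent to the vanishing of the composite after restriction to $p$, i.e.\ of $E_{k+1}|_p\to E_k|_p\to L$. This is precisely the zero locus of the section of $\mathcal{H}om(\pi^*\mathcal{E}_{k+1}|_\Delta,\mathcal{O}(1))$. Hence the scheme-theoretic zero locus represents the functor of $F^{n+1}\text{Quot}_{\vec{d}''_k}(V)$, and $\mathscr{L}_k$ is identified with $\mathcal{O}(1)$ by construction. For $k=n$ there is no condition, and we recover $\mathbb{P}(\mathcal{E}_n|_\Delta)$.

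\textbf{Step 3 (expected dimension).} The expected dimension is $\dim X+r-r-1=rd_n+1-1+\cdots$. More precisely, it equals $rd_n+1+(r-1)-r=rd_n$. This agrees with $\dim F^{n+1}\text{Quot}_{\vec{d}''_k}(V)=r\cdot(\text{last entry})$, where the last entry is $d_n$ for $k<n$ and $d_n+1$ for $k=n$. In the case $k=n$ the dimension is $r(d_n+1)=\dim X+r-1$, which is consistent with the absence of $B$.

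Since the actual zero locus is smooth of this dimension, it is of expected dimension, and so coincides with the derived zero locus. This step uses smoothness and the dimension formula for Hyperquot schemes, which are quoted in the introduction.

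\textbf{Main obstacle.} The delicate point is Step 2's equivalence between "$E_{k+1}\subseteq F_k$" and vanishing on the restriction to $p$ over a general base $T$. This needs flatness of $E_k/F_k$ and the fact that a map into $(p)_*L$ factors through restriction to the graph. We would handle this exactly as in equation (4) and Lemma 1 of \cite{MaNe1}, working locally on $T$.
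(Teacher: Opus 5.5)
Your proposal is correct and follows essentially the route the paper intends: the paper gives no proof and defers to equation (4) and Lemma 1 of \cite{MaNe1}, and that argument has two steps. First, identify the classical zero locus with $F^{n+1}\text{Quot}_{\vec{d}''_{k}}(V)$ via functors of points, using that length-one quotients of $\mathcal{E}_k$ supported on a graph $\Gamma_p$ correspond to line-bundle quotients of $\mathcal{E}_k|_{\Gamma_p}$, and that the condition $\mathcal{E}_{k+1}\subseteq\mathcal{F}_k$ becomes the vanishing of the composite. Second, for $k<n$, use that this Hyperquot scheme is smooth of pure dimension $rd_n$; this gives codimension $r$ in the smooth $\mathbb{P}_X(\mathcal{E}_k)$, so the section is regular and the derived and classical zero loci agree.

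Two cosmetic points: the universal $\mathcal{E}_i$ already live on $X=F^{n}\text{Quot}_{\vec{d}}(V)\times C$, so no extra diagonal in $X\times C$ is needed, and the expected-dimension count should simply read $(rd_n+1)+r-r-1=rd_n$.
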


\begin{prop}\label{pi+times piC as a projective bundle}
Let
\[
\mathcal{E}_{n}\subseteq ...\subseteq\mathcal{E}_{k+1}\subseteq\mathcal{F}_{k}\subseteq\mathcal{E}_{k-1}\subseteq...\subseteq\mathcal{E}_1\subseteq \mathcal{E}_{0}=\pi^{*}(V).
\]
be the universal flag on $F^{n}\text{Quot}_{{\vec{d}'}_{k}}\,(V)$. Then, the map $$\pi_{+}\times \pi_{C}:F^{n+1}\text{Quot}_{{\vec{d}''}_{k}}\,(V)\rightarrow F^{n}\text{Quot}_{{\vec{d}'}_{k}}\,(V)\times C$$ can be identified with the virtual projective bundle $$\mathbb{P}_{F^{n}\text{Quot}_{{\vec{d}'}_{k}}\,(V)\times C}\,(\mathcal{F}_{k}^{\vee}\otimes \omega_{C}-\mathcal{E}_{k-1}^{\vee}\otimes\omega_{C})$$ for all $k=1,...,n$ and such that the tautological bundle $\mathscr{L}_{k}$ is identified with $\mathcal{O}(-1)$.
\end{prop}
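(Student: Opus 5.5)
We follow the strategy of Equation (4) and Lemma 1 in \cite{MaNe1}, working relative to $F^{n}\text{Quot}_{\vec{d}'_{k}}(V)\times C$. Write $X:=F^{n}\text{Quot}_{\vec{d}'_{k}}(V)\times C$ and let $p\in C$ be the point. A point of the fibre of $\pi_{+}\times\pi_{C}$ over $(F_\bullet,p)$ is a choice of $E_k$ with
\[
F_{k}\subseteq E_{k}\subseteq E_{k-1}, \qquad E_k/F_k\cong\mathbb{C}_p .
\]
This is the same as a line in $\text{Hom}(\mathbb{C}_p, E_{k-1}/F_k)$. Equivalently, it is a one-dimensional subspace of $F_k^{\vee}$-type data at $p$ that becomes a rank one quotient of $E_{k-1}/F_{k}$ at $p$. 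So we will dualise and express the fibre as the locus of quotients $F_k^\vee\otimes\omega\to\mathcal{O}(1)$ that vanish on $E_{k-1}^\vee\otimes\omega$.

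\textbf{Step 1: identify the fibre.} Enlarging $F_k$ to $E_k$ at $p$ amounts to choosing a nonzero map $F_k|_p\to\mathbb{C}$ up to scaling, namely the extension class. Apply $\mathcal{H}om(-,\omega_C)$ and Serre duality to $0\to F_k\to E_k\to\mathbb{C}_p\to0$. This gives
\[
0\to E_k^\vee\otimes\omega\to F_k^\vee\otimes\omega\to\mathcal{E}xt^1(\mathbb{C}_p,\omega)\cong\mathbb{C}_p\to0 .
\]
Hence $E_k$ corresponds to a rank one quotient of $(F_k^\vee\otimes\omega)|_p$. The condition $E_k\subseteq E_{k-1}$ is exactly that this quotient kills the image of $E_{k-1}^\vee\otimes\omega\hookrightarrow F_k^\vee\otimes\omega$ at $p$. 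Thus, set-theoretically, the fibre is the zero locus of the composition
\[
(E_{k-1}^\vee\otimes\omega)|_{p}\to(F_k^\vee\otimes\omega)|_p\to\mathcal{O}(1)
\]
inside $\mathbb{P}((F_k^\vee\otimes\omega)|_p)$.

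\textbf{Step 2: do it in families.} We restrict the universal sheaves on $X\times C$ to the graph $\Delta$ of the $C$-coordinate, obtaining vector bundles $\mathcal{F}_k^\vee\otimes\omega_C$ and $\mathcal{E}_{k-1}^\vee\otimes\omega_C$ on $X$ with the map $\phi$ induced by the inclusion. Next we check that the functor of points of $F^{n+1}\text{Quot}_{\vec d''_k}$ over $X$ agrees with the zero-locus functor of $\pi^*\phi$ composed with the universal quotient on $\mathbb{P}_X(\mathcal{F}_k^\vee\otimes\omega)$. For this we run the duality of Step 1 over an arbitrary base $T$, using flatness of $E_k/F_k$, which is a line bundle supported on the graph. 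This also identifies the tautological line: $\mathcal{O}(1)$ is the dual of $E_k/F_k|_\Delta=\mathscr{L}_k$, up to the twist by $\omega$, which is trivialised by the $\mathcal{E}xt^1$ identification. Hence $\mathscr{L}_k\cong\mathcal{O}(-1)$.

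\textbf{Step 3: dimension.} The classical zero locus should be of expected dimension, which is what upgrades the derived locus to the classical one and makes the map l.c.i. We compute
\[
\dim X+r-r-1=rd_n+1-1=rd_n .
\]
Since $F^{n+1}\text{Quot}_{\vec d''_k}(V)$ is smooth of dimension $rd_n$ (its last index is $d_n$, and $k<n$ here), the two dimensions agree; the case $k=n$ is similar.

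The main obstacle is Step 2: making the Serre/Grothendieck duality identification precise over an arbitrary base, i.e.\ the relative statement $\mathcal{E}xt^1_{X\times C/X}(\mathcal{O}_\Delta\otimes L,\omega)\cong L^{-1}$, so that the functors genuinely coincide scheme-theoretically.
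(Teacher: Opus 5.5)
Your proposal is correct and takes essentially the same route the paper defers to (Lemma 1 of \cite{MaNe1}, mutatis mutandis). You dualize the elementary modification $F_k\subset E_k$ to realise the fibre as the zero locus of $\pi^*(\mathcal{E}_{k-1}^\vee\otimes\omega_C)\to\pi^*(\mathcal{F}_k^\vee\otimes\omega_C)\to\mathcal{O}(1)$, make this functorial via $\mathcal{E}xt^1(\mathscr{L}_k\otimes\mathcal{O}_\Delta,\omega_C)\cong\mathscr{L}_k^{-1}\otimes\mathcal{O}_\Delta$ (which gives $\mathscr{L}_k\cong\mathcal{O}(-1)$), and use that $F^{n+1}\text{Quot}_{\vec{d}''_k}(V)$ is smooth of the expected dimension to identify the classical and derived zero loci. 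One small slip in Step 1: an enlargement $E_k\supset F_k$ at $p$ corresponds to a nonzero map $F_k^\vee|_p\to\mathbb{C}$ up to scaling (equivalently a line in $F_k|_p\otimes T_pC$), not a map $F_k|_p\to\mathbb{C}$, which would describe a subsheaf of $F_k$; the dualized exact sequence you write right afterwards is the correct statement.
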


\subsection{Equations defining some Hyperquot schemes}\label{subsection on diagonal embeddings}
In this subsection, we study two instances of certain diagonal embeddings of Hyperquot schemes being cut-out by regular sections of vector bundles on bigger spaces and record some useful facts for later use. First, let us fix $\vec{d}=(d_1,...,d_{n})$ to be an $n-$tuple of non-negative integers. Let us define $\mathfrak{P}_{\vec{d}}$ and $\mathfrak{Z}_{\vec{d}}$ respectively to be the fine moduli spaces parameterising the following diagrams of inclusions of rank $r$ sub-sheaves of $V$ on $C$:

\begin{equation}\tag{$\mathfrak{P}_{\vec{d}}$}\label{Moduli space : mathfrak P}
\begin{tikzcd}[cramped]
	{E^{'}_{n}} \\
	& {E_{n-1}} & {E_{n-2}\hookrightarrow...\hookrightarrow E_{1}} & V \\
	{E_{n}}
	\arrow[hook, from=1-1, to=2-2]
	\arrow[hook, from=2-2, to=2-3]
	\arrow[hook, from=2-3, to=2-4]
	\arrow[hook, from=3-1, to=2-2]
\end{tikzcd},
\end{equation}\\

\begin{equation}\tag{$\mathfrak{Z}_{\vec{d}}$}\label{Moduli space : mathfrak Z}
\begin{tikzcd}[cramped]
	{E_{n}} \\
	& {F_{n}} & {E_{n-1}\hookrightarrow...\hookrightarrow E_{1}} & V \\
	{H_{n}}
	\arrow["x", hook, from=1-1, to=2-2]
	\arrow[hook, from=2-2, to=2-3]
	\arrow[hook, from=2-3, to=2-4]
	\arrow["x", hook, from=3-1, to=2-2]
\end{tikzcd}.
\end{equation}\\
The above are diagrams of inclusions of certain rank $r$ sub-sheaves of $V$ such that:
\begin{enumerate}
    \item $\text{dim}_{\mathbb{C}}(V/E_{*})=d_{*}$ for all $*\in\{1,...,n\}$.
    \item In diagram \ref{Moduli space : mathfrak P}, we also require $\text{dim}_{\mathbb{C}}(V/E^{'}_{n})=d_{n}$.
\end{enumerate}
\subsection{The moduli space $\mathfrak{P}_{\vec{d}}$}
First, let us focus on the space \ref{Moduli space : mathfrak P}. To ensure it's non-emptiness, let us assume that the sequence $(d_{1},...,d_{n})$ is non-decreasing. 
If we denote the $(n-1)-$tuple $(d_{1},...,d_{n-1})$ by $\vec{d}_{\downarrow}$ and if $\varepsilon$ is the natural forgetful map:
\begin{equation}\label{forgetful map FnQuot to Fn-1}
    \varepsilon :F^{n}\text{Quot}_{{\vec{d}}}\,(V)\rightarrow F^{n-1}\text{Quot}_{{\vec{d}_{\downarrow}}}(V),
\end{equation}
then note that $\mathfrak{P}_{\vec{d}}$ is precisely the self-fibre product of $\varepsilon$, i.e. 
\begin{equation}\label{self-fibre product}
\mathfrak{P}_{\vec{d}}=F^{n}\text{Quot}_{{\vec{d}}}\,(V)\times_{F^{n-1}\text{Quot}_{{\vec{d}_{\downarrow}}}(V)} F^{n}\text{Quot}_{{\vec{d}}}\,(V).    
\end{equation}
\noindent Observe that the map $\varepsilon$ is smooth of relative dimension $r(d_{n}-d_{n-1})$, indeed the fibres of $\varepsilon$ are Quot schemes. Therefore, we have the following proposition:
\begin{prop}\label{proposition on smoothness and dimension of moduli space P}
    The space $\mathfrak{P}_{\vec{d}}$ is a smooth projective variety of dimension $r({2d_{n}-d_{n-1}})$.
\end{prop}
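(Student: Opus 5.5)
The plan is to use the description \eqref{self-fibre product} of $\mathfrak{P}_{\vec{d}}$ as the self-fibre product of the forgetful map $\varepsilon$ of \eqref{forgetful map FnQuot to Fn-1}, and to deduce everything from smoothness of $\varepsilon$. First, identify $\varepsilon$ as a relative Quot scheme. Over $F^{n-1}\text{Quot}_{\vec{d}_{\downarrow}}(V)\times C$ we have the universal locally free sheaf $\mathcal{E}_{n-1}$ of rank $r$. A point of $F^{n}\text{Quot}_{\vec{d}}(V)$ over a given flag is exactly a subsheaf $E_n\subseteq E_{n-1}$ with quotient of length $d_n-d_{n-1}$. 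Hence $F^{n}\text{Quot}_{\vec{d}}(V)\cong \text{Quot}_{d_n-d_{n-1}}(\mathcal{E}_{n-1}/F^{n-1}\text{Quot}_{\vec{d}_{\downarrow}}(V))$, the relative Quot scheme of length $d_n-d_{n-1}$ quotients of the family of rank $r$ bundles $\mathcal{E}_{n-1}$ on the curve fibres.

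Second, show this relative Quot scheme is smooth of relative dimension $r(d_n-d_{n-1})$. The fibres are Quot schemes $\text{Quot}_{d_n-d_{n-1}}(E_{n-1})$ of a rank $r$ bundle on $C$, which are smooth of dimension $r(d_n-d_{n-1})$, as recalled in the introduction. For flatness and smoothness of the family I would use deformation theory. At a point $E_n\subseteq E_{n-1}$ with quotient $T$ of finite length, the relative tangent space is $\text{Hom}(E_n,T)$ and obstructions lie in $\text{Ext}^1(E_n,T)=H^1(E_n^\vee\otimes T)=0$ because $T$ is torsion with $0$-dimensional support. So the morphism is smooth, and its relative dimension is $\dim\text{Hom}(E_n,T)=r\cdot\text{length}(T)=r(d_n-d_{n-1})$. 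Running the same argument inductively, or quoting the statement in the introduction, shows $F^{n}\text{Quot}_{\vec{d}}(V)$ is smooth projective of dimension $rd_n$, and in particular $F^{n-1}\text{Quot}_{\vec{d}_{\downarrow}}(V)$ has dimension $rd_{n-1}$.

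Third, conclude. The base change of a smooth morphism is smooth, so the projection $\mathfrak{P}_{\vec{d}}\to F^{n}\text{Quot}_{\vec{d}}(V)$ is smooth of relative dimension $r(d_n-d_{n-1})$ over a smooth base of dimension $rd_n$. Therefore $\mathfrak{P}_{\vec{d}}$ is smooth of dimension $rd_n+r(d_n-d_{n-1})=r(2d_n-d_{n-1})$. It is projective because it is a closed subscheme of the product of two projective varieties. For the word ``variety'' I also want connectedness, which gives irreducibility since the space is smooth. This follows because $\varepsilon$ is smooth and proper with connected fibres (Quot schemes on curves are irreducible) over a connected base. Hence $\mathfrak{P}_{\vec{d}}\to F^{n}\text{Quot}_{\vec{d}}(V)$ has connected fibres over a connected base, and $\mathfrak{P}_{\vec{d}}$ is connected.

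The main obstacle is the second step: verifying rigorously that $\varepsilon$ is smooth, rather than merely having smooth fibres. The vanishing of $\text{Ext}^1(E_n,T)$ handles this cleanly via the infinitesimal lifting criterion for relative Quot schemes. If that route proves delicate, the fallback is a dimension count: $\varepsilon$ is flat because it is a morphism between smooth varieties whose fibres all have the expected dimension (miracle flatness), and flatness together with smooth fibres gives smoothness.
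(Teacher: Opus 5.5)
Your proposal is correct and follows the paper's own argument: write $\mathfrak{P}_{\vec{d}}$ as the self-fibre product of $\varepsilon$, note that $\varepsilon$ is smooth of relative dimension $r(d_n-d_{n-1})$ with Quot-scheme fibres, and add this to $\dim F^{n}\text{Quot}_{\vec{d}}(V)=rd_n$. You also supply details the paper leaves implicit, namely the vanishing of $\text{Ext}^1(E_n,T)$ to show $\varepsilon$ is smooth rather than merely having smooth fibres, and the connectedness argument that justifies the word ``variety''.
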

\noindent Let $\Delta$ be the diagonal embedding:
\[
\Delta:F^{n}\text{Quot}_{{\vec{d}}}\,(V)\hookrightarrow  \mathfrak{P}_{\vec{d}}
\]
and let us try to cut-out the sub-scheme $\Delta$ by a regular section of a vector bundle. To this end, consider the universal diagram on $\mathfrak{P}_{\vec{d}}\times C$:
\[
\begin{tikzcd}[cramped]
	{\mathcal{E}^{'}_{n}} \\
	& {\mathcal{E}_{n-1}} & {\mathcal{E}_{n-2}\hookrightarrow...\hookrightarrow \mathcal{E}_{1}} & V \\
	{\mathcal{E}_{n}}
	\arrow[hook, from=1-1, to=2-2]
	\arrow[hook, from=2-2, to=2-3]
	\arrow[hook, from=2-3, to=2-4]
	\arrow[hook, from=3-1, to=2-2]
\end{tikzcd}
\]
and consider the universal quotient $$\mathscr{F}^{'}_{n}:=\frac{\mathcal{E}_{n-1}}{\mathcal{E}_{n}^{'}},$$ which is a coherent sheaf on $\mathfrak{P}_{\vec{d}}\times C$, flat over $\mathfrak{P}_{\vec{d}}$. The composition 
\[
s:\mathcal{E}_{n}\hookrightarrow \mathcal{E}_{n-1}\rightarrow \mathscr{F}^{'}_{n}
\]
defines a global section of the coherent sheaf $\mathcal{H}om(\mathcal{E}_{n},\mathscr{F}^{'}_{n})$ on $\mathfrak{P}_{\vec{d}}\times C$ and consequently a section of
\[
\rho_{*}\mathcal{H}om(\mathcal{E}_{n},\mathscr{F}^{'}_{n}),
\]
where $\rho:\mathfrak{P}_{\vec{d}}\times C\rightarrow \mathfrak{P}_{\vec{d}}$ is the natural projection. The following proposition is a straightforward consequence of cohomology and base change applied to the flat morphism $\rho$:

\begin{prop}\label{proposition to be used in tautological generation}
   The sheaf $\rho_{*}\mathcal{H}om(\mathcal{E}_{n},\mathscr{F}^{'}_{n})$ is a rank $r(d_{n}-d_{n-1})$ vector bundle on $\mathfrak{P}_{\vec{d}}$, the higher direct images $$R^{i}\rho_{*}\mathcal{H}om(\mathcal{E}_{n},\mathscr{F}^{'}_{n})$$
   vanish for $i>0$ and the section $$s\in H^{0}(\rho_{*}\mathcal{H}om(\mathcal{E}_{n},\mathscr{F}^{'}_{n}))$$ cuts out the sub-scheme $\Delta$. By the dimension count in Proposition \ref{proposition on smoothness and dimension of moduli space P}, the section $s$ is regular.
\end{prop}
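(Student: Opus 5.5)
The plan is to prove the three assertions in turn: local freeness with vanishing higher direct images, the identification of the zero locus of $s$ with $\Delta$, and regularity of $s$.

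\textbf{Cohomology and base change.} Since $\mathcal{E}_n$ is locally free on $\mathfrak{P}_{\vec{d}}\times C$, we have
\[
\mathcal{H}om(\mathcal{E}_n,\mathscr{F}'_n)=\mathcal{E}_n^{\vee}\otimes\mathscr{F}'_n .
\]
This sheaf is flat over $\mathfrak{P}_{\vec{d}}$ because $\mathscr{F}'_n$ is. On the fibre over a closed point, $\mathscr{F}'_n$ restricts to the torsion sheaf $E_{n-1}/E'_n$, of length $d_n-d_{n-1}$ and supported in dimension zero. Hence its tensor product with $E_n^{\vee}$ has $H^1=0$ and $h^0=r(d_n-d_{n-1})$ at every point. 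Grauert's theorem, or cohomology and base change applied to the flat projective morphism $\rho$, then gives:
\begin{itemize}
\item $R^i\rho_*=0$ for $i>0$;
\item $\rho_*\mathcal{H}om(\mathcal{E}_n,\mathscr{F}'_n)$ is locally free of rank $r(d_n-d_{n-1})$;
\item its formation commutes with arbitrary base change.
\end{itemize}

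\textbf{The zero locus is $\Delta$.} Let $T\to\mathfrak{P}_{\vec{d}}$ be a morphism. By base change, the section $s$ vanishes on $T$ if and only if the pulled-back composition $\mathcal{E}_{n,T}\to\mathcal{E}_{n-1,T}\to\mathscr{F}'_{n,T}$ is zero on $T\times C$. This in turn holds if and only if $\mathcal{E}_{n,T}\subseteq\mathcal{E}'_{n,T}$ as subsheaves of $\mathcal{E}_{n-1,T}$.

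Both $\mathcal{E}_{n-1,T}/\mathcal{E}_{n,T}$ and $\mathcal{E}_{n-1,T}/\mathcal{E}'_{n,T}$ are $T$-flat with the same fibrewise length. The induced surjection between these two quotients is therefore a surjection of flat families with equal Hilbert polynomials, so it is an isomorphism, and hence $\mathcal{E}_{n,T}=\mathcal{E}'_{n,T}$. Thus the zero scheme $Z(s)$ represents exactly those $T$-points at which the two flags coincide. By the fibre product description \eqref{self-fibre product}, this is the diagonal $\Delta$ scheme-theoretically. I expect this scheme-theoretic identification, and in particular the equal-length argument for flat families, to be the main point requiring care.

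\textbf{Regularity.} By Proposition \ref{proposition on smoothness and dimension of moduli space P}, $\mathfrak{P}_{\vec{d}}$ is smooth, hence Cohen--Macaulay, of dimension $r(2d_n-d_{n-1})$. The locus $\Delta\cong F^n\text{Quot}_{\vec{d}}(V)$ has dimension $rd_n$, so its codimension is
\[
r(2d_n-d_{n-1})-rd_n=r(d_n-d_{n-1}),
\]
which equals the rank of the bundle. A section of a vector bundle on a Cohen--Macaulay scheme whose zero locus has codimension equal to the rank is regular, and this completes the proof.
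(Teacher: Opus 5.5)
Your proposal is correct and follows the same route as the paper. The paper's entire justification is that the statement is a straightforward consequence of cohomology and base change for the flat projective morphism $\rho$, with regularity coming from the dimension count of Proposition~\ref{proposition on smoothness and dimension of moduli space P}; you fill in the details the paper leaves implicit, namely the base-change description of the zero scheme of $s$, the argument that a surjection between $T$-flat quotients of equal fibrewise length is an isomorphism, and the Cohen--Macaulay criterion for regular sections, and these are all sound.
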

\subsection{The moduli space $\mathfrak{Z}_{\vec{d}}$} Now, let us turn our attention to the space \ref{Moduli space : mathfrak Z}. To ensure it's non-emptiness, let us assume that the sequence $(d_{1},...,d_{n})$ is non-decreasing and that $d_{n}-d_{n-1}\geq 1$. Consider the $n-$tuple $${}^{'}\vec{d}_{n}:=(d_1,...,d_{n-1},d_{n}-1),$$ the $(n+1)-$tuple $${}^{''}\vec{d}_{n}:=(d_1,...,d_{n-1},d_{n}-1,d_{n})$$ and recall the map
\[
\pi_{-}\times \pi_{C}:F^{n+1}\text{Quot}_{{}^{''}{\vec{d}}_{n}}\,(V)\rightarrow F^{n}\text{Quot}_{{}^{'}{\vec{d}}_{n}}\,(V)\times C.
\]
Note that $\mathfrak{Z}_{\vec{d}}$ is precisely the fibre product of $\pi_{-}\times \pi_{C}$ with itself i.e.
\[
\mathfrak{Z}_{\vec{d}}=F^{n+1}\text{Quot}_{{}^{''}{\vec{d}}_{n}}\,(V)\times_{F^{n}\text{Quot}_{{}^{'}{\vec{d}}_{n}}\,(V)\times C}F^{n+1}\text{Quot}_{{}^{''}{\vec{d}}_{n}}\,(V).
\]
Let $p_{1},p_{2}:\mathfrak{Z}_{\vec{d}}\rightarrow F^{n+1}\text{Quot}_{{}^{''}{\vec{d}}_{n}}\,(V)$ be the projection maps to the two factors. It follows from Proposition \ref{pi-times piC as a projective bundle} that $\pi_{-}\times\pi_{C}$ is a $\mathbb{P}^{r-1}-$ bundle. Therefore we obtain:

\begin{prop}
The space $\mathfrak{Z}_{\vec{d}}$ is a smooth projective variety of dimension $r(d_{n}+1)-1.$ 
\end{prop}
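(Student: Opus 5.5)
The plan is to use the fibre product description
\[
\mathfrak{Z}_{\vec{d}}=F^{n+1}\text{Quot}_{{}^{''}\vec{d}_{n}}(V)\times_{F^{n}\text{Quot}_{{}^{'}\vec{d}_{n}}(V)\times C}F^{n+1}\text{Quot}_{{}^{''}\vec{d}_{n}}(V)
\]
and to reduce everything to smoothness of the base together with the projective bundle structure of $\pi_{-}\times\pi_{C}$.

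First, we invoke Proposition \ref{pi-times piC as a projective bundle} with $k=n$, applied to the $n$-tuple ${}^{'}\vec{d}_{n}$. Since $\mathcal{E}_{n+1}=0$ by convention, the virtual projective bundle $\mathbb{P}(\mathcal{E}_{n}-0)$ is the honest projectivisation $\mathbb{P}(\mathcal{E}_{n})$ of the rank $r$ locally free universal sheaf over $F^{n}\text{Quot}_{{}^{'}\vec{d}_{n}}(V)\times C$. So $\pi_{-}\times\pi_{C}$ is a Zariski locally trivial $\mathbb{P}^{r-1}$-bundle, and in particular it is smooth and projective of relative dimension $r-1$.

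Next, we use that smooth morphisms are stable under base change. Hence the projection $p_{1}:\mathfrak{Z}_{\vec{d}}\to F^{n+1}\text{Quot}_{{}^{''}\vec{d}_{n}}(V)$ is again a $\mathbb{P}^{r-1}$-bundle. Concretely, $\mathfrak{Z}_{\vec{d}}\cong\mathbb{P}(\mathcal{E}_{n})\times_{B}\mathbb{P}(\mathcal{E}_{n})$ with $B:=F^{n}\text{Quot}_{{}^{'}\vec{d}_{n}}(V)\times C$, which is the projectivisation of the pullback of $\mathcal{E}_{n}$ over $\mathbb{P}(\mathcal{E}_{n})$.

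The base is smooth and projective. Indeed, $F^{n}\text{Quot}_{{}^{'}\vec{d}_{n}}(V)$ is smooth projective of dimension $r(d_{n}-1)$, as stated in the introduction, so $B$ is smooth projective of dimension $r(d_{n}-1)+1$. It follows that $F^{n+1}\text{Quot}_{{}^{''}\vec{d}_{n}}(V)$ is smooth projective of dimension $r(d_{n}-1)+1+(r-1)=rd_{n}$, which is consistent with the general formula $r\cdot(\text{last entry})$. Then $\mathfrak{Z}_{\vec{d}}$ is smooth projective of dimension $rd_{n}+r-1=r(d_{n}+1)-1$.

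It remains to show that $\mathfrak{Z}_{\vec{d}}$ is a variety, i.e. irreducible (it is already reduced, being smooth). A projective bundle over an irreducible smooth base is irreducible, so it suffices that $F^{n+1}\text{Quot}_{{}^{''}\vec{d}_{n}}(V)$ is irreducible. This in turn follows from the irreducibility of $F^{n}\text{Quot}_{{}^{'}\vec{d}_{n}}(V)\times C$, since $C$ is irreducible and Hyperquot schemes are smooth projective varieties as stated in the introduction. The only point needing care is checking that the fibre product is taken scheme-theoretically and agrees with the moduli description of $\mathfrak{Z}_{\vec{d}}$, including the common support point $x$. This is immediate from the representability of the functors, since both sides parametrise the same families. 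I expect no real obstacle beyond this identification.
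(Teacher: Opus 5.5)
Your proposal is correct and is essentially the paper's argument: the paper also identifies $\mathfrak{Z}_{\vec{d}}$ as the self-fibre product of $\pi_{-}\times\pi_{C}$ and uses Proposition \ref{pi-times piC as a projective bundle} (with $\mathcal{E}_{n+1}=0$) to see it as a $\mathbb{P}^{r-1}$-bundle over a $\mathbb{P}^{r-1}$-bundle over $F^{n}\text{Quot}_{{}^{'}\vec{d}_{n}}(V)\times C$. Your explicit dimension count $r(d_{n}-1)+1+2(r-1)=r(d_{n}+1)-1$ and your irreducibility check fill in details that the paper leaves implicit.
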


\noindent Let $\mathscr{L}_{1}$ and $\mathscr{L}_{2}$ be the tautological line bundles on $\mathfrak{Z}_{d}$, whose fibres over a closed point, as in diagram \ref{Moduli space : mathfrak Z}, correspond to the length one sheaves $F_{n}/E_{n}$ and $F_{n}/H_{n}$, respectively. Let us also denote their respective first Chern classes by $\lambda_{1}$ and $\lambda_{2}$.\\\\
Consider the universal diagram of rank $r$ locally free sheaves on $\mathfrak{Z}_{\vec{d}}\times C$ :

\[
\begin{tikzcd}[cramped]
	{\mathcal{E}_{n}} \\
	& {\mathcal{F}_{n}} & {\mathcal{E}_{n-1}\hookrightarrow...\hookrightarrow \mathcal{E}_{1}} & V \\
	{\mathcal{H}_{n}}
	\arrow[ hook, from=1-1, to=2-2]
	\arrow[hook, from=2-2, to=2-3]
	\arrow[hook, from=2-3, to=2-4]
	\arrow[ hook, from=3-1, to=2-2]
\end{tikzcd},
\]
\newline
note that their is a map $t_{C}:=\pi_{C}\circ p_{1}=\pi_{C}\circ p_{2}: \mathfrak{Z}_{\vec{d}}\rightarrow C$, which sends a closed point, as in diagram \ref{Moduli space : mathfrak Z}, to the support point 
\[
x=\text{Supp }(F_{n}/E_{n})=\text{Supp }(F_{n}/H_{n})
\]
and let $\Gamma:\mathfrak{Z}_{\vec{d}}\hookrightarrow \mathfrak{Z}_{\vec{d}}\times C$ be the graph of $t_{C}$. As in equation \ref{definition of r-1 rk bundle G},  we may define rank $r-1$ locally free sheaves $\mathcal{G}_{1}$ and $\mathcal{G}_{2}$ on $\mathfrak{Z}_{\vec{d}}\,$:
\[
 \mathcal{G}_{1}:=\text{Ker }({\mathcal{F}_{n}}_{|_{\Gamma}}\rightarrow\mathscr{L}_{1})=\text{Im }({\mathcal{E}_{n}}_{|_{\Gamma}}\rightarrow{\mathcal{F}_{n}}_{|_{\Gamma}}),\,\, \mathcal{G}_{2}:=\text{Ker }({\mathcal{F}_{n}}_{|_{\Gamma}}\rightarrow\mathscr{L}_{2})=\text{Im }({\mathcal{H}_{n}}_{|_{\Gamma}}\rightarrow{\mathcal{F}_{n}}_{|_{\Gamma}}).
\]
\noindent Now, let $\Delta$ be the diagonal embedding;
\[
\Delta:F^{n+1}\text{Quot}_{{}^{''}{\vec{d}}_{n}}\,(V)\hookrightarrow  \mathfrak{Z}_{\vec{d}}\,.
\]
As before, we would like to cut-out $\Delta$ by a regular section of a vector bundle, for later use. Indeed, one observes that this is achieved by the section of the rank $r-1$ vector bundle $\mathcal{G}_{2}^{\vee}\otimes\mathscr{L}_{1}$ given by the composition:
\begin{equation}\label{regular section needed for multiplication operators}
\mathcal{G}_{2}\hookrightarrow {\mathcal{F}_{n}}_{|_{\Gamma}}\rightarrow\mathscr{L}_{1}.
\end{equation}
As a consequence, let us derive a useful identity in the cohomology of\\ $F^{n}\text{Quot}_{{\vec{d}}}\,(V)\times F^{n}\text{Quot}_{{\vec{d}}}\,(V)\times C.$ To this end, consider the following commutative diagram:

\[\begin{tikzcd}
	{F^{n+1}\text{Quot}_{{}^{''}{\vec{d}}_{n}}\,(V)} &&& {\mathfrak{Z}_{\vec{d}}} \\
	{F^{n}\text{Quot}_{{\vec{d}}}\,(V)\times C} &&& {F^{n}\text{Quot}_{{\vec{d}}}\,(V)\times F^{n}\text{Quot}_{{\vec{d}}}\,(V)\times C.}
	\arrow["\Delta", from=1-1, to=1-4]
	\arrow["{\pi_{+}\times\pi_{C}}"', from=1-1, to=2-1]
	\arrow["{k_{1}\times k_{2}\times t_{C}}", from=1-4, to=2-4]
	\arrow["{\Delta_{F^{n}\text{Quot}_{{\vec{d}}}\,(V)}\times Id_{C}}"', from=2-1, to=2-4]
\end{tikzcd}\]
\newline
In the above diagram, for $i=1,2$, we define $k_{i}:=\pi_{+}\circ p_{i}$. Let $\lambda$ be the first Chern class of the tautological line bundle on $F^{n+1}\text{Quot}_{{}^{''}{\vec{d}}_{n}}\,(V)$ and note that $\lambda=\Delta^{*}(\lambda_{1})=\Delta^{*}(\lambda_{2})$. Let us pushforward the power series

\[
\frac{1}{z-\lambda}\in H^{*}(F^{n+1}\text{Quot}_{{}^{''}{\vec{d}}_{n}}\,(V))[[z^{-1}]]
\]
\newline
down to ${F^{n}\text{Quot}_{{\vec{d}}}\,(V)\times F^{n}\text{Quot}_{{\vec{d}}}\,(V)\times C}$ in the two different ways given by the above commutative diagram.\\\\
On one hand, the fact that $\Delta$ is cut out by a regular section of $\mathcal{G}_{2}^{\vee}\otimes\mathscr{L}_{1}$, yields that

\[
\Delta_{*}\left(\frac{1}{z-\lambda}\right)=\frac{c(\mathcal{G}_{2},\lambda_{1})}{z-\lambda_{1}}.
\]
\newline
On the other hand, it follows from formula \ref{pushforward formula projective bundle} and Proposition \ref{pi+times piC as a projective bundle} that

\[
(\pi_{+}\times \pi_{C})_{*}\left(\frac{1}{z-\lambda}\right)=\left[-\frac{c(\mathcal{E}_{n-1},z+K_{C})}{c(\mathcal{E}_{n},z+K_{C})}\right]_{z<0}=1-\frac{c(\mathcal{E}_{n-1},z+K_{C})}{c(\mathcal{E}_{n},z+K_{C})}.
\]
\newline
Here recall that $\mathcal{E}_{n-1}$ and $\mathcal{E}_{n}$ are universal vector bundles on $F^{n}\text{Quot}_{\vec{d}}\times C$ (cf. equation \ref{Universal flag}).
Therefore, we obtain the following identity in \\$H^{*}(F^{n}\text{Quot}_{{\vec{d}}}\,(V)\times F^{n}\text{Quot}_{{\vec{d}}}\,(V)\times C)$:
\begin{equation}\label{equation that will yield multiplication operators}
\left(k_{1}\times k_{2}\times t_{C}\right)_{*}\left(\frac{c(\mathcal{G}_{2},\lambda_{1})}{z-\lambda_{1}}\right)=\left(\Delta_{F^{n}\text{Quot}_{{\vec{d}}}\,(V)}\times Id_{C}\right)_{*}\left(1-\frac{c(\mathcal{E}_{n-1},z+K_{C})}{c(\mathcal{E}_{n},z+K_{C})}\right).
\end{equation}

\subsection{The skew-nested Quot scheme $\mathcal{Z}_{\vec{d}}^{i,j}$}\label{moduli space Z}
 Let $\vec{d}=(d_1,...,d_{n})$ be an $n-$tuple of non-negative integers and for $1\leq i<j\leq n$, let $\vec{d}_{i,j}$ be the $n-$tuple $(d_1,...,d_{i-1},d_{i}+1,...,d_{j}+1,d_{j+1},...,d_{n})$.  We define the space $\mathcal{Z}_{\vec{d}}^{i,j}$ to be the fine moduli space with the following description of it's closed points: 
\begin{equation}\label{Moduli space: The one key}
\begin{tikzcd}[cramped]
	& {E_{j}} & {...\hookrightarrow E_{i+1}\,\,\,\,} & {E_{i}} & {...\hookrightarrow E_{1}} & V \\
	{E_{n}\hookrightarrow...\hookrightarrow E_{j+1}} & {F_{j}} & {...\hookrightarrow F_{i+1}} & {F_{i}}
	\arrow[hook, from=1-2, to=1-3]
	\arrow["{\,\,\,\,\,\,\,\,\,\,\,\,...}", shift left=5, draw=none, from=1-2, to=2-2]
	\arrow[hook, from=1-3, to=1-4]
	\arrow[shift right=2, draw=none, from=1-3, to=2-3]
	\arrow[shift right=5, draw=none, from=1-3, to=2-3]
	\arrow[hook, from=1-4, to=1-5]
	\arrow[hook, from=1-5, to=1-6]
	\arrow[hook, from=2-1, to=2-2]
	\arrow["p"'{pos=0.4}, hook, from=2-2, to=1-2]
	\arrow[hook, from=2-2, to=2-3]
	\arrow["p"'{pos=0.4}, shift right=3, hook, from=2-3, to=1-3]
	\arrow[hook, from=2-3, to=2-4]
	\arrow["p"'{pos=0.4}, hook, from=2-4, to=1-4].
\end{tikzcd}
\end{equation}
\newline That is, $\mathcal{Z}_{\vec{d}}^{i,j}$ parameterises pairs of length $n$ flags; 
\[
(E_n\subseteq...\subseteq E_1\subseteq V)\in F^{n}\text{Quot}_{\vec{d}}\,(V) \text{ and } 
(F_n\subseteq...\subseteq F_1\subseteq V)\in F^{n}\text{Quot}_{\vec{d}_{i,j}}\,(V),
\]
such that $E_t=F_t$ for $t=1,...,i-1,j+1,...,n$ and $F_{s}\hookrightarrow E_{s}$, where $E_s/F_s$ are length one sheaves supported on the same, but arbitrary, point $p\in C$ for $s=i,i+1,...,j$.\\\\
We note that $\mathcal{Z}_{\vec{d}}^{i,j}$ is non-empty if and only if $\vec{d}$ is non-decreasing and $d_{j+1}-d_{j}\geq 1.$ For the proceeding statements about $\mathcal{Z}^{i,j}_{\vec{d}}$ in this section, we naturally assume this non-emptiness condition.
\subsection{Some sheaves on $\mathcal{Z}_{\vec{d}}^{i,j}$}\label{subsection:Some sheaves on Z} Consider the natural map $q_{C}\colon\mathcal{Z}_{\vec{d}}^{i,j}\rightarrow C$ which sends a closed point of the form \ref{Moduli space: The one key} to the support point $p\in C$ and let \\$\Delta:\mathcal{Z}_{\vec{d}}^{i,j}\hookrightarrow \mathcal{Z}_{\vec{d}}^{i,j}\times C$ denote the graph of the function $q_{C}$. The space $\mathcal{Z}_{\vec{d}}^{i,j}\times C$ carries a universal diagram of inclusions of locally free sheaves:
\begin{equation}\label{Universal diagram on the one key}
\begin{tikzcd}[cramped]
	& {\mathcal{E}_{j}} & {...\hookrightarrow \mathcal{E}_{i+1}\,\,\,\,} & {\mathcal{E}_{i}} & {...\hookrightarrow \mathcal{E}_{1}} & V \\
	{\mathcal{E}_{n}\hookrightarrow...\hookrightarrow \mathcal{E}_{j+1}} & {\mathcal {F}_{j}} & {...\hookrightarrow \mathcal {F}_{i+1}} & {\mathcal {F}_{i}}
	\arrow[hook, from=1-2, to=1-3]
	\arrow["{\,\,\,\,\,\,\,\,\,\,\,\,...}", shift left=5, draw=none, from=1-2, to=2-2]
	\arrow[hook, from=1-3, to=1-4]
	\arrow[shift right=2, draw=none, from=1-3, to=2-3]
	\arrow[shift right=5, draw=none, from=1-3, to=2-3]
	\arrow[hook, from=1-4, to=1-5]
	\arrow[hook, from=1-5, to=1-6]
	\arrow[hook, from=2-1, to=2-2]
	\arrow[{pos=0.4}, hook, from=2-2, to=1-2]
	\arrow[hook, from=2-2, to=2-3]
	\arrow[{pos=0.4}, shift right=3, hook, from=2-3, to=1-3]
	\arrow[hook, from=2-3, to=2-4]
	\arrow[{pos=0.4}, hook, from=2-4, to=1-4].
\end{tikzcd}
\end{equation}
Hence, we have universal short exact sequences on $\mathcal{Z}_{\vec{d}}^{i,j}$ for all $l=i,i+1,...,j$:
\[
0\rightarrow \mathcal{F}_{l}\rightarrow\mathcal{E}_{l}\rightarrow\mathscr{F}_{l}\rightarrow 0.
\]
Where the torsion sheaf $\mathscr{F}_{l}$ is the cokernel of $ \mathcal{F}_{l}\hookrightarrow\mathcal{E}_{l}$.
If we restrict the above short exact sequence to the copy of $\mathcal{Z}_{\vec{d}}^{i,j}$, given by the embedding $\Delta:\mathcal{Z}_{\vec{d}}^{i,j}\hookrightarrow \mathcal{Z}_{\vec{d}}^{i,j}\times C$, we obtain an exact sequence 
\begin{equation}\label{construction of line bundle L}
{\mathcal{F}_{l}}_{|_{\Delta}}\rightarrow{\mathcal{E}_{l}}_{|_{\Delta}}\rightarrow\mathscr{L}_{l}\rightarrow 0.
\end{equation}
Observe that $\mathscr{L}_{l}$ is precisely the tautological line bundle on $\mathcal{Z}_{\vec{d}}^{i,j}$, whose fibre over a closed point as in diagram \ref{Moduli space: The one key} is identified canonically with the line $E_{l}/F_{l}$. Moreover, since ${\mathcal{E}_{l}}_{|_{\Delta}}$ and $\mathscr{L}_{l}$ are locally free sheaves, we obtain that the sheaf 
$$\mathcal{G}_{l}:=\,\text{Ker}\,({\mathcal{E}_{l}}_{|_{\Delta}}\rightarrow\mathscr{L}_{l})$$
is a locally free sheaf of rank $r-1$ on $\mathcal{Z}_{\vec{d}}^{i,j}$.\\\\
Let $1\leq k\leq n$ and let us define the rank $r-1$ bundle $\mathcal{G}_{\vec{d}}^{k,n}$ on $\mathcal{Z}_{\vec{d}}^{k,n}$ as
\begin{equation}\label{equation: definition of G for mod space Z}
    \mathcal{G}_{\vec{d}}^{k,n}:=\mathcal{G}_{k}:=\,\text{Ker}\,({\mathcal{E}_{k}}_{|_{\Delta}}\rightarrow\mathscr{L}_{k}).
\end{equation}
 This is precisely the sheaf we used to define the operators \ref{Operators a's definition} in the introduction.
 \begin{rem}
     It turns out that the choice $l=k$ to define the operators $a_{k}^{(v)}$ is important in the sense that if we used the vector bundles $\mathcal{G}_{l}$, with $l\neq i$ to define these operators, then they would not commute.
 \end{rem}
\subsection{Smoothness of $\mathcal{Z}_{\vec{d}}^{i,j}$}
We will now prove the following proposition:

\begin{prop}\label{proposition: smoothness and dimension of moduli space Z}
    The moduli space $\mathcal{Z}_{\vec{d}}^{i,j}$, is a smooth projective variety of dimension $rd_{n}$, if $j\neq n$ and dimension $r(d_{n}+1)$ if $j=n$.
\end{prop}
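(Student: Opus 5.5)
The plan is to realise $\mathcal{Z}_{\vec{d}}^{i,j}$ as an iterated tower of (virtual) projective bundles over $F^{n}\text{Quot}_{\vec{d}}\,(V)\times C$, in the spirit of Propositions \ref{pi-times piC as a projective bundle} and \ref{pi+times piC as a projective bundle}. Smoothness then reduces to showing that each step has its expected dimension and is in fact an honest smooth projective bundle. Projectivity is automatic, since $\mathcal{Z}_{\vec{d}}^{i,j}$ is a closed subscheme of $F^{n}\text{Quot}_{\vec{d}}\,(V)\times F^{n}\text{Quot}_{\vec{d}_{i,j}}(V)$.

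\textbf{Construction of the tower.} Start with the smooth variety $F^{n}\text{Quot}_{\vec{d}}\,(V)\times C$ of dimension $rd_n+1$, which records the flag $E_\bullet$ and the point $p$. Choose the sheaves $F_j, F_{j-1},\dots,F_i$ one at a time, from the bottom of the flag upwards.
\begin{enumerate}
\item First choose $F_j$ with $E_{j+1}\subseteq F_j\subseteq E_j$ and $E_j/F_j\cong\mathbb{C}_p$. Exactly as in Proposition \ref{pi-times piC as a projective bundle}, this is the virtual projective bundle $\mathbb{P}(\mathcal{E}_j-\mathcal{E}_{j+1})$ restricted to the graph of $p$, with the convention $\mathcal{E}_{n+1}=0$.
\item Having chosen $F_{l+1}$, choose $F_l$ with $F_{l+1}\subseteq F_l\subseteq E_l$ and $E_l/F_l\cong\mathbb{C}_p$. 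This amounts to a one-dimensional quotient of $\mathcal{E}_l|_{p}$ that kills the image of $\mathcal{F}_{l+1}|_p$, i.e. a point of $\mathbb{P}(\mathcal{E}_l|_\Delta-\mathcal{F}_{l+1}|_\Delta)$.
\end{enumerate}
The only further compatibility, $F_l\subseteq F_{l-1}$, is then built into the next step. I will check, by comparing moduli functors on $T$-points, that the composite of these steps is exactly $\mathcal{Z}_{\vec{d}}^{i,j}$. Each step is the derived zero locus of a map of locally free sheaves, so it is lci of the expected dimension as soon as its classical dimension is the expected one.

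\textbf{Dimension count.} When $j=n$, the first step is the honest projective bundle $\mathbb{P}(\mathcal{E}_n|_\Delta)$, with fibre $\mathbb{P}^{r-1}$. When $j<n$, the first step is fibrewise the fibre of $\pi_-\times\pi_C$; there the point $p$ is forced into the support of $E_j/E_{j+1}$, so the extra parameter coming from $C$ is absorbed. For the remaining steps $F_{l+1}\subseteq F_l\subseteq E_l$, I expect the quotient $E_l/F_{l+1}$ at $p$ to be one-dimensional. The reason is that $E_{l+1}/F_{l+1}\cong\mathbb{C}_p$ and $E_{l+1}\subseteq E_l$, so the choice of $F_l$ should be unique generically; in any case the fibre should be a point or a projective space of controlled dimension. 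Adding these contributions should give $rd_n$ when $j\neq n$ and $rd_n+1+(r-1)=r(d_n+1)$ when $j=n$.

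\textbf{Main obstacle.} The hard point is that the rank of the image of $\mathcal{F}_{l+1}|_p\to\mathcal{E}_l|_p$ can jump. This happens precisely where the torsion of $E_l/F_{l+1}$ at $p$ is not cyclic, so the fibres of the intermediate steps are not equidimensional and the naive tower argument does not directly give smoothness. To handle this, I would compute the Zariski tangent space of $\mathcal{Z}_{\vec{d}}^{i,j}$ at an arbitrary closed point by deformation theory of the diagram \eqref{Moduli space: The one key}. On a curve all $\text{Ext}^2$ groups vanish, so the tangent space is a kernel of a map of $\text{Hom}$ groups between the quotient sheaves. I would then bound its dimension above by the claimed number, using long exact sequences built from the short exact sequences $0\to F_l\to E_l\to\mathbb{C}_p\to0$ together with $\chi(\mathcal{H}om(E,\mathcal{T}))=r\cdot\text{length}(\mathcal{T})$.

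Combining the upper bound on every tangent space with the lower bound on the dimension of every component, which comes from the lci description of the tower, gives smoothness. Irreducibility, and hence that $\mathcal{Z}_{\vec{d}}^{i,j}$ is a variety, I would deduce from connectedness of the fibres over $F^{n}\text{Quot}_{\vec{d}}\,(V)\times C$, since the generic stratum, where all images have maximal rank, is dense. If the tangent-space estimate proves too messy globally, I would first reduce étale-locally to $C=\mathbb{A}^1$ with $V$ trivial, where it becomes a linear-algebra computation with nilpotent operators and flags.
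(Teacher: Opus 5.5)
\textbf{The gap: the lci lower bound does not exist.} The step that fails is the lower bound on component dimensions that you attribute to ``the lci description of the tower''.

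In your second step, $\mathcal{E}_l|_\Delta$ and $\mathcal{F}_{l+1}|_\Delta$ both have rank $r$. So $\mathbb{P}(\mathcal{E}_l|_\Delta-\mathcal{F}_{l+1}|_\Delta)$ has expected dimension one \emph{less} than the previous stage. Its classical fibre, however, is $\mathbb{P}\bigl((E_l/F_{l+1})\otimes k(p)\bigr)$, and this is never empty, because $E_l/F_{l+1}$ contains $E_{l+1}/F_{l+1}\cong\mathbb{C}_p$. Equivalently, $\mathcal{F}_{l+1}|_\Delta\to\mathcal{E}_l|_\Delta$ is never injective on fibres.

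So this step is never of expected dimension. Krull's bound then only says that every component of $\mathcal{Z}_{\vec{d}}^{i,j}$ has dimension at least $D-(j-i)$, where $D$ is the claimed dimension. That cannot exclude extra, smaller components supported on the jump loci where $(E_l/F_{l+1})_p$ is not cyclic. This is exactly the danger the paper has to rule out: the locus $\{E_n\subseteq F_{n-1}\}$, a $\mathbb{P}^{r-1}$-bundle over $Z$ of dimension $r(d_n+1)-1$, could a priori be a component. Your sentence ``the generic stratum \dots\ is dense'' is precisely the statement that needs proof; you assert it rather than derive it.

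The tangent-space part is also only a sketch, and as described it gives no upper bound. The identity $\chi(\mathcal{H}om(E,\mathcal{T}))=r\cdot\text{length}(\mathcal{T})$ computes Euler characteristics of the deformation complex, not its $H^0$. Controlling the tangent space at the non-cyclic points is essentially the whole difficulty.

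\textbf{The missing idea: cut by a rank $r-1$ bundle.} The map $\mathcal{F}_{l+1}|_\Delta\to\mathcal{E}_l|_\Delta$ factors through the surjection onto $\mathcal{G}_{l+1}=\text{Im}(\mathcal{F}_{l+1}|_\Delta\to\mathcal{E}_{l+1}|_\Delta)$. So your step is really $\mathbb{P}(\mathcal{E}_l|_\Delta-\mathcal{G}_{l+1})$, of expected relative dimension $0$, and this does give the lower bound.

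This is the paper's device. For $j=n$, it cuts $\mathcal{Z}_{\vec{d}}^{i,n}$ out of the $\mathbb{P}^{r-1}$-bundle $M$ over $X$ by a section of $\mathcal{G}_n^{\vee}\otimes\mathscr{L}_{n-1}$, which forces irreducibility, l.c.i.-ness and reducedness. It then identifies the forgetful map $\mathcal{Z}_{\vec{d}}^{i,n}\to X$ with the blow-up of the smooth variety $X$ along the smooth subvariety $Z$, the exceptional divisor being cut out by $\mathscr{L}_n\to\mathscr{L}_{n-1}$. No tangent-space computation is needed. The case $j<n$ then follows from the smooth map forgetting $E_{j+1},\dots,E_n$.

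Your connectedness observation is valid, since the fibres are towers of projective spaces over a connected image. If you did establish $\dim T_x\mathcal{Z}_{\vec{d}}^{i,j}\le D$ at every point, connectedness would suffice without any density claim, because the locus where the space is smooth of dimension $D$ would then be both open and closed. But that is not the argument you give, and the tangent-space bound is the hard part.
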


\begin{proof}
    Let $\vec{f}$ be the $j-$tuple $(d_{1},...,d_{j})$. Consider the natural map
\[
\mathcal{Z}_{\vec{d}}^{i,j}\rightarrow \mathcal{Z}_{\vec{f}}^{i,j},
\]
that forgets the sheaves $E_{j+1},...,E_{n}$ of a closed point of the form \eqref{Moduli space: The one key} $\in \mathcal{Z}_{\vec{d}}^{i,j}$ and note that all fibres of this map are smooth hyperquote schemes of dimension $r(d_{n}-d_{j}-1)$. Therefore, it suffices to prove Proposition \ref{proposition: smoothness and dimension of moduli space Z} in the case $j=n.$\\\\
Let us induct on $n$ to show that $\mathcal{Z}_{\vec{d}}^{i,n}$, whenever non-empty, is smooth of dimension $r(d_{n}+1)$ for all $1\leq i\leq n$. The case $n=1$ is clear since $\mathcal{Z}_{d_{1}}^{1,1}$ is just a Hyperquot scheme.
Also, if $\vec{d}_{\downarrow}$ is the $(n-1)-$ tuple $(d_{1},...,d_{n-1})$, then by the induction hypothesis, we have that $\mathcal{Z}_{\vec{d}_{\downarrow}}^{i,n-1}$ is a smooth projective variety of dimension $r(d_{n-1}+1)$,  for all $1\leq i\leq n-1$.\\\\
Let us define some moduli spaces that we will use in the induction step and whose closed points parameterise the following diagrams of rank $r$ sub-sheaves on $V$:

\begin{equation}\tag{X}\label{Modui space: straight X}
    \begin{tikzcd}[cramped]
	{E_{n}} & {E_{n-1}} & {...\hookrightarrow E_{i+1}\,\,\,\,} & {E_{i}} & {...\hookrightarrow E_{1}} & V \\
	{} & {F_{n-1}} & {...\hookrightarrow F_{i+1}} & {F_{i}}
	\arrow[hook, from=1-1, to=1-2]
	\arrow[hook, from=1-2, to=1-3]
	\arrow["{\,\,\,\,\,\,\,\,\,\,\,\,...}", shift left=5, draw=none, from=1-2, to=2-2]
	\arrow[hook, from=1-3, to=1-4]
	\arrow[shift right=2, draw=none, from=1-3, to=2-3]
	\arrow[shift right=5, draw=none, from=1-3, to=2-3]
	\arrow[hook, from=1-4, to=1-5]
	\arrow[hook, from=1-5, to=1-6]
	\arrow["p"'{pos=0.4}, hook, from=2-2, to=1-2]
	\arrow[hook, from=2-2, to=2-3]
	\arrow["p"'{pos=0.4}, shift right=3, hook, from=2-3, to=1-3]
	\arrow[hook, from=2-3, to=2-4]
	\arrow["p"'{pos=0.4}, hook, from=2-4, to=1-4]
\end{tikzcd}
\end{equation}

\begin{equation}\tag{Z}\label{Modui space: straight Z}
\begin{tikzcd}[cramped]
	& {E_{n-1}} & {...\hookrightarrow E_{i+1}\,\,\,\,} & {E_{i}} & {...\hookrightarrow E_{1}} & V \\
	{E_{n}} & {F_{n-1}} & {...\hookrightarrow F_{i+1}} & {F_{i}}
	\arrow[hook, from=1-2, to=1-3]
	\arrow["{\,\,\,\,\,\,\,\,\,\,\,\,...}", shift left=5, draw=none, from=1-2, to=2-2]
	\arrow[hook, from=1-3, to=1-4]
	\arrow[shift right=2, draw=none, from=1-3, to=2-3]
	\arrow[shift right=5, draw=none, from=1-3, to=2-3]
	\arrow[hook, from=1-4, to=1-5]
	\arrow[hook, from=1-5, to=1-6]
	\arrow[hook, from=2-1, to=2-2]
	\arrow["p"'{pos=0.4}, hook, from=2-2, to=1-2]
	\arrow[hook, from=2-2, to=2-3]
	\arrow["p"'{pos=0.4}, shift right=3, hook, from=2-3, to=1-3]
	\arrow[hook, from=2-3, to=2-4]
	\arrow["p"'{pos=0.4}, hook, from=2-4, to=1-4]
\end{tikzcd}
\end{equation}

\begin{equation}\tag{E}\label{Modui space: straight E}
\begin{tikzcd}[cramped]
	& {E_{n-1}} & {...\hookrightarrow E_{i+1}\,\,\,\,} & {E_{i}} & {...\hookrightarrow E_{1}} & V \\
	{E_{n}} & {F_{n-1}} & {...\hookrightarrow F_{i+1}} & {F_{i}} \\
	{F_{n}}
	\arrow[hook, from=1-2, to=1-3]
	\arrow["{\,\,\,\,\,\,\,\,\,\,\,\,...}", shift left=5, draw=none, from=1-2, to=2-2]
	\arrow[hook, from=1-3, to=1-4]
	\arrow[shift right=2, draw=none, from=1-3, to=2-3]
	\arrow[shift right=5, draw=none, from=1-3, to=2-3]
	\arrow[hook, from=1-4, to=1-5]
	\arrow[hook, from=1-5, to=1-6]
	\arrow[hook, from=2-1, to=2-2]
	\arrow["p"'{pos=0.4}, hook, from=2-2, to=1-2]
	\arrow[hook, from=2-2, to=2-3]
	\arrow["p"'{pos=0.4}, shift right=3, hook, from=2-3, to=1-3]
	\arrow[hook, from=2-3, to=2-4]
	\arrow["p"'{pos=0.4}, hook, from=2-4, to=1-4]
	\arrow["p"', hook, from=3-1, to=2-1]
\end{tikzcd}
\end{equation}

\begin{equation}\tag{M}\label{Modui space: straight M}
\begin{tikzcd}[cramped]
	{E_{n}} & {E_{n-1}} & {...\hookrightarrow E_{i+1}\,\,\,\,} & {E_{i}} & {...\hookrightarrow E_{1}} & V \\
	{F_{n}} & {F_{n-1}} & {...\hookrightarrow F_{i+1}} & {F_{i}}
	\arrow[hook, from=1-1, to=1-2]
	\arrow[hook, from=1-2, to=1-3]
	\arrow["{\,\,\,\,\,\,\,\,\,\,\,\,...}", shift left=5, draw=none, from=1-2, to=2-2]
	\arrow[hook, from=1-3, to=1-4]
	\arrow[shift right=2, draw=none, from=1-3, to=2-3]
	\arrow[shift right=5, draw=none, from=1-3, to=2-3]
	\arrow[hook, from=1-4, to=1-5]
	\arrow[hook, from=1-5, to=1-6]
	\arrow["p"', hook, from=2-1, to=1-1]
	\arrow["p"'{pos=0.4}, hook, from=2-2, to=1-2]
	\arrow[hook, from=2-2, to=2-3]
	\arrow["p"'{pos=0.4}, shift right=3, hook, from=2-3, to=1-3]
	\arrow[hook, from=2-3, to=2-4]
	\arrow["p"'{pos=0.4}, hook, from=2-4, to=1-4]
\end{tikzcd}.
\end{equation}
Each of the above figures represents a moduli space that parameterises diagrams of inclusions of certain rank $r$ sub-sheaves $E_{*},F_{*}$ of $V$ on $C$. In each of the above diagrams we impose the condition $\text{dim}_{\mathbb{C}}(V/E_{*})=d_{*}$ for all $*\in\{1,...,n\}$.\\\\
Let us also recall that $\mathcal{Z}_{\vec{d}}^{i,n}$ and $\mathcal{Z}_{\vec{d}_{\downarrow}}^{i,n-1}$ are moduli spaces with the following description of their closed points:

\begin{equation}\tag{$\mathcal{Z}_{\vec{d}_{\downarrow}}^{i,n-1}$}\label{moduli space: Z downarrow}
\begin{tikzcd}[cramped]
	{E_{n-1}} & {...\hookrightarrow E_{i+1}\,\,\,\,} & {E_{i}} & {...\hookrightarrow E_{1}} & V \\
	{F_{n-1}} & {...\hookrightarrow F_{i+1}} & {F_{i}}
	\arrow[hook, from=1-1, to=1-2]
	\arrow["{\,\,\,\,\,\,\,\,\,\,\,\,...}", shift left=5, draw=none, from=1-1, to=2-1]
	\arrow[hook, from=1-2, to=1-3]
	\arrow[shift right=2, draw=none, from=1-2, to=2-2]
	\arrow[shift right=5, draw=none, from=1-2, to=2-2]
	\arrow[hook, from=1-3, to=1-4]
	\arrow[hook, from=1-4, to=1-5]
	\arrow["p"'{pos=0.4}, hook, from=2-1, to=1-1]
	\arrow[hook, from=2-1, to=2-2]
	\arrow["p"'{pos=0.4}, shift right=3, hook, from=2-2, to=1-2]
	\arrow[hook, from=2-2, to=2-3]
	\arrow["p"'{pos=0.4}, hook, from=2-3, to=1-3]
\end{tikzcd}
\end{equation}

\begin{equation}\tag{$\mathcal{Z}_{\vec{d}}^{i,n}$}\label{moduli space: Z the one key in the proof}
\begin{tikzcd}[cramped]
	{E_{n}} & {E_{n-1}} & {...\hookrightarrow E_{i+1}\,\,\,\,} & {E_{i}} & {...\hookrightarrow E_{1}} & V \\
	{F_{n}} & {F_{n-1}} & {...\hookrightarrow F_{i+1}} & {F_{i}}
	\arrow[hook, from=1-1, to=1-2]
	\arrow[hook, from=1-2, to=1-3]
	\arrow["{\,\,\,\,\,\,\,\,\,\,\,\,...}", shift left=5, draw=none, from=1-2, to=2-2]
	\arrow[hook, from=1-3, to=1-4]
	\arrow[shift right=2, draw=none, from=1-3, to=2-3]
	\arrow[shift right=5, draw=none, from=1-3, to=2-3]
	\arrow[hook, from=1-4, to=1-5]
	\arrow[hook, from=1-5, to=1-6]
	\arrow["p"', hook, from=2-1, to=1-1]
	\arrow[hook, from=2-1, to=2-2]
	\arrow["p"'{pos=0.4}, hook, from=2-2, to=1-2]
	\arrow[hook, from=2-2, to=2-3]
	\arrow["p"'{pos=0.4}, shift right=3, hook, from=2-3, to=1-3]
	\arrow[hook, from=2-3, to=2-4]
	\arrow["p"'{pos=0.4}, hook, from=2-4, to=1-4]
\end{tikzcd}.
\end{equation}
Assuming the induction hypothesis, we make note of the following facts about the geometry of the above spaces:
\begin{enumerate}
    \item The space $\mathcal{Z}_{\vec{d}_{\downarrow}}^{i,n-1}$ is smooth and projective of dimension $r(d_{n-1}+1)$: This directly follows from the induction hypothesis.
    \item The space \ref{Modui space: straight Z} is a smooth projective variety of dimension $rd_{n}$: Indeed, the fibres of the natural forgetful map from $\ref{Modui space: straight Z}$ to $\mathcal{Z}_{\vec{d}_{\downarrow}}^{i,n-1}$ are smooth Quot schemes with dimensions $r(d_{n}-d_{n-1}-1)$.
    \item The space \ref{Modui space: straight X} is a smooth projective variety of dimension $r(d_{n}+1)$: This is because the fibres of the natural forgetful map from $\ref{Modui space: straight X}$ to $\mathcal{Z}_{\vec{d}_{\downarrow}}^{i,n-1}$ are smooth quot schemes with dimensions $r(d_{n}-d_{n-1})$.
    \item The space \ref{Modui space: straight E} is a smooth projective variety of dimension $r(d_{n}+1)-1$: The natural forgetful map from $\ref{Modui space: straight E}$ to \ref{Modui space: straight Z} is a $\mathbb{P}^{r-1}-$bundle, using the same argument as for Proposition \ref{pi-times piC as a projective bundle} in the case $k=n$.
    \item The space \ref{Modui space: straight M} is a smooth projective variety of dimension $r(d_{n}+2)-1$: The natural forgetful map from $\ref{Modui space: straight M}$ to \ref{Modui space: straight X} is a $\mathbb{P}^{r-1}-$bundle.
\end{enumerate}
\noindent We will now identify the natural forgetful map
\[
\kappa:\mathcal{Z}_{\vec{d}}^{i,n}\rightarrow X
\]
with the blow-up of $X$ along the sub-scheme $Z$, with exceptional divisor $E$. This would immediately prove the proposition.\\\\
First note from point (3) above that $\kappa^{-1}(Z)=E$ is a $\mathbb{P}^{r-1}$ bundle over $Z$. Let us now observe that $\kappa$ is an isomorphism over $X\backslash Z$. A point in $X\backslash Z$ corresponds to a diagram of the form \eqref{Modui space: straight X} such that $E_{n}$ is not contained in $F_{n-1}$. Then since 
\[
\frac{E_{n}}{E_{n}\cap F_{n-1}}=\frac{F_{n-1}+E_{n}}{F_{n-1}}=\frac{E_{n-1}}{F_{n-1}}\cong \mathbb{C}_{p}
\] 
and the fact that any sub-sheaf of $E_{n}$ and $F_{n-1}$ is a sub-sheaf of $E_{n}\cap F_{n-1}$, by defining
\[
F_{n}:=E_{n}\cap F_{n-1}
\]
we obtain an inverse for $\kappa_{|_{\mathcal{Z}_{\vec{d}}^{i,n}\backslash E}}$. Hence, if we prove the following statements, the universal property of the blow-up will yield an isomorphism between $\mathcal{Z}_{\vec{d}}^{i,n}$ and $Bl_{Z}(X)$:
\begin{enumerate}
    \item The space $\mathcal{Z}_{\vec{d}}^{i,n}$ is irreducible and reduced.
    \item The codimension 1 sub-variety $E\subseteq \mathcal{Z}_{\vec{d}}^{i,n}$, is cut-out by a section of a line bundle on $\mathcal{Z}_{\vec{d}}^{i,n}$ and is hence a Cartier divisor. 
\end{enumerate}
It is not hard to see that $E$ is cut-out by a section of a line bundle on $\mathcal{Z}_{\vec{d}}^{i,n}$; The inclusions $\mathcal{E}_{n}\subseteq\mathcal{E}_{n-1}$ and $\mathcal{F}_{n}\subseteq\mathcal{F}_{n-1}$ of universal vector bundles on $\mathcal{Z}_{\vec{d}}^{i,n}\times C$ (cf. diagram \eqref{Universal diagram on the one key}), induce a morphism of line bundles
\[
s:\mathscr{L}_{n}\rightarrow \mathscr{L}_{n-1}
\]
on $\mathcal{Z}_{\vec{d}}^{i,n}$ which vanishes at a closed point of the form \eqref{moduli space: Z the one key in the proof} if and only if $E_{n}\subseteq F_{n-1}$.\\\\
So it remains to prove (1), that $\mathcal{Z}_{\vec{d}}^{i,n}$ is irreducible and reduced. Since $\kappa_{|_{\mathcal{Z}_{\vec{d}}^{i,n}\backslash E}}$ is an isomorphism and $\kappa_{|_{E}}$ is a $\mathbb{P}^{r-1}-$bundle over $Z$, either $\mathcal{Z}_{\vec{d}}^{i,n}$ is irreducible of dimension $r(d_{n}+1)$ or has two components of dimensions $r(d_n+1)$ and $r(d_{n}+1)-1$. We will realise $\mathcal{Z}_{\vec{d}}^{i,n}$ as the zero locus of a section of a rank $r-1$ vector bundle on $M$, which has dimension $r(d_{n}+2)-1$. This will immediately yield that $\mathcal{Z}_{\vec{d}}^{i,n}$ is irreducible of dimension $r(d_{n}+1)$ and l.c.i.. This will also readily imply reducedness, indeed being reduced is equivalent to the properties $R_{0}$ and $S_{1}$ . The property $S_{1}$ holds for $\mathcal{Z}_{\vec{d}}^{i,n}$ since it is l.c.i. and the property $R_{0}$ holds for $\mathcal{Z}_{\vec{d}}^{i,n}$ since it is reduced on the complement of $E$, where it is isomorphic to the smooth variety $X\backslash Z$.\\\\
So, to prove (1) and the proposition, it remains to cut $\mathcal{Z}_{\vec{d}}^{i,n}$ out by a section of a rank $r-1$ vector bundle on $M$. To this end, we consider the map $p^{M}:M\rightarrow C$, which assigns to a closed point of the form \eqref{Modui space: straight M}, the point $p\in C$ and let
\[
\Delta_{M}:M\hookrightarrow M\times C
\]
be the graph of $p^{M}$.
For $l=i,.. .,n$, consider the universal exact sequence on $M\times C$:
\[
0\rightarrow \mathcal{F}_{l}\rightarrow\mathcal{E}_{l}\rightarrow\mathscr{F}_{l}\rightarrow 0,
\]
whose restriction to a closed point of the form \eqref{Modui space: straight M} corresponds to the inclusion ${F}_{l}\subseteq {E}_{l}$. As before, let us define line bundles $\mathscr{L}_{l}$ and rank $r-1$ bundles $\mathcal{G}_{l}$ on $M$: 
\[
\mathscr{L}_{l}:=\mathscr{F}_{l_{|_{\Delta_{M}}}}\text{ and }\mathcal{G}_{l}:=\text{Ker}\,(\mathcal{E}_{{l}_{|\Delta_{M}}}\rightarrow\mathscr{L}_{l}).
\]
Then one observes that the section of the rank $r-1$ vector bundle $\mathcal{G}_{n}^{\vee}\otimes\mathscr{L}_{n-1}$ given by the composition:
\[
\mathcal{G}_{n}\rightarrow \mathcal{E}_{{n}_{|\Delta_{M}}}\rightarrow \mathcal{E}_{{n-1}_{|\Delta_{M}}} \rightarrow\mathscr{L}_{n-1}
\]
cuts out $\mathcal{Z}_{\vec{d}}^{i,n}$. This completes the proof.
 \end{proof}
\noindent Let us end the subsection by recording a useful equality in $H^{*}(\mathcal{Z}_{\vec{d}}^{k,n})$. Let us define 
 \[
 \lambda_{k}:=c_{1}(\mathscr{L}_{k})\in H^{2}(\mathcal{Z}_{\vec{d}}^{k,n}),
 \]
  then it follows from the definition of $\mathcal{G}^{k,n}_{\vec{d}}$ in \eqref{equation: definition of G for mod space Z}, that:
 \begin{equation}\label{equality in cohomology:a=em}
 c(\mathcal{G}^{k,n}_{\vec{d}},z)=\frac{c({\mathcal{E}_{k}}_{|_{\Delta}},z)}{z-\lambda_{k}}.
 \end{equation}
\subsection{The skew-nested Quot scheme $\mathcal{Y}_{\vec{d}}^{i-1,i}$}\label{subsection:moduli space Y}
As before, let $\vec{d}=(d_1,...,d_{n})$ be an $n-$tuple of non-negative integers, for $k\in \{1,...,n\}$ let $\vec{d}'_{k}=\\(d_1,...,d_{k-1},d_{k}+1,d_{k+1},...,d_{n})$ and for $i\in \{2,...,n\}$, let $\vec{d}_{i-1,i}$ be the $n-$tuple $(d_1,...,d_{i-1}+1,d_{i}+1,...,d_{n})$. Furthermore, let $\vec{d}_{i-1,i}^{\bullet}$ be the $(n+2)-$tuple $(d_1,...,d_{i-1},d_{i-1}+1,d_{i},d_{i}+1,...d_{n})$. We define the space $\mathcal{Y}_{\vec{d}}^{i-1,i}$ to be the fine moduli space with the following description of it's closed points: 
\begin{equation}\label{Moduli space: e_i-1e_i}\tag{$\mathcal{Y}_{\vec{d}}^{i-1,i}$}
\begin{tikzcd}[cramped]
	&& {E_i} & {E_{i-1}} & {E_{i-2}\hookrightarrow...\hookrightarrow E_{1}} & V \\
	{E_{n}\hookrightarrow...\hookrightarrow E_{i+2}} & {E_{i+1}} & {F_{i}} & {F_{i-1}}
	\arrow[hook, from=1-3, to=1-4]
	\arrow[hook, from=1-4, to=1-5]
	\arrow[hook, from=1-5, to=1-6]
	\arrow[hook, from=2-1, to=2-2]
	\arrow[hook, from=2-2, to=2-3]
	\arrow["q"', hook, from=2-3, to=1-3]
	\arrow[hook, from=2-3, to=2-4]
	\arrow["p"', hook, from=2-4, to=1-4]
\end{tikzcd}
\end{equation}
\noindent That is, $\mathcal{Y}_{\vec{d}}^{i-1,i}$ parameterises pairs of length $n$ flags of rank $r$ locally free sheaves on $C$; 
\[
(E_n\subseteq...\subseteq E_1\subseteq V)\in F^{n}\text{Quot}_{\vec{d}}\,(V) \text{ and } 
(F_n\subseteq...\subseteq F_1\subseteq V)\in F^{n}\text{Quot}_{\vec{d}_{i-1,i}}\,(V),
\]
such that $E_t=F_t$ for $t=1,...,i-2,i+1,...,n$ and $E_j/F_j$ are length one sheaves supported on some arbitrary points $p\in C$ and $q\in C$ for $j=i-1$ and $j=i$ respectively.
For the proceeding discussion, we will further ask that $\mathcal{Y}_{\vec{d}}^{i-1,i}$ is non-empty, otherwise the statements are vacuous. This is equivalent to $\vec{d}$ being non-decreasing and $d_{i+1}-d_{i}\geq 1.$ In fact, we will also assume that $d_{i}\neq d_{i-1}$, since in this case $\mathcal{Y}_{\vec{d}}^{i-1,i}$ is just a Hyperquot scheme.\\\\
Note that $\mathcal{Y}_{\vec{d}}^{i-1,i}$ carries tautological line bundles $$\mathscr{L}_{i}\text{ and }\mathscr{L}_{i-1}$$ whose fibres over a closed point, as in Diagram \ref{Moduli space: e_i-1e_i}, are the length one quotients $E_{i}/F_{i}$ and $E_{i-1}/F_{i-1}$ respectively.
Moreover, if $\mathcal{E}_{i-1},\mathcal{E}_{i},\mathcal{F}_{i-1}$ and $\mathcal{F}_{i}$ are universal sheaves corresponding to the sheaves $E_{i-1},E_{i},F_{i-1}$ and $F_{i}$ respectively, then the inclusions $\mathcal{E}_{i}\hookrightarrow \mathcal{E}_{i-1}$ and $\mathcal{F}_{i}\hookrightarrow \mathcal{F}_{i-1}$ induce a map of line bundles 
\begin{equation}\label{equation:section of line bundle moduli space Y}
\mathscr{L}_{i}\xrightarrow{s}\mathscr{L}_{i-1}.
\end{equation}
There are also natural maps
\begin{equation}\label{q_C}
    q_{C}^{i},\,\,q_{C}^{i-1}:\mathcal{Y}_{\vec{d}}^{i-1,i}\rightarrow C \times C,
\end{equation}
which assign the closed point \ref{Moduli space: e_i-1e_i} to Supp $E_{i}/F_{i} =q$ and Supp $E_{i-1}/F_{i-1} =p$ respectively.\\\\
Let $i$ denote the natural embedding of $\mathcal{Y}_{\vec{d}}^{i-1,i}$ into $F^{n}\text{Quot}_{\vec{d}}\,(V) \times 
F^{n}\text{Quot}_{\vec{d}_{i-1,i}}\,(V)$. Using the morphisms \ref{q_C}, it would also be convenient to consider the following closed embedding:
\begin{equation}\label{embedding k for moduli space Y}
    k:=i\times q^{i}_{C}\times q_{C}^{i-1}:\mathcal{Y}_{\vec{d}}^{i-1,i}\hookrightarrow F^{n}\text{Quot}_{\vec{d}}\,(V) \times 
F^{n}\text{Quot}_{\vec{d}_{i-1,i}}\,(V)\times C\times C.
\end{equation}

\begin{prop}\label{geometry of Y}
    The moduli space $\mathcal{Y}_{\vec{d}}^{i-1,i}$ has two equidimensional irreducible components, namely the smooth varieties $\mathcal{Z}_{\vec{d}}^{i-1,i}$ and $F^{n+2}\text{Quot}_{\vec{d}_{i-1,i}^{\bullet}}(V)$. In particular, the space $\mathcal{Y}_{\vec{d}}^{i-1,i}$ has dimension $rd_{n }$ if $i\neq n$ and dimension $r(d_{n}+1)$ if $i=n$.
\end{prop}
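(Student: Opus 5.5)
The plan is to split $\mathcal{Y}_{\vec{d}}^{i-1,i}$ according to the vanishing of the section $s:\mathscr{L}_{i}\to\mathscr{L}_{i-1}$ of \eqref{equation:section of line bundle moduli space Y}. First, I identify two closed subschemes of $\mathcal{Y}_{\vec{d}}^{i-1,i}$.

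\textbf{The skew-nested component.} The locus where $p=q$ and $F_{i}\subseteq F_{i-1}$ with $E_{i}/F_{i}\to E_{i-1}/F_{i-1}$ an isomorphism is exactly $\mathcal{Z}_{\vec{d}}^{i-1,i}$. Here the closed conditions are $p=q$ and $s\neq 0$, but it is cleaner to describe it directly by the diagram \eqref{Moduli space: The one key} with $j=i$.

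\textbf{The Hyperquot component.} The locus where $E_{i}\subseteq F_{i-1}$ is identified with $F^{n+2}\text{Quot}_{\vec{d}_{i-1,i}^{\bullet}}(V)$ via the flag
\[
F_{i}\subseteq E_{i}\subseteq F_{i-1}\subseteq E_{i-1}.
\]
Indeed, $F_{i}\subseteq E_{i}\subseteq F_{i-1}$ forces the flag condition, and the lengths match $\vec{d}^{\bullet}_{i-1,i}$.

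\textbf{Covering.} These two loci cover $\mathcal{Y}_{\vec{d}}^{i-1,i}$. If $E_{i}\not\subseteq F_{i-1}$, then, exactly as in the proof of Proposition \ref{proposition: smoothness and dimension of moduli space Z}, we have $E_{i}/(E_{i}\cap F_{i-1})\cong E_{i-1}/F_{i-1}\cong\mathbb{C}_{p}$. Since $F_{i}\subseteq E_{i}\cap F_{i-1}$ and both have colength one in $E_{i}$, we get $F_{i}=E_{i}\cap F_{i-1}$ and $q=p$. So the point lies in $\mathcal{Z}_{\vec{d}}^{i-1,i}$. Set-theoretically, $\mathcal{Y}_{\vec{d}}^{i-1,i}$ is therefore the union of the two closed subvarieties.

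\textbf{Dimensions and irreducibility of the pieces.} $\mathcal{Z}_{\vec{d}}^{i-1,i}$ is smooth and irreducible of dimension $rd_{n}$ (or $r(d_n+1)$ if $i=n$) by Proposition \ref{proposition: smoothness and dimension of moduli space Z}. The Hyperquot scheme $F^{n+2}\text{Quot}_{\vec{d}^{\bullet}_{i-1,i}}(V)$ is smooth of dimension $r$ times its largest entry. That entry is $d_{n}$ if $i\neq n$, and $d_{n}+1$ if $i=n$, so the two dimensions agree. Irreducibility of Hyperquot schemes on curves is standard; it can be obtained inductively, since the forgetful maps are smooth with Quot-scheme fibres, and Quot schemes of torsion quotients on a curve are irreducible.

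\textbf{Neither component contains the other.} Since the dimensions are equal and both pieces are irreducible, it suffices to check that neither is contained in the other. The generic point of $F^{n+2}\text{Quot}$ has $p\neq q$, which uses the assumption $d_{i}\neq d_{i-1}$ so that $E_{i}\subsetneq E_{i-1}$ has room. Hence it is not in $\mathcal{Z}$. Conversely, the generic point of $\mathcal{Z}_{\vec{d}}^{i-1,i}$ has $E_{i}\not\subseteq F_{i-1}$, since $s$ is nonzero generically by the blow-up description in Proposition \ref{proposition: smoothness and dimension of moduli space Z}. Hence it is not in the Hyperquot locus.

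\textbf{Main obstacle.} The delicate point is that the statement concerns $\mathcal{Y}$ as a scheme, so one may want the union to be scheme-theoretic rather than merely set-theoretic, and the paper only needs components and dimension. If reducedness is required, I would cut $\mathcal{Y}$ out of a smooth ambient space of expected dimension to get an l.c.i. structure. The natural ambient space is the product-type space recording $F_{i-1}\subseteq E_{i-1}$ at $p$ and $F_{i}\subseteq E_{i}$ at $q$ without the cross inclusion. In it, the condition $F_{i}\subseteq F_{i-1}$ is imposed by the vanishing of
\[
\mathcal{G}\to\mathscr{L}_{i-1}
\]
restricted along the graph of $q$, analogously to the end of the proof of Proposition \ref{proposition: smoothness and dimension of moduli space Z}. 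Then generic reducedness on each component, which holds since each component is smooth and the components meet in lower dimension, together with $S_{1}$ would give reducedness.
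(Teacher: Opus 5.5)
Your argument is correct and is essentially the paper's proof. Both split $\mathcal{Y}^{i-1,i}_{\vec d}$ into the zero locus of $s$, which is the Hyperquot scheme $F^{n+2}\text{Quot}_{\vec{d}^{\bullet}_{i-1,i}}(V)$ and contains the locus $p\neq q$, and the preimage of the diagonal, which is $\mathcal{Z}^{i-1,i}_{\vec d}$; your $E_i\cap F_{i-1}$ argument makes explicit the covering step that the paper only asserts.

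Two slips, neither affecting the statement:

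\begin{itemize}
\item $\mathcal{Z}^{i-1,i}_{\vec d}$ is the whole $p=q$ locus. It contains the divisor $J=\{s=0\}$ of Lemma \ref{key lemma to prove the e_i e_i-1 commutation}, so ``$s$ an isomorphism'' only describes an open part of it, and $s\neq 0$ is not a closed condition.
\item Your optional l.c.i.\ sketch would not work as written. For $p\neq q$ the cross inclusion $F_i\subseteq F_{i-1}$ is the vanishing at $p$ of the rank $r$ map $\mathcal{F}_i|_{\Gamma_p}\to\mathscr{L}_{i-1}$; there is no map out of $\mathcal{G}$, which lives at $q$, to $\mathscr{L}_{i-1}$. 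The rank $r-1$ trick from the $\mathcal{Z}$ proof needs both quotients supported at the same point. The paper instead proves l.c.i.-ness separately in Proposition \ref{lci ness of Y}, by intersecting two smooth subvarieties of a triple product of Hyperquot schemes.
\end{itemize}
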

\begin{proof}
    Note that the section
    \[
    s:\mathscr{L}_{i}\rightarrow \mathscr{L}_{i-1}
    \]
    from equation \eqref{equation:section of line bundle moduli space Y}
    cuts out the sub-scheme of $\mathcal{Y}_{\vec{d}}^{i-1,i}$, which parametrises diagrams of the form \eqref{Moduli space: e_i-1e_i} such that $E_{i}\subseteq F_{i-1}$. One observes that this sub-scheme is precisely $F^{n+2}\text{Quot}_{\vec{d}_{i-1,i}^{\bullet}}(V)$ and contains the open locus $$\left(q^{i}_{C}\times q_{C}^{i-1}\right)^{-1}((C\times C)\backslash \Delta),$$ where $\Delta$ is the diagonal in $C\times C$. On the other hand, note that the closed sub-scheme $\left(q^{i}_{C}\times q_{C}^{i-1}\right)^{-1}(\Delta)$ is precisely $\mathcal{Z}_{\vec{d}}^{i-1,i}$. This shows that $\mathcal{Y}_{\vec{d}}^{i-1,i}$ has two irreducible componenets. The dimension count follows from Proposition \ref{proposition: smoothness and dimension of moduli space Z}.  
\end{proof}

\begin{prop}\label{lci ness of Y}
    The space $\mathcal{Y}_{\vec{d}}^{i-1,i}$ is l.c.i and reduced.
\end{prop}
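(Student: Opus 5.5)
We follow the same pattern as the proof of Proposition \ref{proposition: smoothness and dimension of moduli space Z}: realise $\mathcal{Y}_{\vec{d}}^{i-1,i}$ as the zero locus of a section of a vector bundle on a smooth ambient space of the right dimension. The l.c.i. property then comes from the dimension count of Proposition \ref{geometry of Y}. Reducedness then follows from Serre's criterion ($R_0$ plus $S_1$).

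\textbf{Choice of ambient space.} Let $W$ be the moduli space of diagrams as in \eqref{Moduli space: e_i-1e_i}, but with the condition $F_i\subseteq F_{i-1}$ dropped. Concretely, $W$ parameterises a flag $E_n\subseteq\dots\subseteq E_1\subseteq V$ together with two sheaves $F_{i-1}\subset E_{i-1}$ and $F_i\subset E_i$, each of colength one in the respective $E$, with $E_{i+1}\subseteq F_i$. Forgetting $F_{i-1}$ exhibits $W$ as a $\mathbb{P}^{r-1}$-bundle (the projectivisation of $\mathcal{E}_{i-1}$ over the product with $C$, as in Proposition \ref{pi-times piC as a projective bundle} with $\mathcal{E}_{n+1}=0$) over $F^{n+1}\text{Quot}_{\vec{d}''_{i}}(V)$. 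The latter is smooth of dimension $rd_n$, or $r(d_n+1)$ if $i=n$. Hence $W$ is smooth of dimension $\dim\mathcal{Y}_{\vec{d}}^{i-1,i}+r$.

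\textbf{The section.} On $W$ let $\Delta_p:W\hookrightarrow W\times C$ be the graph of $p$. The condition $F_i\subseteq F_{i-1}$ says that the composition $\mathcal{F}_i\hookrightarrow\mathcal{E}_i\hookrightarrow\mathcal{E}_{i-1}\to\mathscr{F}_{i-1}=\mathcal{E}_{i-1}/\mathcal{F}_{i-1}$ vanishes. Since $\mathscr{F}_{i-1}=\mathscr{L}_{i-1}\otimes\mathcal{O}_{\Delta_p}$, this is a section of
\[
\rho_*\mathcal{H}om(\mathcal{F}_i,\mathscr{F}_{i-1})=(\mathcal{F}_i|_{\Delta_p})^{\vee}\otimes\mathscr{L}_{i-1},
\]
which is a vector bundle of rank $r$ by cohomology and base change, as in Proposition \ref{proposition to be used in tautological generation}. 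Its scheme-theoretic zero locus is $\mathcal{Y}_{\vec{d}}^{i-1,i}$, as a representability statement on $T$-points. This uses that $\mathcal{F}_{i-1}$ is the kernel of $\mathcal{E}_{i-1}\to\mathscr{F}_{i-1}$, so vanishing of the composite is exactly factorisation through $\mathcal{F}_{i-1}$. The codimension of the zero locus is $\dim W-\dim\mathcal{Y}_{\vec{d}}^{i-1,i}=r$, equal to the rank, by Proposition \ref{geometry of Y}: both components have the stated dimension. So the section is regular and $\mathcal{Y}_{\vec{d}}^{i-1,i}$ is l.c.i., in particular Cohen--Macaulay and hence $S_1$.

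\textbf{Reducedness.} It remains to check $R_0$, that is, generic reducedness on each irreducible component. On $F^{n+2}\text{Quot}_{\vec{d}^{\bullet}_{i-1,i}}(V)$ we use the open set where $p\neq q$. There $\mathcal{Y}_{\vec{d}}^{i-1,i}$ coincides scheme-theoretically with this smooth Hyperquot scheme: over $p\ne q$ the modifications at distinct points are independent, and the locus is also cut out by the section $s$ of \eqref{equation:section of line bundle moduli space Y}, which is invertible away from the component $\mathcal{Z}$. On $\mathcal{Z}_{\vec{d}}^{i-1,i}$ we use the open set where $E_i\not\subseteq F_{i-1}$, i.e. where $s\neq0$. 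There we must show that $\mathcal{Y}_{\vec{d}}^{i-1,i}$ agrees with the smooth $\mathcal{Z}_{\vec{d}}^{i-1,i}$ scheme-theoretically, meaning that the condition $p=q$ is imposed reducedly.

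This last step is the main obstacle. First I would try the following argument. When $s$ is nonzero at a point, the composite $E_i\to E_{i-1}/F_{i-1}$ is surjective, so $F_i$ is forced to equal $E_i\cap F_{i-1}$ on any $T$-point near it. This identifies $\mathcal{Y}\cap\{s\neq0\}$ with an open subscheme of the space of pairs $(E_\bullet,F_{i-1})$ with $E_{i+1}\subseteq E_i\cap F_{i-1}$. That space is smooth by the same fibration argument used for the space X in the proof of Proposition \ref{proposition: smoothness and dimension of moduli space Z}. It is then isomorphic to $\mathcal{Z}$ on that open set, giving $R_0$ and completing the proof.
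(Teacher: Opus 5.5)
Your proof is correct, but it takes a different route from the paper on both halves.

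\textbf{The l.c.i. property.} The paper does not use a section of a vector bundle. It writes $\mathcal{Y}_{\vec d}^{i-1,i}=Z_1\cap Z_2$ inside $F^n\text{Quot}_{\vec d}(V)\times F^n\text{Quot}_{\vec d_i'}(V)\times F^n\text{Quot}_{\vec d_{i-1,i}}(V)$. Here $Z_1$ and $Z_2$ are the supports of the $e_i$ and $e_{i-1}$ correspondences, each times a Hyperquot factor, and the paper checks that codimensions add. That presentation is tailored to Proposition \ref{composition of correspondences}, which is exactly how $\mathcal{Y}$ enters Lemma \ref{lemma on the composition e_i-1e_i}.

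Your presentation is equally valid: $\mathcal{Y}$ is the zero scheme of a section of $(\mathcal{F}_i|_{\Delta_p})^\vee\otimes\mathscr{L}_{i-1}$ on the smooth $\mathbb{P}^{r-1}$-bundle $W\to F^{n+1}\text{Quot}_{\vec d''_i}(V)\times C$. Your dimension count and the $T$-point description of the zero scheme are right. The argument is in the style of the proof of Proposition \ref{proposition: smoothness and dimension of moduli space Z}. It also gives a Koszul resolution and expresses $[\mathcal{Y}]$ as the top Chern class of that bundle on $W$.

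\textbf{Reducedness.} The paper only says that a dense open of $\mathcal{Y}$ is reduced ``because the two components are smooth''. That alone does not exclude a non-reduced structure along $\mathcal{Z}_{\vec d}^{i-1,i}$. Your check supplies the missing justification. You show that $\mathcal{Y}$ agrees scheme-theoretically with $F^{n+2}\text{Quot}_{\vec d^{\bullet}_{i-1,i}}(V)$ on $\{p\neq q\}$. You also show it agrees with a smooth scheme on $\{E_i\not\subseteq F_{i-1}\}$, using the forced identity $F_i=E_i\cap F_{i-1}$ in families.

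\textbf{Two slips to fix.}
\begin{enumerate}
\item The section $s$ is not invertible away from $\mathcal{Z}_{\vec d}^{i-1,i}$. It vanishes identically on $\{p\neq q\}$, since its zero scheme is the $F^{n+2}\text{Quot}$ component, and it is invertible only off that component. Your other reason is the correct one: on $\{p\neq q\}$ the composite $\mathcal{E}_i\to\mathscr{F}_{i-1}$ factors through $\mathscr{F}_i$, which is supported on the graph of $q$, disjoint there from the graph of $p$.
\item For $i<n$, the full space of pairs $(E_\bullet,F_{i-1})$ with $E_{i+1}\subseteq F_{i-1}$ is not smooth. The locus $\{E_i\subseteq F_{i-1}\}$ is an $(n+1)$-step Hyperquot scheme of the same dimension $rd_n$, so the space is reducible and in general singular. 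What you need, and what is true, is smoothness of the open part $\{E_i\not\subseteq F_{i-1}\}$. There $E_i\cap F_{i-1}$ is a flat family of locally free sheaves of colength one in $E_i$. So forgetting $E_{i+1},\dots,E_n$ exhibits this open part as a relative Hyperquot scheme of a locally free sheaf. Its base is an open subset of the smooth space $\{E_i\subseteq E_{i-1}\supseteq F_{i-1}\}$, the analogue of the space X.
\end{enumerate}
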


\begin{proof}
We will realise $\mathcal{Y}_{\vec{d}}^{i-1,i}$ as the intersection of smooth sub-varieties $Z_{1}$ and $Z_{2}$ of $$Z:=F^{n}\text{Quot}_{\vec{d}}\,(V)\times F^{n}\text{Quot}_{\vec{d}_{i}'}\,(V) \times F^{n}\text{Quot}_{\vec{d}_{i-1,i}}\,(V)$$ and such that $$\text{codim}_{Z}(Z_{1})+\text{codim}_{Z}(Z_{2})=\text{codim}_{Z}(Z_{1}\cap Z_{2}).$$This would immediately imply that $\mathcal{Y}_{\vec{d}}^{i-1,i}$ is l.c.i.. To this end, let us denote closed points of $Z=F^{n}\text{Quot}_{\vec{d}}\,(V)\times F^{n}\text{Quot}_{\vec{d}_{i}'}\,(V) \times F^{n}\text{Quot}_{\vec{d}_{i-1,i}}\,(V)$ by triples of flags of rank $r$ sub-sheaves of $V$ of the form $$\left\{(A_{n} \subseteq ...\subseteq A_{1}\subseteq V),((B_{n} \subseteq ...\subseteq B_{1}\subseteq V)),(C_{n} \subseteq ...\subseteq C_{1}\subseteq V)\right\}.$$ Let $Z_{1}$ be the sub-scheme of $Z$ given by the condition $$A_k=B_k\text{ if }k\neq i\text{ and }B_{i}\subseteq A_{i} \text{ with dim}_{\mathbb{C}}(A_{i}/B_{i})=1$$ and let $Z_{2}$ be the sub-scheme of $Z$ given by the condition $$B_k=C_k\text{ if }k\neq i-1\text{ and }C_{i-1}\subseteq B_{i-1} \text{ with dim}_{\mathbb{C}}(B_{i-1}/C_{i-1})=1.$$
Then one observes that both $Z_{1}$ and $Z_{2}$ are products of Hyperquot schemes and that the scheme theoretic intersection $Z_{1}\cap Z_{2}\cong \mathcal{Y}_{\vec{d}}^{i-1,i}$.\\\\
Reducedness is equivalent to properties $R_{0}$ and $S_{1}$ holding, since $\mathcal{Y}_{\vec{d}}^{i-1,i}$ is l.c.i., it is enough to show that a dense open of $\mathcal{Y}_{\vec{d}}^{i-1,i}$ is reduced. This follows from Proposition \ref{geometry of Y} because $\mathcal{Z}_{\vec{d}}^{i-1,i}$ and $F^{n+2}\text{Quot}_{\vec{d}_{i-1,i}^{\bullet}}(V)$ are smooth. 
\end{proof}

\noindent The line bundles $\mathscr{L}_{i}$ and $\mathscr{L}_{i-1}$ also define cohomology classes on $\mathcal{Y}_{\vec{d}}^{i-1,i}$:
\[
\lambda_{i}:=c_{1}(\mathscr{L}_{i})\text{ and }\lambda_{i-1}:=c_{1}(\mathscr{L}_{i-1}).
\]
By a harmless abuse of notation, we will also refer to the restrictions of the line bundles $\mathscr{L}_{i}$ and $\mathscr{L}_{i-1}$ to the irreducible components $\mathcal{Z}_{\vec{d}}^{i-1,i}$ and $F^{n+2}\text{Quot}_{\vec{d}_{i-1,i}^{\bullet}}(V)$ by the same notations.

\begin{lem}\label{key lemma to prove the e_i e_i-1 commutation}
     Let $\delta\in H^{2}(\mathcal{Y}_{\vec{d}}^{i-1,i})$ be the pull-back of the diagonal class in $C\times C$ via the map $q^{i}_{C}\times q_{C}^{i-1}$. Then we have the following equality of cohomology classes in $H^{*}(\mathcal{Y}_{\vec{d}}^{i-1,i})$:
     \[
     (\lambda_{i-1}-\lambda_{i})[\mathcal{Z}_{\vec{d}}^{i-1,i}]=\delta\cdot[F^{n+2}\text{Quot}_{\vec{d}_{i-1,i}^{\bullet}}].
     \]
     In the above equality, the terms in square brackets [] denote the fundamental classes of the enclosed sub-varieties.
\end{lem}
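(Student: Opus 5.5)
The plan is to realise both sides as the divisor class of a single section of a line bundle on $\mathcal{Y}:=\mathcal{Y}_{\vec{d}}^{i-1,i}$, computed in two ways. Write $Y_1:=\mathcal{Z}_{\vec{d}}^{i-1,i}$ and $Y_2:=F^{n+2}\text{Quot}_{\vec{d}_{i-1,i}^{\bullet}}(V)$, so that $\mathcal{Y}=Y_1\cup Y_2$ by Proposition \ref{geometry of Y}. By Proposition \ref{lci ness of Y}, $\mathcal{Y}$ is reduced and l.c.i., hence Cohen--Macaulay. The fundamental class is therefore $[\mathcal{Y}]=[Y_1]+[Y_2]$, and any Cartier divisor meeting no component generically has a well-defined class $c_1(\mathscr{M})\cap[\mathcal{Y}]$.

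The first computation uses the section $s\in H^0(\mathscr{L}_i^\vee\otimes\mathscr{L}_{i-1})$ from \eqref{equation:section of line bundle moduli space Y}. It vanishes exactly on $Y_2$, the locus where $E_i\subseteq F_{i-1}$, so it vanishes identically on that component. It is generically nonzero on $Y_1$: at a general point of $\mathcal{Z}$ one has $E_i\not\subseteq F_{i-1}$, as in the blow-up description in Proposition \ref{proposition: smoothness and dimension of moduli space Z}.

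Restricting to $Y_1$, the section $s|_{Y_1}$ cuts out the exceptional divisor $Y_1\cap Y_2$. By that blow-up description this locus is a reduced Cartier divisor, so
\[
(\lambda_{i-1}-\lambda_i)[Y_1]=[Y_1\cap Y_2].
\]
Similarly, on $Y_2$ the map $q^i_C\times q^{i-1}_C$ is pulled back from $C\times C$. Since $\Delta\subset C\times C$ is a Cartier divisor, its preimage in $Y_2$ is a Cartier divisor with class $\delta\cdot[Y_2]$. That preimage is again $Y_1\cap Y_2$: on $Y_2$ one has $p=q$ exactly when the point also lies in $\mathcal{Z}$.

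It then remains to show that both scheme-theoretic intersections equal the same reduced subscheme, with multiplicity one. Concretely, $Y_1\cap Y_2$ is the locus in $\mathcal{Z}_{\vec{d}}^{i-1,i}$ where $E_i\subseteq F_{i-1}$. This locus is the smooth space E from the proof of Proposition \ref{proposition: smoothness and dimension of moduli space Z}, here a $\mathbb{P}^{r-1}$-bundle over a smooth base.

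\textbf{Main obstacle.} The difficulty is checking multiplicity one on the $Y_2$ side, i.e.\ that $(q^i_C\times q^{i-1}_C)^*\Delta$ is reduced on $Y_2$. The approach I would try first is a local-coordinate check. Near a general point of the intersection, $Y_2$ is locally an iterated $\mathbb{P}^{r-1}$-bundle, built via Proposition \ref{pi-times piC as a projective bundle} first over the $q$-point and then over the $p$-point. So the map $Y_2\to C\times C$ is smooth onto its image near such a point, and the pullback of the diagonal is reduced. A count gives $\dim Y_1\cap Y_2=\dim Y_2-1$, consistent with this.

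Combining the two computations,
\[
(\lambda_{i-1}-\lambda_i)[Y_1]=[Y_1\cap Y_2]=\delta\cdot[Y_2]
\]
in $H^*(\mathcal{Y})$, pushed forward along the inclusions of the components.
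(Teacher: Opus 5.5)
Your strategy is the paper's: both sides are identified with the class of $Y_1\cap Y_2$ (the paper's locus $J$), once as the zero scheme of $s|_{Y_1}$ and once as the preimage of $\Delta$ in $Y_2$. The problem is the step you single out as the main obstacle, because the local argument you propose for it rests on a false structural claim. Proposition~\ref{pi-times piC as a projective bundle} only exhibits \emph{virtual} projective bundles $\mathbb{P}(\mathcal{E}_k-\mathcal{E}_{k+1})$. These are honest $\mathbb{P}^{r-1}$-bundles only when the inserted sheaf is the bottom of the flag. In $Y_2$ the sheaf $F_{i-1}$ sits above $E_i$, so inserting it is $\mathbb{P}(\mathcal{E}_{i-1}-\mathcal{E}_i)$, which is generically finite rather than a $\mathbb{P}^{r-1}$-bundle. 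For instance, for $i<n$ one has $\dim Y_2=rd_n=\dim F^n\text{Quot}_{\vec{d}}(V)$, so inserting $F_i$ and $F_{i-1}$ is generically finite. Hence $Y_2$ is not an iterated $\mathbb{P}^{r-1}$-bundle near $J$, and smoothness of $Y_2\to C\times C$ there does not follow this way.

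Fortunately no multiplicity computation is needed. Inside $\mathcal{Y}$ both components are cut out scheme-theoretically by their defining moduli conditions. First, $Y_2=Z(s)$, the condition $E_i\subseteq F_{i-1}$, as asserted in the proof of Proposition~\ref{geometry of Y}. Second, $Y_1=(q^i_C\times q^{i-1}_C)^{-1}(\Delta)$, the condition $p=q$, which is how $\mathcal{Z}^{i-1,i}_{\vec{d}}$ is defined as a functor. Hence, as closed subschemes of $\mathcal{Y}$,
\[
Z(s|_{Y_1})=Y_1\times_{\mathcal{Y}}Y_2=(q^i_C\times q^{i-1}_C)|_{Y_2}^{-1}(\Delta).
\]
Both are effective Cartier divisors on the integral schemes $Y_1$ and $Y_2$. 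The cycle of an effective Cartier divisor is the fundamental cycle of the underlying subscheme: its multiplicities are lengths of local rings at generic points, which are intrinsic to that subscheme. So $(\lambda_{i-1}-\lambda_i)\cap[Y_1]=\delta\cap[Y_2]$ follows immediately. The reducedness coming from the blow-up picture on the $Y_1$ side only serves to identify this common cycle with $[J]$. This is exactly the content of the paper's one-line assertion that $J$ is the scheme-theoretic pull-back of $\Delta$ to $F^{n+2}\text{Quot}_{\vec{d}^{\bullet}_{i-1,i}}(V)$.
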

\begin{proof}
    We claim that both the L.H.S. and the R.H.S. of the above equality of classes in $H^{*}(\mathcal{Y}_{\vec{d}}^{i-1,i})$ are equal to the fundamental class of the sub-variety $J$ of $\mathcal{Y}_{\vec{d}}^{i-1,i}$, which parameterises flags of rank $r$ sub-sheaves of $V$ the with following description:

\begin{equation}\label{moduli space J in proof of Y coho equality}\tag{J}
\begin{tikzcd}[cramped]
	&&& {E_{i-1}} & {E_{i-2}\hookrightarrow...\hookrightarrow E_{1}} & V \\
	{E_{n}\hookrightarrow...\hookrightarrow E_{i+1}} & {F_{i}} & {E_{i}} & {F_{i-1}}
	\arrow[hook, from=1-4, to=1-5]
	\arrow[hook, from=1-5, to=1-6]
	\arrow[hook, from=2-1, to=2-2]
	\arrow["p"', hook, from=2-2, to=2-3]
	\arrow[hook, from=2-3, to=2-4]
	\arrow["p"', hook, from=2-4, to=1-4]
\end{tikzcd}.
\end{equation}
In the above diagram, we impose that $\text{dim}_{\mathbb{C}}(V/E_{*})=d_{*}$ for all $*\in\{1,...,n\}$.\\\\
The sub-variety $J$ is cut-out by the restriction of the section $s$ in equation \eqref{equation:section of line bundle moduli space Y}, to the component $\mathcal{Z}_{\vec{d}}^{i-1,i}$ of $\mathcal{Y}_{\vec{d}}^{i-1,i}$. Therefore $ (\lambda_{i-1}-\lambda_{i})[\mathcal{Z}_{\vec{d}}^{i-1,i}]$ is equal to the fundamental class of $J$. On the other hand, $J$ is the scheme-theoretic pull-back $$J=(q^{i}_{C}\times q_{C}^{i-1})_{|_{F^{n+2}\text{Quot}_{\vec{d}_{i-1,i}^{\bullet}}(V)}}^{-1}(\Delta).$$ The projection formula then implies that $\delta\cdot[{F^{n+2}\text{Quot}_{\vec{d}_{i-1,i}^{\bullet}}(V)}]$ is also equal to the fundamental class of $J$ in $H^{*}(\mathcal{Y}_{\vec{d}}^{i-1,i})$, which finishes the proof.

\end{proof}

\subsection{The skew-nested Quot schemes $\mathcal{U}_{\vec{d}}^{i,j}$, $\mathcal{W}_{\vec{d}}^{i,j}$ and $\mathcal{X}_{\vec{d}}^{i,j}$ }\label{subsection: moduli spaces U,X,W}
 Let $\vec{d}=(d_1,...,d_{n})$ be an $n-$tuple of non-negative integers. For $1\leq i\leq j\leq n$, define $n-$tuples $\vec{d'}_{i,j}:=(d_1,...,d_{i-1},d_{i}+1,...,d_{j}+1,d_{j+1},...,d_{n})$ and let 
 $\vec{d''}_{i,j}:=\\
 (d_1,...,d_{i-1},d_{i}+1,...,d_{j-1}+1,d_{j}+2,d_{j+1},...,d_{n})$. We define the spaces $\mathcal{U}_{\vec{d}}^{i,j}$, $\mathcal{W}_{\vec{d}}^{i,j}$ and $\mathcal{X}_{\vec{d}}^{i,j}$ to be the fine moduli spaces parameterising the following diagrams of inclusions of rank $r$ sub-sheaves of $V$:

 \begin{equation}\tag{$\mathcal{U}_{\vec{d}}^{i,j}$}\label{The moduli space: U}
\begin{tikzcd}[cramped]
	&& {E_{j}} && {...\hookrightarrow E_{i+1}\,\,\,\,} & {E_{i}} & {...\hookrightarrow E_{1}} & V \\
	& {F^{'}_{j}} && {F_{j}} & {...\hookrightarrow F_{i+1}} & {F_{i}} \\
	{E_{n}\hookrightarrow...\hookrightarrow E_{j+1}} & {} & {H_{j}}
	\arrow[hook, from=1-3, to=1-5]
	\arrow[hook, from=1-5, to=1-6]
	\arrow[shift right=2, draw=none, from=1-5, to=2-5]
	\arrow[shift right=5, draw=none, from=1-5, to=2-5]
	\arrow[hook, from=1-6, to=1-7]
	\arrow[hook, from=1-7, to=1-8]
	\arrow["p"', hook, from=2-2, to=1-3]
	\arrow["q"', hook, from=2-4, to=1-3]
	\arrow[hook, from=2-4, to=2-5]
	\arrow["q"'{pos=0.4}, shift right=3, hook, from=2-5, to=1-5]
	\arrow[hook, from=2-5, to=2-6]
	\arrow["q"'{pos=0.4}, hook, from=2-6, to=1-6]
	\arrow[hook, from=3-1, to=3-3]
	\arrow["q"', hook, from=3-3, to=2-2]
	\arrow["p"', hook, from=3-3, to=2-4]
\end{tikzcd}
 \end{equation}

\begin{equation}\tag{$\mathcal{W}_{\vec{d}}^{i,j}$}\label{The moduli space: W}
\begin{tikzcd}[cramped]
	& {E_{j}} & {E_{j-1}} & {...\hookrightarrow E_{i+1}} & {E_{i}} & {...\hookrightarrow E_{1}} & V \\
	& {F^{'}_{j}} && {} \\
	{E_{n}\hookrightarrow...\hookrightarrow E_{j+1}} & {H_{j}} & {F_{j-1}} & {...\hookrightarrow F_{i+1}} & {F_{i}}
	\arrow[hook, from=1-2, to=1-3]
	\arrow[hook, from=1-4, to=1-5]
	\arrow[shift right=2, draw=none, from=1-4, to=2-4]
	\arrow[hook, from=1-5, to=1-6]
	\arrow[hook, from=1-6, to=1-7]
	\arrow["p"', hook, from=2-2, to=1-2]
	\arrow[hook, from=3-1, to=3-2]
	\arrow["q"', hook, from=3-2, to=2-2]
	\arrow[hook, from=3-2, to=3-3]
	\arrow["{q\,\,\,\,\,\,\,\,\,\,\,\,\,\,\,\,\,...}"', hook, from=3-3, to=1-3]
	\arrow["q"', shift right=3, hook, from=3-4, to=1-4]
	\arrow[hook, from=3-4, to=3-5]
	\arrow["q"', hook, from=3-5, to=1-5]
\end{tikzcd}
 \end{equation}

\begin{equation}\tag{$\mathcal{X}_{\vec{d}}^{i,j}$}\label{The moduli space: X}
\begin{tikzcd}[cramped]
	& {E_{j}} & {...\hookrightarrow E_{i+1}\,\,\,\,} & {E_{i}} & {...\hookrightarrow E_{1}} & V \\
	& {F_{j}} & {...\hookrightarrow F_{i+1}} & {F_{i}} \\
	{E_{n}\hookrightarrow...\hookrightarrow E_{j+1}} & {H_{j}}
	\arrow[hook, from=1-2, to=1-3]
	\arrow["{\,\,\,\,\,\,\,\,\,\,\,\,...}", shift left=5, draw=none, from=1-2, to=2-2]
	\arrow[hook, from=1-3, to=1-4]
	\arrow[shift right=2, draw=none, from=1-3, to=2-3]
	\arrow[shift right=5, draw=none, from=1-3, to=2-3]
	\arrow[hook, from=1-4, to=1-5]
	\arrow[hook, from=1-5, to=1-6]
	\arrow["q"'{pos=0.4}, hook, from=2-2, to=1-2]
	\arrow[hook, from=2-2, to=2-3]
	\arrow["q"'{pos=0.4}, shift right=3, hook, from=2-3, to=1-3]
	\arrow[hook, from=2-3, to=2-4]
	\arrow["q"'{pos=0.4}, hook, from=2-4, to=1-4]
	\arrow[hook, from=3-1, to=3-2]
	\arrow["p"', hook, from=3-2, to=2-2]
\end{tikzcd}
 \end{equation}
\newline
The above are diagrams of inclusions of certain rank $r$ sub-sheaves $E_{*},F_{*},F^{'}_{j},H_{*}$ of $V$ on $C$, for $*\in\{1,...,n\}$. In each of the above diagrams we impose that $\text{dim}_{\mathbb{C}}(V/E_{*})=d_{*}$.\\\\
For example, the space $\mathcal{X}_{\vec{d}}^{i,j}$ can be thought of as the moduli space parameterising triples of length $n$ flags of rank $r$ sub-sheaves of $V$:
\[
(E_n\subseteq...\subseteq E_1\subseteq V)\in F^{n}\text{Quot}_{\vec{d}}\,(V),\, 
(F_n\subseteq...\subseteq F_1\subseteq V)\in F^{n}\text{Quot}_{\vec{d'}_{i,j}}\,(V)
\]
\[
\text{ and }(H_n\subseteq...\subseteq H_1\subseteq V)\in F^{n}\text{Quot}_{\vec{d''}_{i,j}}\,(V), 
\]
such that $E_t=F_t$ for $t=1,...,i-1,j+1,...,n$ and $F_{s}\hookrightarrow E_{s}$, where $E_s/F_s$ are length one sheaves supported on the same, but arbitrary, point $q\in C$ for $s=i,i+1,...,j$. And $H_{t}=F_{t}$ for $t=1,...,j-1,j+1,...,n$ and $H_{j}\hookrightarrow F_{j}$, where $F_{j}/H_{j}$ is a length one sheaf on $C$, supported on some arbitrary point $p\in C$. The space $\mathcal{W}_{\vec{d}}^{i,j}$ has a similar description and hence we obtain closed embeddings:

\begin{equation}\label{natural embedding of moduli space X}
    i_{X}:\mathcal{X}_{\vec{d}}^{i,j}\hookrightarrow F^{n}\text{Quot}_{\vec{d}}\,(V)\times F^{n}\text{Quot}_{\vec{d'}_{i,j}}\,(V)\times F^{n}\text{Quot}_{\vec{d''}_{i,j}}\,(V).
\end{equation}
\begin{equation}\label{natural embedding of moduli space W}
    i_{W}:\mathcal{W}_{\vec{d}}^{i,j}\hookrightarrow F^{n}\text{Quot}_{\vec{d}}\,(V)\times F^{n}\text{Quot}_{\vec{d'}_{i,j}}\,(V)\times F^{n}\text{Quot}_{\vec{d''}_{i,j}}\,(V).
\newline
\end{equation} 
It would also be useful to consider the composition of $i_{X}$ and $i_{W}$ with the natural projection: $$F^{n}\text{Quot}_{\vec{d}}\,(V)\times F^{n}\text{Quot}_{\vec{d'}_{i,j}}\,(V)\times F^{n}\text{Quot}_{\vec{d''}_{i,j}}\,(V)\rightarrow F^{n}\text{Quot}_{\vec{d}}\,(V)\times  F^{n}\text{Quot}_{\vec{d''}_{i,j}}\,(V).$$
Hence, we obtain maps:
\begin{equation}\label{E,H forgetful map for moduli space X}
    k_{X}:\mathcal{X}_{\vec{d}}^{i,j}\rightarrow F^{n}\text{Quot}_{\vec{d}}\,(V)\times F^{n}\text{Quot}_{\vec{d''}_{i,j}}\,(V).
\end{equation}
\begin{equation}\label{E,H forgetful map for moduli space W}
    k_{W}:\mathcal{W}_{\vec{d}}^{i,j}\rightarrow F^{n}\text{Quot}_{\vec{d}}\,(V)\times F^{n}\text{Quot}_{\vec{d''}_{i,j}}\,(V).
    \newline
\end{equation}

\noindent Note that the moduli spaces $\mathcal{X}_{\vec{d}}^{i,j}$,$\mathcal{W}_{\vec{d}}^{i,j}$ and $\mathcal{U}_{\vec{d}}^{i,j}$ are non-empty if and only if the $n-$tuples $\vec{d}$ and $\vec{d''}_{i,j}$ are non-decreasing. We assume this non-emptiness condition for the rest of the section.
\subsection{Geometry of $\mathcal{U}_{\vec{d}}^{i,j}$, $\mathcal{W}_{\vec{d}}^{i,j}$ and $\mathcal{X}_{\vec{d}}^{i,j}$}
In the next few propositions, we will address the geometric properties of the spaces $\mathcal{U}_{\vec{d}}^{i,j}$, $\mathcal{W}_{\vec{d}}^{i,j}$ and $\mathcal{X}_{\vec{d}}^{i,j}$ in question. Namely we will discuss their dimension counts, irreducibility and smoothness.\\\\
First, let us note that there exist maps 
\begin{equation}\label{birational maps : eta and theta}
    \eta_{\vec{d}}^{i,j}:\mathcal{U}_{\vec{d}}^{i,j}\rightarrow\mathcal{X}_{\vec{d}}^{i,j} \text{ and } \theta_{\vec{d}}^{i,j}:\mathcal{U}_{\vec{d}}^{i,j}\rightarrow\mathcal{W}_{\vec{d}}^{i,j},
\end{equation}
\noindent which respectively forget the sheaves $F^{'}_{j}$ and $F_{j}$ (cf. Diagrams (\ref{The moduli space: U}),(\ref{The moduli space: W}) and (\ref{The moduli space: X}).)

\begin{prop}\label{Geometry of moduli space $U$}
    The moduli space $\mathcal{U}_{\vec{d}}^{i,j}$ is a smooth projective variety of dimension $rd_{n}$ if $j\neq n$ and of dimension $r(d_{n}+2)$ if $j=n$.
\end{prop}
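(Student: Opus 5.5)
First reduce to $j=n$, exactly as in the proof of Proposition \ref{proposition: smoothness and dimension of moduli space Z}. The map $\mathcal{U}_{\vec{d}}^{i,j}\to\mathcal{U}_{\vec{f}}^{i,j}$ with $\vec{f}=(d_1,\dots,d_j)$ forgets $E_{j+1},\dots,E_n$. Its fibres are smooth Hyperquot schemes of flags inside $H_j$, of dimension $r(d_n-d_j-2)$. So it suffices to show that $\mathcal{U}_{\vec{d}}^{i,n}$ is smooth of dimension $r(d_n+2)$. The dimension for $j\neq n$ then follows by adding $r(d_n-d_j-2)$ to $r(d_j+2)$.

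For $j=n$ I would fibre $\mathcal{U}_{\vec{d}}^{i,n}$ over $\mathcal{Z}_{\vec{d}}^{i,n}$. The forgetful map $\mathcal{U}_{\vec{d}}^{i,n}\to\mathcal{Z}_{\vec{d}}^{i,n}$ remembers the flags $E_\ast$ and $F_\ast$ (with $E_n/F_n\cong\mathbb{C}_q$) and forgets $F'_n$ and $H_n$. Its fibre is the space of pairs $H_n\subseteq F'_n\subseteq E_n$ with $E_n/F'_n\cong \mathbb{C}_p$, $F'_n/H_n\cong\mathbb{C}_q$, and $H_n\subseteq F_n$ with $F_n/H_n\cong\mathbb{C}_p$, for varying $p$.

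A cleaner route is to factor through the variable $H_n$. The map $(E_\ast,F_\ast,H_n)\mapsto(E_\ast,F_\ast)$ is the projectivisation $\mathbb{P}(\mathcal{F}_n)$ over $\mathcal{Z}_{\vec{d}}^{i,n}\times C$, as in Proposition \ref{pi-times piC as a projective bundle} with $k=n$. This intermediate space, call it $\mathcal{Z}'$, is smooth of dimension $r(d_n+1)+1+(r-1)=r(d_n+2)$.

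Then I would show that the map $\mathcal{U}_{\vec{d}}^{i,n}\to\mathcal{Z}'$ forgetting $F'_n$ is birational, and identify it as a blow-up or an isomorphism, copying the strategy of Proposition \ref{proposition: smoothness and dimension of moduli space Z}.

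Given $H_n\subseteq F_n\subseteq E_n$ with $E_n/H_n$ of length two supported at $\{p,q\}$, the choices of $F'_n$ with $E_n/F'_n\cong\mathbb{C}_p$ and $F'_n/H_n\cong\mathbb{C}_q$ are as follows:
\begin{itemize}
\item If $p\neq q$, $F'_n$ is unique: it is the preimage of the $q$-part of $E_n/H_n$.
\item If $p=q$ and $E_n/H_n\cong\mathcal{O}_p/\mathfrak m_p^2$ is curvilinear, $F'_n$ must be $F_n$, so again it is unique.
\item If $p=q$ and $E_n/H_n\cong\mathbb{C}_p^{2}$, the choices of $F'_n$ form a $\mathbb{P}^1$.
\end{itemize}
This last locus $Z''\subseteq\mathcal{Z}'$ is where $H_n\supseteq \mathfrak{m}_pE_n$. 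It is therefore the locus where the section $\mathcal{G}\to\mathscr{L}$ obtained by composing $H_n|_p\to E_n|_p\to E_n/F_n$ vanishes on the rank $r-2$ kernel. I expect $Z''$ to be smooth of codimension $2$, being a Grassmannian-type bundle over $\{p=q\}$. The exceptional locus over it is then a $\mathbb{P}^1$-bundle, which is a divisor.

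I would then proceed as before:
\begin{enumerate}
\item Exhibit the exceptional divisor as the zero locus of a map of tautological line bundles on $\mathcal{U}$, namely the map $E_n/F_n\to E_n/F'_n$ restricted over the diagonal, or equivalently the comparison of $F_n$ with $F'_n$. This makes it Cartier.
\item Show that $\mathcal{U}_{\vec{d}}^{i,n}$ is irreducible and reduced. To do this, cut it out by a section of a vector bundle of the right rank on a smooth ambient space. A natural ambient space is the product over $\mathcal{Z}_{\vec{d}}^{i,n}$ of the $\mathbb{P}^{r-1}$-bundle choosing $F'_n\subset E_n$ and the $\mathbb{P}^{r-1}$-bundle choosing $H_n\subset F_n$. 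The conditions $H_n\subseteq F'_n$ and $F'_n/H_n$ supported at $q$ are then imposed by a section of a bundle such as $\mathcal{G}^\vee\otimes\mathscr{L}$, as in the last step of the proof of Proposition \ref{proposition: smoothness and dimension of moduli space Z}.
\item Once $\mathcal{U}$ is shown to be l.c.i. of the expected dimension, apply the universal property of blow-ups to identify $\mathcal{U}\cong Bl_{Z''}\mathcal{Z}'$, which is smooth.
\end{enumerate}

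The main obstacle is step 2. The ambient count must give exactly $r(d_n+2)$, and the rank of the cutting bundle must match, since the incidence $H_n\subseteq F'_n$ is not obviously a single regular section.

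If this fails, I would instead use the symmetry of the paper's setup. Via $\eta$ and $\theta$, $\mathcal{U}$ maps to both $\mathcal{X}$ and $\mathcal{W}$, so $\mathcal{U}$ can be viewed as the blow-up of $\mathcal{X}$ along the locus $\{p=q,\ H_j\subseteq \mathfrak m E_j\}$. I would prove smoothness of $\mathcal{X}$ first, since it is a $\mathbb{P}^{r-1}$-bundle over $\mathcal{Z}\times C$, and then run the blow-up argument there.
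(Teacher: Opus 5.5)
Your reduction to $j=n$, your identification of the intermediate space with $\mathbb{P}(\mathcal{F}_n)$ over $\mathcal{Z}^{i,n}_{\vec{d}}\times C$, and your fibrewise analysis of the map forgetting $F'_n$ are correct. That intermediate space is the paper's $\mathcal{X}^{i,n}_{\vec{d}}$, so your fallback is the same route; there the locus should read $H_j\supseteq\mathfrak{m}_pE_j$. However, the two steps meant to turn this into a proof fail as stated.

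In Step 1, comparing $F_n$ with $F'_n$ (via $F_n/H_n\to E_n/F'_n$ or $F'_n/H_n\to E_n/F_n$; there is no natural map $E_n/F_n\to E_n/F'_n$) cuts out $\text{Diag}=\{F_n=F'_n\}$. That is the strict transform of $\{p=q\}$ and maps birationally onto it. It is not the exceptional locus $\{p=q,\ E_n/H_n\cong\mathbb{C}_p^{2}\}$ of your map. So you have not shown that the preimage of $Z''$ is Cartier, and you cannot invoke the universal property of the blow-up. A section that does cut out the exceptional locus is the one induced by multiplication by a local equation of $p$ on $E_n/H_n$, namely $\mathscr{L}_1\otimes(p^{U}\times q^{U})^{*}\mathcal{O}(-\Delta)\to\tilde{\mathscr{L}}_2$.

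In Step 2, your ambient space, with independent support points, has dimension $r(d_n+3)$. The incidence $H_n\subseteq F'_n$ is then a rank-$r$ condition whose zero locus is $\mathcal{U}^{i,n}_{\vec{d}}\cup\{F'_n=F_n\}$, two components of dimension $r(d_n+2)$. To cut out $\mathcal{U}^{i,n}_{\vec{d}}$ alone you would need the fibre product over $\mathcal{Z}^{i,n}_{\vec{d}}\times C$ (same point $p$ for $F'_n\subset E_n$ and $H_n\subset F_n$), of dimension $r(d_n+3)-1$, together with the rank $r-1$ section $\mathcal{G}_2\hookrightarrow\mathcal{F}_n|_{\Gamma_p}\to\mathcal{E}_n|_{\Gamma_p}\to\tilde{\mathscr{L}}_1$.

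The idea you are missing makes all of this unnecessary: $\mathcal{U}^{i,j}_{\vec{d}}$ literally is $\mathcal{Z}^{i,j+1}_{\vec{f}}$ for the $(n+1)$-tuple $\vec{f}=(d_1,\dots,d_j,d_j+1,d_{j+1},\dots,d_n)$. Insert $F'_j$ as the new $(j+1)$-st sheaf of the top flag and $H_j$ as the new $(j+1)$-st sheaf of the bottom flag. The conditions $F'_j\subseteq E_j$, $H_j\subseteq F_j$ and $F'_j/H_j\cong\mathbb{C}_q$ are exactly the new column of the $\mathcal{Z}$-diagram. The coincidence of the two support points labelled $p$ is automatic, because $E_j/H_j$ has length two. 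The smoothness and dimension result already proved for $\mathcal{Z}$ then gives dimension $rf_{n+1}=rd_n$ for $j<n$ and $r(f_{n+1}+1)=r(d_n+2)$ for $j=n$. This is the paper's one-line proof.

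Under this identification, the blow-up in the earlier proof for $\mathcal{Z}$ is the map forgetting $H_n$ rather than $F'_n$. Its centre is $\{F'_n=F_n\}$, of codimension $r$, and its exceptional divisor is $\text{Diag}$. That is exactly why comparing $F_n$ with $F'_n$ is the right Cartier equation for that map but not for yours.
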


\begin{proof}
    Note that the space $\mathcal{U}_{\vec{d}}^{i,j}$ is just the space $\mathcal{Z}_{\vec{f}}^{i,j+1}$, for the $(n+1)-$tuple $\vec{f}:=(d_{1},...,d_{j},d_{j}+1,d_{j+1},...,d_{n})$. Consequently, this proposition follows from Proposition \ref{proposition: smoothness and dimension of moduli space Z}. 
\end{proof}

\begin{prop}\label{Geometry of moduli space $X$}
    The moduli space $\mathcal{X}_{\vec{d}}^{i,j}$ is a smooth projective variety of dimension $rd_{n}$ if $j\neq n$ and of dimension $r(d_{n}+2)$ if $j=n$.
\end{prop}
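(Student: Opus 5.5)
The plan is to fibre $\mathcal{X}_{\vec{d}}^{i,j}$ over the smooth space $\mathcal{Z}_{\vec{d}}^{i,j}$ of Proposition \ref{proposition: smoothness and dimension of moduli space Z} and show that the fibres form a projective bundle, in the same way that Proposition \ref{pi-times piC as a projective bundle} exhibits $F^{n+1}\text{Quot}_{\vec{d}''_{n}}(V)$ as $\mathbb{P}(\mathcal{E}_{n})$.

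Consider the forgetful map
\[
\mathcal{X}_{\vec{d}}^{i,j}\rightarrow \mathcal{Z}_{\vec{d}}^{i,j},
\]
which discards $H_{j}$. Its fibre over a point of the form \eqref{Moduli space: The one key} parameterises a point $p\in C$ together with a length one quotient $F_{j}\twoheadrightarrow \mathbb{C}_{p}$. The only constraint is $E_{j+1}\subseteq H_{j}$. Since $\vec{d''}_{i,j}$ is assumed non-decreasing, we have $d_{j+1}\geq d_{j}+2$, so $E_{j+1}\subsetneq F_{j}$ with colength $d_{j+1}-d_{j}-1\geq 1$. This constraint is therefore the condition that the quotient $F_{j}\to\mathbb{C}_p$ kills the image of $E_{j+1}$.

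Globally, I expect $\mathcal{X}_{\vec{d}}^{i,j}\rightarrow \mathcal{Z}_{\vec{d}}^{i,j}\times C$ to be the virtual projective bundle $\mathbb{P}(\mathcal{F}_{j}-\mathcal{E}_{j+1})$ of Definition \ref{virtual projective bundle}, in exact analogy with Proposition \ref{pi-times piC as a projective bundle}. Here we use the convention $\mathcal{E}_{n+1}=0$, so that when $j=n$ this is the honest $\mathbb{P}^{r-1}$-bundle $\mathbb{P}(\mathcal{F}_{n})$.

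For $j=n$ the dimension count is then immediate:
\[
\dim \mathcal{X}_{\vec{d}}^{i,n}=r(d_{n}+1)+1+(r-1)=r(d_{n}+2).
\]
Smoothness and projectivity follow from those of $\mathcal{Z}_{\vec{d}}^{i,n}$.

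For $j<n$ I would reduce to a case without trailing sheaves, as in the first paragraph of the proof of Proposition \ref{proposition: smoothness and dimension of moduli space Z}. Let $\vec{f}=(d_{1},\dots,d_{j})$. The map $\mathcal{X}_{\vec{d}}^{i,j}\rightarrow\mathcal{X}_{\vec{f}}^{i,j}$ forgets $E_{j+1},\dots,E_{n}$. Its fibre over a point is the Hyperquot scheme of flags $E_{n}\subseteq\dots\subseteq E_{j+1}\subseteq H_{j}$, where $H_j$ has colength $d_j+2$ in $V$. This is smooth of dimension $r(d_{n}-d_{j}-2)$, and the family is smooth because the fibres are Hyperquot schemes of a varying locally free sheaf. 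By the case $j=n$ (with $n$ replaced by $j$), $\mathcal{X}_{\vec{f}}^{i,j}$ is smooth of dimension $r(d_{j}+2)$. Adding the two gives dimension $rd_{n}$, as claimed. Irreducibility follows because the total space is a smooth fibration with connected fibres over an irreducible base.

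The main obstacle is the identification of the fibration in the case $j=n$ scheme-theoretically, not just on closed points. Concretely, one must show that $\mathcal{X}_{\vec{f}}^{i,j}\rightarrow\mathcal{Z}_{\vec{f}}^{i,j}\times C$ represents the functor of rank one quotients of $\mathcal{F}_{j}|_{\Gamma}$. I would argue this exactly as in Lemma 1 of \cite{MaNe1}: a length one quotient of $F_{j}$ supported at $p$ is the same datum as a one-dimensional quotient of the fibre $F_{j}|_{p}$, and this description works in families, using flatness of the universal quotient. Once that is in place, the rest of the argument is the dimension bookkeeping above.
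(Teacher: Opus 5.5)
Your proof is correct, but it is organised differently from the paper's. The paper notes that $\mathcal{X}_{\vec d}^{i,j}$ is itself a space of type $\mathcal{Z}$. Regard $H_j$ as an extra member of the tail shared by the $E$- and $F$-flags. Then $\mathcal{X}_{\vec d}^{i,j}=\mathcal{Z}_{\vec f}^{i,j}$ for the $(n+1)$-tuple $\vec f=(d_1,\dots,d_j,d_j+2,d_{j+1},\dots,d_n)$. Since $j<n+1$, Proposition~\ref{proposition: smoothness and dimension of moduli space Z} gives dimension $rf_{n+1}$, which is $r(d_n+2)$ or $rd_n$.

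You instead apply that proposition only with the original indices and rebuild $H_j$ and the tail by hand. First you take the $\mathbb{P}^{r-1}$-bundle $\mathbb{P}(\mathcal{F}_j)$ over $\mathcal{Z}^{i,j}_{(d_1,\dots,d_j)}\times C$, then a Hyperquot fibration for $E_{j+1},\dots,E_n$. Unwinding the reduction step in the paper's proof of Proposition~\ref{proposition: smoothness and dimension of moduli space Z} (the case where $j$ is not the last index) shows the two arguments coincide. That step fibres $\mathcal{Z}_{\vec f}^{i,j}$ over $\mathcal{Z}^{i,j}_{(d_1,\dots,d_j)}$, with fibre the Hyperquot scheme of flags $E_n\subseteq\dots\subseteq E_{j+1}\subseteq H_j\subseteq F_j$. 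You have simply split this fibre into its first step (length one quotients of $F_j$) and the rest, so both proofs rest on the same inputs.

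The paper's relabelling is more economical. It only needs smoothness of relative Hyperquot schemes, so the ``main obstacle'' you flag disappears. In your version too it suffices that the relative Quot scheme of length one quotients is smooth of relative dimension $r$ over $\mathcal{Z}^{i,j}$. The identification with $\mathbb{P}(\mathcal{F}_j)$ is in any case the standard fact behind Proposition~\ref{pi-times piC as a projective bundle} for $k=n$. Your version has the advantage of making the structure $\mathcal{X}^{i,n}_{\vec d}\cong\mathbb{P}(\mathcal{F}_n)\to\mathcal{Z}^{i,n}_{\vec d}\times C$ explicit. Your expectation that $\mathcal{X}^{i,j}_{\vec d}$ is the virtual bundle $\mathbb{P}(\mathcal{F}_j-\mathcal{E}_{j+1})$ for $j<n$ is not proved, but nothing depends on it, since your $j<n$ case goes through the truncation.
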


\begin{proof}
    Similar to the proof of Proposition \ref{Geometry of moduli space $U$}, one observes that the moduli space \ref{The moduli space: X} is an instance of the moduli space $\mathcal{Z}_{\vec{f}}^{i,j}$ for the $(n+1)-$tuple $\vec{f}:=(d_{1},...,d_{j},d_{j}+2,d_{j+1},...,d_{n})$. Proposition \ref{proposition: smoothness and dimension of moduli space Z} then gives the desired result. 
\end{proof}

\begin{prop}\label{Geometry of moduli space $W$}
    The moduli space $\mathcal{W}_{\vec{d}}^{i,j}$ is a reduced, irreducible, l.c.i. projective variety of dimension $rd_{n}$ if $j\neq n$ and of dimension $r(d_{n}+2)$ if $j=n$.
\end{prop}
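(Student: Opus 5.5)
We will prove Proposition \ref{Geometry of moduli space $W$} by imitating the proof of Proposition \ref{lci ness of Y}. The approach is to first write $\mathcal{W}_{\vec{d}}^{i,j}$ as an intersection of two smooth subvarieties of the expected codimension, which gives the l.c.i. property and a lower bound on dimension. We then use the forgetful map $\theta_{\vec{d}}^{i,j}:\mathcal{U}_{\vec{d}}^{i,j}\to\mathcal{W}_{\vec{d}}^{i,j}$ from \eqref{birational maps : eta and theta} to get irreducibility, the exact dimension and reducedness.

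\emph{Reduction to $j=n$.} As in the proof of Proposition \ref{proposition: smoothness and dimension of moduli space Z}, consider the map forgetting $E_{j+1},\dots,E_n$. It is a fibration whose fibres are smooth Hyperquot schemes of dimension $r(d_n-d_j-2)$, over the analogous space for the truncated tuple $(d_1,\dots,d_j)$. All the claimed properties are stable under such smooth fibrations, so we may assume $j=n$. The target dimension is then $r(d_n+2)$.

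\emph{l.c.i.} A point of $\mathcal{W}$ is a triple of flags $(E_\bullet,G_\bullet,H_\bullet)$, where $G_\bullet$ agrees with $E_\bullet$ except that $G_j=F'_j$. Let $\vec{g}$ be the degree vector of $G_\bullet$ and put $Z:=F^n\text{Quot}_{\vec{d}}(V)\times F^n\text{Quot}_{\vec{g}}(V)\times F^n\text{Quot}_{\vec{d''}_{i,j}}(V)$. Let $Z_1\subseteq Z$ impose the $(E,G)$ condition: $G$ equals $E$ except $G_j\subseteq E_j$ with colength one. This is a product of the Hyperquot scheme $F^{n+1}\text{Quot}$ of Subsection \ref{subsection on moduli space: Fn+1 Quot} with a Hyperquot scheme, hence smooth. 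Let $Z_2\subseteq Z$ impose the $(G,H)$ condition: $H_s\subseteq G_s$ with colength one at a common point $q$ for $s=i,\dots,j$, and $H_t=G_t$ otherwise. This is $\mathcal{Z}^{i,j}_{\vec{g}}$ times a Hyperquot scheme, which is smooth by Proposition \ref{proposition: smoothness and dimension of moduli space Z}. Scheme-theoretically $Z_1\cap Z_2\cong\mathcal{W}_{\vec{d}}^{i,j}$. Hence every component of $\mathcal{W}$ has dimension at least $\dim Z_1+\dim Z_2-\dim Z$, which is $r(d_n+2)$ by the dimension formulas above. Once we show $\dim\mathcal{W}=r(d_n+2)$, the intersection is of expected codimension and $\mathcal{W}$ is l.c.i.

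\emph{Irreducibility and dimension via $\theta$.} First we show $\theta$ is surjective. Given a point of $\mathcal{W}$, the quotient $E_j/H_j$ has length two and $q$ lies in its support, since $F'_j/H_j\cong\mathbb{C}_q$. Choose a length-one quotient of $E_j/H_j$ supported at $q$ and let $F_j$ be its kernel in $E_j$. Then $H_j\subseteq F_j$ with colength one at $p$, and $E_j/F_j\cong\mathbb{C}_q$. It remains to check $F_j\subseteq F_{j-1}$; this is automatic if we choose the quotient compatibly with $E_j\to E_{j-1}/F_{j-1}$, and that compatibility is the point to check carefully. Next, $\theta$ is an isomorphism over the open locus $\{E_j\not\subseteq F_{j-1}\}$. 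There, as in the blow-up argument, $F_j=E_j\cap F_{j-1}$ is forced, and it contains $H_j$ because $H_j\subseteq E_j$ and $H_j\subseteq F_{j-1}$. Since $\mathcal{U}$ is smooth, irreducible and of dimension $r(d_n+2)$ by Proposition \ref{Geometry of moduli space $U$}, surjectivity gives that $\mathcal{W}$ is irreducible of dimension at most $r(d_n+2)$. Combined with the lower bound, this gives equality and hence the l.c.i. property.

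\emph{Reducedness.} We check $S_1$ and $R_0$, as in the proofs of Propositions \ref{proposition: smoothness and dimension of moduli space Z} and \ref{lci ness of Y}. $S_1$ holds because $\mathcal{W}$ is l.c.i. For $R_0$, the open locus $\{E_j\not\subseteq F_{j-1}\}$ is isomorphic to an open subset of the smooth variety $\mathcal{U}$, so it is reduced. It is nonempty, hence dense by irreducibility; this follows because its complement is the pullback of a proper closed condition, namely the vanishing of the map of tautological line bundles $E_j/F'_j$-type $\to E_{j-1}/F_{j-1}$.

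The main obstacle I expect is the surjectivity of $\theta$ over the bad locus $E_j\subseteq F_{j-1}$, in particular when $p=q$ and $E_j/H_j$ is a length-two module at one point, where several choices of $F_j$ must be compared with $F_{j-1}$. If surjectivity fails there, I would instead bound that locus directly: it is fibred over a Hyperquot-type space, and I would show its dimension is less than $r(d_n+2)$, so it cannot form an extra component by the l.c.i. lower bound.
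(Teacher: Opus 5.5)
Your proposal is correct and is essentially the paper's proof. It uses the same realisation of $\mathcal{W}_{\vec{d}}^{i,j}$ as $Z_1\cap Z_2$ inside a triple product of Hyperquot schemes (your middle degree vector $\vec{g}=\vec{d}'_j$ is the right one), the surjectivity of $\theta_{\vec{d}}^{i,j}$ together with its being an isomorphism over $\{E_j\not\subseteq F_{j-1}\}$, and the $R_0+S_1$ criterion; your reduction to $j=n$ is harmless but not needed.

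The step you flag as the main obstacle is in fact immediate. On the locus $E_j\subseteq F_{j-1}$, every $F_j\subseteq E_j$ automatically lies in $F_{j-1}$, so the preimages there are exactly the length-one quotients $E_j/H_j\to\mathbb{C}_q$, which always exist (e.g.\ $F_j=F'_j$ when $p=q$). Off that locus, $F_j=E_j\cap F_{j-1}$ works, which is precisely how the paper argues.
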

\begin{proof}
    We claim that the map ${\theta}_{\vec{d}}^{i,j}:\mathcal{U}_{\vec{d}}^{i,j}\rightarrow\mathcal{W}_{\vec{d}}^{i,j}$ is surjective at the level of closed points and an isomorphism over an open locus of $\mathcal{W}_{\vec{d}}^{i,j}$. This would imply that $\mathcal{W}_{\vec{d}}^{i,j}$ is irreducible and with dimension equal to that of $\mathcal{U}_{\vec{d}}^{i,j}$. To this end, let $I$ be the closed sub-scheme of $\mathcal{W}_{\vec{d}}^{i,j}$ given by the condition $E_{j}\subseteq F_{j-1}$ (c.f diagram \eqref{The moduli space: W}). It is not difficult to see that points in $I$ have pre-images in $\mathcal{U}_{\vec{d}}^{i,j}$, indeed pre-images of a point in $I$ correspond to length one quotients:
    \[
    \frac{E_{j}}{H_{j}}\rightarrow\mathbb{C}_{q}\rightarrow 0.
    \]
On the other hand, a point in $\mathcal{W}_{\vec{d}}^{i,j}\backslash I$ has a unique pre-image in $\mathcal{U}_{\vec{d}}^{i,j}$, given by $F_{j}':=E_{j}\cap F_{j-1}$. In particular, the morphism ${\theta}_{\vec{d}}^{i,j}$ is an isomorphism over this locus since this inverse can be constructed in families in the same manner. Therefore we have shown that $\mathcal{W}_{\vec{d}}^{i,j}$ is irreducible and has the same dimension as that of $\mathcal{U}_{\vec{d}}^{i,j}$.\\\\
Let us now show that $\mathcal{W}_{\vec{d}}^{i,j}$ is l.c.i.. The discussion in the above paragraph shows that $\mathcal{U}_{\vec{d}}^{i,j}$ is reduced on a dense open, so we will also obtain the reducedness of $\mathcal{W}_{\vec{d}}^{i,j}$ by the $R_{0}+S_{1}$ criterion. To see that $\mathcal{W}_{\vec{d}}^{i,j}$ is l.c.i., we will realise it as the intersection of smooth sub-varieties $Z_{1}$ and $Z_{2}$ of $$Z:=F^{n}\text{Quot}_{\vec{d}}\,(V)\times F^{n}\text{Quot}_{\vec{d'}_{i,j}}(V)\times F^{n}\text{Quot}_{\vec{d''}_{i,j}}(V)$$ and such that $$\text{codim}_{Z}(Z_{1})+\text{codim}_{Z}(Z_{2})=\text{codim}_{Z}(Z_{1}\cap Z_{2}).$$ Let us denote closed points of $Z=F^{n}\text{Quot}_{\vec{d}}\,(V)\times F^{n}\text{Quot}_{\vec{d'}_{i,j}}(V)\times F^{n}\text{Quot}_{\vec{d''}_{i,j}}(V)$ by triples of flags of rank $r$ sub-sheaves of $V$ of the form $$\left\{(A_{n} \subseteq ...\subseteq A_{1}\subseteq V),((B_{n} \subseteq ...\subseteq B_{1}\subseteq V)),(C_{n} \subseteq ...\subseteq C_{1}\subseteq V)\right\}.$$ Let $Z_{1}$ be the sub-scheme of $Z$ given by the condition $$A_k=B_k\text{ if }k\neq j\text{ and }B_{j}\subseteq A_{j}\text{ with dim}_{\mathbb{C}}(A_{j}/B_{j})=1$$ and let $Z_{2}$ be the sub-scheme of $Z$ given by the condition $$B_k=C_k\text{ if }k\notin \left\{i,i+1,...,j\right\}\text{ and } C_{k}\subseteq B_{k}\text{ with }B_{k}/C_{k}\cong\mathbb{C}_{q} \text{ otherwise}.$$ Above, the point $q\in C$ is arbitrary but same for all $k\in\{i,...,j\}.$
One observes that $Z_{1}$ and $Z_{2}$ are smooth, indeed $Z_{1}$ is a product of Hyperquot schemes and $Z_{2}$ is a product of $\mathcal{Z}_{\vec{d}}^{i,j}$ with a Hyperquot scheme. Finally, one notes that the scheme theoretic intersection $Z_{1}\cap Z_{2}\cong \mathcal{W}_{\vec{d}}^{i,j}$. In this proof, we showed that dim $\mathcal{W}_{\vec{d}}^{i,j}=$ dim $\mathcal{U}_{\vec{d}}^{i,j}$, therefore, the co-dimension counts follow from the Propositions \ref{Geometry of moduli space $U$} and \ref{proposition: smoothness and dimension of moduli space Z}.
\end{proof}
\subsection{Some sheaves on $\mathcal{U}_{\vec{d}}^{i,j}$, $\mathcal{W}_{\vec{d}}^{i,j}$ and $\mathcal{X}_{\vec{d}}^{i,j}$}
We have morphisms 
\begin{equation}\label{morphisms p on U,X,W to C}
p^{U}:\mathcal{U}_{\vec{d}}^{i,j}\rightarrow C,\,\,p^{X}:\mathcal{X}_{\vec{d}}^{i,j}\rightarrow C \text{ and }p^{W}:\mathcal{W}_{\vec{d}}^{i,j}\rightarrow C,
\end{equation}
which assign to the points (\ref{The moduli space: U}), (\ref{The moduli space: X}) and (\ref{The moduli space: W}), the points\\ Supp$\,E_{j}/F^{'}_{j}=$ Supp$\,F_{j}/H_{j}$, Supp$\,F_{j}/H_{j}$ and Supp$\,E_{j}/F^{'}_{j}$ respectively.\\\\
Similarly we have morphisms 
\begin{equation}\label{morphisms q on U,X,W to C}
q^{U}:\mathcal{U}_{\vec{d}}^{i,j}\rightarrow C,\,\,q^{X}:\mathcal{X}_{\vec{d}}^{i,j}\rightarrow C \text{ and }q^{W}:\mathcal{W}_{\vec{d}}^{i,j}\rightarrow C,
\end{equation}
which assign to the points (\ref{The moduli space: U}), (\ref{The moduli space: X}) and (\ref{The moduli space: W}), the points\\ Supp$\,E_{j}/F_{j}=$ Supp$\,F^{'}_{j}/H_{j}$, Supp$\,E_{j}/F_{j}$ and Supp$\,F^{'}_{j}/H_{j}$ respectively.\\\\
Using the graphs of the above morphisms and replicating the construction of \ref{construction of line bundle L} to the moduli spaces $\mathcal{U}_{\vec{d}}^{i,j},\mathcal{W}_{\vec{d}}^{i,j}$ and $\mathcal{X}_{\vec{d}}^{i,j}$, we obtain some tautological line bundles; On $\mathcal{U}_{\vec{d}}^{i,j}$, we have line bundles 
\[\mathscr{L}_{1},\mathscr{L}_{2},\tilde{\mathscr{L}}_{1}, \tilde{\mathscr{L}}_{2}, \mathscr{L}_{3},\]
 whose respective fibres over a closed point of the moduli given by the diagram (\ref{The moduli space: U}), are the length one quotients $E_{j}/F_{j}, F_{j}/H_{j},E_{j}/F^{'}_{j},F^{'}_{j}/H_{j}, E_{i}/F_{i}$. We denote the respective first Chern classes of these bundles as
 \[
 \lambda_{1},\lambda_{2},\tilde{\lambda}_{1},\tilde{\lambda}_{2},\lambda_{3}
 \]
On $\mathcal{X}_{\vec{d}}^{i,j}$, we have line bundles 
\[\mathscr{L}_{1},\mathscr{L}_{2}, \mathscr{L}_{3},\]
 whose respective fibres over a closed point of the moduli given by the diagram (\ref{The moduli space: X}), are the length one quotients $E_{j}/F_{j}, F_{j}/H_{j}, E_{i}/F_{i}$. As before, we denote the respective first Chern classes of these bundles as
 \[
 \lambda_{1},\lambda_{2},\lambda_{3}
 \]
 On $\mathcal{W}_{\vec{d}}^{i,j}$, we have line bundles 
\[\tilde{\mathscr{L}}_{1}, \tilde{\mathscr{L}}_{2}, \mathscr{L}_{3},\]
 whose respective fibres over a closed point of the moduli given by the diagram (\ref{The moduli space: W}), are the length one quotients $E_{j}/F^{'}_{j},F^{'}_{j}/H_{j}, E_{i}/F_{i}$. Finally, we denote the respective first Chern classes of these bundles as
 \[
 \tilde{\lambda}_{1},\tilde{\lambda}_{2},\lambda_{3}.
 \]
Note that we have intentionally confused the notation of line bundles/Chern classes on $\mathcal{W}_{\vec{d}}^{i,j}$ or $\mathcal{X}_{\vec{d}}^{i,j}$ with their respective pull-backs on $\mathcal{U}_{\vec{d}}^{i,j}$ via the maps $\mathcal{\eta}_{\vec{d}}^{i,j}$ or $\mathcal{\theta}_{\vec{d}}^{i,j}$.\\\\
If $\Gamma_{q}:\mathcal{U}_{\vec{d}}^{i,j}\rightarrow \mathcal{U}_{\vec{d}}^{i,j} \times C$ and  $\Gamma_{p}:\mathcal{U}_{\vec{d}}^{i,j}\rightarrow \mathcal{U}_{\vec{d}}^{i,j} \times C$ are the graphs of the maps $q^{U}$ and $p^{U}$ respectively, then we can define rank $r-1$ vector bundles on $\mathcal{U}_{\vec{d}}^{i,j}$
\[
\mathcal{G}_{1},\mathcal{G}_{2},\tilde{\mathcal{G}}_{1},\tilde{\mathcal{G}}_{2},\mathcal{G}_{3}
\]
to be the kernels of the morphisms:

\[
\mathcal{E}_{{j}_{|_{\Gamma_{q}}}}\rightarrow \mathscr{L}_{1},\,\mathcal{F}_{{j}_{|_{\Gamma_{p}}}}\rightarrow \mathscr{L}_{2},\,\mathcal{E}_{{j}_{|_{\Gamma_{p}}}}\rightarrow \tilde{\mathscr{L}}_{1}, \,\mathcal{F}'_{{j}_{|_{\Gamma_{q}}}}\rightarrow \tilde{\mathscr{L}_{2}},\,\mathcal{E}_{{i}_{|_{\Gamma_{q}}}}\rightarrow \mathscr{L}_{3}
\]
\newline
respectively. Above, the sheaves $\mathcal{E}_{{j}},\mathcal{E}_{{i}}$, $\mathcal{F}_{{j}}, \mathcal{F}'_{{j}}$ are universal sheaves on $\mathcal{U}_{\vec{d}}^{i,j}\times C$ whose fibres over a closed point of the form \eqref{The moduli space: U} are the sheaves $E_{j},E_{i},F_{j},F_{j}'$ respectively.\\\\
Similarly, we can define rank $r-1$ vector bundles on $\mathcal{X}_{\vec{d}}^{i,j}$:
\[
\mathcal{G}_{1},\mathcal{G}_{2},\mathcal{G}_{3}
\]
which are associated with the inclusions $F_{j}\subseteq E_{j}$, $H_{j}\subseteq F_{j}$ and $F_{i}\subseteq E_{i}$ respectively (cf. diagram \ref{The moduli space: X}) and rank $r-1$ vector bundles on $\mathcal{W}_{\vec{d}}^{i,j}$:
\[
\tilde{\mathcal{G}}_{1},\tilde{\mathcal{G}}_{2},\mathcal{G}_{3}
\]
which are associated with the inclusions $F'_{j}\subseteq E_{j}$, $H_{j}\subseteq F'_{j}$ and $F_{i}\subseteq E_{i}$ respectively (cf. diagram \ref{The moduli space: W}).
\subsection{Some divisors on $\mathcal{U}_{\vec{d}}^{i,j}$: $\Delta_{U}$, Diag and Exp}\label{Subsection: Divisors}
Let $\Delta\hookrightarrow C\times C$ denote the diagonal in $C\times C$ and let 
\begin{equation}\label{Equation: Definition of Delta U}
\Delta_{U}:=(p^{U}\times q^{U})^{-1}(\Delta),
\end{equation}
be the scheme theoretic pre-image of $\Delta\subseteq C\times C$ in $\mathcal{U}_{\vec{d}}^{i,j}$. In other words, $\Delta_{U}$
is the sub-scheme of $\mathcal{U}_{\vec{d}}^{i,j}$ given by the condition $p=q$ in Diagram \ref{The moduli space: U} and defines an effective Cartier Divisor.\\\\
Now, observe that we have the following morphisms of line bundles on $\mathcal{U}_{\vec{d}}^{i,j}$:
\[
\mathscr{L}_{2}\xrightarrow{s_{1}}\tilde{\mathscr{L}}_{1} \text{ and }\tilde{\mathscr{L}}_{2}\xrightarrow{s_{2}}\mathscr{L}_{1},
\]
that we think  as sections of the line bundles $\mathscr{L}_{2}^{-1}\tilde{\mathscr{L}}_{1}$ and $\tilde{\mathscr{L}}_{2}^{-1}{\mathscr{L}}_{1}$ respectively. Then, one observes that the zero locus of both the sections $s_{1}$ and $s_{2}$ is the divisor on $\mathcal{U}_{\vec{d}}^{i,j}$ obtained by imposing the conditions $F_{j}=F_{j}^{'}$ and $p=q$  (cf. Diagram \ref{The moduli space: U}).  We denote this divisor by Diag. The scheme Diag has the following description of it's closed points:

\begin{equation}\tag{Diag}\label{The moduli space: Diag}
    \begin{tikzcd}[cramped]
	& {E_{j}} & {...\hookrightarrow E_{i+1}\,\,\,\,} & {E_{i}} & {...\hookrightarrow E_{1}} & V \\
	& {F_{j}=F_{j}^{'}} & {...\hookrightarrow F_{i+1}} & {F_{i}} \\
	{E_{n}\hookrightarrow...\hookrightarrow E_{j+1}} & {H_{j}}
	\arrow[hook, from=1-2, to=1-3]
	\arrow["{\,\,\,\,\,\,\,\,\,\,\,\,...}", shift left=5, draw=none, from=1-2, to=2-2]
	\arrow[hook, from=1-3, to=1-4]
	\arrow[shift right=2, draw=none, from=1-3, to=2-3]
	\arrow[shift right=5, draw=none, from=1-3, to=2-3]
	\arrow[hook, from=1-4, to=1-5]
	\arrow[hook, from=1-5, to=1-6]
	\arrow["q"'{pos=0.4}, hook, from=2-2, to=1-2]
	\arrow[hook, from=2-2, to=2-3]
	\arrow["q"'{pos=0.4}, shift right=3, hook, from=2-3, to=1-3]
	\arrow[hook, from=2-3, to=2-4]
	\arrow["q"'{pos=0.4}, hook, from=2-4, to=1-4]
	\arrow[hook, from=3-1, to=3-2]
	\arrow["q"', hook, from=3-2, to=2-2]
\end{tikzcd}.
\end{equation}
\begin{prop}\label{smoothness of Diag}
    The scheme Diag is smooth.
\end{prop}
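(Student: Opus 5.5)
The plan is to identify Diag with a space already known to be smooth, or to fibre it over one. Diag parametrises a flag $E_n\subseteq\dots\subseteq E_{j+1}\subseteq H_j\subseteq F_j$, together with $F_s\subseteq E_s$ for $s=i,\dots,j$ with all quotients $E_s/F_s\cong\mathbb{C}_q$, and the extra condition that $F_j/H_j$ is also supported at the same point $q$. Forgetting $H_j$ gives a morphism
\[
\mathrm{Diag}\rightarrow \mathcal{Z}^{i,j}_{\vec{f}},\qquad \vec{f}:=(d_1,\dots,d_j),
\]
where we first restrict to the truncated data $E_j\subseteq\dots\subseteq E_1$ and $F_j\subseteq\dots\subseteq F_i$. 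The sheaves $E_{j+1},\dots,E_n$ are then handled as in the first reduction in the proof of Proposition \ref{proposition: smoothness and dimension of moduli space Z}: they form a smooth Hyperquot fibration over the data involving $H_j$. So we reduce to $j=n$ and to proving that the space $D$ of pairs (point of $\mathcal{Z}^{i,n}_{\vec{d}}$, $H_n\subseteq F_n$ with $F_n/H_n\cong\mathbb{C}_q$ at the same point $q=q_C$) is smooth.

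Step two realises $D$ as a projective bundle over $\mathcal{Z}^{i,n}_{\vec{d}}$. Length one quotients of $F_n$ supported at a prescribed point $q$ are parametrised by $\mathbb{P}(F_n|_q)$. In families, $D$ is the projectivisation $\mathbb{P}\big((\mathcal{F}_n)|_{\Delta}\big)$ over $\mathcal{Z}^{i,n}_{\vec{d}}$. Here $\Delta$ is the graph of $q_C$, and $(\mathcal{F}_n)|_\Delta$ is locally free of rank $r$ because $\mathcal{F}_n$ is locally free on $\mathcal{Z}\times C$. To check this is an isomorphism of schemes, and not just of closed points, I would argue as in Proposition \ref{pi-times piC as a projective bundle}. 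A quotient $\mathcal{F}_n\to\mathscr{L}\otimes\mathcal{O}_{\Gamma}$ over a test scheme $T$, whose support map agrees with $q_C$, is the same thing as a surjection $(\mathcal{F}_n)|_{\Gamma}\to\mathscr{L}$. This is exactly the functor of $\mathbb{P}((\mathcal{F}_n)|_\Delta)$. So $D$ is a $\mathbb{P}^{r-1}$-bundle over the smooth variety $\mathcal{Z}^{i,n}_{\vec{d}}$, and hence is smooth of dimension $r(d_n+1)+r-1$. This count matches the expectation that Diag is a divisor in $\mathcal{U}$, which has dimension $r(d_n+2)$.

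Step three checks that this moduli description agrees with the scheme structure of Diag defined as the zero locus of $s_1$ (equivalently $s_2$). This is the main obstacle. I would argue that the closed subscheme of $\mathcal{U}^{i,j}_{\vec{d}}$ given functorially by the condition $F_j=F'_j$ is represented by the bundle $D$ above, and then show that it coincides with the zero scheme of $s_1$. Vanishing of $s_1:\mathscr{L}_2\to\tilde{\mathscr{L}}_1$ means that the image of $F_j$ in $E_j/F'_j$ vanishes, so $F_j\subseteq F'_j$ holds scheme-theoretically. Since both sheaves have colength one in $E_j$, they are then equal, and this forces $p=q$. Conversely, on $D$ the section $s_1$ visibly vanishes. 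If scheme-theoretic equality proves delicate, a fallback is to compare dimensions: $D$ is smooth of the expected dimension $\dim\mathcal{U}-1$ and lies inside the zero locus of $s_1$, a Cartier divisor on the smooth $\mathcal{U}$. One then checks reducedness of the zero locus at a generic point by a local computation in the $\mathbb{P}^{r-1}$ fibre directions, and concludes equality using that an effective Cartier divisor is $S_1$ and generically agrees with $D$.
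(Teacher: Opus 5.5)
Your proposal is correct and follows the paper's own argument. The paper shows that Diag is a $\mathbb{P}^{r-1}$-bundle (the choice of $H_j\subseteq F_j$ at the point $q$) followed by a relative Hyperquot bundle (the choice of $E_{j+1},\dots,E_n$ inside $H_j$) over the smooth space $\mathcal{Z}^{i,j}_{(d_1,\dots,d_j)}$, exactly as in your first two steps. Your Step three, the scheme-theoretic match between the zero scheme of $s_1$ and this moduli description, is left implicit in the paper. Your primary argument already settles it: in a flat family, $\mathcal{F}_j\subseteq\mathcal{F}'_j$ with both of relative colength one in $\mathcal{E}_j$ forces $\mathcal{F}_j=\mathcal{F}'_j$ and hence $p=q$, so the dimension-count fallback is unnecessary.
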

\begin{proof}
    This follows from the fact that the scheme Diag is a $\mathbb{P}^{r-1}-$bundle followed by a Hyperquot scheme bundle, over the space $\mathcal{Z}_{(d_{1},...,d_{j})}^{i,j}$, which is smooth by Proposition \ref{proposition on smoothness and dimension of moduli space P}.
\end{proof}
\noindent Let us denote the fundamental class of Diag by $[\text{Diag}]\in H^{2}(\mathcal{U}^{i,j}_{\vec{d}})$. Then the following proposition is a consequence of the fact that Diag is the zero locus of both $s_{1}$ and $s_{2}$:
\begin{prop}\label{Diag in cohomology}
    The following equality holds in $H^{2}(\mathcal{U}^{i,j}_{\vec{d}}):$
    \[
    [\text{Diag}]=\lambda_{1}-\tilde{\lambda}_{2}=\tilde{\lambda}_{1}-\lambda_{2}.
    \]
\end{prop}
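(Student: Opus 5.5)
The plan is to use the fact that the zero scheme of a section of a line bundle, when it is an effective Cartier divisor, has fundamental class equal to the first Chern class of that line bundle. Since $\mathcal{U}^{i,j}_{\vec{d}}$ is smooth and irreducible (Proposition \ref{Geometry of moduli space $U$}), every nonzero section of a line bundle on it vanishes along an effective Cartier divisor. So it will suffice to establish three facts: (a) $s_{1}$ and $s_{2}$ are not identically zero; (b) the scheme-theoretic zero loci $Z(s_1)$ and $Z(s_2)$ both equal the reduced scheme Diag; (c) then
\[
[\text{Diag}]=c_{1}(\mathscr{L}_{2}^{-1}\tilde{\mathscr{L}}_{1})=\tilde{\lambda}_{1}-\lambda_{2},\qquad [\text{Diag}]=c_{1}(\tilde{\mathscr{L}}_{2}^{-1}\mathscr{L}_{1})=\lambda_{1}-\tilde{\lambda}_{2}.
\]

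For (a), I would work on the open locus $p\neq q$ of $\mathcal{U}^{i,j}_{\vec{d}}$. There $F_{j}\neq F'_{j}$, so $F_{j}+F'_{j}=E_{j}$. The map $s_{1}$ is induced fibrewise by $F_{j}/H_{j}\to E_{j}/F'_{j}$, and it is nonzero exactly when $F_{j}\not\subseteq F'_{j}$; this holds on that open set. The same argument applies to $s_{2}\colon F'_{j}/H_{j}\to E_{j}/F_{j}$. By Proposition \ref{Geometry of moduli space $U$} this open locus is dense.

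For (b), on closed points $s_{1}$ vanishes if and only if $F_{j}\subseteq F'_{j}$. Since both have colength one in $E_{j}$, this forces $F_{j}=F'_{j}$ and hence $p=q$. So set-theoretically $Z(s_{1})=\text{Diag}$, and symmetrically $Z(s_{2})=\text{Diag}$.

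The main obstacle is showing the equality is scheme-theoretic, i.e.\ that $Z(s_{1})$ is reduced. My approach is as follows.
\begin{itemize}
\item Both $Z(s_1)$ and Diag are Cartier divisors on the smooth variety $\mathcal{U}^{i,j}_{\vec{d}}$, and Diag is smooth by Proposition \ref{smoothness of Diag}. Diag is also irreducible, being a fibration over the irreducible space $\mathcal{Z}^{i,j}$.
\item Therefore $Z(s_{1})=m\cdot\text{Diag}$ for some multiplicity $m\ge 1$, and it suffices to prove $m=1$ at a single point.
\item To get $m=1$, I would compare Zariski tangent spaces. I would compute the tangent space of $Z(s_1)$ via its functorial description: families in which $F_{j}\subseteq F'_{j}$, which forces $F_{j}=F'_{j}$ as flat families of colength-one subsheaves. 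This family-level description is exactly the moduli problem defining Diag.
\item Concretely, the moduli functor of $Z(s_{1})$ is the subfunctor of $\mathcal{U}$ where $\mathcal{F}_{j}\to\mathcal{E}_{j}/\mathcal{F}'_{j}$ vanishes. Vanishing of that map means $\mathcal{F}_j\subseteq\mathcal{F}'_j$, and two flat families of colength-one subsheaves with one contained in the other must be equal. This is the functor of Diag, so $Z(s_1)=\text{Diag}$ as schemes.
\item The same functorial argument applies to $s_{2}$.
\end{itemize}

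As a fallback, I would check $m=1$ in a local model, namely $C=\mathbb{A}^{1}$ near a generic point of Diag with $E_{j}$ locally free. There I would compute $s_{1}$ explicitly in coordinates as a function of the form $(p-q)\cdot(\text{unit})$ along the appropriate transversal slice.
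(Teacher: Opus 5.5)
Your proposal is correct and takes the same route as the paper. The paper's justification is the one-line observation that Diag is the zero locus of both $s_1$ and $s_2$, so $[\text{Diag}]$ equals $c_1(\mathscr{L}_2^{-1}\tilde{\mathscr{L}}_1)$ and $c_1(\tilde{\mathscr{L}}_2^{-1}\mathscr{L}_1)$. Your functor-of-points argument already gives the scheme-theoretic equality directly: vanishing of $s_1$ on a $T$-family forces $\mathcal{F}_j\subseteq\mathcal{F}'_j$, hence $\mathcal{F}_j=\mathcal{F}'_j$, which is exactly the moduli functor of the smooth scheme Diag. So the multiplicity discussion and the local-model fallback are unnecessary.
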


\noindent Note that the birational map $\eta_{\vec{d}}^{i,j}:\mathcal{U}_{\vec{d}}^{i,j}\rightarrow\mathcal{X}_{\vec{d}}^{i,j}$ is a $\mathbb{P}^{1}$-bundle over the closed locus $\mathfrak{D}$, which consists of closed points of the form \eqref{The moduli space: X}, such that $p=q$ and $E_{j}/H_{j}\cong \mathbb{C}_{p}\oplus \mathbb{C}_{p}$. Also note that $\eta_{\vec{d}}^{i,j}$ is an isomorphism on the complement of $\mathfrak{D}$. Since $\mathcal{U}_{\vec{d}}^{i,j}$ and $\mathcal{X}_{\vec{d}}^{i,j}$ are smooth, it follows that $\mathfrak{D}$ is of codimension $2$ in $\mathcal{X}_{\vec{d}}^{i,j}$.  Let us denote by $\mathfrak{T}:=\mathcal{X}_{\vec{d}}^{i,j}\backslash\mathfrak{D}$, the open locus of $\mathcal{X}_{\vec{d}}^{i,j}$ over which $\eta_{\vec{d}}^{i,j}$ is an isomorphism.\\\\
\noindent Let us define the effective divisor Exp as:
\begin{equation}\label{equation: definition of exp}
    \text{Exp}:=\Delta_{U}-\text{Diag}.
\end{equation} 
 One observes that the open immersion $\left({\eta_{\vec{d}}^{i,j}}_{|_{\Delta_{U}}}\right)^{-1}(\mathfrak{T})\rightarrow \Delta_{U}$ factors through the closed immersion Diag$\hookrightarrow \Delta_{U}$. Hence we obtain that Diag is not contained in the support of $\text{Exp}$ and moreover, one notes that the support of Exp coincides with the the exceptional locus of $\eta_{\vec{d}}^{i,j}$. Explicitly, closed points of Exp are of the form of diagram \eqref{The moduli space: U}, such that $p=q$ and $E_{j}/H_{j}\cong \mathbb{C}_{p}\oplus \mathbb{C}_{p}.$

 \begin{lem}\label{Lemma: exact sequence, Diag and Exp}
Let $\mathcal{I}_{\text{Diag}}\subseteq\mathcal{O}_{\mathcal{U}^{i,j}_{\vec{d}}}$ be the ideal sheaf of Diag, then we have the following short exact sequence of sheaves on $\mathcal{U}^{i,j}_{\vec{d}}$:
\begin{equation}\label{equation:exact sequence, Diag and Exp}
    0\rightarrow\mathcal{I}_{\text{Diag}}\otimes\mathcal{O}_{\text{Exp}}\rightarrow {\mathcal{O}}_{\Delta_{U}}\rightarrow  \mathcal{O}_{\text{Diag}}\rightarrow 0.
\end{equation}
 \end{lem}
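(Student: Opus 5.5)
This is a general statement about effective Cartier divisors on the smooth (hence integral) variety $\mathcal{U}^{i,j}_{\vec{d}}$: if $D=\Delta_{U}$ decomposes as $D=D_{1}+D_{2}$ with $D_{1}=\text{Diag}$ and $D_{2}=\text{Exp}$ effective Cartier, then there is a short exact sequence $0\to \mathcal{O}(-D_{1})\otimes\mathcal{O}_{D_{2}}\to\mathcal{O}_{D}\to\mathcal{O}_{D_{1}}\to 0$. The proof only needs the decomposition \eqref{equation: definition of exp} to be a genuine equality of effective Cartier divisors.

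First I make the decomposition precise. $\Delta_{U}$ is the pullback of the diagonal of $C\times C$ under $p^{U}\times q^{U}$. Since $\mathcal{U}^{i,j}_{\vec{d}}$ is smooth and irreducible (Proposition \ref{Geometry of moduli space $U$}), its local rings are domains, so any nonzero local equation defines an effective Cartier divisor. $\Delta_{U}$ is a proper closed subset, so its local equation $f$ is nonzero. Diag is the zero scheme of the section $s_{1}$; at least its support is contained in $\Delta_{U}$. The key claim is that, locally, a generator $g$ of the ideal of Diag divides $f$. I would check this on the smooth Cartier divisor Diag (Proposition \ref{smoothness of Diag}), which is reduced and irreducible: its ideal is then prime of height one. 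Since $f$ vanishes on Diag, $f\in(g)$. Write $f=gh$; then Exp is \emph{defined} as the divisor of $h$, which is effective. This matches \eqref{equation: definition of exp}, and local equations of the three divisors satisfy $f=gh$ with $g,h$ nonzero in a domain.

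Next comes the local algebra. In a domain $R$ with $f=gh$, consider the surjection $R/(f)\to R/(g)$. Its kernel is $(g)/(gh)$. Multiplication by $g$ gives $R\to (g)/(gh)$, which is surjective with kernel $\{x: gx\in (gh)\}=(h)$, since $g$ is a nonzerodivisor. Hence $(g)/(gh)\cong R/(h)$. Globally, this kernel is $\mathcal{I}_{\text{Diag}}/\mathcal{I}_{\text{Diag}}\mathcal{I}_{\text{Exp}}$. Because $\mathcal{I}_{\text{Diag}}$ is invertible, this equals $\mathcal{I}_{\text{Diag}}\otimes\mathcal{O}_{\mathcal{U}}/\mathcal{I}_{\text{Exp}}=\mathcal{I}_{\text{Diag}}\otimes\mathcal{O}_{\text{Exp}}$. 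Here I also use $\mathcal{I}_{\Delta_{U}}=\mathcal{I}_{\text{Diag}}\mathcal{I}_{\text{Exp}}$, which is $(f)=(g)(h)$. These identifications are canonical, so they glue, and we obtain the sequence \eqref{equation:exact sequence, Diag and Exp}.

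The main obstacle is the first step: justifying that $\mathcal{I}_{\Delta_{U}}\subseteq\mathcal{I}_{\text{Diag}}$ scheme-theoretically, i.e.\ Diag $\subseteq\Delta_{U}$ as schemes, so that the divisibility $g\mid f$ holds. I would argue this in one of two ways. The first is via the moduli description: on the zero scheme of $s_{1}$, the two points $p$ and $q$ coincide functorially, since $F_{j}=F_{j}'$ forces both supports to agree. The second, more robustly, uses reducedness: Diag is smooth, hence reduced, so containment of closed sets suffices. Once this divisibility is in place, everything else is formal.
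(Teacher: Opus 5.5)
Your proof is correct, but it takes a different route from the paper.

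\textbf{The paper's route.} It starts from the description of $\Delta_{U}$ as the scheme-theoretic union of Diag and Exp. This makes the kernel of $\mathcal{O}_{\Delta_{U}}\to\mathcal{O}_{\text{Diag}}$ equal to $\mathcal{I}_{\text{Diag}}/(\mathcal{I}_{\text{Diag}}\cap\mathcal{I}_{\text{Exp}})$. The paper then has to prove $\mathcal{I}_{\text{Diag}}\cap\mathcal{I}_{\text{Exp}}=\mathcal{I}_{\text{Diag}}\cdot\mathcal{I}_{\text{Exp}}$. It does so by arguing that the local equation of Exp is a nonzerodivisor on Diag. This rests on $\text{codim}(\text{Diag}\cap\text{Exp})=2$, which is deduced from $\eta^{i,j}_{\vec{d}}$ mapping $\text{Diag}\cap\text{Exp}$ bijectively onto the codimension-two locus $\mathfrak{D}$.

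\textbf{Your route.} You read $\mathcal{I}_{\Delta_{U}}=\mathcal{I}_{\text{Diag}}\mathcal{I}_{\text{Exp}}$ as the very meaning of $\text{Exp}=\Delta_{U}-\text{Diag}$. You then use only that the local equation $g$ of Diag is a nonzerodivisor, which gives $(g)/(gh)\cong R/(h)$ directly. This is the standard sequence $0\to\mathcal{O}_{D_{2}}(-D_{1})\to\mathcal{O}_{D_{1}+D_{2}}\to\mathcal{O}_{D_{1}}\to 0$. It holds for any effective Cartier divisors, with no condition on how $D_{1}$ and $D_{2}$ meet, so the codimension-two analysis is not needed.

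\textbf{Comparison.} Your argument also justifies something the paper takes for granted: that Exp is effective, i.e.\ $\text{Diag}\subseteq\Delta_{U}$ scheme-theoretically. For that step only reducedness (local regularity) of Diag matters, not irreducibility. Keep your second, reducedness-based justification; the functorial one is only sketched. What the paper's extra work buys is the geometric fact that Diag is not a component of Exp, so $\Delta_{U}$ really is the scheme-theoretic union. That fact fits with Exp being supported on the exceptional locus of $\eta^{i,j}_{\vec{d}}$, which the paper uses later, e.g.\ ${\eta^{i,j}_{\vec{d}}}_{*}[\text{Exp}]=0$. It is not needed for the exact sequence itself.
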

 \begin{proof}Since $\Delta_{U}$ is the scheme theoretic union of Diag and Exp, we have the exact sequence
 \[
 0\rightarrow \frac{\mathcal{I}_{\text{Diag}}}{\mathcal{I}_{\text{Diag}}\cap\mathcal{I}_{\text{Exp}}}\rightarrow {\mathcal{O}}_{\Delta_{U}}\rightarrow  \mathcal{O}_{\text{Diag}}\rightarrow 0,
 \]
     where $\mathcal{I}_{\text{Exp}}$ is the ideal sheaf of Exp on $\mathcal{U}^{i,j}_{\vec{d}}$. It remains to argue that \\
     ${\mathcal{I}_{\text{Diag}}\cap\mathcal{I}_{\text{Exp}}}={\mathcal{I}_{\text{Diag}}\cdot\mathcal{I}_{\text{Exp}}}$. This is true whenever locally, the equation cutting out Exp is a non-zero divisor in Diag, or in other words codim$_{\mathcal{U}^{i,j}_{\vec{d}}}(\text{Diag}\,\cap\,\text{Exp})=2$. This is because $\eta^{i,j}_{\vec{d}}$ maps $\text{Diag}\,\cap\,\text{Exp}$ bijectively onto $\mathfrak{D}$, the fact that $\mathfrak{D}$ is a closed set of codimension $2$ in $\mathcal{X}^{i,j}_{\vec{d}}$ and that dim $\mathcal{X}^{i,j}_{\vec{d}}=$ dim $\mathcal{U}^{i,j}_{\vec{d}}$.
 \end{proof}

\begin{prop}\label{proposition: short exact sequence of G's 1}
   On $\mathcal{U}^{i,j}_{\vec{d}}$, we have the following short exact sequence of sheaves:

\[
0\rightarrow\mathcal{G}_{2}\rightarrow \tilde{\mathcal{G}}_{1}\rightarrow \tilde{\mathscr{L}}_{2}\otimes \mathcal{O}_{\text{Exp}}\rightarrow 0.
\]
\end{prop}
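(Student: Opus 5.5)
The plan is to construct the map $\mathcal{G}_{2}\rightarrow\tilde{\mathcal{G}}_{1}$ directly, show that it is an injection of rank $r-1$ bundles whose determinant vanishes exactly along the Cartier divisor Exp, and then identify the cokernel with $\tilde{\mathscr{L}}_{2}\otimes\mathcal{O}_{\text{Exp}}$.

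\textbf{Construction of the map.} By definition, $\mathcal{G}_{2}$ is the image of ${\mathcal{H}_{j}}_{|_{\Gamma_{p}}}\rightarrow{\mathcal{F}_{j}}_{|_{\Gamma_{p}}}$, and $\tilde{\mathcal{G}}_{1}$ is the image of ${\mathcal{F}'_{j}}_{|_{\Gamma_{p}}}\rightarrow{\mathcal{E}_{j}}_{|_{\Gamma_{p}}}$. The inclusion $\mathcal{F}_{j}\subseteq\mathcal{E}_{j}$ restricted to $\Gamma_{p}$ gives a map $\mathcal{G}_{2}\rightarrow{\mathcal{E}_{j}}_{|_{\Gamma_{p}}}$. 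Since $\mathcal{H}_{j}\subseteq\mathcal{F}'_{j}$, its image lies in $\tilde{\mathcal{G}}_{1}$, which gives a morphism $\phi:\mathcal{G}_{2}\rightarrow\tilde{\mathcal{G}}_{1}$ of rank $r-1$ bundles.

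\textbf{Rank of $\phi$ at closed points.} I would check this pointwise, using Diagram \eqref{The moduli space: U}.
\begin{enumerate}
\item If $p\neq q$, then $F_{j}=E_{j}$ and $H_{j}=F'_{j}$ near $p$, so $\phi$ is an isomorphism.
\item If $p=q$, both $\mathcal{G}_{2}$ and $\tilde{\mathcal{G}}_{1}$ map isomorphically into the fibre $E_{j}|_{p}$. Under these embeddings, $\mathcal{G}_{2}$ is identified with the image of $H_{j}|_{p}\rightarrow E_{j}|_{p}$.
\item If $E_{j}/H_{j}$ is cyclic of length $2$ at $p$, this image has dimension $r-1$ and $\phi$ is an isomorphism.
\item If $E_{j}/H_{j}\cong\mathbb{C}_{p}\oplus\mathbb{C}_{p}$, the image has dimension $r-2$, and the cokernel of $\phi$ is $F'_{j}|_{p}/\mathrm{Im}(H_{j}|_{p})=(F'_{j}/H_{j})_{p}$, which is the fibre of $\tilde{\mathscr{L}}_{2}$.
\end{enumerate}
So $\phi$ is an isomorphism off Exp and drops rank by exactly one along Exp. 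Since $\mathcal{U}^{i,j}_{\vec{d}}$ is smooth and irreducible (Proposition \ref{Geometry of moduli space $U$}) and $\phi$ is generically an isomorphism, $\phi$ is injective. Its cokernel is supported on the zero locus of $\det\phi\in H^{0}(\det\tilde{\mathcal{G}}_{1}\otimes\det\mathcal{G}_{2}^{-1})$.

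\textbf{Scheme-theoretic identification.} I would show that the divisor of $\det\phi$ is exactly Exp, by comparing first Chern classes. Using $\mathcal{G}_{2}=\mathrm{Ker}({\mathcal{F}_{j}}_{|_{\Gamma_{p}}}\rightarrow\mathscr{L}_{2})$ and $\tilde{\mathcal{G}}_{1}=\mathrm{Ker}({\mathcal{E}_{j}}_{|_{\Gamma_{p}}}\rightarrow\tilde{\mathscr{L}}_{1})$, we have
\[
c_{1}(\tilde{\mathcal{G}}_{1})-c_{1}(\mathcal{G}_{2})=c_{1}({\mathcal{E}_{j}}_{|_{\Gamma_{p}}})-c_{1}({\mathcal{F}_{j}}_{|_{\Gamma_{p}}})-\tilde{\lambda}_{1}+\lambda_{2}.
\]
Restricting the sequence $0\rightarrow\mathcal{F}_{j}\rightarrow\mathcal{E}_{j}\rightarrow\mathscr{L}_{1}\otimes\mathcal{O}_{\Gamma_{q}}\rightarrow0$ to $\Gamma_{p}$ should give $c_{1}({\mathcal{E}_{j}}_{|_{\Gamma_{p}}})-c_{1}({\mathcal{F}_{j}}_{|_{\Gamma_{p}}})=[\Delta_{U}]$. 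The underlying computation is $\Gamma_{p}^{*}[\Gamma_{q}]=(p^{U}\times q^{U})^{*}\delta=[\Delta_{U}]$. Combined with Proposition \ref{Diag in cohomology}, this gives
\[
c_{1}(\tilde{\mathcal{G}}_{1})-c_{1}(\mathcal{G}_{2})=[\Delta_{U}]-[\text{Diag}]=[\text{Exp}].
\]
A cleaner route, which avoids arguing only in cohomology, is to compare the two sections directly: compute $\det\phi$ locally as the section cutting out $\Delta_{U}$ divided by the section $s_{1}$ cutting out Diag.

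\textbf{Conclusion and the main obstacle.} Once $\mathrm{div}(\det\phi)=\text{Exp}$, the cokernel of $\phi$ has the following local description: an injective map of free modules whose determinant is a local equation $f$ of Exp and whose corank is one at every point of Exp. Locally this is $\mathrm{diag}(1,\dots,1,f)$ up to units, so the cokernel is a line bundle on Exp. The natural surjection ${\mathcal{F}'_{j}}_{|_{\Gamma_{p}}}\rightarrow(\mathcal{F}'_{j}/\mathcal{H}_{j})_{|_{\Gamma_{p}}}$, restricted to Exp where $p=q$, identifies this line bundle with $\tilde{\mathscr{L}}_{2}\otimes\mathcal{O}_{\text{Exp}}$. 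The main obstacle I expect is the multiplicity-one statement, namely that $\det\phi$ vanishes to order exactly one along Exp rather than to higher order. I would handle it via the identity $[\text{Exp}]=[\Delta_{U}]-[\text{Diag}]$ together with Lemma \ref{Lemma: exact sequence, Diag and Exp}, which isolates the Exp part of $\mathcal{O}_{\Delta_{U}}$.
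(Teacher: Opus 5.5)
Your route differs from the paper's and can be completed, but two steps are not justified as written.

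\textbf{How the paper argues.} The paper never looks at fibres. It maps the restricted universal sequence $0\to\mathcal{F}_{j}|_{\Gamma_{p}}\to\mathcal{E}_{j}|_{\Gamma_{p}}\to\mathscr{L}_{1}\otimes\mathcal{O}_{\Delta_{U}}\to0$ by surjections onto $0\to\mathscr{L}_{2}\xrightarrow{s_{1}}\tilde{\mathscr{L}}_{1}\to\tilde{\mathscr{L}}_{1}\otimes\mathcal{O}_{\text{Diag}}\to0$, and applies the snake lemma. The exact sequence $0\to\mathcal{I}_{\text{Diag}}\otimes\mathcal{O}_{\text{Exp}}\to\mathcal{O}_{\Delta_{U}}\to\mathcal{O}_{\text{Diag}}\to0$ computes the third kernel as $\mathscr{L}_{1}\otimes\mathcal{I}_{\text{Diag}}\otimes\mathcal{O}_{\text{Exp}}$. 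The section $s_{2}$ then gives $\mathcal{I}_{\text{Diag}}\cong\tilde{\mathscr{L}}_{2}\otimes\mathscr{L}_{1}^{-1}$. Injectivity, the scheme structure of the cokernel and its identification all come out at once. Your $\phi$ is exactly the induced map on kernels.

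\textbf{First gap: the divisor of $\det\phi$.} You need $\operatorname{div}(\det\phi)=\text{Exp}$ as Cartier divisors. Equality of first Chern classes only says two effective divisors with the same support are homologous. That fixes the multiplicities only if the support of Exp is irreducible, or its components have independent classes, and you have not shown either. Your ``cleaner route'' is the correct argument and should be the main one:
\begin{itemize}
\item Determinants are multiplicative along the commutative square formed by $\mathcal{G}_{2}\subset\mathcal{F}_{j}|_{\Gamma_{p}}\twoheadrightarrow\mathscr{L}_{2}$ and $\tilde{\mathcal{G}}_{1}\subset\mathcal{E}_{j}|_{\Gamma_{p}}\twoheadrightarrow\tilde{\mathscr{L}}_{1}$. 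Hence $\det(\mathcal{F}_{j}|_{\Gamma_{p}}\to\mathcal{E}_{j}|_{\Gamma_{p}})=s_{1}\cdot\det\phi$.
\item The left side cuts out $\Delta_{U}$ scheme-theoretically. Its cokernel is $\mathscr{L}_{1}\otimes\mathcal{O}_{\Delta_{U}}$ by right exactness of restriction, so this follows from the $0$-th Fitting ideal.
\item Therefore $\operatorname{div}(\det\phi)=\Delta_{U}-\text{Diag}=\text{Exp}$.
\end{itemize}

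\textbf{Second gap: identifying the cokernel.} You need the natural map $\mathcal{F}'_{j}|_{\Gamma_{p}}\to\tilde{\mathscr{L}}_{2}\otimes\mathcal{O}_{\text{Exp}}$ to kill $\ker(\mathcal{F}'_{j}|_{\Gamma_{p}}\to\mathcal{E}_{j}|_{\Gamma_{p}})$, so that it descends to $\tilde{\mathcal{G}}_{1}$. This cannot be checked at closed points, because Exp is not known to be reduced. It does hold:
\begin{itemize}
\item The map $s_{2}\colon\tilde{\mathscr{L}}_{2}\otimes\mathcal{O}_{\text{Exp}}\to\mathscr{L}_{1}\otimes\mathcal{O}_{\Delta_{U}}$ is injective. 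Locally it is multiplication by an equation $g$ of Diag, $\mathcal{O}/(h)\to\mathcal{O}/(gh)$.
\item Composing your map with $s_{2}$ gives $\mathcal{F}'_{j}|_{\Gamma_{p}}\to\mathcal{E}_{j}|_{\Gamma_{p}}\to\mathscr{L}_{1}\otimes\mathcal{O}_{\Delta_{U}}$, which visibly factors through $\tilde{\mathcal{G}}_{1}$.
\item The induced map $\operatorname{coker}\phi\to\tilde{\mathscr{L}}_{2}\otimes\mathcal{O}_{\text{Exp}}$ is a surjection of invertible $\mathcal{O}_{\text{Exp}}$-modules, hence an isomorphism.
\end{itemize}
This uses exactly the paper's input that $s_{2}$ cuts out Diag.

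\textbf{A smaller slip.} On Exp, $\mathcal{G}_{2}$ does not embed into $E_{j}|_{p}$; if it did, $\phi$ would be injective on that fibre. What is true, and what you actually use, is that the image of $\phi$ on the fibre is the image of $H_{j}|_{p}\to E_{j}|_{p}$.

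\textbf{Comparison.} With these repairs your proof is longer. In exchange it gives an explicit fibrewise picture of where and how $\phi$ degenerates. It also gets injectivity for free from being generically an isomorphism on the integral scheme $\mathcal{U}^{i,j}_{\vec{d}}$, with no Tor vanishing. The paper's snake-lemma argument makes the pointwise rank analysis and the local normal form unnecessary.
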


\begin{proof}
    First, restricting universal sheaves on $\mathcal{U}^{i,j}_{\vec{d}}\times C$ to $\Gamma_{p}$ yields the following short exact sequence of two-step complexes on $\mathcal{U}^{i,j}_{\vec{d}}$:
    \begin{equation}
\begin{tikzcd}[cramped]
	& {\mathcal{}} &&& \\
	0 & {{\mathcal{F}_{j}}_{|_{\Gamma_{p}}}} & {{\mathcal{E}_{j}}_{|_{\Gamma_{p}}}} & {{\mathscr{L}}_{1}\otimes\mathcal{O}_{{\Delta}_{U}}} & 0 \\
	0 & {\mathscr{L}_{2}} & {\tilde{\mathscr{L}}_{1}} & {\tilde{\mathscr{L}}_{1}\otimes\mathcal{O}_{\text{Diag}}} & 0 \\
	& 0 & 0 & 0
	\arrow[from=2-1, to=2-2]
	\arrow[from=2-2, to=2-3]
	\arrow[from=2-2, to=3-2]
	\arrow[from=2-3, to=2-4]
	\arrow[from=2-3, to=3-3]
	\arrow[from=2-4, to=2-5]
	\arrow[from=2-4, to=3-4]
	\arrow[from=3-1, to=3-2]
	\arrow[from=3-2, to=3-3]
	\arrow[from=3-2, to=4-2]
	\arrow[from=3-3, to=3-4]
	\arrow[from=3-3, to=4-3]
	\arrow[from=3-4, to=3-5]
	\arrow[from=3-4, to=4-4]
\end{tikzcd}
    \end{equation}
\noindent The first two columns follow from the definitions of the involved sheaves, the last column is simply the quotient 
\[
\mathcal{O}_{\Delta_{U}}\rightarrow \mathcal{O}_{\text{Diag}} 
\]
tensored with $\mathscr{L}_{1}$ and the observation that $\mathscr{L}_{{1}_{|_{\text{Diag}}}}=\tilde{\mathscr{L}}_{{1}_{|_{\text{Diag}}}}$. The second row is implied by the fact that Diag is cut out by the section $$s_1:{\mathscr{L}}_{2}\rightarrow\tilde{\mathscr{L}}_{1}.$$ 
The first row is obtained by restricting the following universal exact sequence on $\mathcal{U}^{i,j}_{\vec{d}}\times C$ to $\Gamma_{p}$:
\[
0\rightarrow\mathcal{F}_{j}\rightarrow \mathcal{E}_{j}\rightarrow\mathscr{L}_{1}\otimes\mathcal{O}_{\Gamma_{q}}\rightarrow 0,
\]
 as well as observing that $\Gamma_{p}\cap \Gamma_{q}\cong \Delta_{U}$ and that $\mathcal{T}or_{1}^{\mathcal{U}^{i,j}_{\vec{d}}\times C}\left(\mathcal{O}_{\Gamma_{q}},\mathcal{O}_{\Gamma_{p}}\right)=0.$ Indeed, the vanishing of the Tor follows from the fact that $\Gamma_{p}$ and $\Gamma_{q}$ are divisors whose intersection is a pure codimension $2$ sub-scheme in $\mathcal{U}^{i,j}_{\vec{d}}\times C$. We leave the straightforward verification of the commutativity of the two squares to the reader.\\\\
 Since the vertical maps are surjective, taking the long exact sequence of the above exact sequence of complexes yields a short exact sequence whose terms are the kernels of the three columns. Hence, from Lemma \ref{Lemma: exact sequence, Diag and Exp} and the isomorphism $\mathcal{I}_{\text{Diag}}\cong \tilde{\mathscr{L}}_{2}\otimes\mathscr{L}_{1}^{-1}$ induced by the section $s_{2}$, we obtain the exact sequence:
 \[
 0\rightarrow\mathcal{G}_{2}\rightarrow \tilde{\mathcal{G}}_{1}\rightarrow \tilde{\mathscr{L}}_{2}\otimes \mathcal{O}_{\text{Exp}}\rightarrow 0.
 \]
 as desired.
\end{proof}

\begin{cor}\label{corollary (use: proof of commutativity of a k neq n): difference of c(G,z)}
    The following identity holds in $H^{*}(\mathcal{U}^{i,j}_{\vec{d}})[z]$:

    \[
    c(\mathcal{G}_{2},z)-c(\tilde{\mathcal{G}}_{1},z)=[\text{Exp}]\cdot\frac{c\left({\mathcal{E}_{j}}_{_{|_{\Gamma_{p}}}},z\right)}{c(\mathscr{L}_{1},z)\cdot c(\mathscr{L}_{2},z)}.
    \]
\end{cor}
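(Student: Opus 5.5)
The identity should follow by taking total Chern classes in the exact sequence of Proposition \ref{proposition: short exact sequence of G's 1}, rewriting $c(\mathcal{G}_{2},z)$ in terms of $c({\mathcal{E}_{j}}_{|_{\Gamma_{p}}},z)$, and simplifying with Proposition \ref{Diag in cohomology}. Throughout we use the convention \eqref{total chern class definition}, so that a line bundle $\mathscr{M}$ has $c(\mathscr{M},z)=z-c_{1}(\mathscr{M})$.

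\textbf{Step 1 (line bundle restricted to a divisor).} For a line bundle $\mathscr{M}$ and an effective Cartier divisor $D$, the sequence $0\rightarrow\mathscr{M}(-D)\rightarrow\mathscr{M}\rightarrow\mathscr{M}\otimes\mathcal{O}_{D}\rightarrow0$ gives
\[
c(\mathscr{M}\otimes\mathcal{O}_{D},z)=\frac{z-c_{1}(\mathscr{M})}{z-c_{1}(\mathscr{M})+[D]}.
\]
Apply this with $\mathscr{M}=\tilde{\mathscr{L}}_{2}$ and $D=\text{Exp}$ to the sequence of Proposition \ref{proposition: short exact sequence of G's 1}. This yields
\[
c(\tilde{\mathcal{G}}_{1},z)=c(\mathcal{G}_{2},z)\cdot\frac{z-\tilde{\lambda}_{2}}{z-\tilde{\lambda}_{2}+[\text{Exp}]},
\]
and therefore
\[
c(\mathcal{G}_{2},z)-c(\tilde{\mathcal{G}}_{1},z)=[\text{Exp}]\cdot\frac{c(\mathcal{G}_{2},z)}{z-\tilde{\lambda}_{2}+[\text{Exp}]}.
\]

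\textbf{Step 2 (express $c(\mathcal{G}_{2},z)$ via ${\mathcal{E}_{j}}_{|_{\Gamma_{p}}}$).} By definition $\mathcal{G}_{2}=\text{Ker}({\mathcal{F}_{j}}_{|_{\Gamma_{p}}}\rightarrow\mathscr{L}_{2})$, so $c(\mathcal{G}_{2},z)=c({\mathcal{F}_{j}}_{|_{\Gamma_{p}}},z)/(z-\lambda_{2})$.

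Next use the first row of the diagram in the proof of Proposition \ref{proposition: short exact sequence of G's 1}, namely the short exact sequence
\[
0\rightarrow{\mathcal{F}_{j}}_{|_{\Gamma_{p}}}\rightarrow{\mathcal{E}_{j}}_{|_{\Gamma_{p}}}\rightarrow\mathscr{L}_{1}\otimes\mathcal{O}_{\Delta_{U}}\rightarrow0.
\]
Its left exactness rests on the Tor vanishing established there. Combined with Step 1 for the divisor $\Delta_{U}$, this gives
\[
c({\mathcal{F}_{j}}_{|_{\Gamma_{p}}},z)=c({\mathcal{E}_{j}}_{|_{\Gamma_{p}}},z)\cdot\frac{z-\lambda_{1}+[\Delta_{U}]}{z-\lambda_{1}}.
\]

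\textbf{Step 3 (cancellation).} Since $\text{Exp}=\Delta_{U}-\text{Diag}$, we have $[\Delta_{U}]=[\text{Diag}]+[\text{Exp}]$. Proposition \ref{Diag in cohomology} gives $[\text{Diag}]=\lambda_{1}-\tilde{\lambda}_{2}$. Hence
\[
z-\lambda_{1}+[\Delta_{U}]=z-\tilde{\lambda}_{2}+[\text{Exp}].
\]
This factor cancels exactly with the denominator from Step 1, leaving
\[
[\text{Exp}]\cdot\frac{c({\mathcal{E}_{j}}_{|_{\Gamma_{p}}},z)}{(z-\lambda_{1})(z-\lambda_{2})},
\]
which is the claimed formula.

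\textbf{Main obstacle.} The only point needing care is that these manipulations are legitimate in $H^{*}(\mathcal{U}^{i,j}_{\vec{d}})[z]$ and not merely in $H^{*}(\mathcal{U}^{i,j}_{\vec{d}})[[z^{-1}]]$. Quotients such as $1/(z-\tilde{\lambda}_{2}+[\text{Exp}])$ make sense as expansions in $z^{-1}$, since the cohomology classes are nilpotent. The right-hand side is then a polynomial identity: $c({\mathcal{E}_{j}}_{|_{\Gamma_{p}}},z)/c(\mathscr{L}_{1},z)$ is $c(\mathcal{G}_{1},z)$, a polynomial. Dividing further by $c(\mathscr{L}_{2},z)$ and multiplying by $[\text{Exp}]$ gives a polynomial because ${\mathscr{L}_{2}}_{|_{\text{Exp}}}$ is a quotient of ${\mathcal{G}_{1}}_{|_{\text{Exp}}}$ there, as $E_{j}/H_{j}\cong\mathbb{C}_{p}\oplus\mathbb{C}_{p}$ on $\text{Exp}$. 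In any case the equality holds in the power series ring, and both sides are polynomial in $z$, so it suffices to argue there.
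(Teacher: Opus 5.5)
Your proof is correct, and it differs from the paper's in the middle step. Both arguments take total Chern classes in the sequence of Proposition \ref{proposition: short exact sequence of G's 1} and finish with Proposition \ref{Diag in cohomology}.

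The paper proceeds as follows.
\begin{itemize}
\item It writes the difference as $[\text{Exp}]\cdot c(\tilde{\mathcal{G}}_{1},z)/c(\tilde{\mathscr{L}}_{2},z)$.
\item It substitutes $c(\tilde{\mathcal{G}}_{1},z)=c({\mathcal{E}_{j}}_{|_{\Gamma_{p}}},z)/c(\tilde{\mathscr{L}}_{1},z)$, using the definition of $\tilde{\mathcal{G}}_{1}$.
\item It then proves separately that $c(\tilde{\mathscr{L}}_{1},z)c(\tilde{\mathscr{L}}_{2},z)=c(\mathscr{L}_{1},z)c(\mathscr{L}_{2},z)$. This step needs both equalities $[\text{Diag}]=\lambda_{1}-\tilde{\lambda}_{2}=\tilde{\lambda}_{1}-\lambda_{2}$, together with $[\text{Diag}]\cdot\tilde{\lambda}_{1}=[\text{Diag}]\cdot\lambda_{1}$.
\end{itemize}

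You proceed differently.
\begin{itemize}
\item You keep $c(\mathcal{G}_{2},z)$ and rewrite it through the first row of the diagram in the proof of Proposition \ref{proposition: short exact sequence of G's 1}, whose cokernel is $\mathscr{L}_{1}\otimes\mathcal{O}_{\Delta_{U}}$.
\item You observe that $[\Delta_{U}]=[\text{Diag}]+[\text{Exp}]$ and $[\text{Diag}]=\lambda_{1}-\tilde{\lambda}_{2}$ give $z-\lambda_{1}+[\Delta_{U}]=z-\tilde{\lambda}_{2}+[\text{Exp}]$, so the extra factor cancels exactly.
\end{itemize}

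Your route is shorter. It uses only one half of Proposition \ref{Diag in cohomology} and needs no restriction-to-Diag argument. Its additional inputs are the Tor vanishing behind the first row and the additivity $\Delta_{U}=\text{Diag}+\text{Exp}$. Both were already used to prove Proposition \ref{proposition: short exact sequence of G's 1}, so nothing new is required. The paper's route instead works purely with the bundles $\tilde{\mathcal{G}}_{1}$, $\mathscr{L}_{1},\mathscr{L}_{2},\tilde{\mathscr{L}}_{1},\tilde{\mathscr{L}}_{2}$ on $\mathcal{U}^{i,j}_{\vec{d}}$, and isolates the identity exchanging the two pairs of line bundles.

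Your closing discussion of polynomiality is unnecessary. The left-hand side is a polynomial in $z$, so the equality in $H^{*}(\mathcal{U}^{i,j}_{\vec{d}})((z^{-1}))$ already forces the right-hand side to be one. You therefore need not rely on the claim that ${\mathscr{L}_{2}}_{|_{\text{Exp}}}$ is a quotient of ${\mathcal{G}_{1}}_{|_{\text{Exp}}}$. That claim is correct pointwise but delicate scheme-theoretically.
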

\begin{proof}
Proposition \ref{proposition: short exact sequence of G's 1} implies that
\[
c(\tilde{\mathcal{G}_{1}},z)=c({\mathcal{G}_{2}},z)\cdot c(\tilde{\mathscr{L}}_{2}\otimes \mathcal{O}_{\text{Exp}},z).
\]
Therefore, 
\[
c(\mathcal{G}_{2},z)-c(\tilde{\mathcal{G}}_{1},z)=c(\tilde{\mathcal{G}}_{1},z)\left[\frac{1}{c(\tilde{\mathscr{L}}_{2}\otimes \mathcal{O}_{\text{Exp}},\,z)}-1\right].
\]
\[
=c(\tilde{\mathcal{G}}_{1},z)\left[\frac{c(\tilde{\mathscr{L}}_{2}\otimes\mathcal{O}(-\text{Exp}),z)}{c(\tilde{\mathscr{L}}_{2},\,z)}-1\right]=[\text{Exp}]\cdot\frac{c(\tilde{\mathcal{G}}_{1},z)}{c(\tilde{\mathscr{L}}_{2},z)}
\]
\[
=[\text{Exp}]\cdot\frac{c\left({\mathcal{E}_{j}}_{_{|_{\Gamma_{p}}}},z\right)}{c(\tilde{\mathscr{L}}_{1},z)\cdot c(\tilde{\mathscr{L}}_{2},z)}.
\]
    The last equality follows from the definition of $\tilde{\mathcal{G}}_{1}$. It remains to show that 
    \[
    {c(\tilde{\mathscr{L}}_{1},z)\cdot c(\tilde{\mathscr{L}}_{2},z)}={c({\mathscr{L}}_{1},z)\cdot c({\mathscr{L}}_{2},z)}.
    \]
    Indeed,
    \[
    c(\tilde{\mathscr{L}}_{1},z)\cdot c(\tilde{\mathscr{L}}_{2},z)=(z-\tilde{\lambda}_{1})(z-\tilde{\lambda}_{2}).
    \]
    By Proposition \ref{Diag in cohomology}, this expression is equal to 
    \[
    (z-\tilde{\lambda}_{1})(z-{\lambda}_{1}+\text{Diag})=z^{2}+\left(-{\lambda}_{1}-\tilde{\lambda}_{1}+\text{Diag}\right)z+{\lambda}_{1}\tilde{\lambda}_{1}-\text{Diag}\cdot\tilde{\lambda}_{1}
    \]
    \[
    =z^{2}+\left(-{\lambda}_{1}-\tilde{\lambda}_{1}+\text{Diag}\right)z+{\lambda}_{1}\tilde{\lambda}_{1}-\text{Diag}\cdot{\lambda}_{1}
    =(z-\lambda_{1})(z-\tilde{\lambda}_{1}+\text{Diag}).
    \]
    The last line follows from the fact that $\tilde{\lambda}_{{1}_{|_{\text{Diag}}}}={\lambda}_{{1}_{|_{\text{Diag}}}}$. Using Proposition \ref{Diag in cohomology} again gives
    \[
    (z-\lambda_{1})(z-\tilde{\lambda}_{1}+\text{Diag})=(z-\lambda_{1})(z-{\lambda}_{2})={c({\mathscr{L}}_{1},z)\cdot c({\mathscr{L}}_{2},z)}.
    \]
    This finishes the proof.
\end{proof}
\subsection{The spaces $\mathcal{U}_{\vec{d}}^{i,j}$, $\mathcal{W}_{\vec{d}}^{i,j}$ and $\mathcal{X}_{\vec{d}}^{i,j}:$ The case $i=j$}\label{subsection: U,X,W case:i=j} Note that when $i=j$, the spaces $\mathcal{W}_{\vec{d}}^{j,j}$ and $\mathcal{X}_{\vec{d}}^{j,j};$ are just hyperquot schemes that parameterise flags of length $n+2$. In particular, the space  $\mathcal{W}_{\vec{d}}^{j,j}$ is smooth, which turn out to not be the case when $i< j$ and rank $V>1$. Furthermore, observe that there is an obvious isomorphism 
\[
\xi:\mathcal{W}_{\vec{d}}^{j,j}\rightarrow\mathcal{X}_{\vec{d}}^{j,j},\,F_{j}^{'}\mapsto F_{j}:=F_{j}^{'}
\]
(cf. diagrams \ref{The moduli space: W} and \ref{The moduli space: X}). However, it would still be convenient to distinguish between the two spaces since $\xi$ is not compatible with the natural forgetful maps from $\mathcal{U}_{\vec{d}}^{j,j}$, indeed
\[
\xi\circ\theta^{j,j}_{\vec{d}} \neq\eta^{j,j}_{\vec{d}}
\]
\newline
since $\xi$ does not exchange the support points $p,q$.\\\\
As before let $\delta:=[\Delta]$, $\Delta_{W}:=(p^{W}\times q^{W})^{-1}(\Delta)$ and $\Delta_{X}:=(p^{X}\times q^{X})^{-1}(\Delta)$. The following lemma follows directly from the definitions.
\begin{lem}\label{theta and eta are iso onto delta for i=j.}
    The maps $\theta^{j,j}_{\vec{d}}$ and $\eta^{j,j}_{\vec{d}}$ map Diag isomorphically onto $\Delta_{W}$ and $\Delta_{X}$ respectively.
\end{lem}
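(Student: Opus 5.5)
The claim is that, for $i=j$, the forgetful maps $\theta^{j,j}_{\vec{d}}$ and $\eta^{j,j}_{\vec{d}}$ restrict to isomorphisms $\text{Diag}\cong\Delta_{W}$ and $\text{Diag}\cong\Delta_{X}$. The plan is to write down explicit inverse morphisms on the level of moduli functors, working with $S$-families of flags throughout. Everything reduces to one observation: on Diag the two intermediate sheaves coincide, $F_{j}=F_{j}'$.

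When $i=j$, a point of $\mathcal{U}^{j,j}_{\vec{d}}$ is a diagram $H_{j}\subseteq F_{j},F'_{j}\subseteq E_{j}$, with all other sheaves of the flag shared, and with
\[
E_j/F_j\cong\mathbb{C}_q,\quad F_j/H_j\cong\mathbb{C}_p,\quad E_j/F'_j\cong\mathbb{C}_p,\quad F'_j/H_j\cong\mathbb{C}_q.
\]
Diag is the closed subscheme where $F_j=F'_j$ and $p=q$; scheme-theoretically it is the common zero locus of $s_1$ and $s_2$.

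First step: show that $\theta^{j,j}_{\vec{d}}$ maps Diag into $\Delta_W$. The map $\theta$ forgets $F_j$ and keeps $F'_j$, and $\Delta_W$ is cut out by the condition $p=q$ on $\mathcal{W}^{j,j}_{\vec{d}}$. Since $p^{W}\circ\theta=p^{U}$ and $q^{W}\circ\theta=q^{U}$, we get $\theta^{-1}(\Delta_W)=\Delta_U\supseteq\text{Diag}$.

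Second step: construct the inverse $\Delta_W\to\text{Diag}$. Given an $S$-family in $\Delta_W$, i.e. a flag $H_j\subseteq F'_j\subseteq E_j$ with both successive quotients supported on the same section $p=q$, set $F_j:=F'_j$. This is a point of $\mathcal{U}^{j,j}_{\vec{d}}$, because the quotient conditions for $F_j$ are exactly those for $F'_j$ once $p=q$. It lies in Diag, because with $F_j=F'_j$ the maps $s_1\colon\mathscr{L}_2\to\tilde{\mathscr{L}}_1$ and $s_2\colon\tilde{\mathscr{L}}_2\to\mathscr{L}_1$ are both induced by the inclusion of $F_j=F'_j$, so they vanish identically. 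The two composites are the identity on functors of points. For the composite on Diag this uses that over Diag the universal $F_j$ and $F'_j$ agree as subsheaves of $E_j$, which is exactly the defining condition.

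The case $\eta^{j,j}_{\vec{d}}$ is symmetric: $\eta$ forgets $F'_j$, and the inverse $\Delta_X\to\text{Diag}$ sets $F'_j:=F_j$.

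The main obstacle is matching the scheme structures on the two sides. Diag was introduced as the zero locus of $s_1$ (equivalently of $s_2$), not as "the locus $F_j=F'_j$, $p=q$", so I must check that these agree over an arbitrary base $S$. Vanishing of $s_1$ means the composite $\mathcal{F}_j\to\mathcal{E}_j\to\mathcal{E}_j/\mathcal{F}'_j$ vanishes along $\Gamma_p$. Since $\mathcal{E}_j/\mathcal{F}'_j$ is a line bundle pushed forward from $\Gamma_p$, this gives $\mathcal{F}_j\subseteq\mathcal{F}'_j$. Both have colength one in $\mathcal{E}_j$ and are flat over $S$, so equality follows, and then $p=q$ is automatic.

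If this functorial argument becomes delicate, the fallback is to compare reduced structures. Diag is smooth by Proposition \ref{smoothness of Diag}, and $\Delta_W$, $\Delta_X$ are smooth hyperquot-type spaces (a $\mathbb{P}^{r-1}$-bundle over a hyperquot scheme). A bijective morphism between smooth varieties that is birational is then an isomorphism by Zariski's Main Theorem.
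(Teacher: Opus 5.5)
Your proposal is correct and follows the paper's approach; the paper only remarks that the lemma follows from the definitions, and the natural reading is your inverse $\Delta_W\to\text{Diag}$ (resp.\ $\Delta_X\to\text{Diag}$) given by $F_j:=F'_j$ (resp.\ $F'_j:=F_j$), while your check that the zero locus of $s_1$ (or $s_2$) represents the subfunctor $\{\mathcal{F}_j=\mathcal{F}'_j\}$ over an arbitrary base, with $p=q$ then forced, supplies the detail the paper leaves implicit. The Zariski Main Theorem fallback is unnecessary, since the functorial argument is already complete, and it would in any case require knowing that the scheme structures on $\Delta_W$ and $\Delta_X$ are smooth, which is essentially the content of the lemma.
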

\noindent Therefore, we have the following equalities in cohomology:
\begin{cor}\label{corollay about pushforward of diag being delta}
    From lemma \ref{theta and eta are iso onto delta for i=j.}, it follows that:
    \[
    {\theta^{j,j}_{\vec{d}}}_{*}\left([\text{Diag}]\right)=\left(p_{W}\times q_{W}\right)^{*}(\delta) \text{ and }{\eta^{j,j}_{\vec{d}}}_{*}\left([\text{Diag}]\right)=(p_{X}\times q_{X})^{*}(\delta).
    \]
\end{cor}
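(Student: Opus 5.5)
The plan is to combine Lemma \ref{theta and eta are iso onto delta for i=j.} with a standard fact: if $D\subset Y$ is an effective Cartier divisor on a smooth variety $Y$, and $f\colon X\to Y$ is a morphism from a smooth (or just l.c.i.) variety $X$ whose scheme-theoretic preimage $f^{-1}(D)$ is a divisor, then $f^{*}[D]=[f^{-1}(D)]$, counted with scheme-theoretic multiplicities. I will treat $\theta^{j,j}_{\vec{d}}$ and $\mathcal{W}^{j,j}_{\vec{d}}$ in detail; the argument for $\eta^{j,j}_{\vec{d}}$ and $\mathcal{X}^{j,j}_{\vec{d}}$ is word-for-word the same.

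\textbf{Step 1 (the pushforward side).} By Lemma \ref{theta and eta are iso onto delta for i=j.}, $\theta^{j,j}_{\vec{d}}$ restricts to an isomorphism of Diag onto the closed subscheme $\Delta_{W}\subset\mathcal{W}^{j,j}_{\vec{d}}$. The pushforward of a fundamental class along a closed embedding is the fundamental class of the image, so
\[
{\theta^{j,j}_{\vec{d}}}_{*}[\text{Diag}]=[\Delta_{W}].
\]
Since Diag is smooth by Proposition \ref{smoothness of Diag}, $\Delta_{W}$ is reduced and smooth. Hence $[\Delta_{W}]$ carries no multiplicities.

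\textbf{Step 2 (the pullback side).} For $i=j$, $\mathcal{W}^{j,j}_{\vec{d}}$ is a Hyperquot scheme of flags of length $n+2$, hence smooth. The diagonal $\Delta\subset C\times C$ is an effective Cartier divisor, and by definition $\Delta_{W}=(p^{W}\times q^{W})^{-1}(\Delta)$ scheme-theoretically. This preimage is a proper subscheme, because the points with $p\neq q$ form a nonempty open locus: one can take $F_j'/H_j$ and $E_j/F_j'$ supported at distinct points. As $\mathcal{W}^{j,j}_{\vec{d}}$ is irreducible, $\Delta_{W}$ is therefore an effective Cartier divisor of the expected codimension $1$. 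The refined Gysin pullback of $[\Delta]$ along $p^{W}\times q^{W}$ is then the fundamental class of the scheme-theoretic preimage, giving
\[
(p^{W}\times q^{W})^{*}(\delta)=[\Delta_{W}].
\]
Here the reducedness from Step 1 guarantees that this agrees with the class pushed forward there.

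\textbf{Step 3.} Combining Steps 1 and 2 gives the first equality of the corollary. Repeating the argument with $\eta^{j,j}_{\vec{d}}$, $\Delta_X$ and the smooth Hyperquot scheme $\mathcal{X}^{j,j}_{\vec{d}}$ gives the second.

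The only point needing care is in Step 2: checking that the preimage of the diagonal is not the whole space, so that it has codimension exactly $1$ and the pullback of $\delta$ is the honest fundamental class. I expect this to be the main obstacle, though it is settled by exhibiting a point with $p\neq q$. Everything else is formal.
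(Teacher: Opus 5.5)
Your proposal is correct and takes essentially the same approach as the paper, which simply asserts that the corollary follows from the lemma identifying Diag with $\Delta_W$ and $\Delta_X$. Your Steps 1 and 2 spell out what the paper leaves implicit: the pushforward of $[\text{Diag}]$ is $[\Delta_W]$, and $[\Delta_W]$ equals $(p^W\times q^W)^*(\delta)$ because $\Delta_W$ is the scheme-theoretic preimage of the Cartier divisor $\Delta$ in the irreducible smooth space $\mathcal{W}^{j,j}_{\vec{d}}$, has the expected codimension, and is reduced; the same holds for $\mathcal{X}^{j,j}_{\vec{d}}$.
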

\noindent We omit the proof of the following proposition, since it is identical to the proof of Proposition \ref{proposition: short exact sequence of G's 1}.
\begin{prop}\label{proposition: short exact sequence of G's 2}
On $\mathcal{U}^{j,j}_{\vec{d}}$, we have the following short exact sequences of sheaves:

\begin{equation}\label{Equation: SES k=n 1}
    0\rightarrow\mathcal{G}_{2}\rightarrow \tilde{\mathcal{G}}_{1}\rightarrow \tilde{\mathscr{L}}_{2}\otimes \mathcal{O}_{\text{Exp}}\rightarrow 0.\\
\end{equation}
and 
\begin{equation}\label{Equation: SES k=n 2}
    0\rightarrow\tilde{\mathcal{G}}_{2}\rightarrow {\mathcal{G}}_{1}\rightarrow {\mathscr{L}}_{2}\otimes \mathcal{O}_{\text{Exp}}\rightarrow 0.\\
\end{equation}
\end{prop}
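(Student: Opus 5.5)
The first sequence \eqref{Equation: SES k=n 1} is literally the case $i=j$ of Proposition \ref{proposition: short exact sequence of G's 1}, and I would re-run that argument verbatim. Restricting the universal sheaves on $\mathcal{U}^{j,j}_{\vec{d}}\times C$ to $\Gamma_{p}$ gives a map of two-step complexes with surjective vertical arrows. The top row is
\[
0\rightarrow {\mathcal{F}_{j}}_{|_{\Gamma_{p}}}\rightarrow{\mathcal{E}_{j}}_{|_{\Gamma_{p}}}\rightarrow\mathscr{L}_{1}\otimes\mathcal{O}_{\Delta_{U}}\rightarrow 0,
\]
which is the universal sequence $0\to\mathcal{F}_{j}\to\mathcal{E}_{j}\to\mathscr{L}_{1}\otimes\mathcal{O}_{\Gamma_{q}}\to0$ restricted to $\Gamma_{p}$. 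The bottom row is
\[
0\rightarrow\mathscr{L}_{2}\xrightarrow{s_{1}}\tilde{\mathscr{L}}_{1}\rightarrow\tilde{\mathscr{L}}_{1}\otimes\mathcal{O}_{\text{Diag}}\rightarrow0.
\]
The kernels of the columns are $\mathcal{G}_{2}$, $\tilde{\mathcal{G}}_{1}$ and $\mathscr{L}_{1}\otimes\mathcal{I}_{\text{Diag}}\otimes\mathcal{O}_{\text{Exp}}$, the last one by Lemma \ref{Lemma: exact sequence, Diag and Exp}. The snake lemma, together with $\mathcal{I}_{\text{Diag}}\cong\tilde{\mathscr{L}}_{2}\otimes\mathscr{L}_{1}^{-1}$ coming from $s_{2}$, then yields \eqref{Equation: SES k=n 1}.

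The inputs are the same as in the case $i<j$, so I only need to recheck them when $i=j$. First, $\mathcal{U}^{j,j}_{\vec{d}}$ is smooth by Proposition \ref{Geometry of moduli space $U$}. Second, Diag is the common zero locus of $s_{1}$ and $s_{2}$; here Diag is just the condition $F_{j}=F'_{j}$, $p=q$. Third, Lemma \ref{Lemma: exact sequence, Diag and Exp} holds, since $\eta^{j,j}_{\vec{d}}$ is still a $\mathbb{P}^{1}$-bundle over the codimension-two locus $\mathfrak{D}$ where $E_{j}/H_{j}\cong\mathbb{C}_{p}^{\oplus2}$. Fourth, $\mathcal{T}or_{1}(\mathcal{O}_{\Gamma_{p}},\mathcal{O}_{\Gamma_{q}})=0$, because $\Gamma_{p}\cap\Gamma_{q}\cong\Delta_{U}$ has pure codimension two. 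Fifth, $\mathscr{L}_{1}$ and $\tilde{\mathscr{L}}_{1}$ agree on Diag.

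For \eqref{Equation: SES k=n 2} I would use the symmetry that exists only when $i=j$. The diagram \eqref{The moduli space: U} becomes a square $H_{j}\subset F_{j},F'_{j}\subset E_{j}$ with all the $E_{t}$, $t\neq j$, fixed. It has an involution $\sigma$ that swaps $F_{j}\leftrightarrow F'_{j}$ and $p\leftrightarrow q$. For $i<j$ this fails, because the $F_{t}$ with $t<j$ are attached to $q$ only; that is exactly why the second sequence is new here. Under $\sigma$ the bundles are exchanged as $\mathscr{L}_{1}\leftrightarrow\tilde{\mathscr{L}}_{1}$, $\mathscr{L}_{2}\leftrightarrow\tilde{\mathscr{L}}_{2}$, $\mathcal{G}_{1}\leftrightarrow\tilde{\mathcal{G}}_{1}$, $\mathcal{G}_{2}\leftrightarrow\tilde{\mathcal{G}}_{2}$, and $\Gamma_{p}\leftrightarrow\Gamma_{q}$. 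The section $s_{1}$ goes to $s_{2}$, so Diag is preserved, and $\Delta_{U}$, hence $\text{Exp}=\Delta_{U}-\text{Diag}$, is preserved as well. Applying $\sigma^{*}$ to \eqref{Equation: SES k=n 1} then gives \eqref{Equation: SES k=n 2}. Equivalently, one can redo the snake-lemma argument restricting to $\Gamma_{q}$, with the roles of $s_{1}$ and $s_{2}$ interchanged.

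The step I expect to need the most care is checking that $\sigma$ really is a well-defined automorphism of the moduli functor matching the labelled bundles as claimed. In particular one must confirm that $\mathcal{G}_{1}$, defined via $\Gamma_{q}$ and $\mathcal{E}_{j}$, corresponds to $\tilde{\mathcal{G}}_{1}$, defined via $\Gamma_{p}$. One must also confirm that Exp, and not just its support, is $\sigma$-invariant as a divisor; this follows since both $\Delta_{U}$ and Diag are $\sigma$-stable subschemes.
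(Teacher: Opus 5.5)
Your proposal is correct and is essentially the paper's argument: the paper only says the proof is identical to that of Proposition~\ref{proposition: short exact sequence of G's 1}. That means the snake-lemma chase on $\Gamma_p$ for the first sequence, and the mirrored chase on $\Gamma_q$ (with $s_1$ and $s_2$ interchanged) for the second. Your involution $\sigma$ swapping $F_j\leftrightarrow F'_j$ and $p\leftrightarrow q$ is a clean way of packaging that mirror symmetry, and it is well defined precisely because for $i=j$ there are no $F_t$ with $t<j$.

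One small correction to your commentary: only the involution is special to $i=j$, not the second sequence itself. The $\Gamma_q$ chase works verbatim for all $i\le j$; the paper records the second sequence only for $i=j$ because it is needed only in the $k=n$ case of the commutativity of the $a_k^{(v)}$.
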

\noindent The above proposition implies the following useful equality in cohomology. Again, we omit the proof since it is similar to the proof of Corollary \ref{corollary (use: proof of commutativity of a k neq n): difference of c(G,z)}.
\begin{cor}\label{corollary (use: proof of commutativity of a k=n): differences of c(G,z)c(G,w)}
    The following identity holds in $H^{*}(\mathcal{U}^{j,j}_{\vec{d}})[z,w]$:

    \[
    c(\mathcal{G}_{1},w)c(\mathcal{G}_{2},z)-c(\tilde{\mathcal{G}}_{1},z)c(\tilde{\mathcal{G}}_{2},w)
    \]
    
    \[=[\text{Exp}]\cdot\frac{c\left({\mathcal{E}_{j}}_{_{|_{\Gamma_{p}}}},z\right)\cdot c\left({\mathcal{E}_{j}}_{_{|_{\Gamma_{q}}}},w\right)}{c(\mathscr{L}_{1},w)\cdot c(\mathscr{L}_{1},z)\cdot c(\mathscr{L}_{2},z)}- [\text{Exp}]\cdot\frac{c\left({\mathcal{E}_{j}}_{_{|_{\Gamma_{p}}}},z\right)\cdot c\left({\mathcal{E}_{j}}_{_{|_{\Gamma_{q}}}},w\right)}{c(\tilde{\mathscr{L}}_{1},z)\cdot c(\tilde{\mathscr{L}}_{1},w)\cdot c(\tilde{\mathscr{L}}_{2},w)}.
    \]
\end{cor}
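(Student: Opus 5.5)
The plan is to derive both products from the exact sequences \eqref{Equation: SES k=n 1} and \eqref{Equation: SES k=n 2} of Proposition \ref{proposition: short exact sequence of G's 2}, applying to each of them the same manipulation used in the proof of Corollary \ref{corollary (use: proof of commutativity of a k neq n): difference of c(G,z)}, and then to combine the two resulting identities by a telescoping decomposition. Concretely, write
\[
c(\mathcal{G}_{1},w)c(\mathcal{G}_{2},z)-c(\tilde{\mathcal{G}}_{1},z)c(\tilde{\mathcal{G}}_{2},w)
= c(\mathcal{G}_{1},w)\bigl(c(\mathcal{G}_{2},z)-c(\tilde{\mathcal{G}}_{1},z)\bigr)+c(\tilde{\mathcal{G}}_{1},z)\bigl(c(\mathcal{G}_{1},w)-c(\tilde{\mathcal{G}}_{2},w)\bigr),
\]
so that each bracket is a single difference of Chern polynomials governed by one of the two short exact sequences.

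For the first bracket, I will invoke Corollary \ref{corollary (use: proof of commutativity of a k neq n): difference of c(G,z)}, whose proof transfers verbatim to $i=j$: it gives $[\text{Exp}]\,c({\mathcal{E}_{j}}_{|_{\Gamma_{p}}},z)/\bigl(c(\mathscr{L}_{1},z)c(\mathscr{L}_{2},z)\bigr)$. The second bracket is handled by the symmetric computation. From \eqref{Equation: SES k=n 2}, $c(\mathcal{G}_{1},w)=c(\tilde{\mathcal{G}}_{2},w)\,c(\mathscr{L}_{2}\otimes\mathcal{O}_{\text{Exp}},w)$, and as before this yields
\[
c(\tilde{\mathcal{G}}_{2},w)-c(\mathcal{G}_{1},w)=[\text{Exp}]\cdot\frac{c(\mathcal{G}_{1},w)}{c(\mathscr{L}_{2},w)},
\]
so the second bracket equals the negative of this. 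Since $\mathcal{G}_{1}$ is the kernel of ${\mathcal{E}_{j}}_{|_{\Gamma_{q}}}\to\mathscr{L}_{1}$, we have $c(\mathcal{G}_{1},w)=c({\mathcal{E}_{j}}_{|_{\Gamma_{q}}},w)/c(\mathscr{L}_{1},w)$. Each term is now $[\text{Exp}]$ times a class, so everything else only needs to be evaluated after restriction to Exp.

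On Exp, the factor multiplying $[\text{Exp}]$ in the first term is $c(\mathcal{G}_{1},w)$, which equals $c({\mathcal{E}_{j}}_{|_{\Gamma_{q}}},w)/c(\mathscr{L}_{1},w)$. This reproduces exactly the first term of the claimed identity. For the second term, the multiplying factor is $c(\tilde{\mathcal{G}}_{1},z)/\bigl(c(\mathscr{L}_{1},w)c(\mathscr{L}_{2},w)\bigr)$ times $c({\mathcal{E}_{j}}_{|_{\Gamma_{q}}},w)$. Using $c(\tilde{\mathcal{G}}_{1},z)=c({\mathcal{E}_{j}}_{|_{\Gamma_{p}}},z)/c(\tilde{\mathscr{L}}_{1},z)$, the remaining task is to convert $c(\mathscr{L}_{1},w)c(\mathscr{L}_{2},w)$ into $c(\tilde{\mathscr{L}}_{1},w)c(\tilde{\mathscr{L}}_{2},w)$. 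That equality was established, with no restriction, in the last part of the proof of Corollary \ref{corollary (use: proof of commutativity of a k neq n): difference of c(G,z)}, using Proposition \ref{Diag in cohomology} and $\tilde{\lambda}_{1}|_{\text{Diag}}=\lambda_{1}|_{\text{Diag}}$.

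The main obstacle is bookkeeping rather than anything conceptual. One must check that the Exp-twist computation $1/c(L\otimes\mathcal{O}_{\text{Exp}},z)-1=[\text{Exp}]/c(L,z)$ holds in the form used, namely $c(L\otimes\mathcal{O}(-\text{Exp}),z)=c(L,z)+[\text{Exp}]$. One must also confirm that the order of telescoping matches the asymmetric denominators in the target, with $\mathscr{L}$'s in the first term and $\tilde{\mathscr{L}}$'s in the second. If the denominators come out swapped, I would telescope in the other order, putting $c(\tilde{\mathcal{G}}_{2},w)$ rather than $c(\mathcal{G}_{1},w)$ in front of the first difference. Then I would again use the product equality of line bundle Chern polynomials to match the stated form.
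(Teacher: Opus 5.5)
Your proposal is correct and is essentially the argument the paper has in mind; the paper omits the proof as ``similar'' to the earlier corollary. The two terms of the target are exactly $c(\mathcal{G}_{1},w)\bigl(c(\mathcal{G}_{2},z)-c(\tilde{\mathcal{G}}_{1},z)\bigr)$ and $c(\tilde{\mathcal{G}}_{1},z)\bigl(c(\mathcal{G}_{1},w)-c(\tilde{\mathcal{G}}_{2},w)\bigr)$, each computed from one of the two exact sequences, with $c(\mathscr{L}_{1},w)c(\mathscr{L}_{2},w)=c(\tilde{\mathscr{L}}_{1},w)c(\tilde{\mathscr{L}}_{2},w)$ fixing the denominators, and your telescoping order already gives the stated form, so the fallback reordering is unnecessary.
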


\subsection{The skew-nested Quot schemes $\tilde{\mathcal{U}}_{\vec{d}}^{i}\,$, $\mathcal{S}_{\vec{d}}^{i}$ and $\mathcal{T}_{\vec{d}}^{i}$ }\label{subsection: moduli spaces U,S,T}
Let $\vec{d}=(d_1,...,d_{n})$ be an $n-$tuple of non-negative integers. For $1\leq i\leq n$, we define spaces $\tilde{\mathcal{U}}_{\vec{d}}^{i}\,$, $\mathcal{S}_{\vec{d}}^{i}$ and $\mathcal{T}_{\vec{d}}^{i}$ to be respectively the fine moduli spaces parameterising the following diagrams of inclusions of rank $r$ sub-sheaves of $V$:

\begin{equation}\tag{$\tilde{\mathcal{U}}_{\vec{d}}^{i}$}\label{The moduli space : U tilde}
\begin{tikzcd}[cramped]
	& {E'_i} & {F_{i}} & {E_{i-1}\hookrightarrow...\hookrightarrow E_{1}} & V \\
	{E_{n}\hookrightarrow...\hookrightarrow E_{i+1}} & {\tilde{F}_{i}} & {E_{i}}
	\arrow["q", hook, from=1-2, to=1-3]
	\arrow[hook, from=1-3, to=1-4]
	\arrow[hook, from=1-4, to=1-5]
	\arrow[hook, from=2-1, to=2-2]
	\arrow["p"', hook, from=2-2, to=1-2]
	\arrow["q"', hook, from=2-2, to=2-3]
	\arrow["p"', hook, from=2-3, to=1-3]
\end{tikzcd}
\end{equation}

\begin{equation}
\tag{${\mathcal{S}}_{\vec{d}}^{i}$}\label{The moduli space : S}
\begin{tikzcd}[cramped]
	& {E'_i} & {E_{i-1}} & {E_{i-2}\hookrightarrow...\hookrightarrow E_{1}} & V \\
	{E_{n}\hookrightarrow...\hookrightarrow E_{i+1}} & {\tilde{F}_{i}} & {E_{i}}
	\arrow[hook, from=1-2, to=1-3]
	\arrow[hook, from=1-3, to=1-4]
	\arrow[hook, from=1-4, to=1-5]
	\arrow[hook, from=2-1, to=2-2]
	\arrow["p"', hook, from=2-2, to=1-2]
	\arrow["q"', hook, from=2-2, to=2-3]
	\arrow[hook, from=2-3, to=1-3]
\end{tikzcd}
\end{equation}

\begin{equation}
\tag{${\mathcal{T}}_{\vec{d}}^{i}$}\label{The moduli space : T}
\begin{tikzcd}[cramped]
	& {E'_i} & {F_{i}} & {E_{i-1}\hookrightarrow...\hookrightarrow E_{1}} & V \\
	{E_{n}\hookrightarrow...\hookrightarrow E_{i+2}} & {{E}_{i+1}} & {E_{i}}
	\arrow["q", hook, from=1-2, to=1-3]
	\arrow[hook, from=1-3, to=1-4]
	\arrow[hook, from=1-4, to=1-5]
	\arrow[hook, from=2-1, to=2-2]
	\arrow[hook, from=2-2, to=1-2]
	\arrow[hook, from=2-2, to=2-3]
	\arrow["p"', hook, from=2-3, to=1-3]
\end{tikzcd}
\end{equation}
\newline
\noindent The above are diagrams of inclusions of certain rank $r$ sub-sheaves $E_{*},E'_{i},F_{i},\tilde{F}_{i}$ of $V$ on $C$, for $*\in\{1,...,n\}$ and in each of the above diagrams we impose that $\text{dim}_{\mathbb{C}}(V/E_{*})=d_{*}$.
Note that we have maps
\begin{equation}\label{birational-ish maps : sigma and tau}
    \sigma_{\vec{d}}^{i}:\tilde{\mathcal{U}}_{\vec{d}}^{i}\rightarrow\mathcal{S}_{\vec{d}}^{i} \text{ and } \tau_{\vec{d}}^{i}:\tilde{\mathcal{U}}_{\vec{d}}\rightarrow\mathcal{T}_{\vec{d}}^{i},
\end{equation}
which forget the sheaves $F_{i}$ and $\tilde{F}_{i}$ respectively.

\begin{lem}\label{Lemma excision argument one}
    The maps $\sigma_{\vec{d}}^{i}$ and $\tau_{\vec{d}}^{i}$ are isomorphisms on the open loci given by the condition $E_{i}\neq E'_{i}$.
\end{lem}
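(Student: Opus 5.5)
The plan is to construct explicit inverses on the loci $\{E_{i}\neq E'_{i}\}$. Both inverses are defined as functors on test schemes, so that they are morphisms and not merely bijections on closed points. In each case one recovers the forgotten sheaf from $E_{i}$ and $E'_{i}$: for $\sigma^{i}_{\vec{d}}$ we put $F_{i}:=E_{i}+E'_{i}$, and for $\tau^{i}_{\vec{d}}$ we put $\tilde{F}_{i}:=E_{i}\cap E'_{i}$.

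First I will check the pointwise picture. On a closed point of $\tilde{\mathcal{U}}^{i}_{\vec{d}}$, both $E_{i}$ and $E'_{i}$ are colength one in $F_{i}$ and contain $\tilde{F}_{i}$ with colength one. If $E_{i}\neq E'_{i}$, then $E_{i}+E'_{i}$ strictly contains $E_{i}$ and lies in $F_{i}$, so $F_{i}=E_{i}+E'_{i}$. Dually, $E_{i}\cap E'_{i}$ is strictly contained in $E_{i}$ and contains $\tilde{F}_{i}$, so $\tilde{F}_{i}=E_{i}\cap E'_{i}$. Conversely, given a point of $\mathcal{S}^{i}_{\vec{d}}$ with $E_{i}\neq E'_{i}$, the sum $E_{i}+E'_{i}\subseteq E_{i-1}$ satisfies
\[
\frac{E_{i}+E'_{i}}{E'_{i}}\cong\frac{E_{i}}{E_{i}\cap E'_{i}}=\frac{E_{i}}{\tilde{F}_{i}}\cong\mathbb{C}_{q},
\]
and similarly $(E_{i}+E'_{i})/E_{i}\cong\mathbb{C}_{p}$. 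These are exactly the labels of diagram \eqref{The moduli space : U tilde}. The analogous computation shows that $E_{i}\cap E'_{i}$ gives a point of $\tilde{\mathcal{U}}^{i}_{\vec{d}}$ over a point of $\mathcal{T}^{i}_{\vec{d}}$ with $E_{i}\neq E'_{i}$.

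Next I will do the family versions, following the pattern of the proof of Proposition \ref{proposition: smoothness and dimension of moduli space Z} and of Proposition \ref{Geometry of moduli space $W$}. Over a test scheme $B$ mapping to the open locus of $\mathcal{S}^{i}_{\vec{d}}$, consider the map of $B$-flat sheaves
\[
\mathcal{E}_{i}/\tilde{\mathcal{F}}_{i}\oplus\mathcal{E}'_{i}/\tilde{\mathcal{F}}_{i}\rightarrow\mathcal{E}_{i-1}/\tilde{\mathcal{F}}_{i}.
\]
The condition $E_{i}\neq E'_{i}$ says this map is injective on every fibre. By the local criterion of flatness, it is then injective with $B$-flat cokernel $\mathcal{E}_{i-1}/(\mathcal{E}_{i}+\mathcal{E}'_{i})$. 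Hence $\mathcal{F}_{i}:=\mathcal{E}_{i}+\mathcal{E}'_{i}$ is locally free and defines a $B$-point of $\tilde{\mathcal{U}}^{i}_{\vec{d}}$.

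For $\mathcal{T}^{i}_{\vec{d}}$, I will use the composite $\mathcal{E}_{i}\rightarrow\mathcal{F}_{i}\rightarrow\mathcal{F}_{i}/\mathcal{E}'_{i}$. Its target is a flat family of length one sheaves, and the map is surjective on fibres exactly when $E_{i}\not\subseteq E'_{i}$. Since $E_{i}$ and $E'_{i}$ have the same colength in $F_{i}$, this is equivalent to $E_{i}\neq E'_{i}$. So the kernel $\tilde{\mathcal{F}}_{i}=\mathcal{E}_{i}\cap\mathcal{E}'_{i}$ is locally free, with flat quotient $\mathcal{E}_{i}/\tilde{\mathcal{F}}_{i}\cong\mathcal{F}_{i}/\mathcal{E}'_{i}$.

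Finally I will verify that these constructions are two-sided inverses on $B$-points. One composite is the identity by construction. For the other, given a $B$-point of $\tilde{\mathcal{U}}^{i}_{\vec{d}}$ over the open locus, the inclusion $\mathcal{E}_{i}+\mathcal{E}'_{i}\subseteq\mathcal{F}_{i}$ has $B$-flat quotient with zero fibres, so the quotient vanishes. The case of $\tilde{\mathcal{F}}_{i}\subseteq\mathcal{E}_{i}\cap\mathcal{E}'_{i}$ is handled the same way. I expect the main obstacle to be this flatness bookkeeping: making sure that fibrewise injectivity or surjectivity really upgrades to flat cokernels, and that $E_{i}\neq E'_{i}$ is an open condition. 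For the latter I will note that it is the non-vanishing locus of the composite $\mathcal{E}_{i}\rightarrow\mathcal{E}_{i-1}/\mathcal{E}'_{i}$ restricted to the relevant graph, which is the same kind of line-bundle-section argument used for the section $s$ in \eqref{equation:section of line bundle moduli space Y}.
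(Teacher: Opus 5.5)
Your proposal is correct and takes the same route as the paper. The inverse is given by $F_i:=E_i+E'_i$ on $\mathcal{S}^i_{\vec d}$ and by $\tilde F_i:=E_i\cap E'_i$ on $\mathcal{T}^i_{\vec d}$, and your fibrewise injectivity/surjectivity and flatness checks correctly supply the ``works in families'' step that the paper only asserts.

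One side remark should be dropped. Restricting $\mathcal{E}_i\to\mathcal{E}_{i-1}/\mathcal{E}'_i$ to a graph does not detect $E_i\neq E'_i$, because the image $(E_i+E'_i)/E'_i$ can lie in $\mathfrak{m}_q\cdot(E_{i-1}/E'_i)$. For example, take $V=\mathcal{O}_C$, $p\neq q\notin D$, $E_{i-1}=\mathcal{O}(-D)$, $E_i=\mathcal{O}(-D-p-q)$ and $E'_i=\mathcal{O}(-D-2q)$: then $E_i\neq E'_i$, yet the restricted map vanishes. Openness is immediate anyway, since $\{E_i=E'_i\}$ is the preimage of the diagonal under $k_S$, respectively $k_T$.
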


\begin{proof}
    Let us consider a point of the form \eqref{The moduli space : S} such that $E_{i}\neq E'_{i}$. We claim that there exists a unique $\sigma_{\vec{d}}^{i}\,-$ pre-image of this point in $\tilde{\mathcal{U}}_{\vec{d}}^{i}\,$. Note that $E_{i}\neq E'_{i}$ implies that
    \[
    \frac{E_{i}+E'_{i}}{E_{i}}\cong\mathbb{C}_{p}\text{ and }\frac{E_{i}+E'_{i}}{E'_{i}}\cong \mathbb{C}_{q}.
    \]
    Since any sheaf that contains $E_{i}$ and $E'_{i}$ must contain their sum $F_{i}:=E_{i}+E'_{i}$, we have proved our claim. Indeed, this argument works in families and hence we obtain that $\sigma_{\vec{d}}^{i}$ is an isomorphism over the open locus $\left\{E_{i}\neq E'_{i}\right\}.$ The proof of the fact that $\tau_{\vec{d}}^{i}$ is an isomorphism over the locus $\left\{E_{i}\neq E'_{i}\right\}\subseteq \mathcal{T}_{\vec{d}}^{i}$ is similar, with $\tilde{F}_{i}:=E_{i}\cap E'_{i}$ defining the unique $\tau_{\vec{d}}^{i}\,-$ pre-image of a point of the form \eqref{The moduli space : T}, with $E_{i}\neq E'_{i}$. 
\end{proof}
\noindent We also have morphisms: 
\begin{equation}\label{E,E' forgetful map for moduli space S}
    k_{S}:\mathcal{S}_{\vec{d}}^{i}\rightarrow F^{n}\text{Quot}_{\vec{d}}\,(V)\times F^{n}\text{Quot}_{\vec{d}}\,(V)
\end{equation}
\begin{equation}\label{E,E' forgetful map for moduli space T}
    k_{T}:\mathcal{T}_{\vec{d}}^{i}\rightarrow F^{n}\text{Quot}_{\vec{d}}\,(V)\times F^{n}\text{Quot}_{\vec{d}}\,(V).
\end{equation}
\noindent The morphism $k_{S}$ (respectively $k_{T}$), maps a closed point of the form (\ref{The moduli space : S}) (respectively (\ref{The moduli space : T})) to the pair of flags:
\[
 \left(\left(E_{n}\subseteq...\subseteq{E}_{i+1}\subseteq E_{i}\subseteq{E}_{i-1}...\subseteq E_{1}\right),\,\left(E_{n}\subseteq...\subseteq{E}_{i+1}\subseteq E'_{i}\subseteq{E}_{i-1}...\subseteq E_{1}\right)\right),
\]
which is a point in the product $F^{n}\text{Quot}_{\vec{d}}\,(V)\times F^{n}\text{Quot}_{\vec{d}}\,(V).$\\\\
We also have morphisms 
\begin{equation}\label{morphisms p,q on U,X,W to C}
p^{\tilde{U}},q^{\tilde{U}}:\tilde{\mathcal{U}}_{\vec{d}}^{i}\rightarrow C,\,\,p^{S},q^{S}:\mathcal{S}_{\vec{d}}^{i}\rightarrow C \text{ and }p^{T},q^{T}:\mathcal{T}_{\vec{d}}^{i}\rightarrow C.
\end{equation}
The maps $p^{\tilde{U}},p^{S},p^{T}$ respectively assign to closed points of the form of the Diagrams (\ref{The moduli space : U tilde}), (\ref{The moduli space : S}) and (\ref{The moduli space : T}), the support points $p$. Whereas, the maps $q^{\tilde{U}},q^{S},q^{T}$ respectively assign to closed points of the form of the Diagrams (\ref{The moduli space : U tilde}), (\ref{The moduli space : S}) and (\ref{The moduli space : T}), the support point $q$.
\subsection{Geometry of $\tilde{\mathcal{U}}_{\vec{d}}^{i}\,$, $\mathcal{S}_{\vec{d}}^{i}$ and $\mathcal{T}_{\vec{d}}^{i}$}
We will now discuss the dimension counts, irreducibility and smoothness properties of the spaces $\tilde{\mathcal{U}}_{\vec{d}}^{i}\,$, $\mathcal{S}_{\vec{d}}^{i}$ and $\mathcal{T}_{\vec{d}}^{i}$ .

\begin{prop}\label{Geometry of moduli space U tilde}
    The moduli space $\tilde{\mathcal{U}}_{\vec{d}}^{i}$ is a smooth projective variety of dimension $rd_{n}$ if $i< n$ and of dimension $r(d_{n}+1)$ if $i=n$.
\end{prop}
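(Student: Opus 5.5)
The plan is to recognise $\tilde{\mathcal{U}}_{\vec{d}}^{i}$ as one of the skew-nested Quot schemes $\mathcal{Z}^{a,b}_{\vec{f}}$ of Subsection \ref{moduli space Z}, this time for flags of length $n+1$. The statement then follows directly from Proposition \ref{proposition: smoothness and dimension of moduli space Z}, exactly as Propositions \ref{Geometry of moduli space $U$} and \ref{Geometry of moduli space $X$} were deduced.

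Concretely, in Diagram \eqref{The moduli space : U tilde} I read the top row as a length $n+1$ flag
\[
E_{n}\subseteq\dots\subseteq E_{i+1}\subseteq E'_{i}\subseteq F_{i}\subseteq E_{i-1}\subseteq\dots\subseteq E_{1}\subseteq V,
\]
where $F_{i}$ occupies position $i$ and $E'_{i}$ occupies position $i+1$. The corresponding colengths form the $(n+1)$-tuple
\[
\vec{f}:=(d_{1},\dots,d_{i-1},d_{i}-1,d_{i},d_{i+1},\dots,d_{n}).
\]
The "lower" sheaves are $E_{i}\subseteq F_{i}$ at position $i$ and $\tilde{F}_{i}\subseteq E'_{i}$ at position $i+1$. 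These satisfy the following conditions:
\begin{itemize}
\item $F_{i}/E_{i}\cong\mathbb{C}_{p}$ and $E'_{i}/\tilde{F}_{i}\cong\mathbb{C}_{p}$, so both quotients are supported at the same point $p$;
\item $\tilde{F}_{i}\subseteq E_{i}$;
\item $E_{i+1}\subseteq\tilde{F}_{i}$.
\end{itemize}
The remaining labels are forced: $E_{i}/\tilde{F}_{i}$ and $F_{i}/E'_{i}$ are automatically of length one, and both are supported at $q$. These are precisely the defining conditions of $\mathcal{Z}^{i,i+1}_{\vec{f}}$ in Diagram \eqref{Moduli space: The one key}, with $j=i+1$. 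I will check that this identification of closed points is functorial, i.e.\ that the two moduli functors agree on families. This is a direct comparison of the universal diagrams, since both sides are fibre products of Hyperquot schemes imposing the same inclusions.

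Granting the identification, Proposition \ref{proposition: smoothness and dimension of moduli space Z} gives that $\mathcal{Z}^{i,i+1}_{\vec{f}}$ is a smooth projective variety. Its dimension is $r$ times the last entry of $\vec{f}$ when $i+1\neq n+1$, which is $rd_{n}$ for $i<n$. When $i+1=n+1$, i.e.\ $i=n$, the dimension is $r(f_{n+1}+1)=r(d_{n}+1)$. This matches the claim.

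The only point needing care is the non-emptiness hypothesis under which Proposition \ref{proposition: smoothness and dimension of moduli space Z} was proved. That hypothesis requires $\vec{f}$ to be non-decreasing with $f_{i+2}-f_{i+1}\geq 1$, i.e.\ $d_{i-1}\leq d_{i}-1$ and $d_{i+1}\geq d_{i}+1$. I will verify that these conditions are equivalent to $\tilde{\mathcal{U}}^{i}_{\vec{d}}$ being non-empty, so that outside this range the statement is vacuous. I expect this bookkeeping, together with matching the length-one labels to the correct positions, to be the main place where an error could creep in; the geometric content is entirely carried by the earlier blow-up argument for $\mathcal{Z}^{i,j}$.
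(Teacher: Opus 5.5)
Your proposal is correct and is essentially the paper's own argument. The paper identifies $\tilde{\mathcal{U}}^{i}_{\vec d}$ with $\mathcal{Z}^{i,i+1}_{\vec f}$ for the same $(n+1)$-tuple $\vec f=(d_1,\dots,d_{i-1},d_i-1,d_i,\dots,d_n)$ and reads off smoothness and dimension from Proposition \ref{proposition: smoothness and dimension of moduli space Z}; your extra checks (that the two $q$-labels are forced, and the non-emptiness bookkeeping) make explicit what the paper leaves as ``one observes''.
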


\begin{proof}
    One observes that $\tilde{\mathcal{U}}_{\vec{d}}^{i}$ is precisely the space ${\mathcal{Z}}_{\vec{f}}^{i,i+1}$, where $\vec{f}$ is the \\
    $(n+1)-$tuple $(d_{1},...,d_{i-1},d_{i}-1,d_{i},...,d_{n})$. Therefore, the proposition follows from Proposition \ref{proposition: smoothness and dimension of moduli space Z}.
\end{proof}

\begin{prop}\label{Geometry of moduli space $S$}
    The moduli space $\mathcal{S}_{\vec{d}}^{i}$ is reduced, l.c.i. and consists of two components of dimension $rd_{n}$ if $i< n$ and of dimension $r(d_{n}+1)$ if $i=n$.
\end{prop}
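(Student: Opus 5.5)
The plan is to mimic the proofs of Propositions \ref{geometry of Y} and \ref{lci ness of Y}, with $\mathcal{S}_{\vec{d}}^{i}$ playing the role of $\mathcal{Y}$. First I will identify the two components. Consider the sub-scheme of $\mathcal{S}_{\vec{d}}^{i}$ cut out by the condition $E_{i}=E'_{i}$. Equivalently, it is cut out by the vanishing of the composite $\tilde{\mathscr{L}}\to \mathcal{E}_{i-1}/\mathcal{E}'_{i}|_{\Gamma}$-type map of line bundles on a suitable graph, in analogy with the section $s$ of \eqref{equation:section of line bundle moduli space Y}. On this locus $E_{i}/\tilde F_{i}$ and $E'_{i}/\tilde F_{i}$ coincide, so $p=q$, and the locus is the Hyperquot scheme $F^{n+1}\text{Quot}$ of flags $E_n\subseteq\dots\subseteq E_{i+1}\subseteq\tilde F_i\subseteq E_i\subseteq E_{i-1}\subseteq\dots$. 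Its dimension is $rd_n$ if $i<n$ and $r(d_n+1)$ if $i=n$.

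The complementary open locus $\{E_i\neq E'_i\}$ is, by Lemma \ref{Lemma excision argument one}, isomorphic to an open subset of $\tilde{\mathcal{U}}_{\vec{d}}^{i}$. So its closure is the image $\sigma_{\vec{d}}^{i}(\tilde{\mathcal{U}}_{\vec{d}}^{i})$, which is irreducible of the dimension given by Proposition \ref{Geometry of moduli space U tilde}. This is the same number, so $\mathcal{S}_{\vec{d}}^{i}$ is the union of two closed irreducible subsets of equal dimension. I will check that neither contains the other by exhibiting generic points, for instance $p\neq q$ for the first and a flag with $E_i=E'_i$ and $p$ generic for the second. This gives the component statement.

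For the l.c.i. property I will realise $\mathcal{S}_{\vec{d}}^{i}$ as a transverse intersection $Z_1\cap Z_2$ inside
$$Z:=F^{n}\text{Quot}_{\vec{d}}(V)\times F^{n}\text{Quot}_{\vec{f}}(V)\times F^{n}\text{Quot}_{\vec{d}}(V),$$
where $\vec f=(d_1,\dots,d_{i-1},d_i+1,d_{i+1},\dots,d_n)$. A point is a triple of flags $(A_\bullet,B_\bullet,C_\bullet)$, standing for $E_\bullet$, the flag with $\tilde F_i$ in place $i$, and $E'_\bullet$. Let $Z_1$ be defined by $A_k=B_k$ for $k\neq i$ and $B_i\subseteq A_i$ of colength one, and $Z_2$ by $C_k=B_k$ for $k\neq i$ and $B_i\subseteq C_i$ of colength one. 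Each $Z_a$ is isomorphic to a product of a Hyperquot scheme $F^{n+1}\text{Quot}$ with a Hyperquot scheme, hence smooth, and scheme-theoretically $Z_1\cap Z_2\cong\mathcal{S}_{\vec{d}}^{i}$.

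The codimension count $\operatorname{codim}Z_1+\operatorname{codim}Z_2=\operatorname{codim}(Z_1\cap Z_2)$ follows from the dimension just computed. Both $Z_1$ and $Z_2$ have dimension $r(d_n+\epsilon)+rd_n$ in the ambient $Z$ of dimension $3rd_n+r\epsilon$, where $\epsilon=\delta_{i,n}$. Taking care with $\epsilon$, the sum of codimensions equals $rd_n+r\epsilon$, and $\dim Z-(rd_n+r\epsilon)$ matches. This bookkeeping is the step I expect to need the most care.

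Reducedness then follows exactly as in Proposition \ref{lci ness of Y}. Since the scheme is l.c.i., it satisfies $S_1$, so it suffices to check $R_0$. On the open locus $E_i\neq E'_i$ it is isomorphic to an open subset of the smooth $\tilde{\mathcal{U}}_{\vec{d}}^{i}$. On the open subset of the other component where $p\neq q$ fails but the point lies off the first component's closure, it is the smooth Hyperquot scheme. For this I will verify the scheme structure there using the line-bundle section, which is transverse at generic points. The main obstacle is confirming that the scheme-theoretic intersection really is generically reduced along the Hyperquot component, i.e. that the defining section vanishes to order one. If needed, I will fall back on computing the tangent space at a generic point with $E_i=E'_i$ and $p$ generic, and checking that it has dimension $rd_n+r\epsilon$.
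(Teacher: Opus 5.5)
Your overall architecture matches the paper's. You identify the component $\{E_i=E'_i\}\cong F^{n+1}\text{Quot}$, obtain the other component as the closure of $\{E_i\neq E'_i\}$ via the excision lemma and $\tilde{\mathcal{U}}^{i}_{\vec d}$, and deduce reducedness from $R_0+S_1$. Your realisation $\mathcal{S}^{i}_{\vec d}=Z_1\cap Z_2$ is the right adaptation of the argument for $\mathcal{Y}^{i-1,i}_{\vec d}$, which the paper leaves to the reader: the flag carrying $\tilde F_i$ is the common sub-flag of the other two. Your dimension count also works. One slip in the arithmetic: with $\epsilon=\delta_{i,n}$, the sum of the codimensions is $2rd_n$, and it is the expected dimension $\dim Z-2rd_n$ that equals $rd_n+r\epsilon$.

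The gap is the $R_0$ check along the Hyperquot component. You flag it yourself but do not resolve it, and the tool you propose there does not exist. The natural map encoding $E_i=E'_i$ is $\mathcal{E}_i/\tilde{\mathcal{F}}_i\to\mathcal{E}_{i-1}/\mathcal{E}'_i$ (or its mirror). The restriction of $\mathcal{E}_{i-1}/\mathcal{E}'_i$ to a graph is not a line bundle: its fibre jumps according to whether the point lies in the support of a length $d_i-d_{i-1}$ sheaf. This differs from $\mathcal{Y}^{i-1,i}_{\vec d}$, where both ends of $s$ are length-one quotients. Moreover, since the locus is an entire component, no ``order of vanishing'' of a section controls nilpotents of $\mathcal{S}^{i}_{\vec d}$ there. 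The tangent-space computation is only promised.

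The missing observation, which is what the paper uses, is that the Hyperquot component contains the open set $W=\{p\notin\text{Supp}(E_{i-1}/E_i)\}$ of $\mathcal{S}^{i}_{\vec d}$, and the reason is a support argument.
\begin{itemize}
\item Over $W$, the composite $\mathcal{E}'_i/\tilde{\mathcal{F}}_i\to\mathcal{E}_{i-1}/\mathcal{E}_i$ maps a sheaf supported on the graph of $p$ to one supported on a disjoint closed set, so it vanishes in every family.
\item Hence $\mathcal{E}'_i\subseteq\mathcal{E}_i$. Then $\pi^*V/\mathcal{E}'_i\to\pi^*V/\mathcal{E}_i$ is a surjection of flat families of the same length $d_i$, so $\mathcal{E}'_i=\mathcal{E}_i$.
\item Thus $W$ is scheme-theoretically an open subset of the smooth Hyperquot scheme.
\item $W$ is dense in that component: take a generic flag, with $p=q$ away from $\text{Supp}(E_{i-1}/E_i)$.
\item $W$ misses the closure of $\{E_i\neq E'_i\}$. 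On that locus $(E_i+E'_i)/E_i\cong\mathbb{C}_p$ sits inside $E_{i-1}/E_i$, so $p\in\text{Supp}(E_{i-1}/E_i)$, which is a closed condition.
\end{itemize}
This gives $R_0$. Together with $S_1$ from the l.c.i. property, it yields reducedness.
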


\begin{proof}
    For notational convenience, let us define the numbers dim$(i):=rd_{n}$ if $i\in\{1,...,n-1\}$ and $dim$(i)=$r(d_{n}+1)$ if $i=n$. The closed sub-scheme of $\mathcal{S}_{\vec{d}}^{i}$ given by the condition $E_{i}=E'_{i}$ (c.f Diagram \ref{The moduli space : S}) is a clearly a Hyperquot scheme of dimension dim$(i)$ and contains the open in $\mathcal{S}_{\vec{d}}^{i}$\,, given by the condition
    \[
    p\notin \text{Supp }\frac{E_{i-1}}{E_{i}}.
    \] 
    The complement of this component, given by the condition $E_{i}\neq E'_{i}$ is also smooth of dimension dim$(i)$ thanks to Lemma \ref{Lemma excision argument one} and Proposition \ref{Geometry of moduli space U tilde}. This shows that $\mathcal{S}_{\vec{d}}^{i}$ has dimension equal to dim$(i)$ and is reduced on a dense open. The proof that $\mathcal{S}_{\vec{d}}^{i}$ is l.c.i. is left to the reader since it is similar to the proof of $\mathcal{Y}_{\vec{d}}^{i-1,i}$ being l.c.i. (cf. Proposition \ref{geometry of Y}). Since $\mathcal{S}_{\vec{d}}^{i}$ is l.c.i., we also obtain that $\mathcal{S}_{\vec{d}}^{i}$ is reduced due to the $R_{0}+S_{1}$ criterion for reducedness. 
\end{proof}

\begin{prop}\label{Geometry of moduli space $t$}
    The moduli space $\mathcal{T}_{\vec{d}}^{i}$ is reduced, l.c.i. and consists of two components of dimension $rd_{n}$ if $i< n$. If $i=n$, then it is smooth of dimension of dimension $r(d_{n}+1)$.
\end{prop}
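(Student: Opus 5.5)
We follow the template of Proposition \ref{Geometry of moduli space $S$}, exchanging the roles of sum and intersection. Throughout, write $\text{dim}(i):=rd_{n}$ for $i<n$ and $\text{dim}(i):=r(d_{n}+1)$ for $i=n$.

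\textbf{Decomposition into loci.} First we study the closed sub-scheme of $\mathcal{T}_{\vec{d}}^{i}$ given by $E_{i}=E'_{i}$. On this locus a point of the form \eqref{The moduli space : T} is a flag $E_{n}\subseteq\dots\subseteq E_{i+1}\subseteq E_{i}\subseteq F_{i}\subseteq E_{i-1}\subseteq\dots$ with $F_{i}/E_{i}\cong\mathbb{C}_{p}$ and $p=q$. It is therefore a Hyperquot scheme of length $n+1$, of dimension $rd_{n}$ if $i<n$ and $r(d_{n}+1)$ if $i=n$. Its complement $\{E_{i}\neq E'_{i}\}$ is, by Lemma \ref{Lemma excision argument one}, isomorphic to an open subset of $\tilde{\mathcal{U}}_{\vec{d}}^{i}$. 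By Proposition \ref{Geometry of moduli space U tilde} this open set is smooth of dimension $\text{dim}(i)$. A point of $\{E_{i}=E'_{i}\}$ lies in the closure of $\{E_{i}\neq E'_{i}\}$ only if it admits a deformation to that locus, which forces $E_{i+1}\subseteq E_{i}\cap E'_{i}$. This condition is automatic, so the closure picture must be analysed.

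\textbf{Case $i<n$.} The constraint $E_{i+1}\subseteq \tilde F_i:=E_i\cap E'_i$ means the closure of $\{E_{i}\neq E'_{i}\}$ is the image $\tau(\tilde{\mathcal{U}}_{\vec{d}}^{i})$. This image has dimension $rd_{n}$. Its intersection with $\{E_i=E'_i\}$ is the locus where $E_{i+1}\subseteq\tilde F_i\subseteq E_i$ for some $\tilde F_i$ of colength one. That locus is a proper closed subset of the Hyperquot component whenever $d_{i+1}>d_i$. So there are two components, each of dimension $rd_{n}$. For the l.c.i.\ property we imitate Proposition \ref{lci ness of Y}. We write $\mathcal{T}_{\vec{d}}^{i}$ as the intersection $Z_{1}\cap Z_{2}$ inside $F^{n}\text{Quot}_{\vec{d}}(V)\times F^{n}\text{Quot}_{\vec{d}'}(V)\times F^{n}\text{Quot}_{\vec{d}}(V)$, where the middle factor records the flag with $F_{i}$ in slot $i$. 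Here $Z_{1}$ imposes $E_{i}\subseteq F_{i}$ with the other slots equal, and $Z_{2}$ imposes $E'_{i}\subseteq F_{i}$ with the other slots equal. Both are smooth, being $F^{n+1}\text{Quot}$ schemes times a Hyperquot scheme. Expected codimension then gives expected dimension $\text{dim}(i)$, which matches the count above, so the intersection is l.c.i. Reducedness follows from the $R_{0}+S_{1}$ criterion, since both components are generically smooth.

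\textbf{Case $i=n$.} There is no $E_{i+1}$, so the constraint disappears. The locus $\{E_{n}=E'_{n}\}$ is $F^{n+1}\text{Quot}$ of dimension $r(d_{n}+1)$. We must show it lies in the closure of $\{E_{n}\neq E'_{n}\}$, and more strongly that $\tau_{\vec d}^{n}$ is an isomorphism. Our plan is to identify $\mathcal{T}_{\vec{d}}^{n}$ with $\mathbb{P}(\mathcal{F}_{n})\times_{F^{n}\text{Quot}\times C}\mathbb{P}(\mathcal{F}_{n})$, the fibre product of two $\mathbb{P}^{r-1}$-bundles over $F^{n}\text{Quot}_{\vec{d}'}(V)$. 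By Proposition \ref{pi-times piC as a projective bundle}, $\pi_{-}\times\pi_{C}$ is a genuine projective bundle when $k=n$. Thus $\mathcal{T}^{n}_{\vec d}$ is a $\mathbb{P}^{r-1}\times\mathbb{P}^{r-1}$-bundle over $F^{n}\text{Quot}_{\vec{d}'}(V)\times C$, hence smooth of dimension $rd_{n}+1+2(r-1)$. This conflicts with $r(d_n+1)$, so the parametrisation must be rechecked: the two quotients of $F_n$ are at possibly different points $p,q$. The correct picture is therefore a fibre product over $F^{n}\text{Quot}_{\vec d'}$ of two copies of the smooth map $F^{n+1}\text{Quot}\to F^{n}\text{Quot}_{\vec d'}$. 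Both maps are smooth, being Quot-scheme fibrations for the last step. Hence $\mathcal{T}^n_{\vec d}$ is smooth, with dimension $r(d_n+1)$ computed from relative dimensions.

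\textbf{Main obstacle.} The main difficulty is the correct dimension bookkeeping in this last step, namely that $\mathcal{T}^{n}_{\vec{d}}$ is the self-fibre product of the forgetful map over $\mathcal{E}_{n}$, in the spirit of $\mathfrak{P}_{\vec d}$ in Proposition \ref{proposition on smoothness and dimension of moduli space P}. The second difficulty is verifying the codimension count in the l.c.i.\ argument for $i<n$.
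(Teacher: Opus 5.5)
Your final arguments match the paper's. For $i=n$ the paper notes that $\mathcal{T}^{n}_{\vec d}=\mathfrak{P}_{\vec f}$ with $\vec f=(d_1,\dots,d_{n-1},d_n-1,d_n)$. This is exactly the self-fibre product of the smooth forgetful map $F^{n+1}\text{Quot}_{\vec f}(V)\to F^{n}\text{Quot}_{(d_1,\dots,d_{n-1},d_n-1)}(V)$ that you arrive at. For $i<n$ the paper defers to the argument for $\mathcal{S}^{i}_{\vec d}$, which is your decomposition, your $Z_1\cap Z_2$ l.c.i.\ argument and the $R_0+S_1$ step. However, several claims you make along the way are false and should be deleted or fixed, even though your final argument does not use them.

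For $i=n$, the locus $\{E_n=E'_n\}$ is $F^{n+1}\text{Quot}_{\vec f}(V)$, whose largest colength is $d_n$. So it has dimension $rd_n$, not $r(d_n+1)$. The count does not transfer from $\mathcal{S}^{n}_{\vec d}$: there the extra sheaf $\tilde F_n$ has colength $d_n+1$, whereas here $F_n$ has colength $d_n-1$. This locus is the diagonal of $\mathfrak{P}_{\vec f}$, of codimension $r$, cut out by the regular section of Proposition \ref{proposition to be used in tautological generation}; in particular $\mathcal{T}^{n}_{\vec d}$ is irreducible. Likewise, $\tau^{n}_{\vec d}$ is not an isomorphism for $r\geq 2$. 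Over a point with $E_n=E'_n$ its fibre is the $\mathbb{P}^{r-1}$ of subsheaves $\tilde F_n\subseteq E_n$ with $E_n/\tilde F_n\cong\mathbb{C}_p$, so $\tau^{n}_{\vec d}$ contracts a divisor onto that diagonal.

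For $i<n$, two points need fixing.

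First, when $d_{i+1}>d_i$, a colength-one $\tilde F_i$ with $E_{i+1}\subseteq\tilde F_i\subseteq E_i$ exists at \emph{every} point of the Hyperquot locus. So, as stated, your ``proper closed subset'' is the whole locus. The intersection with $\tau^{i}_{\vec d}(\tilde{\mathcal{U}}^{i}_{\vec d})$ additionally requires $E_i/\tilde F_i\cong\mathbb{C}_p$, i.e.\ $p\in\text{Supp}(E_i/E_{i+1})$. You do not need this anyway: both loci are irreducible of dimension $rd_n$, and the Hyperquot locus misses the dense open $\{E_i\neq E'_i\}$ of the other, so they are distinct components.

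Second, for $R_0$ it is not enough that each component, with its reduced structure, is generically smooth; a double line is l.c.i.\ with smooth support. You must show that $\mathcal{T}^{i}_{\vec d}$ itself is reduced at the generic point of the Hyperquot component. As in the paper's treatment of $\mathcal{S}^{i}_{\vec d}$, this comes from an open subset of $\mathcal{T}^{i}_{\vec d}$ lying in that component:
\begin{itemize}
\item If $E_i\neq E'_i$, then $E_{i+1}\subseteq E_i\cap E'_i$ and $E_i/(E_i\cap E'_i)\cong\mathbb{C}_q$. Hence on the open set $\{q\notin\text{Supp}(E_i/E_{i+1})\}$ one has $E'_i=E_i$.
\item There, the map forgetting $E'_i$ has the diagonal section and is unramified, since its relative tangent space is $\text{Hom}(E_i/E_{i+1},\mathbb{C}_p)=0$.
\item So $\mathcal{T}^{i}_{\vec d}$ agrees with the smooth Hyperquot scheme on that open set.
\end{itemize}
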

\begin{proof}
    Note that when $i=n$, the space ${\mathcal{T}}_{\vec{d}}^{i}$ is precisely the space ${\mathfrak{P}}_{\vec{f}}$, where $\vec{f}$ is the $(n+1)-$tuple $(d_{1},...,d_{n-1},d_{n}-1,d_{n})$. Therefore, the current proposition in the case $i=n$ follows from Proposition \ref{proposition on smoothness and dimension of moduli space P}. We omit the proof of this proposition in the case $i\neq n$, since it is similar to the proof of Proposition \ref{Geometry of moduli space $S$}. 
\end{proof}

\subsection{Tautological line bundles on $\tilde{\mathcal{U}}_{\vec{d}}^{i}\,$, $\mathcal{S}_{\vec{d}}^{i}$ and $\mathcal{T}_{\vec{d}}^{i}$}
The spaces  $\tilde{\mathcal{U}}_{\vec{d}}^{i}\,$, $\mathcal{S}_{\vec{d}}^{i}$ and $\mathcal{T}_{\vec{d}}^{i}$ carry several tautological line bundles;\\\\ 
On $\tilde{\mathcal{U}}_{\vec{d}}^{i}\,$, we have line bundles 
\[
\mathscr{L}_{1},\mathscr{L}_{2},{\mathscr{L}}^{'}_{1}, {\mathscr{L}}^{'}_{2},
\]
 whose respective fibres over a closed point of the moduli space given by the Diagram \eqref{The moduli space : U tilde}, are the length one quotients $F_{i}/E_{i}, E_{i}/\tilde{F}_{i},F_{i}/E^{'}_{i},E^{'}_{i}/\tilde{F}_{i}$. We denote the respective first Chern classes of these bundles as
 \[
 \lambda_{1},\lambda_{2},{\lambda}^{'}_{1},{\lambda}^{'}_{2}.
 \]
On $\mathcal{S}_{\vec{d}}^{i}$, we have line bundles 
\[\mathscr{L}_{2},\mathscr{L}^{'}_{2},\]
 whose respective fibres over a closed point of the moduli given by the Diagram \eqref{The moduli space : S}$\,$, are the length one quotients $E_{i}/\tilde{F}_{i},E^{'}_{i}/\tilde{F}_{i}$. We denote the respective first Chern classes of these bundles as
 \[
 \lambda_{2},\lambda^{'}_{2}.
 \]
 On $\mathcal{  T}_{\vec{d}}^{i}$, we have line bundles 
\[
{\mathscr{L}}_{1}, \mathscr{L}^{'}_{1}, \]
 whose respective fibres over a closed point of the moduli given by the diagram (\ref{The moduli space : T}), are the length one quotients $F_{i}/E_{i},F_{i}/E^{'}_{i}$. Finally, we denote the respective first Chern classes of these bundles as
 \[
 {\lambda}_{1},{\lambda}^{'}_{1}.
 \]
Note that we have the following morphisms of line bundles on $\mathcal{U}_{\vec{d}}^{i,j}$:
\[
\mathscr{L}_{2}\xrightarrow{s_{1}}{\mathscr{L}}^{'}_{1} \text{ and }{\mathscr{L}}^{'}_{2}\xrightarrow{s_{2}}\mathscr{L}_{1}.
\]
These morphisms vanish precisely on the locus of $\tilde{\mathcal{U}}_{\vec{d}}^{i}$ given by the condition $E_{i}=E_{i}^{'}$ (cf. diagram \ref{The moduli space : U tilde}), and hence restrict to isomorphisms on the locus $\{E_{i}\neq E_{i}^{'}\}$. This implies the following lemma that we record for later use:

\begin{lem}\label{Lemma excision argument two}
    Let $U$ be the open set of $\tilde{\mathcal{U}}_{\vec{d}}^{i}$ given by the condition $E_{i}\neq E_{i}^{'}$, then we have the following equality of classes in $H^{*}(U):$
    \[
    {\lambda_{2}}_{|_{U}}={\lambda^{'}_{1}}_{|_{U}} \text{ and }{\lambda_{2}^{'}}_{|_{U}}={\lambda^{}_{1}}_{|_{U}}.
    \]
\end{lem}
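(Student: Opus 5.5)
The plan is to deduce both equalities from the fact, recorded just before the statement, that the two morphisms of line bundles $s_{1}:\mathscr{L}_{2}\rightarrow\mathscr{L}'_{1}$ and $s_{2}:\mathscr{L}'_{2}\rightarrow\mathscr{L}_{1}$ on $\tilde{\mathcal{U}}_{\vec{d}}^{i}$ vanish exactly along the locus $\{E_{i}=E'_{i}\}$. Consequently their restrictions to the open set $U=\{E_{i}\neq E'_{i}\}$ are nowhere vanishing, i.e.\ isomorphisms of line bundles $\mathscr{L}_{2}|_{U}\cong\mathscr{L}'_{1}|_{U}$ and $\mathscr{L}'_{2}|_{U}\cong\mathscr{L}_{1}|_{U}$. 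Taking first Chern classes, which commute with restriction to the open subset, gives $\lambda_{2}|_{U}=\lambda'_{1}|_{U}$ and $\lambda'_{2}|_{U}=\lambda_{1}|_{U}$.

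The substantive step is to justify the vanishing-locus statement. First I would construct $s_{1}$ and $s_{2}$ explicitly on $\tilde{\mathcal{U}}_{\vec{d}}^{i}\times C$. The inclusion $\mathcal{E}_{i}\hookrightarrow\mathcal{F}_{i}$ induces $\mathcal{E}_{i}/\tilde{\mathcal{F}}_{i}\rightarrow\mathcal{F}_{i}/\mathcal{E}'_{i}$. Both sheaves are supported on graphs of maps to $C$: the source on the graph of $q^{\tilde U}$, the target on the graph of $p^{\tilde U}$. Restricting to the appropriate graph (where on the locus $p\neq q$ the map is automatically zero, and on $p=q$ the two graphs agree) yields $s_{1}$, and $s_{2}$ is built the same way from $\mathcal{E}'_{i}\hookrightarrow\mathcal{F}_{i}$.

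Then I would check pointwise at a closed point. The fibre map of $s_{1}$ is the composite $E_{i}/\tilde{F}_{i}\rightarrow F_{i}/E'_{i}$, and it is zero iff $E_{i}\subseteq E'_{i}$. Since both sheaves have colength one in $F_{i}$ (equivalently, both contain $\tilde{F}_{i}$ with colength one), this happens iff $E_{i}=E'_{i}$. The same argument applies to $s_{2}$.

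The only delicate point is the case $p\neq q$: there $E_{i}\neq E'_{i}$ automatically, and I must confirm that the sections are nonzero rather than forced to vanish by support considerations. I expect this to be the main obstacle, and I would handle it by noting that when $p\neq q$ we have $F_{i}/\tilde{F}_{i}\cong\mathbb{C}_{p}\oplus\mathbb{C}_{q}$. In this decomposition $E_{i}/\tilde{F}_{i}$ and $E'_{i}/\tilde{F}_{i}$ are the two distinct summands, and the line bundles are then identified canonically via these summands, so the relevant maps are isomorphisms. Once this is settled, nowhere-vanishing on $U$ follows, and with it the lemma.
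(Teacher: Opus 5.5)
Your strategy is the paper's: the text just before the lemma observes that $s_{1}\colon\mathscr{L}_{2}\to\mathscr{L}'_{1}$ and $s_{2}\colon\mathscr{L}'_{2}\to\mathscr{L}_{1}$ vanish precisely on $\{E_{i}=E'_{i}\}$. Hence they are isomorphisms over $U$, and the Chern class equalities follow.

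However, your construction paragraph contains a false step. In the diagram defining $\tilde{\mathcal{U}}^{i}_{\vec{d}}$, the inclusion $E'_{i}\hookrightarrow F_{i}$ is labelled $q$, so $F_{i}/E'_{i}\cong\mathbb{C}_{q}$. Thus the target $\mathcal{F}_{i}/\mathcal{E}'_{i}$ of your sheaf map is supported on the graph $\Gamma_{q}$ of $q^{\tilde{U}}$, the same graph as the source $\mathcal{E}_{i}/\tilde{\mathcal{F}}_{i}$, and not on the graph of $p^{\tilde{U}}$. Similarly, the source $\mathcal{E}'_{i}/\tilde{\mathcal{F}}_{i}$ and the target $\mathcal{F}_{i}/\mathcal{E}_{i}$ of $s_{2}$ are both supported on $\Gamma_{p}$. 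Your assertion that the map is ``automatically zero on the locus $p\neq q$'' is therefore wrong. If it were true, it would force $s_{1}$ to vanish at every point of $U$ with $p\neq q$, which is incompatible with $s_{1}$ being an isomorphism over $U$. Your last paragraph then asserts the opposite without reconciling the two.

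The repair is immediate, and your ``delicate point'' disappears. Define $s_{1}$ as the restriction to $\Gamma_{q}$ of $\mathcal{E}_{i}/\tilde{\mathcal{F}}_{i}\to\mathcal{F}_{i}/\mathcal{E}'_{i}$, and $s_{2}$ as the restriction to $\Gamma_{p}$ of $\mathcal{E}'_{i}/\tilde{\mathcal{F}}_{i}\to\mathcal{F}_{i}/\mathcal{E}_{i}$, with no case distinction. These quotients are flat with length-one fibres, so the fibre of $s_{1}$ at a closed point is the induced map $E_{i}/\tilde{F}_{i}\to F_{i}/E'_{i}$. Your third paragraph already treats all points of $U$ uniformly: this map vanishes iff $E_{i}\subseteq E'_{i}$, iff $E_{i}=E'_{i}$ (equal colength). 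Otherwise it is a nonzero map between length-one sheaves, hence an isomorphism, whether or not $p=q$. A map of line bundles that is nonzero at every closed point of $U$ is an isomorphism over $U$, so taking $c_{1}$ gives the lemma. Your direct-sum analysis for $p\neq q$ is then just a special case and is not needed separately.
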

\subsection{The formalism of correspondences}\label{subsection:formalism of correspondences}
In this brief subsection, we record some useful standard facts on the formalism of composition of correspondences in cohomology. Many of the spaces that were studied in this section are naturally supports of cohomology classes which arise from this formalism.\\\\
    Let $X$ and $Y$ be smooth projective varieties, let $\gamma_1\in H^{*}(X\times Y)$ and let $p_{_X},p_{_Y}
    $ be the projection maps from $X\times Y$ to $X$ and $Y$ respectively. Then $\gamma_1$ gives rise to a linear map $T^{X,Y}_{\gamma_1}:H^{*}(X)\rightarrow H^{*}(Y)$ defined by the formula
    \[
    T^{X,Y}_{\gamma_1}:= {p_{_Y}}_{*}(\gamma_1\cdot p_{_X}^{*}(-)).
    \]
    \begin{defn}\label{correspondence}
        We say that the linear map $T^{X,Y}_{\gamma_1}$ is given as a correspondence by the class $\gamma_1$ on $X\times Y.$
    \end{defn}
    \noindent Now, let $Z$ be another smooth projective variety and let $\gamma_2\in H^{*}(Y\times Z)$. As before, the class $\gamma_2$ gives rise to a linear map $T^{Y,Z}_{\gamma_2}:H^{*}(Y)\rightarrow H^{*}(Z)$. We describe the composition $T^{Y,Z}_{\gamma_2}\circ T^{X,Y}_{\gamma_1}:H^{*}(X)\rightarrow H^{*}(Z)$,
for which we consider the following diagram:
     \[
    \begin{tikzcd}
    &X\times Y\times Z \arrow[swap]{dl}{p_{_{XY}}}\arrow{d}{p_{_{XZ}}}\arrow{dr}{p_{_{YZ}}}&\\
    X\times Y&  X\times Z   &Y\times Z
\end{tikzcd}.
\]
The following proposition is a standard fact:
\begin{prop} \label{class of correspondence}
The composition $T^{Y,Z}_{\gamma_2}\circ T^{X,Y}_{\gamma_1}$ is equal to $T^{X,Z}_\alpha$ where the correspondence $\alpha\in H^{*}(X\times Z)$ is defined as:
\[
    \alpha:= {p_{_{XZ}}}_{*}(p_{_{XY}}^{*}(\gamma_1)\cdot p_{_{YZ}}^{*}(\gamma_2)).
    \]
\end{prop}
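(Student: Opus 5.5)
The plan is to prove the formula by computing both sides as classes in $H^{*}(X\times Z)$ paired against an arbitrary class, using only the projection formula and base change for the cartesian squares of product projections. Fix $\beta\in H^{*}(X)$. By definition, $T^{X,Y}_{\gamma_1}(\beta)={p_{_Y}}_{*}(\gamma_1\cdot p_{_X}^{*}\beta)$, where $p_X,p_Y$ are the projections from $X\times Y$. Applying $T^{Y,Z}_{\gamma_2}$ then requires pulling this class back to $Y\times Z$ along the projection $Y\times Z\to Y$, multiplying by $\gamma_2$, and pushing forward to $Z$.

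\textbf{Step 1: base change.} The square formed by $p_{_{XY}}:X\times Y\times Z\to X\times Y$, $p_{_{YZ}}:X\times Y\times Z\to Y\times Z$ and the two projections to $Y$ is cartesian, with smooth (product) projections. Hence, for any $\eta\in H^{*}(X\times Y)$, pulling back ${p_Y}_{*}\eta$ to $Y\times Z$ equals ${p_{_{YZ}}}_{*}p_{_{XY}}^{*}\eta$. This is the standard flat base change, or in cohomology the Künneth compatibility of pushforward along $X\times(-)$.

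\textbf{Step 2: projection formula.} With $\eta=\gamma_1\cdot p_X^{*}\beta$, the projection formula for $p_{_{YZ}}$ gives
\[
\gamma_2\cdot {p_{_{YZ}}}_{*}p_{_{XY}}^{*}\eta={p_{_{YZ}}}_{*}\bigl(p_{_{YZ}}^{*}\gamma_2\cdot p_{_{XY}}^{*}\gamma_1\cdot p_{_{XY}}^{*}p_X^{*}\beta\bigr).
\]
Pushing forward to $Z$ and using functoriality, the composite $X\times Y\times Z\to Y\times Z\to Z$ equals $X\times Y\times Z\xrightarrow{p_{_{XZ}}}X\times Z\to Z$. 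Also, $p_X\circ p_{_{XY}}$ equals the projection $X\times Z\to X$ composed with $p_{_{XZ}}$.

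\textbf{Step 3: regroup.} Apply the projection formula once more for $p_{_{XZ}}$ to pull the factor $\beta$ (pulled back from $X$) outside. This yields
\[
{p_Z}_{*}\bigl({p_{_{XZ}}}_{*}(p_{_{XY}}^{*}\gamma_1\cdot p_{_{YZ}}^{*}\gamma_2)\cdot p_X^{*}\beta\bigr)=T^{X,Z}_{\alpha}(\beta).
\]

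The only point needing care is the sign and ordering in the graded-commutative products. Since $\beta$ may have odd degree, moving $p_X^{*}\beta$ past $\gamma_2$ can introduce a Koszul sign. I would handle this by fixing the convention that the input class is multiplied on the right, as in the definition of $T_{\gamma_1}$, and checking that each use of the projection formula keeps that ordering. With that convention no reordering past $\gamma_2$ is needed, so no sign appears. This bookkeeping is the only mild obstacle; everything else is formal.
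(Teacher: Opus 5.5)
Your argument (base change for the cartesian square of projections over $Y$, then the projection formula) is the standard one. The paper gives no proof of its own; it only calls the proposition a standard fact. However, there is a concrete error exactly at the point you flagged, and your remedy for it is backwards.

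The sign has nothing to do with $\beta$. At the end of Step 2 the integrand is $p_{_{YZ}}^{*}\gamma_2\cdot p_{_{XY}}^{*}\gamma_1\cdot p^{*}\beta$, with $\gamma_2$ on the left, because $T^{Y,Z}_{\gamma_2}$ multiplies its input by $\gamma_2$ from the left. In Step 3 you write $p_{_{XY}}^{*}\gamma_1\cdot p_{_{YZ}}^{*}\gamma_2$ instead. That silent swap costs $(-1)^{\deg\gamma_1\deg\gamma_2}$. It is precisely the convention of putting the input on the right that forces it: with $T_\gamma(x)={p_{_Y}}_{*}(p_{_X}^{*}x\cdot\gamma)$ the formula would hold verbatim. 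What your steps actually prove is
\[
T^{Y,Z}_{\gamma_2}\circ T^{X,Y}_{\gamma_1}=(-1)^{\deg\gamma_1\deg\gamma_2}\,T^{X,Z}_{\alpha}.
\]
The sign is genuinely there. Take $X=Z=\mathrm{pt}$, $Y=C$ of genus $g\geq 1$, and $\gamma_1,\gamma_2\in H^{1}(C)$ with $\int_C\gamma_1\gamma_2=1$. Then the composition sends $1$ to $\int_C\gamma_2\gamma_1=-1$, while $\alpha=\int_C\gamma_1\gamma_2=1$.

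So the statement, read literally for arbitrary $\gamma_1,\gamma_2\in H^{*}$, needs one of two amendments: either assume one of the two classes has even degree, or define $\alpha$ with the factors in the order $p_{_{YZ}}^{*}(\gamma_2)\cdot p_{_{XY}}^{*}(\gamma_1)$. With either amendment your proof is complete. The remaining uses of the projection formula are sign-free: pulling $\gamma_2$ inside ${p_{_{YZ}}}_{*}$ from the left, and pulling $\beta$ out of ${p_{_{XZ}}}_{*}$ on the right. This is because Gysin maps between complex varieties shift degree by an even amount.

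The amendment costs the paper nothing. Every class to which the proposition is applied is a pushforward, along a proper map from one of the moduli spaces, of a polynomial in Chern classes (and $\delta$, $K_C$). Such classes have even degree. Odd classes of $C$ only enter afterwards, through the operators $T(\gamma)$.
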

\noindent We would often be in a setting where
$i_1:Z_1\hookrightarrow X \times Y\times Z$ and $i_2:Z_2\hookrightarrow X \times Y\times Z$ are closed embeddings and there exist cohomology classes $c_1\in H^{*}(Z_1),c_2\in H^{*}(Z_2)$ such that $p_{_{XY}}^{*}(\gamma_1)={i_{1}}_{*}(c_1)$ and $ p_{_{YZ}}^{*}(\gamma_2)={i_{2}}_{*}(c_2)$. Therefore we would have the following fibre square:

\[\begin{tikzcd}
	&& {Z_{1}} \\
	{Z_{1}\cap Z_{2}} &&&& {X\times Y\times Z} \\
	&& {Z_{2}}
	\arrow["{i_1}",hook, from=1-3, to=2-5]
	\arrow["{j_1}", hook, from=2-1, to=1-3]
	\arrow["{j_2}"', hook, from=2-1, to=3-3]
	\arrow["{i_2}"',hook, from=3-3, to=2-5]
\end{tikzcd}\]

\noindent In practise, the following proposition helps us compute the class $p_{_{XY}}^{*}(\gamma_1)\cdot p_{_{YZ}}^{*}(\gamma_2)={i_{1}}_{*}(c_1)\cdot{i_{2}}_{*}(c_2)$:

\begin{prop}\label{composition of correspondences}
    In the setting of the above diagram, let $k:=i_1 \circ j_1=i_2 \circ j_2$. If $i_1$ and $i_2$ are regular embeddings and 
    $\text{Codim}_{X\times Y\times Z}(Z_1\cap Z_2)=  Codim_{X\times Y\times Z}(Z_1)+ Codim_{X\times Y\times Z}(Z_2)$ then $j_1,j_2$ are also regular embeddings and 
    \[
    {i_{1}}_{*}(c_1)\cdot{i_{2}}_{*}(c_2)=k_{*}(j_{1}^{*}(c_1)\cdot j_{2}^{*}(c_2)).
    \]
\end{prop}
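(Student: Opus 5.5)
This is the standard excess-intersection formula for transversal (proper) intersections of regular embeddings. I would reduce it to Fulton's refined Gysin machinery, working in Borel--Moore homology (or Chow groups followed by the cycle class map). The cohomology of the smooth ambient $M:=X\times Y\times Z$ is identified with homology by Poincaré duality. Under this identification, the cup product ${i_1}_*(c_1)\cdot {i_2}_*(c_2)$ corresponds to the refined intersection of ${i_1}_*(c_1\cap[Z_1])$ with ${i_2}_*(c_2\cap[Z_2])$, pulled back along the diagonal $\Delta_M:M\to M\times M$.

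First I will show that $j_1,j_2$ are regular embeddings. Locally on $M$, $Z_1$ is cut out by a regular sequence $f_1,\dots,f_a$ and $Z_2$ by $g_1,\dots,g_b$, with $a,b$ the codimensions. Then $Z_1\cap Z_2$ is cut out in $M$ by $a+b$ equations and has codimension $a+b$. Since $M$ is smooth, its local rings are Cohen--Macaulay, so $f_1,\dots,f_a,g_1,\dots,g_b$ is a regular sequence. Hence the images of the $g$'s form a regular sequence in $\mathcal{O}_{Z_1}$, so $j_1$ is regular of codimension $b$; symmetrically, $j_2$ is regular of codimension $a$. The same count shows that the normal bundle $N_{j_1}$ equals $j_2^*N_{i_2}$, so the excess normal bundle of the fibre square is zero.

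Next I compute the product. Write ${i_2}_*(c_2)\cdot\alpha=i_{2*}(c_2\cdot i_2^*\alpha)$ by the projection formula, with $\alpha:={i_1}_*(c_1)$. The key step is to compute $i_2^*{i_1}_*(c_1)$ in $H^*(Z_2)$. By the base-change formula for a regular embedding in a fibre square with zero excess bundle (Fulton, Theorem 6.2 and Proposition 6.3), we have $i_2^!\,{i_1}_*=j_{2*}\,j_1^!$ on homology classes. Translated to cohomology on the smooth $Z_1$, this says $i_2^*{i_1}_*(c_1)=j_{2*}j_1^*(c_1)$, where $j_{2*}$ is the Gysin pushforward. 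Then
\[
c_2\cdot i_2^*{i_1}_*(c_1)=c_2\cdot j_{2*}j_1^*(c_1)=j_{2*}\big(j_2^*(c_2)\cdot j_1^*(c_1)\big),
\]
again by the projection formula. Applying $i_{2*}$ and using $k=i_2\circ j_2$ gives the claim.

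The main obstacle is making the base change $i_2^*{i_1}_*=j_{2*}j_1^*$ rigorous in singular cohomology. The difficulty is that $Z_1\cap Z_2$ may be singular; in this paper's applications the intersections, such as $\mathcal{Y}$ and $\mathcal{W}$, are l.c.i.\ but not smooth. As a result, pushforward from $Z_1\cap Z_2$ is only defined on its Borel--Moore homology. I would handle this by interpreting $j_1^*(c_1)\cdot j_2^*(c_2)$ as the class $(j_1^*c_1\cdot j_2^*c_2)\cap[Z_1\cap Z_2]$ in $H^{BM}_*(Z_1\cap Z_2)$, with $k_*$ its homology pushforward. With that convention, the identity becomes Fulton's compatibility of refined Gysin maps with proper pushforward. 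Here $[Z_1\cap Z_2]=i_2^!\,[Z_1]$ holds because the intersection is proper, and $i_2^!$ agrees with the Poincaré dual cup product on the smooth ambient space.
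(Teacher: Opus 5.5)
Your proposal is correct and follows the paper's route. The paper simply cites the projection formula together with base change for regular embeddings (Fulton, Theorem 6.2(a)), and your identification $i_2^*{i_1}_*=j_{2*}j_1^*$ via the vanishing excess bundle is exactly that step. You also make explicit two points the paper leaves implicit: the Cohen--Macaulay argument that $j_1,j_2$ are regular (the local codimension $a+b$ at each point follows from the global hypothesis together with Krull's height theorem), and the Borel--Moore interpretation of $k_*$ when $Z_1\cap Z_2$ is singular, as happens for $\mathcal{Y}$ and $\mathcal{W}$.
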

\noindent This proposition is a straightforward consequence of the projection formula and base change for regular embeddings (cf. Theorem 6.2 (a) in \cite{Ful}).

\section{The Yangian Action}\label{section: The Yangian action}

\begin{defn}\label{Shifted Yangian defn}
    We fix positive integers $r$ and $n$. With $\hbar$ being a formal parameter, we define the shifted Yangian $Y_{\hbar}^{r}({\mathfrak{sl}_{n+1}})$ as the $\mathbb{Q}[\hbar]-$algebra generated by the formal symbols 
    \[
    \{e_{k}^{(v)},f_{k}^{(v)},m_k^{(u)}\}_{k\in\{1,...,n\},\,v\geq 0,\,u\in\{1,...,r\}}
    \]
    subject to the following relations for all $i,j\in\{1,...,n\}$ and $s,t\geq0$:
    
\begin{equation}\tag{R1}\label{mm commutation}
    \left[m_{i}^{(s)}, m_{j}^{(t)}\right]=0
\end{equation}

\begin{equation}\tag{R2}\label{ee commutation 1}
    \frac{\left[e_{i}^{(s+1)}, e_{i}^{(t)}\right]}{\hbar}-\frac{\left[e_{i}^{(s)}, e_{i}^{(t+1)}\right]}{\hbar}=e_{i}^{(s)}e_{i}^{(t)}+e_{i}^{(t)}e_{i}^{(s)}.
\end{equation}

\begin{equation}\tag{R3}\label{ee commutation 2}
    \frac{\left[e_{i}^{(s)}, e_{i-1}^{(t+1)}\right]}{\hbar}-\frac{\left[e_{i}^{(s+1)}, e_{i-1}^{(t)}\right]}{\hbar}= e_{i}^{(s)} e_{i-1}^{(t)}.
\end{equation}

\begin{equation}\tag{R4}\label{e commutation 3}
    \left[e_{i}^{(s)}, e_{j}^{(t)}\right]=0  \,\,\,\text{ for }|i-j|>1.
\end{equation}

\begin{equation}\tag{R5}\label{e Serre relations}
    \left[e_{i}^{(s)},\left[e_{i}^{(t)}, e_{j}^{(u)}\right]\right]+\left[e_{i}^{(t)},\left[e_{i}^{(s)}, e_{j}^{(u)}\right]\right]=0. \,\,\,\text{ for }|i-j|=1.
\end{equation}

\begin{equation} \tag{R6}\label{em relations}
    \frac{\left[m_{i}^{(s)}, e_{j}^{(t)}\right]}{\hbar}=-\delta_{i,j}\left(\sum_{l=0}^{s-1}e_{j}^{(t+l)}m_{i}^{(s-l-1)}(-1)^{l+1}\right).
\end{equation}

\begin{equation}\tag{R7}\label{ff commutation 1}
    \frac{\left[ f_{i}^{(t)},f_{i}^{(s+1)}\right]}{\hbar}-\frac{\left[f_{i}^{(t+1)},f_{i}^{(s)}\right]}{\hbar}=f_{i}^{(s)}f_{i}^{(t)}+f_{i}^{(t)}f_{i}^{(s)}.
\end{equation}

\begin{equation}\tag{R8}\label{ff commutation 2}
    \frac{\left[f_{i}^{(s+1)}, f_{i-1}^{(t)}\right]}{\hbar}-\frac{\left[f_{i}^{(s)}, f_{i-1}^{(t+1)}\right]}{\hbar}= f_{i}^{(t)} f_{i-1}^{(s)}
\end{equation}

\begin{equation}\tag{R9}\label{f commutation 3}
    \left[f_{i}^{(s)}, f_{j}^{(t)}\right]=0  \,\,\,\text{ for }|i-j|>1.
\end{equation}

\begin{equation}\tag{R10}\label{f Serre relations}
    \left[f_{i}^{(s)},\left[f_{i}^{(t)}, f_{j}^{(u)}\right]\right]+\left[f_{i}^{(t)},\left[f_{i}^{(s)}, f_{j}^{(u)}\right]\right]=0. \,\,\,\text{ for }|i-j|=1.
\end{equation}

\begin{equation} \tag{R11}\label{fm relations}
    \frac{\left[ f_{j}^{(t)},m_{i}^{(s)}\right]}{\hbar}=-\delta_{i,j}\left(\sum_{l=0}^{s-1}m_{i}^{(s-l-1)}f_{j}^{(t+l)}(-1)^{l+1}\right).
\end{equation}

\begin{equation}\tag{R12}\label{ef relations}
    \frac{\left[e_{i}^{(s)}, f_{j}^{(t)}\right]}{\hbar}=\delta_{i,j}\cdot h_{_{i}}^{(s+t-\delta_{i,n}\cdot r+1)}.
    \end{equation}
Where 
\[
h_i(z):=\sum\limits_{l=0}^{\infty}\frac{h_{i}^{(l)}}{z^{l+\delta_{i,n}\cdot r}}=\frac{m_{i+1}(z)m_{i-1}(z+\hbar)}{m_{i}(z)m_{i}(z+\hbar)}
\]
Here we have
\[
m_i(z):=\sum\limits_{s=0}^{r}z^{r-s}(-1)^{s}m_i^{(s)} \text{ for } i\in\{1,...,n\} \text{, }
\]
where $m_{i}^{(0)}=1$ and $m_{0}(z)$ is a fixed monic degree $r$ polynomial. We also make the conventions that $m_{i}(z)=1$ for $i>n$ and $h_{i}^{(l)}=0$ for all $i\geq 0$ and $l<0$.
\end{defn}

\begin{rem}
    The shift in the name ``shifted Yangian of $\mathfrak{sl}_{n+1}$" is reflected in the $e_{i},f_{j}$ commutation relation \eqref{ef relations}, in the case $i=j=n$. As a consequence, $e_{n}^{{(0)}}, f_{n}^{(0)}$ and $[e_{n}^{{(0)}}, f_{n}^{(0)}]$ is not an $\mathfrak{sl}_{2}-$triple for all $r\geq 1$.
\end{rem}

\noindent In this section, we will show that the operators \ref{e,f,m}, namely:
$$e_{k}^{(v)},f_{k}^{(v)},m_{k}^{(u)}:H^{*}(F^{n}\text{Quot}(V))\rightarrow H^{*}(F^{n}\text{Quot}(V)\times C)$$
satisfy the relations of Theorem \ref{Yangian relations satisfied}. This would readily imply that the above operators define a $\mathbb{Q}-$ linear map 
\[
Y_{\hbar}^{r}({\mathfrak{sl}_{n+1}})\rightarrow \text{Hom}(H^{*}(F^{n}\text{Quot}(V)), H^{*}(F^{n}\text{Quot}(V)\times C))
\]
which defines an action of the Yangian $Y_{\hbar}^{r}({\mathfrak{sl}_{n+1}})$ on $H^{*}(F^{n}\text{Quot}(V))$ in the following sense:
\begin{defn}\label{Action defn}
 An action of $Y_{\hbar}^{r}({\mathfrak{sl}_{n+1}})$ on $H^{*}(F^n\text{Quot}(V))$ is a $\mathbb{Q}$-linear map:
\[
Y_{\hbar}^{r}({\mathfrak{sl}_{n+1}})\xrightarrow{\Phi} \text{Hom}(H^{*}(F^n\text{Quot}(V)),H^{*}(F^n\text{Quot}(V)\times C))
\]
satisfying the following properties for all $x,y\in Y_{\hbar}^{r}({\mathfrak{sl}_{n+1}})$:
\begin{enumerate}
    \item $\Phi(1)=\rho^{*}$, where $\rho:F^n\text{Quot}(V)\times C\rightarrow F^n\text{Quot}(V)$ is the natural projection map.
    \item $\Phi(xy)$ is the composition:
    \begin{multline*}
    H^{*}(F^n\text{Quot}(V))\xrightarrow{\Phi(y)}H^{*}(F^n\text{Quot}(V)\times C)\\
    \xrightarrow{\Phi(x)\otimes Id_{C}}H^{*}(F^n\text{Quot}(V)\times C\times C)\xrightarrow{Id\otimes \Delta^{*}}H^{*}(F^n\text{Quot}(V)\times C).
     \end{multline*}
    Here $\Delta:C\rightarrow C \times C$ is the diagonal embedding.\\
    
    \item $\Phi(\hbar x)$ is the composition:
\[
H^{*}(F^n\text{Quot}(V))\xrightarrow{\Phi(x)}H^{*}(F^n\text{Quot}(V)\times C)\xrightarrow{Id\otimes (\cdot K_C)}H^{*}(F^n\text{Quot}(V)\times C).
\]

\item $\Delta_{*}\left(\frac{\Phi[x,y]}{-\hbar}\right)$ coincides with the difference of the following compositions: 

\[
H^{*}(F^n\text{Quot}(V))\xrightarrow{\Phi(y)}H^{*}(F^n\text{Quot}(V)\times C)\xrightarrow{\Phi(x)\otimes Id_{C}}H^{*}(F^n\text{Quot}(V)\times C\times C)
\]
and 
\begin{multline*}
H^{*}(F^n\text{Quot}(V))\xrightarrow{\Phi(x)}H^{*}(F^n\text{Quot}(V)\times C)\\
\xrightarrow{\Phi(y)\otimes Id_{C}}H^{*}(F^n\text{Quot}(V)\times C\times C)\xrightarrow{Id\otimes \,\text{s}^{*}}H^{*}(F^n\text{Quot}(V)\times C\times C).
\end{multline*}
\newline
Above, we have $s:C\times C\rightarrow C\times C\,,(a,b)\mapsto (b,a)$.\\
\end{enumerate}
\end{defn}

\subsection{Proof of Theorem \ref{Yangian relations satisfied}}
We will now prove Theorem \ref{Yangian relations satisfied} by verifying the relations \ref{mm commutation action} - \ref{f Serre relations action}.\\\\
Note that the relation \ref{mm commutation action} clearly holds since the operators $m_{i}^{(s)}$ are given by multiplication with tautological classes and hence commute.\\\\ Moreover, the relations \ref{e commutation 3 action} and \ref{f commutation 3 action} are straightforward to verify, since Propositions \ref{class of correspondence} and \ref{composition of correspondences} readily imply that the operator $e^{(s)}_{i}\circ {e}^{(t)}_{j}$ (respectively $f^{(s)}_{i}\circ {f}^{(t)}_{j}$) is given by the same correspondence as the operator $e^{(t)}_{j}\circ {e}^{(s)}_{i}$ (respectively $f^{(t)}_{j}\circ {f}^{(s)}_{i}$) whenever $|i-j|>1$.\\\\
We will now verify the rest of the relations of Theorem \ref{Yangian relations satisfied} over the next few subsections. 
\subsection{The $e_{i}^{(*)},e_{i-1}^{(*)}$ and $f_{i}^{(*)},f_{i-1}^{(*)}$ commutation relations: \eqref{ee commutation 2 action} and \eqref{ff commutation 2 action}}
Let us verify the relation \ref{ee commutation 2 action}. We omit the verification of the relation \ref{ff commutation 2 action} since it is identical.
The geometry of the moduli space $\mathcal{Y}_{\vec{d}}^{i-1,i}$ (see subsection \ref{subsection:moduli space Y} for definitions and notation) would be key in this proof.\\\\
The following two lemmas follow easily from propositions \ref{class of correspondence}, \ref{composition of correspondences} and the dimension estimate in proposition \ref{geometry of Y}. To state the lemmas let us recall some notation. Let $\vec{d}=(d_1,...,d_{n})$ be an $n-$tuple of non-negative integers and for $i>1$, let $\vec{d}_{i-1,i}$ be the $n-$tuple $(d_1,...,d_{i-2},d_{i-1}+1,d_{i}+1,d_{i+1},...,d_{n})$ and let $\vec{d}_{i-1,i}^{\bullet}$ to be the $(n+2)-$ tuple $(d_1,...,d_{i-1},d_{i-1}+1,d_{i},d_{i}+1,d_{i+1},...,d_{n})$. We remark that the statements in this subsection will be obviously true whenever the $n-$tuple $\vec{d}$ is such that the moduli space $\mathcal{Y}_{\vec{d}}^{i-1,i}$ is a Hyperquot scheme (the case $d_{i-1}=d_{i}$) or is empty. As in the statement of Theorem \ref{Yangian relations satisfied}, we will also make the convention that the operators with superscripts $(s)$ and $(t)$ will contribute to the first and second factor of $C\times C$ respectively.

\begin{lem}\label{lemma on the composition e_ie_i-1}
The linear operator
\[
e_{i}^{(s)}e^{(t)}_{i-1}:H^{*}(F^{n}\text{Quot}_{\vec{d}}(V))\rightarrow H^{*}(F^{n}\text{Quot}_{\vec{d}_{i-1,i}}(V)\times C\times C)
\]
is given as a correspondence by the class $$k_{*}\left(\lambda_{i}^{s}\lambda_{i-1}^{t}\cdot\left[F^{n+2}\text{Quot}_{\vec{d}_{i-1,i}^{\bullet}}\right]\right)$$ on $F^{n}\text{Quot}_{\vec{d}}(V)\times  F^{n}\text{Quot}_{\vec{d}_{i-1,i}}(V)\times C\times C$.
    
\end{lem}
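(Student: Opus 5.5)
We compute the composition using the formalism of correspondences, Propositions \ref{class of correspondence} and \ref{composition of correspondences}. First, $e_{i-1}^{(t)}$ is given by a correspondence on $F^{n}\text{Quot}_{\vec{d}}(V)\times F^{n}\text{Quot}_{\vec{d}'_{i-1}}(V)\times C$. This correspondence is the pushforward of $\lambda_{i-1}^{t}$ from $F^{n+1}\text{Quot}_{\vec{d}''_{i-1}}(V)$, embedded via $\pi_{-}\times\pi_{+}\times\pi_{C}$; this map is a closed embedding, since the flag is recovered from the two forgotten flags and $p$ is determined. Similarly, $e_{i}^{(s)}$ is given by the pushforward of $\lambda_{i}^{s}$ from $F^{n+1}\text{Quot}_{(\vec{d}'_{i-1})''_{i}}(V)$, embedded in $F^{n}\text{Quot}_{\vec{d}'_{i-1}}(V)\times F^{n}\text{Quot}_{\vec{d}_{i-1,i}}(V)\times C$.

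Next we pull both classes back to the triple product
\[
Z:=F^{n}\text{Quot}_{\vec{d}}(V)\times F^{n}\text{Quot}_{\vec{d}'_{i-1}}(V)\times F^{n}\text{Quot}_{\vec{d}_{i-1,i}}(V)\times C\times C,
\]
where the $C$ factors are labelled by $(s)$ and $(t)$ as in the convention. The two pulled-back supports $Z_{1}$ and $Z_{2}$ are smooth, since they are products of Hyperquot schemes with the remaining factors, and their embeddings are regular because everything is smooth. Their scheme-theoretic intersection parameterises flags $E_{\bullet}\supseteq B_{\bullet}\supseteq F_{\bullet}$ such that $B$ differs from $E$ only in slot $i-1$ (at the point $p$) and $F$ differs from $B$ only in slot $i$ (at the point $q$). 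The constraint $F_{i}=B_{i}'\subseteq B_{i-1}=F_{i-1}$ means that this is exactly the flag $E_{n}\subseteq\cdots\subseteq E_{i+1}\subseteq F_{i}\subseteq E_{i}\subseteq F_{i-1}\subseteq E_{i-1}\subseteq\cdots$, that is, $F^{n+2}\text{Quot}_{\vec{d}^{\bullet}_{i-1,i}}(V)$. Here the middle flag $B$ is determined by $(E,F)$, since $B_{i-1}=F_{i-1}$ and $B_{k}=E_{k}$ otherwise. This step is essentially a check on the moduli functors, done in families.

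The main obstacle is the dimension hypothesis of Proposition \ref{composition of correspondences}: we must verify $\text{codim}\,Z_{1}+\text{codim}\,Z_{2}=\text{codim}(Z_{1}\cap Z_{2})$. Write $a,b,c$ for the dimensions of the three Hyperquot factors. Then $\dim Z_{1}=\dim F^{n+1}\text{Quot}_{\vec{d}''_{i-1}}+c+1$ and $\dim Z_{2}=a+\dim F^{n+1}\text{Quot}+1$. Since the dimension of a Hyperquot scheme depends only on its last entry, these equal $(r d_{n}+c+1)$ and $(a+r d_{n}+1)$ when $i<n$, with the obvious modification $d_{n}\mapsto d_{n}+1$ in the relevant slots when $i=n$. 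The intersection has dimension equal to the last entry times $r$, which is $r d_{n}$, or $r(d_{n}+1)$ when $i=n$. The required equality then reduces to arithmetic with $a=rd_{n}$, $b=c=rd_{n}$ (respectively $r d_{n}$, $r(d_{n}+1)$, $r(d_{n}+1)$ when $i=n$), consistent with the dimension estimate of Proposition \ref{geometry of Y}. We expect this bookkeeping, and in particular the case $i=n$, to be the only delicate point.

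Finally, Proposition \ref{composition of correspondences} gives the product of the pulled-back classes as the pushforward of $j_{1}^{*}(\lambda_{i-1}^{t})\cdot j_{2}^{*}(\lambda_{i}^{s})=\lambda_{i-1}^{t}\lambda_{i}^{s}$ from the intersection. Here $\lambda_{i-1}$ and $\lambda_{i}$ restrict to the tautological classes of $E_{i-1}/F_{i-1}$ and $E_{i}/F_{i}$, since $B_{i}=E_{i}$ and $B_{i-1}=F_{i-1}$. Projecting away the middle factor, which is an isomorphism onto its image as $B$ is determined, we obtain the pushforward along $k$ from \eqref{embedding k for moduli space Y}, restricted to the component $F^{n+2}\text{Quot}_{\vec{d}^{\bullet}_{i-1,i}}$. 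This yields the stated class $k_{*}\left(\lambda_{i}^{s}\lambda_{i-1}^{t}\cdot\left[F^{n+2}\text{Quot}_{\vec{d}_{i-1,i}^{\bullet}}\right]\right)$.
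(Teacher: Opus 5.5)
Your argument is the same as the paper's: compose via Propositions~\ref{class of correspondence} and \ref{composition of correspondences}, identify the intersection with the component $F^{n+2}\text{Quot}_{\vec{d}^{\bullet}_{i-1,i}}(V)$ of $\mathcal{Y}^{i-1,i}_{\vec{d}}$, and check dimensions. It works, but one value in your bookkeeping is wrong. The middle factor $F^{n}\text{Quot}_{\vec{d}'_{i-1}}(V)$ has dimension $b=rd_n$ also when $i=n$, because its last entry is still $d_n$; it is not $r(d_n+1)$.

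With your stated $b=r(d_n+1)$, the codimension identity is off by $r$ when $i=n$. With $b=rd_n$ the identity becomes
\[
\dim F^{n+2}\text{Quot}_{\vec{d}^{\bullet}_{i-1,i}}+b=\dim F^{n+1}\text{Quot}_{\vec{d}''_{i-1}}+\dim F^{n+1}\text{Quot}_{(\vec{d}'_{i-1})''_{i}}.
\]
This reads $2rd_n=2rd_n$ for $i<n$ and $r(d_n+1)+rd_n=rd_n+r(d_n+1)$ for $i=n$, so it holds in both cases.
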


\noindent Similarly, we have:
\begin{lem}\label{lemma on the composition e_i-1e_i}
The linear operator
\[
e^{(t)}_{i-1}e_{i}^{(s)}:H^{*}(F^{n}\text{Quot}_{\vec{d}}\,(V))\rightarrow H^{*}(F^{n}\text{Quot}_{\vec{d}_{i-1,i}}(V)\times C\times C)
\]
is given as a correspondence by the class $$k_{*}\left(\lambda_{i}^{s}\lambda_{i-1}^{t}\right)$$ on $F^{n}\text{Quot}_{\vec{d}}(V)\times  F^{n}\text{Quot}_{\vec{d}_{i-1,i}}(V)\times C\times C$.
\end{lem}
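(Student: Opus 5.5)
The plan is to apply the composition formalism of Subsection \ref{subsection:formalism of correspondences}, namely Propositions \ref{class of correspondence} and \ref{composition of correspondences}, to the two correspondences defining $e_i^{(s)}$ and $e_{i-1}^{(t)}$, taken in the order where $e_i^{(s)}$ acts first.

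Write $X=F^{n}\text{Quot}_{\vec{d}}(V)$, $Y=F^{n}\text{Quot}_{\vec{d}'_{i}}(V)$ and $Z=F^{n}\text{Quot}_{\vec{d}_{i-1,i}}(V)$, and work on $X\times Y\times Z$ with the two curve factors $C\times C$ carried along.
\begin{enumerate}
\item By definition \eqref{definition of e operator}, $e_i^{(s)}$ is given by the class $(\pi_-\times\pi_+\times\pi_C)_*(\lambda_i^s)$. Since $\pi_-\times\pi_+$ is a closed embedding, this is the pushforward of $\lambda_i^s$ from the smooth Hyperquot scheme $F^{n+1}\text{Quot}_{\vec{d}''_i}(V)\subseteq X\times Y\times C$. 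Similarly, $e_{i-1}^{(t)}$ is given by $\lambda_{i-1}^t$ supported on $F^{n+1}\text{Quot}_{(\vec{d}'_i)''_{i-1}}(V)\subseteq Y\times Z\times C$.
\item Pull these back to $X\times Y\times Z\times C\times C$ to obtain smooth subvarieties $Z_1$ and $Z_2$. In the notation of the proof of Proposition \ref{lci ness of Y}, and up to the curve factors, $Z_1\cap Z_2$ is the scheme that proposition identifies with $\mathcal{Y}_{\vec{d}}^{i-1,i}$, embedded via $k$ together with the forgetful map to the middle flag.
\item The codimension hypothesis of Proposition \ref{composition of correspondences} is exactly the dimension count already used in Proposition \ref{lci ness of Y}, which rests on Proposition \ref{geometry of Y}. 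Both $Z_i$ are smooth subvarieties of a smooth ambient variety, so the embeddings are regular.
\item Proposition \ref{composition of correspondences} then expresses the product of the two classes as the pushforward from $\mathcal{Y}_{\vec{d}}^{i-1,i}$ of $j_1^*\lambda_i^s\cdot j_2^*\lambda_{i-1}^t$. These restrict to the tautological classes $\lambda_i^s\lambda_{i-1}^t$ on $\mathcal{Y}$, because the universal quotients $E_i/F_i$ and $E_{i-1}/F_{i-1}$ pull back compatibly.
\item Pushing forward along $p_{XZ}$ forgets the middle flag $F_i\subseteq E_{i-1}$. By step 2 this flag is already part of the data of $\mathcal{Y}$, so $p_{XZ}$ restricted to $Z_1\cap Z_2$ is just $k$, and Proposition \ref{class of correspondence} gives the class $k_*(\lambda_i^s\lambda_{i-1}^t)$.
\end{enumerate}

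The fundamental class of $\mathcal{Y}$ carries no extra multiplicities, because $\mathcal{Y}$ is reduced and l.c.i. of the expected dimension (Proposition \ref{lci ness of Y}). So it counts both components, $\mathcal{Z}_{\vec{d}}^{i-1,i}$ and $F^{n+2}\text{Quot}_{\vec{d}_{i-1,i}^{\bullet}}$, with multiplicity one. This is the contrast with Lemma \ref{lemma on the composition e_ie_i-1}, where only the second component appears.

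The main point requiring care is step 2: checking that the scheme-theoretic intersection $Z_1\cap Z_2$, with the support maps to $C\times C$ included, is isomorphic to $\mathcal{Y}$ and not only equal on closed points. I would verify this at the level of functors of points, using that the conditions $F_i=E_i$ off index $i$ and $F_{i-1}\subseteq E_{i-1}$ are imposed independently on the two factors. Beyond that, the remaining care is bookkeeping of which $C$ factor receives $s$ and which receives $t$.
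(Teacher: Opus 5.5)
Your proposal is correct and follows the same route as the paper. The paper proves the lemma simply by citing Propositions \ref{class of correspondence} and \ref{composition of correspondences} together with the dimension count of Proposition \ref{geometry of Y}, with the intersection being exactly the one identified with $\mathcal{Y}_{\vec{d}}^{i-1,i}$ in the proof of Proposition \ref{lci ness of Y}. Your extra remark, that reducedness makes $[\mathcal{Y}_{\vec{d}}^{i-1,i}]$ the sum of its two components with multiplicity one, is precisely what the paper relies on immediately afterwards in Corollary \ref{corollary on the operator [e_i,e_i-1]}.
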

\noindent Then the following corollary follows from Propositions \ref{geometry of Y} and \ref{lci ness of Y} and the above two lemmas; \ref{lemma on the composition e_ie_i-1} and \ref{lemma on the composition e_i-1e_i}:
\begin{cor}\label{corollary on the operator [e_i,e_i-1]}
The linear operator
\[
    \left[e_{i}^{(s)}, e_{i-1}^{(t)}\right]:H^{*}(F^{n}\text{Quot}_{\vec{d}}(V))\rightarrow H^{*}(F^{n}\text{Quot}_{\vec{d}_{i-1,i}}(V)\times C\times C)
\]
is given as a correspondence by the class $$-k_{*}\left(\lambda_{i}^{s}\lambda_{i-1}^{t}\cdot\left[\mathcal{Z}_{\vec{d}}^{i-1,i}\right]\right)$$ on $F^{n}\text{Quot}_{\vec{d}}(V)\times F^{n}\text{Quot}_{\vec{d}_{i-1,i}}(V)\times C\times C$.
\end{cor}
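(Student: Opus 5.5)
The plan is to subtract the two correspondences of Lemmas \ref{lemma on the composition e_ie_i-1} and \ref{lemma on the composition e_i-1e_i} and decompose the class of $\mathcal{Y}_{\vec{d}}^{i-1,i}$ into its components.

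First, by Lemma \ref{lemma on the composition e_i-1e_i}, the operator $e^{(t)}_{i-1}e_{i}^{(s)}$ is given by $k_{*}(\lambda_{i}^{s}\lambda_{i-1}^{t}\cdot[\mathcal{Y}_{\vec{d}}^{i-1,i}])$. Here the implicit class is the fundamental class of $\mathcal{Y}_{\vec{d}}^{i-1,i}$. This is legitimate because, by Proposition \ref{lci ness of Y}, $\mathcal{Y}_{\vec{d}}^{i-1,i}$ is l.c.i. of the expected dimension, so the excess intersection of Proposition \ref{composition of correspondences} produces exactly its fundamental class.

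Second, by Proposition \ref{lci ness of Y}, $\mathcal{Y}_{\vec{d}}^{i-1,i}$ is also reduced. By Proposition \ref{geometry of Y}, it has exactly two irreducible components of the same dimension, namely $\mathcal{Z}_{\vec{d}}^{i-1,i}$ and $F^{n+2}\text{Quot}_{\vec{d}_{i-1,i}^{\bullet}}(V)$. Since the scheme is reduced, each component occurs with multiplicity one. This gives the decomposition
\[
[\mathcal{Y}_{\vec{d}}^{i-1,i}]=[\mathcal{Z}_{\vec{d}}^{i-1,i}]+[F^{n+2}\text{Quot}_{\vec{d}_{i-1,i}^{\bullet}}(V)]
\]
in the Borel--Moore homology of $\mathcal{Y}_{\vec{d}}^{i-1,i}$. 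Pushing forward along $k$ turns it into an identity of classes on $F^{n}\text{Quot}_{\vec{d}}(V)\times F^{n}\text{Quot}_{\vec{d}_{i-1,i}}(V)\times C\times C$.

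The classes $\lambda_{i},\lambda_{i-1}$ are defined on all of $\mathcal{Y}_{\vec{d}}^{i-1,i}$, so capping with $\lambda_{i}^{s}\lambda_{i-1}^{t}$ is compatible with this decomposition. The restrictions of $\lambda_{i},\lambda_{i-1}$ to the Hyperquot component agree with the classes appearing in Lemma \ref{lemma on the composition e_ie_i-1}. The factors of $C\times C$ are matched with $q^{i}_{C}$ and $q^{i-1}_{C}$ through $k$ in both lemmas.

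Third, I subtract the two correspondences:
\[
[e_{i}^{(s)},e_{i-1}^{(t)}]=k_{*}\bigl(\lambda_{i}^{s}\lambda_{i-1}^{t}[F^{n+2}\text{Quot}]\bigr)-k_{*}\bigl(\lambda_{i}^{s}\lambda_{i-1}^{t}[\mathcal{Y}]\bigr)=-k_{*}\bigl(\lambda_{i}^{s}\lambda_{i-1}^{t}[\mathcal{Z}_{\vec{d}}^{i-1,i}]\bigr).
\]
This is the claimed class.

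The main obstacle is the first step: justifying that the composition correspondence is the fundamental class of $\mathcal{Y}$ with no excess contribution, and that the multiplicities are both one. The dimension count and the l.c.i. property from Propositions \ref{geometry of Y} and \ref{lci ness of Y} handle the first point, and reducedness handles the second. The degenerate cases, where $d_{i-1}=d_{i}$ or $\mathcal{Y}$ is empty, are immediate.
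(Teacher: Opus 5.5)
Your proposal is correct and follows essentially the same route as the paper. The paper also reads off $e_{i-1}^{(t)}e_{i}^{(s)}$ from the fundamental class of the reduced l.c.i. scheme $\mathcal{Y}_{\vec{d}}^{i-1,i}$, splits that class as $[\mathcal{Z}_{\vec{d}}^{i-1,i}]+[F^{n+2}\text{Quot}_{\vec{d}_{i-1,i}^{\bullet}}(V)]$ using Propositions \ref{geometry of Y} and \ref{lci ness of Y}, and subtracts it from the class in Lemma \ref{lemma on the composition e_ie_i-1}.
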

\noindent We are now ready to verify relation \ref{ee commutation 2 action}.\\
\begin{prop}\label{e_i,e_{i-1} relation proof} The formula
\[
{\left[e_{i}^{(s)}, e_{i-1}^{(t+1)}\right]}-{\left[e_{i}^{(s+1)}, e_{i-1}^{(t)}\right]}= -\delta \cdot e_{i}^{(s)} e_{i-1}^{(t)}.
\]
holds as an equality of operators
\[
H^{*}(F^{n}\text{Quot}(V))\rightarrow H^{*}(F^{n}\text{Quot}(V)\times C\times C).
\]
As indicated in the statement of theorem\ref{Yangian relations satisfied}, we make the convention that the operators labeled $e_{i}^{(s)}$ and $e_{i}^{(s+1)}$ contribute to the first factor of $C\times C$ and the operators labeled $e_{i-1}^{(t)}$ and $e_{i-1}^{(t+1)}$ contribute to the second factor of $C\times C$.
\newline
\begin{proof}
It follows from corollary \ref{corollary on the operator [e_i,e_i-1]} that the L.H.S., which is the operator $$\left[e_{i}^{(s)}, e_{i-1}^{(t+1)}\right]-\left[e_{i}^{(s+1)}, e_{i-1}^{(t)}\right]$$ is given as a correspondence by the class $$-k_{*}\left(\lambda_{i}^{s}\lambda_{i-1}^{t}\cdot\left[\mathcal{Z}_{\vec{d}}^{i-1,i}\right]\cdot(\lambda_{i-1}-\lambda_{i})\right).$$ By Lemma \ref{key lemma to prove the e_i e_i-1 commutation}, we can re-write this expression as $$-k_{*}\left(\lambda_{i}^{s}\lambda_{i-1}^{t}\cdot\delta\cdot\left[F^{n+2}\text{Quot}_{\vec{d}_{i-1,i}^{\bullet}}\right]\right).$$ By Lemma \ref{lemma on the composition e_ie_i-1}, this is precisely the class on $F^{n}\text{Quot}_{\vec{d}}(V)\times F^{n}\text{Quot}_{\vec{d}_{i-1,i}}(V)\times C\times C$ which defines the linear operator $$-\delta\cdot e_{i}^{(s)} e_{i-1}^{(t)}$$ as a correspondence. This is precisely the R.H.S. of the formula \ref{ee commutation 2 action} and hence we are done.

\end{proof}

\end{prop}

\subsection{The cubic Serre relations: \eqref{e Serre relations action} and \eqref{f Serre relations action}}
We will now prove the identity 
\begin{equation}\label{e Serre relations i formula}
     \left[e_{j}^{(s)},\left[e_{j}^{(t)}, e_{j-1}^{(u)}\right]\right]+\left[e_{j}^{(t)},\left[e_{j}^{(s)}, e_{j-1}^{(u)}\right]\right]=0.
\end{equation}
The proof of the identity
\[
\left[e_{j}^{(s)},\left[e_{j}^{(t)}, e_{j+1}^{(u)}\right]\right]+\left[e_{j}^{(t)},\left[e_{j}^{(s)}, e_{j+1}^{(u)}\right]\right]=0
\]
is similar, the verifications of the relations \ref{f Serre relations action} are identical and hence we will omit them. The geometry of the moduli spaces $\mathcal{U}_{\vec{d}}^{j-1,j},\mathcal{W}_{\vec{d}}^{j-1,j}$ and $\mathcal{X}_{\vec{d}}^{j-1,j}$ (cf. Subsections \ref{subsection: moduli spaces U,X,W}-\ref{Subsection: Divisors} for definitions and notations) will play a key role in the proof.\\\\
First, let us recall some notations and conventions. Let $\vec{d}=(d_1,...,d_{n})$ be an $n-$tuple of non-negative integers. For $j=2,...,n$, also define $n-$tuples \\
$\vec{d'}_{j-1,j}:=(d_1,...,d_{j-2},d_{j-1}+1,d_{j}+1,d_{j+1},...,d_{n})$ and let $\vec{d''}_{j-1,j}:=\\
(d_1,...,d_{j-2},d_{j-1}+1,d_{j}+2,d_{j+1},...,d_{n})$. We remark that in the case when $\vec{d}$ or $\vec{d''}_{j-1,j}$ is not non-decreasing, then the statements in this subsection will become vacuously true as the moduli spaces $\mathcal{U}_{\vec{d}}^{j-1,j},\mathcal{W}_{\vec{d}}^{j-1,j}$ and $\mathcal{X}_{\vec{d}}^{j-1,j}$ will become empty.\\\\
Also, recall the morphisms \ref{E,H forgetful map for moduli space W},\ref{E,H forgetful map for moduli space X}, \ref{morphisms p on U,X,W to C} and \ref{morphisms q on U,X,W to C}. Using these maps let us define:
\begin{equation}\label{k_X^1}
    k^{1}_{X}:=k_{X}\times q^{X}\times p^{X}\times q^{X}  : \mathcal{X}_{\vec{d}}^{j-1,j}\rightarrow F^{n}\text{Quot}_{\vec{d}}(V)\times F^{n}\text{Quot}_{\vec{d''}_{j-1,j}}(V)\times C\times C\times C
\end{equation}
\begin{equation}\label{k_X^2}
    k^{2}_{X}:=k_{X}\times q^{X}\times q^{X}\times p^{X} :\mathcal{X}_{\vec{d}}^{j-1,j}\rightarrow F^{n}\text{Quot}_{\vec{d}}(V)\times F^{n}\text{Quot}_{\vec{d''}_{j-1,j}}(V)\times C\times C\times C
\end{equation}
\begin{equation}\label{k_W^1}
    k^{1}_{W}:=k_{W}\times q^{W}\times p^{W}\times q^{W} :\mathcal{W}_{\vec{d}}^{j-1,j}\rightarrow F^{n}\text{Quot}_{\vec{d}}(V)\times F^{n}\text{Quot}_{\vec{d''}_{j-1,j}}(V)\times C\times C\times C
\end{equation}
and
\begin{equation}\label{k_W^2}
    k^{2}_{W}:=k_{W}\times q^{W}\times q^{W}\times p^{W}:\mathcal{W}_{\vec{d}}^{j-1,j}\rightarrow F^{n}\text{Quot}_{\vec{d}}(V)\times F^{n}\text{Quot}_{\vec{d''}_{j-1,j}}(V)\times C\times C\times C.
\end{equation}
As a first step we would like to express the operators involved in \eqref{e Serre relations i formula} in terms of correspondences. Due to Corollary \ref{corollary on the operator [e_i,e_i-1]} we know how to express the commutator $[e_{j}^{(s)}, e_{j-1}^{(u)}]$ as a correspondence. Then the following lemmas become straightforward consequences of Propositions \ref{class of correspondence} and \ref{composition of correspondences} on the formalism of correspondences and the dimension counts in Propositions \ref{Geometry of moduli space $X$} and \ref{Geometry of moduli space $W$}.\\\\
As in the statement of Theorem \ref{Yangian relations satisfied}, we also make the convention that the operators $e_{j-1}^{(u)},e_{j}^{(s)}$ and $e_{j}^{(t)}$ contribute to the first, second and third factor of $C\times C\times C$ respectively. 

\begin{lem}\label{corollary on the operator e_is[e_i,e_i-1]}
 The linear operator
\[
    e_{j}^{(s)}\left[e_{j}^{(t)}, e_{j-1}^{(u)}\right]:H^{*}(F^{n}\text{Quot}_{\vec{d}}(V))\rightarrow H^{*}(F^{n}\text{Quot}_{\vec{d''}_{j-1,j}}(V)\times C\times C\times C)
\]
is given as a correspondence by the class $$-{k^{1}_{X}}_{*}\left(\lambda_{1}^{t}\lambda_{2}^{s}\lambda_{3}^{u}\right)$$ on $F^{n}\text{Quot}_{\vec{d}}(V)\times F^{n}\text{Quot}_{\vec{d''}_{j-1,j}}(V)\times C\times C\times C$.
\newline
\end{lem}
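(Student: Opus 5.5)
The plan is to compose two correspondences with the formalism of Subsection \ref{subsection:formalism of correspondences}. By Corollary \ref{corollary on the operator [e_i,e_i-1]}, applied with $i=j$, the commutator $\left[e_{j}^{(t)}, e_{j-1}^{(u)}\right]$ is given by the class
\[
-k_{*}\left(\lambda_{j}^{t}\lambda_{j-1}^{u}\cdot\left[\mathcal{Z}_{\vec{d}}^{j-1,j}\right]\right)
\]
on $F^{n}\text{Quot}_{\vec{d}}(V)\times F^{n}\text{Quot}_{\vec{d'}_{j-1,j}}(V)\times C\times C$. The operator $e_{j}^{(s)}$ is the correspondence $(\pi_{+}\times\pi_{C}\times\pi_{-})_{*}(\lambda_{j}^{s})$ supported on $F^{n+1}\text{Quot}_{(\vec{d'}_{j-1,j})''_{j}}(V)$. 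Proposition \ref{class of correspondence} then expresses the composite as the pushforward along the projection forgetting the middle factor $F^{n}\text{Quot}_{\vec{d'}_{j-1,j}}(V)$ of the product of the two pulled-back classes on the big product
\[
\Pi:=F^{n}\text{Quot}_{\vec{d}}\times F^{n}\text{Quot}_{\vec{d'}_{j-1,j}}\times F^{n}\text{Quot}_{\vec{d''}_{j-1,j}}\times C^{3}.
\]

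Set $Z_{1}$ to be the image of $\mathcal{Z}_{\vec{d}}^{j-1,j}$ times the third factor, and $Z_{2}$ the image of the Hyperquot correspondence for $e_j$ times the first factor. Inside $\Pi$ the scheme-theoretic intersection $Z_{1}\cap Z_{2}$ is exactly $\mathcal{X}_{\vec{d}}^{j-1,j}$. Indeed, a point of the intersection consists of $E_\bullet$, a flag $F_\bullet$ with $E_{j-1}/F_{j-1}\cong E_j/F_j\cong\mathbb{C}_q$, and $H_j\subseteq F_j$ with quotient $\mathbb{C}_p$, which is diagram \eqref{The moduli space: X} with $i=j-1$. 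The $C$-coordinates are recorded by the maps $q^X,p^X,q^X$ in the order $(u,s,t)$, matching $k^1_X$.

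The next step is the dimension check required by Proposition \ref{composition of correspondences}. Both $Z_1$ and $Z_2$ are smooth: the first by Proposition \ref{proposition: smoothness and dimension of moduli space Z}, the second as a Hyperquot scheme. I will compute their codimensions in $\Pi$ and compare the sum with $\dim\Pi-\dim\mathcal{X}_{\vec{d}}^{j-1,j}$, using Proposition \ref{Geometry of moduli space $X$}. The embeddings of smooth subvarieties of a smooth variety are regular. The count reduces to the identity
\[
\dim\mathcal{X}=\dim\mathcal{Z}^{j-1,j}_{\vec d}+\dim F^{n+1}\text{Quot}-\dim F^{n}\text{Quot}_{\vec{d'}_{j-1,j}},
\]
which I expect to check case by case ($j<n$ versus $j=n$). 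This check is the main obstacle, since a mismatch would leave excess intersection. If it fails as stated, I would instead use that the Hyperquot correspondence is a virtual projective bundle over the middle factor times $C$ (Proposition \ref{pi-times piC as a projective bundle}), which makes the pullback flat-like. The intersection would then be the fibre product, which is $\mathcal{X}$, of the expected dimension.

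Once the hypotheses hold, Proposition \ref{composition of correspondences} gives the product class as the pushforward from $\mathcal{X}_{\vec{d}}^{j-1,j}$ of the restricted classes. The class $\lambda_{j}^{t}\lambda_{j-1}^{u}$ restricts to $\lambda_{1}^{t}\lambda_{3}^{u}$, since $\mathscr L_j,\mathscr L_{j-1}$ are $E_j/F_j, E_{j-1}/F_{j-1}$. The class $\lambda_j^s$ restricts to $\lambda_2^s$, where $\mathscr L_2$ is $F_j/H_j$. The fundamental classes restrict to $1$, by the transversality just established. Pushing forward along the projection forgetting $F_\bullet$ turns the composite of $i_X$ with the projection into $k^1_X$, which yields $-{k^1_X}_*(\lambda_1^t\lambda_2^s\lambda_3^u)$.
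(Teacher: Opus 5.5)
Your proposal is correct and is the paper's own argument: compose the correspondence for $\bigl[e_j^{(t)},e_{j-1}^{(u)}\bigr]$, which is supported on $\mathcal{Z}^{j-1,j}_{\vec d}$, with the Hyperquot correspondence for $e_j^{(s)}$ via Propositions \ref{class of correspondence} and \ref{composition of correspondences}, and identify the intersection with $\mathcal{X}^{j-1,j}_{\vec d}$. The dimension identity you reduce to does hold: it reads $rd_n=rd_n+rd_n-rd_n$ for $j<n$ and $r(d_n+2)=r(d_n+1)+r(d_n+2)-r(d_n+1)$ for $j=n$, in agreement with the paper's dimension of $\mathcal{X}^{j-1,j}_{\vec d}$, so your fallback is not needed (which is just as well, since $\pi_{-}\times\pi_{C}$ is not flat when $j<n$).
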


\begin{lem}\label{corollary on the operator e_it[e_i,e_i-1]}
 The linear operator
\[
    e_{j}^{(t)}\left[e_{j}^{(s)}, e_{j-1}^{(u)}\right]:H^{*}(F^{n}\text{Quot}_{\vec{d}}(V))\rightarrow H^{*}\left(F^{n}\text{Quot}_{\vec{d''}_{j-1,j}}(V)\times C\times C\times C\right)
\]
is given as a correspondence by the class $$-{k^{2}_{X}}_{*}\left(\lambda_{1}^{s}\lambda_{2}^{t}\lambda_{3}^{u}\right)$$ on $F^{n}\text{Quot}_{\vec{d}}(V)\times F^{n}\text{Quot}_{\vec{d''}_{j-1,j}}(V)\times C\times C\times C$.
\newline
\end{lem}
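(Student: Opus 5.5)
The plan is to compose the correspondence for $\left[e_{j}^{(s)}, e_{j-1}^{(u)}\right]$ with the one for $e_{j}^{(t)}$, using Propositions \ref{class of correspondence} and \ref{composition of correspondences}, in exactly the way Lemma \ref{corollary on the operator e_is[e_i,e_i-1]} was obtained with the roles of $s$ and $t$ exchanged. By Corollary \ref{corollary on the operator [e_i,e_i-1]}, the operator $\left[e_{j}^{(s)}, e_{j-1}^{(u)}\right]$ is given by the class $-k_{*}\left(\lambda_{j}^{s}\lambda_{j-1}^{u}[\mathcal{Z}_{\vec{d}}^{j-1,j}]\right)$ on $F^{n}\text{Quot}_{\vec{d}}(V)\times F^{n}\text{Quot}_{\vec{d'}_{j-1,j}}(V)\times C\times C$. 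Since $\mathcal{Z}_{\vec{d}}^{j-1,j}$ is smooth (Proposition \ref{proposition: smoothness and dimension of moduli space Z}), we may regard this as the pushforward of the class $-\lambda_{j}^{s}\lambda_{j-1}^{u}$ along a closed embedding of a smooth variety. Here both support points coincide and are equal to $q$. The operator $e_{j}^{(t)}$ is given by pushing forward $\lambda^{t}$ from $F^{n+1}\text{Quot}$, which is smooth and embeds into the product of the two Hyperquot schemes times $C$, with support point $p$.

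Next, we pull both classes back to the triple product
\[
F^{n}\text{Quot}_{\vec{d}}(V)\times F^{n}\text{Quot}_{\vec{d'}_{j-1,j}}(V)\times F^{n}\text{Quot}_{\vec{d''}_{j-1,j}}(V)\times C^{3}.
\]
There they become pushforwards from $Z_{1}=\mathcal{Z}^{j-1,j}_{\vec{d}}\times F^{n}\text{Quot}_{\vec{d''}_{j-1,j}}$ and from $Z_{2}=F^{n}\text{Quot}_{\vec{d}}\times F^{n+1}\text{Quot}$, with the $C$ factors placed appropriately. The scheme-theoretic intersection $Z_{1}\cap Z_{2}$ is exactly $\mathcal{X}_{\vec{d}}^{j-1,j}$: the middle flag is $F$, the inclusion $H_{j}\subseteq F_{j}$ is supported at $p$, and the inclusions $F_{k}\subseteq E_{k}$ are supported at $q$. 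The factor $C^{3}$ then receives the points $(q,q,p)$ in the order (first, second, third) dictated by the convention that $e_{j-1}^{(u)}, e_{j}^{(s)}, e_{j}^{(t)}$ contribute in that order. This is precisely the map $k^{2}_{X}$. Under this identification, $\lambda_{j}$ and $\lambda_{j-1}$ on $\mathcal{Z}$ restrict to $\lambda_{1}$ (the quotient $E_j/F_j$) and $\lambda_{3}$ (the quotient $E_{j-1}/F_{j-1}$), and the class $\lambda$ on $F^{n+1}\text{Quot}$ restricts to $\lambda_{2}$.

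The main obstacle is checking the codimension hypothesis of Proposition \ref{composition of correspondences}. Both embeddings are regular, since they are closed embeddings of smooth varieties into a smooth variety. It remains to show that $\operatorname{codim}(Z_{1}\cap Z_{2})=\operatorname{codim}Z_{1}+\operatorname{codim}Z_{2}$. I would compute this directly from the dimensions: $\dim \mathcal{X}_{\vec{d}}^{j-1,j}$ is given by Proposition \ref{Geometry of moduli space $X$}, $\dim\mathcal{Z}_{\vec{d}}^{j-1,j}$ by Proposition \ref{proposition: smoothness and dimension of moduli space Z}, and the Hyperquot dimensions are $rd_{n}$. Cases $j<n$ and $j=n$ should be treated separately. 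In each case the equality reduces to $\dim\mathcal{X}=\dim\mathcal{Z}+\dim F^{n+1}\text{Quot}-\dim F^{n}\text{Quot}_{\vec{d'}}-1$ (the $-1$ accounts for the shared $C$ factor), and this is verified directly. Given this, Proposition \ref{composition of correspondences} yields the product class $k_{*}(-\lambda_{1}^{s}\lambda_{2}^{t}\lambda_{3}^{u})$ on the intersection. Pushing forward along the projection that forgets the middle flag, and applying Proposition \ref{class of correspondence}, gives $-{k^{2}_{X}}_{*}(\lambda_{1}^{s}\lambda_{2}^{t}\lambda_{3}^{u})$, as claimed.
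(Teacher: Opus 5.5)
Your route is the paper's own. The paper obtains this lemma directly from Propositions \ref{class of correspondence} and \ref{composition of correspondences}, using the commutator correspondence of Corollary \ref{corollary on the operator [e_i,e_i-1]}, the identification $Z_1\cap Z_2\cong\mathcal{X}^{j-1,j}_{\vec d}$, and the dimension counts. Several of your steps are correct as written:
\begin{itemize}
\item the scheme-theoretic identification of the intersection;
\item the placement $(q,q,p)$ of the points, which produces $k^{2}_{X}$;
\item the matching $\lambda_j\mapsto\lambda_1$, $\lambda_{j-1}\mapsto\lambda_3$, $\lambda\mapsto\lambda_2$;
\item the sign.
\end{itemize}

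\textbf{The gap.} The dimension identity to which you reduce the codimension hypothesis is off by one, and as written it is false. Set
$A=F^{n}\text{Quot}_{\vec d}(V)\times F^{n}\text{Quot}_{\vec{d'}_{j-1,j}}(V)\times F^{n}\text{Quot}_{\vec{d''}_{j-1,j}}(V)\times C^{3}$. Then
$\operatorname{codim}_A Z_1=\dim F^{n}\text{Quot}_{\vec d}(V)+\dim F^{n}\text{Quot}_{\vec{d'}_{j-1,j}}(V)+2-\dim\mathcal{Z}^{j-1,j}_{\vec d}$
and
$\operatorname{codim}_A Z_2=\dim F^{n}\text{Quot}_{\vec{d'}_{j-1,j}}(V)+\dim F^{n}\text{Quot}_{\vec{d''}_{j-1,j}}(V)+1-\dim F^{n+1}\text{Quot}$.
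The codimension condition is therefore
\[
\dim\mathcal{X}^{j-1,j}_{\vec d}=\dim\mathcal{Z}^{j-1,j}_{\vec d}+\dim F^{n+1}\text{Quot}-\dim F^{n}\text{Quot}_{\vec{d'}_{j-1,j}}(V),
\]
with no $-1$. Here $F^{n+1}\text{Quot}$ is the space defining $e_j^{(t)}$ out of $F^{n}\text{Quot}_{\vec{d'}_{j-1,j}}(V)$; its dimension is $rd_n$ for $j<n$ and $r(d_n+2)$ for $j=n$.

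There is no shared $C$ factor to subtract. The intersection $Z_1\cap Z_2$ is the fibre product of $\mathcal{Z}^{j-1,j}_{\vec d}$ and $F^{n+1}\text{Quot}$ over the middle Hyperquot scheme only. The support point $p$ of $F_j/H_j$ is independent of $q$; the diagonal constraint is already absorbed in $\operatorname{codim}_A Z_1$.

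The corrected identity holds:
\begin{itemize}
\item for $j<n$ it reads $rd_n=rd_n+rd_n-rd_n$;
\item for $j=n$ it reads $r(d_n+2)=r(d_n+1)+r(d_n+2)-r(d_n+1)$.
\end{itemize}
Your version with the $-1$ fails in both cases, so the step you identified as the main obstacle does not check out as stated. With this correction the argument is complete.
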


\begin{lem}\label{corollary on the operator [e_i,e_i-1]e_i(s)}
 The linear operator
\[
    \left[e_{j}^{(t)}, e_{j-1}^{(u)}\right] e_{j}^{(s)}:H^{*}\left(F^{n}\text{Quot}_{\vec{d}}(V)\right)\rightarrow H^{*}\left(F^{n}\text{Quot}_{\vec{d''}_{j-1,j}}(V)\times C\times C\times C\right)
\]
is given as a correspondence by the class $$-{k^{1}_{W}}_{*}\left(\tilde{\lambda}_{1}^{s}\tilde{\lambda}_{2}^{t}{\lambda}_{3}^{u}\right)$$ on $F^{n}\text{Quot}_{\vec{d}}(V)\times F^{n}\text{Quot}_{\vec{d''}_{j-1,j}}(V)\times C\times C\times C$.
\newline
\end{lem}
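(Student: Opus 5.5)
The plan is to compose the correspondence for $e_j^{(s)}$, which comes from $F^{n+1}\text{Quot}$, with the correspondence for the commutator $[e_j^{(t)},e_{j-1}^{(u)}]$ given by Corollary \ref{corollary on the operator [e_i,e_i-1]}, and then to use Propositions \ref{class of correspondence} and \ref{composition of correspondences} to identify the resulting fibre product with $\mathcal{W}_{\vec{d}}^{j-1,j}$.

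\textbf{Step 1: the two correspondences.}
\begin{enumerate}
\item[(a)] By Corollary \ref{corollary on the operator [e_i,e_i-1]}, the commutator $[e_j^{(t)},e_{j-1}^{(u)}]$ is given by $-k_*\left(\lambda_j^t\lambda_{j-1}^u[\mathcal{Z}^{j-1,j}_{\vec{d}'_j}]\right)$. This is a correspondence from $F^n\text{Quot}_{\vec{d}'_j}(V)$ to $F^n\text{Quot}_{\vec{d''}_{j-1,j}}(V)$. It is supported on the smooth space $\mathcal{Z}^{j-1,j}$ and is pushed forward via the common support point $q$.
\item[(b)] $e_j^{(s)}$ is given by $(\pi_-\times\pi_+\times\pi_C)_*(\lambda_j^s)$ on $F^n\text{Quot}_{\vec{d}}(V)\times F^n\text{Quot}_{\vec{d}'_j}(V)\times C$. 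Its support is the Hyperquot scheme $F^{n+1}\text{Quot}_{\vec{d}''_j}(V)$, which is smooth.
\end{enumerate}

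\textbf{Step 2: identify the intersection.}
\begin{enumerate}
\item[(a)] Pull both classes back to the triple product
\[Z=F^n\text{Quot}_{\vec{d}}(V)\times F^n\text{Quot}_{\vec{d}'_j}(V)\times F^n\text{Quot}_{\vec{d''}_{j-1,j}}(V)\]
times the appropriate copies of $C$.
\item[(b)] The supports are $Z_1$ (the $e_j$ condition $A_k=B_k$ for $k\neq j$, with $B_j\subseteq A_j$ of colength one at $p$) and $Z_2$ (the $\mathcal{Z}^{j-1,j}$-type condition between the middle and last flags, at a common point $q$). These are exactly the subvarieties appearing in the proof of Proposition \ref{Geometry of moduli space $W$}.
\item[(c)] That proof shows $Z_1\cap Z_2\cong\mathcal{W}^{j-1,j}_{\vec{d}}$ scheme-theoretically. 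It also shows the intersection has the expected codimension, using the dimension count of Proposition \ref{Geometry of moduli space $W$}.
\item[(d)] Both $Z_1$ and $Z_2$ are smooth, so their embeddings into the smooth ambient space are regular. Proposition \ref{composition of correspondences} therefore applies, and the product of the pulled-back classes is $k_*$ of the product of the restricted classes on $\mathcal{W}$.
\end{enumerate}

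\textbf{Step 3: match the restricted classes.}
\begin{enumerate}
\item[(a)] On $\mathcal{W}$, in the notation of diagram (\ref{The moduli space: W}), the first inclusion is $F'_j\subseteq E_j$ at $p$. Under $j_1^*$ the class $\lambda_j$ from $e_j^{(s)}$ restricts to $\tilde\lambda_1$.
\item[(b)] For the commutator, the $j$-th quotient $F'_j/H_j$ at $q$ gives $\tilde\lambda_2$. The $(j-1)$-th quotient $E_{j-1}/F_{j-1}$ at $q$ gives $\lambda_3$.
\item[(c)] The curve coordinates also match. The $e_j^{(s)}$ point is $p$; the commutator contributes $q$ for both $e_j^{(t)}$ and $e_{j-1}^{(u)}$. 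With the ordering convention $(u),(s),(t)$, the map to $C^3$ is $q^W\times p^W\times q^W$, which is $k^1_W$.
\item[(d)] Pushing forward along the projection that forgets the middle flag then gives $-{k^1_W}_*(\tilde\lambda_1^s\tilde\lambda_2^t\lambda_3^u)$, as claimed.
\end{enumerate}

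\textbf{Main obstacle.} Proposition \ref{composition of correspondences} is stated for two regular embeddings into a common space. Here one class already sits on $F^n\text{Quot}\times F^n\text{Quot}\times C\times C$ and comes from a pushforward along $k$, and is only then pulled back along the projections to the triple product. I therefore need flat base change for these smooth projections, and I need the $C$-factors to be bookkept so that $p$ and $q$ land in the correct slots. I would handle this by including the $C$-coordinates in the ambient space, $Z\times C\times C$, and embedding $Z_1$ and $Z_2$ via their graph maps, which are still regular embeddings of smooth varieties. The codimension additivity is then exactly the count in the proof of Proposition \ref{Geometry of moduli space $W$}.
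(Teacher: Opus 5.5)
Your proposal is correct and follows the paper's route. The paper obtains this lemma directly from Propositions \ref{class of correspondence} and \ref{composition of correspondences} together with the dimension count for $\mathcal{W}^{j-1,j}_{\vec{d}}$, whose proof exhibits $\mathcal{W}^{j-1,j}_{\vec{d}}$ as exactly the intersection $Z_1\cap Z_2$ of smooth subvarieties with additive codimensions that you use. Your matching of the restricted classes $\tilde{\lambda}_1,\tilde{\lambda}_2,\lambda_3$, of the map $k^1_W$, and of the $C$-factors via graph embeddings just spells out details the paper leaves implicit, including that the middle factor of the ambient space must be $F^{n}\text{Quot}_{\vec{d}'_j}(V)$.
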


\begin{lem}\label{corollary on the operator [e_i,e_i-1]e_i(t)}
 The linear operator
\[
    \left[e_{j}^{(s)}, e_{j-1}^{(u)}\right] e_{j}^{(t)}:H^{*}(F^{n}\text{Quot}_{\vec{d}}(V))\rightarrow H^{*}(F^{n}\text{Quot}_{\vec{d''}_{j-1,j}}(V)\times C\times C\times C)
\]
is given as a correspondence by the class $$-{k^{2}_{W}}_{*}\left(\tilde{\lambda}_{1}^{t}\tilde{\lambda}_{2}^{s}{\lambda}_{3}^{u}\right)$$ on $F^{n}\text{Quot}_{\vec{d}}(V)\times F^{n}\text{Quot}_{\vec{d''}_{j-1,j}}(V)\times C\times C\times C$.
\newline
\end{lem}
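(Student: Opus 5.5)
The plan is to compute the composition $\left[e_{j}^{(s)}, e_{j-1}^{(u)}\right]\circ e_{j}^{(t)}$ directly in the formalism of Subsection \ref{subsection:formalism of correspondences}, with the two factors written as explicit correspondences.

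\textbf{The two correspondences.} Put $X:=F^{n}\text{Quot}_{\vec{d}}(V)$, $Y:=F^{n}\text{Quot}_{\vec{d}'_{j}}(V)$ and $Z:=F^{n}\text{Quot}_{\vec{d''}_{j-1,j}}(V)$.
\begin{enumerate}
\item By definition \eqref{definition of e operator}, $e_{j}^{(t)}$ is given by the class $(\pi_{-}\times\pi_{+}\times\pi_{C})_{*}(\lambda_{j}^{t})$ on $X\times Y\times C$. Its support is the smooth Hyperquot scheme $F^{n+1}\text{Quot}_{\vec{d}''_{j}}(V)$, embedded by $\pi_{-}\times\pi_{+}\times\pi_{C}$.
\item By Corollary \ref{corollary on the operator [e_i,e_i-1]}, applied with base point $\vec{d}'_{j}$, the commutator $\left[e_{j}^{(s)}, e_{j-1}^{(u)}\right]$ is given by $-k_{*}\left(\lambda_{j}^{s}\lambda_{j-1}^{u}\cdot[\mathcal{Z}^{j-1,j}_{\vec{d}'_{j}}]\right)$ on $Y\times Z\times C\times C$. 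This class is supported on the smooth variety $\mathcal{Z}^{j-1,j}_{\vec{d}'_{j}}$ (Proposition \ref{proposition: smoothness and dimension of moduli space Z}).
\end{enumerate}
We label the three curve factors as the lemma requires. The factor coming from $e_{j}^{(t)}$ goes into the third slot. Those coming from $e_{j-1}^{(u)}$ and $e_{j}^{(s)}$ go into the first and second slots, and both record the common support point $q$ of $\mathcal{Z}^{j-1,j}$.

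\textbf{Applying the composition formula.} Work in the ambient space $X\times Y\times Z\times C^{3}$, which is smooth. Let $Z_{1}$ and $Z_{2}$ be the pullbacks of the two supports, namely $F^{n+1}\text{Quot}_{\vec{d}''_{j}}(V)\times Z\times C^{2}$ and $X\times\mathcal{Z}^{j-1,j}_{\vec{d}'_{j}}\times C$ (with the curve factors placed as above). Both are smooth closed subvarieties of a smooth variety, hence regularly embedded.

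Writing $F'_{j}$ for the middle sheaf from $Y$ gives the following identifications.
\begin{enumerate}
\item The scheme-theoretic intersection $Z_{1}\cap Z_{2}$ is exactly the moduli space $\mathcal{W}^{j-1,j}_{\vec{d}}$ of diagram \eqref{The moduli space: W}.
\item The map $Z_{1}\cap Z_{2}\to X\times Z\times C^{3}$ is $k^{2}_{W}$.
\item The line bundles restrict as follows. $\mathscr{L}_{j}$ from $e_{j}^{(t)}$ becomes $\tilde{\mathscr{L}}_{1}$, with fibre $E_{j}/F'_{j}$ supported at $p$. $\mathscr{L}_{j}$ from the commutator becomes $\tilde{\mathscr{L}}_{2}$, with fibre $F'_{j}/H_{j}$. $\mathscr{L}_{j-1}$ becomes $\mathscr{L}_{3}$, with fibre $E_{j-1}/F_{j-1}$.
\end{enumerate}
The curve maps pull back to $q^{W},q^{W},p^{W}$ in the three slots, matching $k^{2}_{W}$.

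Propositions \ref{composition of correspondences} and \ref{class of correspondence} then give the correspondence class
\[
-{p_{XZ}}_{*}\,{k}_{*}\left(\tilde{\lambda}_{1}^{t}\tilde{\lambda}_{2}^{s}\lambda_{3}^{u}\cdot[\mathcal{W}^{j-1,j}_{\vec{d}}]\right)=-{k^{2}_{W}}_{*}\left(\tilde{\lambda}_{1}^{t}\tilde{\lambda}_{2}^{s}\lambda_{3}^{u}\right).
\]
The fundamental class is the right one here because $\mathcal{W}^{j-1,j}_{\vec{d}}$ is reduced and irreducible (Proposition \ref{Geometry of moduli space $W$}).

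\textbf{Main obstacle: the transversality hypothesis.} The step that needs real care is the codimension additivity required by Proposition \ref{composition of correspondences}. The plan is to compute both codimensions explicitly.
\begin{enumerate}
\item $\dim Z_{1}=r d_{n}'+\dim Z+2$, where $d'_{n}$ is the last entry of $\vec{d}'_{j}$, and $\dim Z_{2}=\dim X+\dim\mathcal{Z}^{j-1,j}_{\vec{d}'_{j}}+1$, using Proposition \ref{proposition: smoothness and dimension of moduli space Z}.
\item Compare $\operatorname{codim} Z_{1}+\operatorname{codim} Z_{2}$ with $\operatorname{codim}\mathcal{W}^{j-1,j}_{\vec{d}}$, using $\dim\mathcal{W}^{j-1,j}_{\vec{d}}=\dim\mathcal{U}^{j-1,j}_{\vec{d}}$ from Proposition \ref{Geometry of moduli space $W$}.
\item Treat the cases $j<n$ and $j=n$ separately, since the dimension formulas differ. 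In each case the count must come out to exactly $\dim X+\dim Y+\dim Z+3-\dim\mathcal{W}$.
\end{enumerate}
I expect this bookkeeping to close: $\mathcal{W}$ was already shown to be l.c.i. as a transverse intersection of a Hyperquot-scheme product with $\mathcal{Z}^{j-1,j}\times$(Hyperquot scheme), which is essentially the same configuration as here.
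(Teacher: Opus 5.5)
Your proposal is correct and follows the paper's own argument. The paper states this lemma as a direct consequence of the correspondence formalism together with the dimension counts for $\mathcal{X}$ and $\mathcal{W}$; your version simply writes out the intersection $Z_{1}\cap Z_{2}\cong\mathcal{W}^{j-1,j}_{\vec{d}}$, the identification of line bundles, and the curve-slot bookkeeping explicitly.

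The codimension bookkeeping you deferred does close, in both cases $j<n$ and $j=n$:
\begin{itemize}
\item $\dim F^{n+1}\text{Quot}_{\vec{d}''_{j}}(V)=\dim Y$;
\item $\dim\mathcal{Z}^{j-1,j}_{\vec{d}'_{j}}=\dim\mathcal{W}^{j-1,j}_{\vec{d}}=\dim Z$.
\end{itemize}
Hence $\operatorname{codim}Z_{1}=\dim X+1$ and $\operatorname{codim}Z_{2}=\dim Y+2$. Their sum is $\dim X+\dim Y+3$, which equals $\operatorname{codim}\mathcal{W}^{j-1,j}_{\vec{d}}$ in $X\times Y\times Z\times C^{3}$, so the hypothesis of the composition proposition holds.
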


\begin{prop}\label{Serre relations proof }
The formula
\[
 \left[e_{j}^{(s)},\left[e_{j}^{(t)}, e_{j-1}^{(u)}\right]\right]+\left[e_{j}^{(t)},\left[e_{j}^{(s)}, e_{j-1}^{(u)}\right]\right]=0
\]
holds as an equality of operators
\[
H^{*}(F^{n}\text{Quot}(V))\rightarrow H^{*}(F^{n}\text{Quot}(V)\times C\times C\times C)
\]
for all $j=2,...,n$. Here, we make the convention that the operators $e_{j-1}^{(u)},\,e_{j}^{(s)}$ and $e_{j}^{(t)}$ in the above equation contribute to the first, second, and third factor of $C\times C\times C$ respectively.
\end{prop}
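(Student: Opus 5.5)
The plan is to rewrite the left-hand side as a single class on $F^{n}\text{Quot}_{\vec{d}}(V)\times F^{n}\text{Quot}_{\vec{d''}_{j-1,j}}(V)\times C\times C\times C$, pull everything back to the smooth space $\mathcal{U}_{\vec{d}}^{j-1,j}$, and show that the resulting integrand is a multiple of $[\text{Diag}]$ that dies on $\text{Diag}$. Expanding the two double commutators gives
\[
e_j^{(s)}[e_j^{(t)},e_{j-1}^{(u)}]-[e_j^{(t)},e_{j-1}^{(u)}]e_j^{(s)}+e_j^{(t)}[e_j^{(s)},e_{j-1}^{(u)}]-[e_j^{(s)},e_{j-1}^{(u)}]e_j^{(t)}.
\]
By Lemmas \ref{corollary on the operator e_is[e_i,e_i-1]}--\ref{corollary on the operator [e_i,e_i-1]e_i(t)}, this operator is given by the class
\[
-{k^1_X}_*(\lambda_1^t\lambda_2^s\lambda_3^u)+{k^1_W}_*(\tilde\lambda_1^s\tilde\lambda_2^t\lambda_3^u)-{k^2_X}_*(\lambda_1^s\lambda_2^t\lambda_3^u)+{k^2_W}_*(\tilde\lambda_1^t\tilde\lambda_2^s\lambda_3^u).
\]

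\textbf{Step 1: lift to $\mathcal{U}_{\vec{d}}^{j-1,j}$.} The maps $\eta=\eta^{j-1,j}_{\vec{d}}$ and $\theta=\theta^{j-1,j}_{\vec{d}}$ are birational, hence of degree one. So $\eta_*1=[\mathcal{X}]$ and $\theta_*1=[\mathcal{W}]$, where for the singular $\mathcal{W}$ we use its fundamental class, which is what enters Lemmas \ref{corollary on the operator [e_i,e_i-1]e_i(s)} and \ref{corollary on the operator [e_i,e_i-1]e_i(t)}. By the projection formula, each term above is a pushforward from $\mathcal{U}$ of the same monomial in the pulled-back classes $\lambda$'s and $\tilde\lambda$'s. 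From the definitions of the support maps we have $p^X\eta=p^W\theta=p^U$ and $q^X\eta=q^W\theta=q^U$, and $k_X\eta=k_W\theta$. Therefore $k^a_X\circ\eta=k^a_W\circ\theta=:k^a_U$ for $a=1,2$, where $k^1_U=k\times q\times p\times q$ and $k^2_U=k\times q\times q\times p$. The total class becomes
\[
{k^1_U}_*\bigl((\tilde\lambda_1^s\tilde\lambda_2^t-\lambda_2^s\lambda_1^t)\lambda_3^u\bigr)+{k^2_U}_*\bigl((\tilde\lambda_1^t\tilde\lambda_2^s-\lambda_2^t\lambda_1^s)\lambda_3^u\bigr).
\]

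\textbf{Step 2: factor out $[\text{Diag}]$.} Put $D=[\text{Diag}]$, $x=\lambda_1$, $y=\lambda_2$. By Proposition \ref{Diag in cohomology}, $\tilde\lambda_1=y+D$ and $\tilde\lambda_2=x-D$, so each bracket equals $D\cdot P$ for an explicit polynomial $P$. This means each term is a pushforward of a class supported on $\text{Diag}$, namely $\iota_*(\iota^*P\,\lambda_3^u)$ with $\iota:\text{Diag}\hookrightarrow\mathcal{U}$. Since $p=q$ on $\text{Diag}$, we get $k^1_U\iota=k^2_U\iota$. The whole class therefore equals ${k^1_U}_*\iota_*\bigl(\iota^*(P_{s,t}+P_{t,s})\lambda_3^u\bigr)$, where
\[
D(P_{s,t}+P_{t,s})=a^sb^t+a^tb^s-(y^sx^t+y^tx^s),\qquad a=\tilde\lambda_1,\ b=\tilde\lambda_2.
\]

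\textbf{Step 3: show the symmetric quotient vanishes on $\text{Diag}$.} Since $a+b=x+y$, write $a^sb^t+a^tb^s=g(a+b,ab)$ with $g$ a polynomial. Then the difference is
\[
g(e_1,ab)-g(e_1,xy)=(ab-xy)\cdot\tilde g,
\]
where $\tilde g$ is the divided difference in the second variable. Moreover $ab-xy=(y+D)(x-D)-xy=D(x-y-D)$, so $P_{s,t}+P_{t,s}=(x-y-D)\tilde g$ as polynomials in $x,y,D$. On $\text{Diag}$ we have $F_j=F'_j$, so $\tilde\lambda_2|_{\text{Diag}}=\lambda_2|_{\text{Diag}}$. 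Hence $\iota^*(x-y-D)=\iota^*(\tilde\lambda_2-\lambda_2)=0$, and the class vanishes.

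\textbf{Main obstacle.} The step I expect to be hardest is the legitimacy of the division by $D$. Equality on $\mathcal{U}$ holds only as classes, so I will work with honest polynomial identities in the symbols $x,y,D$ and only then use $D\cdot(\ )=\iota_*\iota^*(\ )$, which relies on the smoothness of $\text{Diag}$ (Proposition \ref{smoothness of Diag}). A secondary check is the compatibility of the factor orderings in $k^1$ and $k^2$ with the conventions fixed in the four lemmas.
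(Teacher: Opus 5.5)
Your proposal is correct and follows the paper's proof. You lift all four terms to $\mathcal{U}_{\vec{d}}^{j-1,j}$, rewrite them using $[\text{Diag}]=\lambda_1-\tilde\lambda_2=\tilde\lambda_1-\lambda_2$, push the $[\text{Diag}]$-divisible parts through $\text{Diag}$ (where the two maps to $F^{n}\text{Quot}_{\vec{d}}(V)\times F^{n}\text{Quot}_{\vec{d''}_{j-1,j}}(V)\times C\times C\times C$ agree), and use $\lambda_2=\tilde\lambda_2$ on $\text{Diag}$. The only difference is bookkeeping: the paper moves one side to $\text{Diag}$, swaps $\lambda_2\leftrightarrow\tilde\lambda_2$ and moves it back, whereas you sum both terms and extract the factor $\lambda_1-\lambda_2-[\text{Diag}]=\tilde\lambda_2-\lambda_2$ via the symmetric-function divided difference.
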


\begin{proof}
    From Lemmas \ref{corollary on the operator e_is[e_i,e_i-1]}, \ref{corollary on the operator e_it[e_i,e_i-1]}, \ref{corollary on the operator [e_i,e_i-1]e_i(s)} and \ref{corollary on the operator [e_i,e_i-1]e_i(t)}, it is enough to show that 

    \begin{equation}\label{What must be shown in the proof of the Serre relations}
        -{k^{2}_{X}}_{*}\left(\lambda_{1}^{s}\lambda_{2}^{t}\lambda_{3}^{u}\right)-{k^{1}_{X}}_{*}\left(\lambda_{1}^{t}\lambda_{2}^{s}\lambda_{3}^{u}\right)+{k^{1}_{W}}_{*}\left(\tilde{\lambda}_{1}^{s}\tilde{\lambda}_{2}^{t}{\lambda}_{3}^{u}\right)+{k^{2}_{W}}_{*}\left(\tilde{\lambda}_{1}^{t}\tilde{\lambda}_{2}^{s}{\lambda}_{3}^{u}\right)=0.
    \end{equation}
    \newline
    on $F^{n}\text{Quot}_{\vec{d}}(V)\times F^{n}\text{Quot}_{\vec{d''}_{j-1,j}}(V)\times C\times C\times C$.\\\\
    Recall the birational morphisms $ \eta_{\vec{d}}^{j-1,j}$ and $ \theta_{\vec{d}}^{j-1,j}$ (cf. equation \ref{birational maps : eta and theta}) and note that 
    \[
     k_{X}\circ\eta_{\vec{d}}^{j-1,j}=  k_{W}\circ \theta_{\vec{d}}^{j-1,j},
    \]
    as maps between  $\mathcal{U}_{\vec{d}}^{j-1,j}$ and $F^{n}\text{Quot}_{\vec{d}}(V)\times F^{n}\text{Quot}_{\vec{d''}_{j-1,j}}(V)$. This leads us to define morphisms $$\alpha^{1},\alpha^{2}:\mathcal{U}_{\vec{d}}^{j-1,j}\rightarrow F^{n}\text{Quot}_{\vec{d}}(V)\times F^{n}\text{Quot}_{\vec{d''}_{j-1,j}}(V)\times C\times C\times C$$ as:
    \[
    \alpha^{1}:=k_{X}^{1}\circ\eta_{\vec{d}}^{j-1,j}= k_{W}^{1}\circ\theta_{\vec{d}}^{j-1,j} \text{ and }\alpha^{2}:=k_{X}^{2}\circ\eta_{\vec{d}}^{j-1,j}= k_{W}^{2}\circ\theta_{\vec{d}}^{j-1,j}.
    \]
    Since $\eta_{\vec{d}}^{j-1,j}$ and $\theta_{\vec{d}}^{j-1,j}$ are birational, it follows from the projection formula that we can re-write the expression  
    \[ 
    -{k^{2}_{X}}_{*}(\lambda_{1}^{s}\lambda_{2}^{t}\lambda_{3}^{u})-{k^{1}_{X}}_{*}(\lambda_{1}^{t}\lambda_{2}^{s}\lambda_{3}^{u})+{k^{1}_{W}}_{*}(\tilde{\lambda}_{1}^{s}\tilde{\lambda}_{2}^{t}{\lambda}_{3}^{u})+{k^{2}_{W}}_{*}(\tilde{\lambda}_{1}^{t}\tilde{\lambda}_{2}^{s}{\lambda}_{3}^{u})
    \]
   in equation \ref{What must be shown in the proof of the Serre relations}, as:
    \[
     -{ \alpha^{2}}_{*}(\lambda_{1}^{s}\lambda_{2}^{t}\lambda_{3}^{u})-{ \alpha^{1}}_{*}(\lambda_{1}^{t}\lambda_{2}^{s}\lambda_{3}^{u})+{ \alpha^{1}}_{*}(\tilde{\lambda}_{1}^{s}\tilde{\lambda}_{2}^{t}{\lambda}_{3}^{u})+{ \alpha^{2}}_{*}(\tilde{\lambda}_{1}^{t}\tilde{\lambda}_{2}^{s}{\lambda}_{3}^{u}).
    \]
    Therefore, we must show that
    \begin{equation}\label{What must be shown, part 2, in the proof of the Serre relations}
        {\alpha^{1}}_{*}(\tilde{\lambda}_{1}^{s}\tilde{\lambda}_{2}^{t}{\lambda}_{3}^{u}-\lambda_{1}^{t}\lambda_{2}^{s}\lambda_{3}^{u}) = {\alpha^{2}}_{*}(\lambda_{1}^{s}\lambda_{2}^{t}\lambda_{3}^{u}-\tilde{\lambda}_{1}^{t}\tilde{\lambda}_{2}^{s}{\lambda}_{3}^{u}).
    \end{equation}
    \newline
    To this end, let us recall the divisor $\text{Diag}$ of $\mathcal{U}_{\vec{d}}^{j-1,j}$ (cf. Subsection \ref{Subsection: Divisors}) and observe that 
    \[
    {\alpha^{1}}_{_{|_{\text{Diag}}}}={\alpha^{2}}_{_{|_{\text{Diag}}}}.
    \]
    So we define 
    \[
    \alpha:\text{Diag} \rightarrow F^{n}\text{Quot}_{\vec{d}}(V)\times F^{n}\text{Quot}_{\vec{d''}_{j-1,j}}(V)\times C\times C\times C,\,\alpha:={\alpha^{1}}_{_{|_{\text{Diag}}}}={\alpha^{2}}_{_{|_{\text{Diag}}}}
    \]
    and note that for any cohomology class $\gamma$ on $\mathcal{U}_{\vec{d}}^{j-1,j}$, the projection formula implies:
    \begin{equation}\label{observation 1 in proof of serre relations}
\alpha^{1}_{*}\left([\text{Diag}]\cdot\gamma\right)=\alpha_{*}\left(\gamma_{_{|_{\text{Diag}}}}\right)=\alpha^{2}_{*}\left([\text{Diag}]\cdot\gamma\right).
    \end{equation}
    
    \noindent We also make a key observation about the restriction of the tautological line bundles on $\mathcal{U}_{\vec{d}}^{j-1,j}$ to Diag, which just follows from the definition of Diag as the sub-scheme of $\mathcal{U}_{\vec{d}}^{j-1,j}$ given by the relation $F_{j}=F_{j}^{'}$ (cf. Diagrams \ref{The moduli space: U} and \ref{The moduli space: Diag} ):
\begin{equation}\label{observation 2 in proof of serre relations}
\mathscr{L}_{{1}_{{|}_{\text{Diag}}}}={\tilde{\mathscr{L}}}_{{1}_{{|}_{\text{Diag}}}} \text{ and } \mathscr{L}_{{2}_{{|}_{\text{Diag}}}}={\tilde{\mathscr{L}}}_{{2}_{{|}_{\text{Diag}}}}.
\end{equation}
Coming back to verifying equation \ref{What must be shown, part 2, in the proof of the Serre relations}, the L.H.S.
\[
{\alpha^{1}}_{*}\left(\tilde{\lambda}_{1}^{s}\tilde{\lambda}_{2}^{t}{\lambda}_{3}^{u}-\lambda_{1}^{t}\lambda_{2}^{s}\lambda_{3}^{u}\right)
\]
can be re-written using Proposition \ref{Diag in cohomology} as:
\[
{\alpha^{1}}_{*}\left(\left(\lambda_{2}+[\text{Diag}]\right)^{s}\tilde{\lambda}_{2}^{t}{\lambda}_{3}^{u}-\left(\tilde{\lambda}_{2}+[\text{Diag}]\right)^{t}\lambda_{2}^{s}\lambda_{3}^{u}\right).
\]
By doing a simple binomial expansion we see that this expression can be written in the form of equation \ref{observation 1 in proof of serre relations}. Explicitly, we have
\[
\text{L.H.S.}={\alpha^{1}}_{*}\left(\left(\sum\limits_{i=0}^{s}\binom{s}{i}\lambda_{2}^{s-i}[\text{Diag}]^{i}\right)\tilde{\lambda}_{2}^{t}{\lambda}_{3}^{u}-\left(\sum\limits_{j=0}^{t}\binom{t}{j}\tilde{\lambda}_{2}^{t-j}[\text{Diag}]^{j}\right)\lambda_{2}^{s}\lambda_{3}^{u}\right)
\]
\[
={\alpha^{1}}_{*}\left(\left(\sum\limits_{i=1}^{s}\binom{s}{i}\lambda_{2}^{s-i}[\text{Diag}]^{i}\right)\tilde{\lambda}_{2}^{t}{\lambda}_{3}^{u}-\left(\sum\limits_{j=1}^{t}\binom{t}{j}\tilde{\lambda}_{2}^{t-j}[\text{Diag}]^{j}\right)\lambda_{2}^{s}\lambda_{3}^{u}\right)
\]
\[
=\alpha_{*}\left(\left(\left(\sum\limits_{i=1}^{s}\binom{s}{i}\lambda_{2}^{s-i}[\text{Diag}]^{i-1}\right)\tilde{\lambda}_{2}^{t}{\lambda}_{3}^{u}-\left(\sum\limits_{j=1}^{t}\binom{t}{j}\tilde{\lambda}_{2}^{t-j}[\text{Diag}]^{j-1}\right)\lambda_{2}^{s}\lambda_{3}^{u}\right)
_{|_{\text{Diag}}}\right).
\]
Then by \ref{observation 2 in proof of serre relations}, we can switch $\lambda_{2}$ and $\tilde{\lambda}_{2}$ in the above expression:
\begin{multline*}
\text{L.H.S.}=\alpha_{*}\left(\left(\sum\limits_{i=1}^{s}\binom{s}{i}\tilde{\lambda}_{2}^{s-i}[\text{Diag}]^{i-1}\right)\lambda_{2}^{t}{\lambda}_{3}^{u}\right.\\\left.-\left(\sum\limits_{j=1}^{t}\binom{t}{j}\lambda_{2}^{t-j}[\text{Diag}]^{j-1}\right)\tilde{\lambda}_{2}^{s}\lambda_{3}^{u}\right)
_{|_{\text{Diag}}}.
\end{multline*}
Again, by \ref{observation 1 in proof of serre relations}, this is equal to 
\[
\alpha^{2}_{*}\left(\left(\sum\limits_{i=1}^{s}\binom{s}{i}\tilde{\lambda}_{2}^{s-i}[\text{Diag}]^{i}\right)\lambda_{2}^{t}{\lambda}_{3}^{u}-\left(\sum\limits_{j=1}^{t}\binom{t}{j}\lambda_{2}^{t-j}[\text{Diag}]^{j}\right)\tilde{\lambda}_{2}^{s}\lambda_{3}^{u}\right)
\]
\[
=\alpha^{2}_{*}\left(\left(\sum\limits_{i=0}^{s}\binom{s}{i}\tilde{\lambda}_{2}^{s-i}[\text{Diag}]^{i}\right)\lambda_{2}^{t}{\lambda}_{3}^{u}-\left(\sum\limits_{j=0}^{t}\binom{t}{j}\lambda_{2}^{t-j}[\text{Diag}]^{j}\right)\tilde{\lambda}_{2}^{s}\lambda_{3}^{u}\right)
\]
\[
=\alpha^{2}_{*}\left(\left(\tilde{\lambda}_{2}+[\text{Diag}]\right)^{s}\lambda_{2}^{t}{\lambda}_{3}^{u}-\left(\lambda_{2}+[\text{Diag}]\right)^{t}\tilde{\lambda}_{2}^{s}\lambda_{3}^{u}\right).
\]
\newline
By Proposition \ref{Diag in cohomology}, this expression is equal to 
\[
\alpha^{2}_{*}\left(\lambda_{1}^{s}\lambda_{2}^{t}{\lambda}_{3}^{u}-\tilde{\lambda}_{1}^{t}\tilde{\lambda}_{2}^{s}\lambda_{3}^{u}\right),
\]
which is precisely the R.H.S. of \ref{What must be shown, part 2, in the proof of the Serre relations}, finishing the proof.
\end{proof}

\subsection{The $e_{j}^{(*)},e_{j}^{(*)}$ and $f_{j}^{(*)},f_{j}^{(*)}$ commutation relations: \eqref{ee commutation 1 action} and \eqref{ff commutation 1 action}}
We will now verify the relation \ref{ee commutation 1 action}. We omit the verification of the relation \ref{ff commutation 1 action} since it is identical.
We will crucially use the geometry of the moduli spaces $\mathcal{U}_{\vec{d}}^{j,j},\, \mathcal{X}_{\vec{d}}^{j,j}$ and $\mathcal{W}_{\vec{d}}^{j,j}$ (cf. Subsection \ref{subsection: U,X,W case:i=j} for definitions and notation) in this proof.\\\\
Let us set up notations and conventions for this subsection. Let $\vec{d}=(d_1,...,d_{n})$ be an $n-$tuple of non-negative integers. For $j=1,...,n$, we define the $n-$tuple $\vec{d''}_{j,j}:=(d_1,...,d_{j-1},d_{j}+2,d_{j+1},...,d_{n})$. In the case when $\vec{d}$ or $\vec{d''}_{j,j}$ is not non-decreasing, then the statements in this subsection will be vacuously true as the moduli spaces $\mathcal{U}_{\vec{d}}^{j,j},\mathcal{W}_{\vec{d}}^{j,j}$ and $\mathcal{X}_{\vec{d}}^{j,j}$ will become empty. \\\\
Following the convention of the statement of Theorem \ref{Yangian relations satisfied}, the operator $e_{j}^{(s)}$ contributes to the first factor of $C\times C$ and the operators $e_{j}^{(t)}$ contributes to the second factor of $C\times C$.\\\\
The following lemmas follow from Propositions \ref{class of correspondence} and \ref{composition of correspondences} and the dimension counts in propositions \ref{Geometry of moduli space $X$} and \ref{Geometry of moduli space $W$}. To state the lemmas let us also recall the maps \ref{morphisms q on U,X,W to C}, \ref{morphisms p on U,X,W to C}, \ref{E,H forgetful map for moduli space W} and \ref{E,H forgetful map for moduli space X}.

\begin{lem}\label{corollary on the operator ejej s,t}
The linear operator
\[
    e_{j}^{(s)}\circ e_{j}^{(t)}:H^{*}(F^{n}\text{Quot}_{\vec{d}}(V))\rightarrow H^{*}(F^{n}\text{Quot}_{\vec{d''}_{j,j}}(V)\times C\times C)
\]
is given as a correspondence by the class $${(k_{X}\times p^{X}\times q^{X})}_{*}\left(\lambda_{1}^{t}\lambda_{2}^{s}\right)$$ on $F^{n}\text{Quot}_{\vec{d}}(V)\times F^{n}\text{Quot}_{\vec{d''}_{j,j}}(V)\times C\times C$.
\newline
\end{lem}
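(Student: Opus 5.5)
We apply Proposition \ref{class of correspondence} to the two correspondences defining $e_{j}^{(t)}$ and $e_{j}^{(s)}$, and then evaluate their product with Proposition \ref{composition of correspondences}. The first operator, $e_{j}^{(t)}:H^{*}(F^{n}\text{Quot}_{\vec{d}}(V))\to H^{*}(F^{n}\text{Quot}_{\vec{d'}_{j,j}}(V)\times C)$, is given by the class $(\pi_{-}\times\pi_{+}\times\pi_{C})_{*}(\lambda_{j}^{t})$. Here $\pi_{-}\times\pi_{+}\times\pi_{C}$ is a closed embedding of $F^{n+1}\text{Quot}_{\vec{d}''_{j}}(V)$. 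The second operator, $e_{j}^{(s)}:H^{*}(F^{n}\text{Quot}_{\vec{d'}_{j,j}}(V))\to H^{*}(F^{n}\text{Quot}_{\vec{d''}_{j,j}}(V)\times C)$, is given by the analogous class, supported on $F^{n+1}\text{Quot}$ of the tuple $(d_1,\dots,d_{j}+1,d_{j}+2,\dots,d_n)$. Following the convention of Theorem \ref{Yangian relations satisfied}, the operator $e^{(s)}$ contributes to the first factor of $C\times C$ and $e^{(t)}$ to the second. We work on the triple product
\[
Y:=F^{n}\text{Quot}_{\vec{d}}(V)\times F^{n}\text{Quot}_{\vec{d'}_{j,j}}(V)\times F^{n}\text{Quot}_{\vec{d''}_{j,j}}(V)\times C\times C.
\]
There we set $Z_{1}$ to be the pullback of the first support, where the middle flag is obtained from the first by a length one modification at the second $C$-coordinate. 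We set $Z_{2}$ to be the pullback of the second support, where the last flag is obtained from the middle one by a length one modification at the first $C$-coordinate.

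Next we identify the scheme-theoretic intersection $Z_{1}\cap Z_{2}$ with $\mathcal{X}^{j,j}_{\vec{d}}$, embedded via $i_{X}\times p^{X}\times q^{X}$. A point of the intersection is a triple $E\supseteq F\supseteq H$ in which all sheaves away from index $j$ agree, with $E_{j}/F_{j}\cong\mathbb{C}_{q}$ and $F_{j}/H_{j}\cong\mathbb{C}_{p}$. This is exactly diagram \eqref{The moduli space: X} with $i=j$. The identification should hold in families as well, since both sides represent the same moduli functor, namely the fibre product of the two Hyperquot correspondences over $F^{n}\text{Quot}_{\vec{d'}_{j,j}}(V)$. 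Under this identification, the pullback of $\lambda_{j}$ from the first correspondence is $\lambda_{1}$ (fibre $E_{j}/F_{j}$), and the pullback from the second is $\lambda_{2}$ (fibre $F_{j}/H_{j}$).

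The main step, and the only one needing care, is the dimension check required by Proposition \ref{composition of correspondences}. Both $Z_{i}$ are smooth, being products of a Hyperquot scheme with factors, so the embeddings $Z_{i}\hookrightarrow Y$ are regular. We compare dimensions as follows.
\begin{itemize}
\item The support of $e^{(t)}$ has dimension $rd_{n}$ or $r(d_{n}+1)$, so $\operatorname{codim}_{Y}Z_{1}=\dim Y-\dim F^{n}\text{Quot}_{\vec{d''}_{j,j}}(V)-\dim(\text{support of }e^{(t)})-1$.
\item The analogous formula gives $\operatorname{codim}_{Y}Z_{2}$.
\item Proposition \ref{Geometry of moduli space $X$} gives $\dim\mathcal{X}^{j,j}_{\vec{d}}=\dim F^{n}\text{Quot}_{\vec{d}}(V)+2$ in the case $j=n$, namely $r(d_n+2)$. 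For $j<n$ it gives $rd_{n}$.
\end{itemize}
In the case $j<n$, all Hyperquot schemes involved have dimension $rd_{n}$, so $\dim Y=3rd_{n}+2$ and each codimension equals $rd_{n}+1$. The codimension of $\mathcal{X}$ is then $2rd_{n}+2$, which is additive. In the case $j=n$, the same bookkeeping uses the shifted dimensions $r(d_{n}+1)$ and $r(d_{n}+2)$, and additivity again holds.

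With additivity established, Proposition \ref{composition of correspondences} shows that the product of the two pulled-back classes equals the pushforward from $\mathcal{X}^{j,j}_{\vec{d}}$ of $\lambda_{1}^{t}\lambda_{2}^{s}$. Projecting away the middle factor turns $i_{X}\times p^{X}\times q^{X}$ into $k_{X}\times p^{X}\times q^{X}$. This gives the claimed class: $p^{X}$ records $\mathrm{Supp}\,F_{j}/H_{j}$, the point of $e^{(s)}$, in the first slot, and $q^{X}$ records the point of $e^{(t)}$ in the second slot.
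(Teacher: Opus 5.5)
Your proposal is correct and follows the paper's route: the paper only cites Propositions \ref{class of correspondence} and \ref{composition of correspondences} together with the dimension count for $\mathcal{X}^{j,j}_{\vec{d}}$. You supply the missing details, namely the identification $Z_{1}\cap Z_{2}\cong\mathcal{X}^{j,j}_{\vec{d}}$ as a fibre product of Hyperquot correspondences, the matching of $\lambda_{j}$ with $\lambda_{1}$ and $\lambda_{2}$, and codimension additivity in both cases $j<n$ and $j=n$. The one slip is writing $\dim F^{n}\text{Quot}_{\vec{d}}(V)+2$, which should be $rd_{n}+2r$; the value $r(d_{n}+2)$ you actually use is correct.
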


\begin{lem}\label{corollary on the operator ejej t,s}
The linear operator
\[
    e_{j}^{(t)}\circ e_{j}^{(s)}:H^{*}(F^{n}\text{Quot}_{\vec{d}}(V))\rightarrow H^{*}(F^{n}\text{Quot}_{\vec{d''}_{j,j}}(V)\times C\times C)
\]
is given as a correspondence by the class $${(k_{W}\times p^{W}\times q^{W})}_{*}\left(\tilde{\lambda}_{1}^{s}\tilde{\lambda}_{2}^{t}\right)$$ on $F^{n}\text{Quot}_{\vec{d}}(V)\times F^{n}\text{Quot}_{\vec{d''}_{j,j}}(V)\times C\times C$.
\newline
\end{lem}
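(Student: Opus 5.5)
We prove the statement by the same mechanism as Lemma \ref{lemma on the composition e_ie_i-1}: write $e_{j}^{(t)}\circ e_{j}^{(s)}$ as a composition of two correspondences and evaluate the composite with Propositions \ref{class of correspondence} and \ref{composition of correspondences}. The first operator applied is $e_j^{(s)}$, so it is the one contributing to the first factor of $C\times C$. By definition \eqref{definition of e operator}, $e_{j}^{(s)}:H^{*}(F^{n}\text{Quot}_{\vec{d}}(V))\to H^{*}(F^{n}\text{Quot}_{\vec{d}'_{j}}(V)\times C)$ is the correspondence given by $(\pi_-\times\pi_+\times\pi_C)_*(\lambda_j^{s})$, supported on the image of $F^{n+1}\text{Quot}_{\vec{d}''_{j}}(V)$. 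The second operator $e_j^{(t)}:H^{*}(F^{n}\text{Quot}_{\vec{d}'_{j}}(V))\to H^{*}(F^{n}\text{Quot}_{\vec{d''}_{j,j}}(V)\times C)$ is given the same way with $\lambda_j^{t}$.

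Take the ambient product
\[
F^{n}\text{Quot}_{\vec{d}}(V)\times F^{n}\text{Quot}_{\vec{d}'_{j}}(V)\times F^{n}\text{Quot}_{\vec{d''}_{j,j}}(V)\times C\times C,
\]
and let $Z_1$, $Z_2$ be the pullbacks of the two supports, pulled back from the relevant factors. Both are smooth, being products of Hyperquot schemes, and hence regularly embedded. Their scheme-theoretic intersection parametrises chains $H_j\subseteq F'_j\subseteq E_j$, with all other sheaves of the three flags identified. The point of $E_j/F'_j$ is recorded in the first factor of $C$ and that of $F'_j/H_j$ in the second. This is exactly $\mathcal{W}_{\vec{d}}^{j,j}$ (diagram \eqref{The moduli space: W} with $i=j$), with $p^{W}$ the first coordinate and $q^{W}$ the second. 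Under this identification the pullback of $\lambda_j$ from $Z_1$ restricts to $\tilde\lambda_1=c_1(E_j/F'_j)$, and the pullback from $Z_2$ restricts to $\tilde\lambda_2=c_1(F'_j/H_j)$.

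The one real check is the codimension hypothesis of Proposition \ref{composition of correspondences}. Write $D_0,D_1,D_2$ for the dimensions of the three Hyperquot factors. Then $\dim Z_1=\dim F^{n+1}\text{Quot}_{\vec{d}''_{j}}+D_2+1$ and $\dim Z_2=\dim F^{n+1}\text{Quot}+D_0+1$. By Proposition \ref{Geometry of moduli space $W$} and the remark of Subsection \ref{subsection: U,X,W case:i=j}, $\mathcal{W}^{j,j}_{\vec{d}}$ is a smooth Hyperquot scheme of dimension $\dim F^{n}\text{Quot}_{\vec{d''}_{j,j}}$. A direct count shows the codimensions add; when $j=n$ one uses $D_0=rd_n$, $D_1=r(d_n+1)$, $D_2=r(d_n+2)$, and when $j<n$ all the dimensions equal $rd_n$.

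Proposition \ref{composition of correspondences} then identifies the product of the two pulled-back classes with the pushforward from $\mathcal{W}^{j,j}_{\vec{d}}$ of $\tilde\lambda_1^{s}\tilde\lambda_2^{t}$. Pushing forward along the projection that forgets the middle factor $F^{n}\text{Quot}_{\vec{d}'_{j}}$, as in Proposition \ref{class of correspondence}, turns the map into $k_W\times p^{W}\times q^{W}$ and gives the claimed class. The main obstacle is bookkeeping rather than anything deep: the two $C$-factors must be matched correctly, with $e_j^{(s)}$ on the first factor corresponding to $p^{W}$ and $\tilde\lambda_1$.
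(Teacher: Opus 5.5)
Your proposal is correct and takes the same route as the paper, which derives this lemma directly from Propositions \ref{class of correspondence} and \ref{composition of correspondences}: the two supports meet in $\mathcal{W}^{j,j}_{\vec d}$, and the dimension count of Proposition \ref{Geometry of moduli space $W$} supplies the transversality, exactly as you check. One small correction: $e_j^{(s)}$ is attached to the first factor of $C\times C$ (and hence to $p^{W}$ and $\tilde\lambda_1$) because of the convention fixed in Theorem \ref{Yangian relations satisfied}, not because it is applied first; in Lemma \ref{corollary on the operator ejej s,t} the operator applied first contributes to the second factor.
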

\noindent Now, let us observe that there is an equality of morphisms:
\begin{multline*}
\left(k_{X}\times p^{X}\times q^{X}\right)\circ \,\eta^{j,j}_{\vec{d}}=\left(k_{W}\times p^{W}\times q^{W}\right)\circ\,\theta^{j,j}_{\vec{d}}:\\
\,\mathcal{U}^{j,j}_{\vec{d}}
\rightarrow F^{n}\text{Quot}_{\vec{d}}(V)\times F^{n}\text{Quot}_{\vec{d''}_{j,j}}(V)\times C\times C
\end{multline*}
and denote the above map by $\beta$.
\newline
\begin{prop}\label{e_i,e_i relations}
The formula
\[
 {\left[e_{j}^{(s+1)}, e_{j}^{(t)}\right]}-{\left[e_{j}^{(s)}, e_{j}^{(t+1)}\right]}=-\delta \cdot\left(e_{j}^{(s)}e_{j}^{(t)}+e_{j}^{(t)}e_{j}^{(s)}\right)
\]
holds as an equality of operators
\[
H^{*}(F^{n}\text{Quot}(V))\rightarrow H^{*}(F^{n}\text{Quot}(V)\times C\times C)
\]
for all $j=1,2,...,n$, $s,t\geq 0$. Here, we make the convention that the operators $e_{j}^{(s)},e_{j}^{(s+1)}$ contribute to the first factor of $C\times C$ and the operators $e_{j}^{(t)},e_{j}^{(t+1)}$ contribute to the second factor of $C\times C$. The class $\delta\in H^{*}(F^{n}\text{Quot}(V)\times C\times C)$ is the pull-back of the diagonal class in $ H^{*}( C\times C)$ via the natural projection.
\end{prop}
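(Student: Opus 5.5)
Using Lemmas \ref{corollary on the operator ejej s,t} and \ref{corollary on the operator ejej t,s} and the fact that $\eta^{j,j}_{\vec{d}}$ and $\theta^{j,j}_{\vec{d}}$ are birational (so the projection formula lets us lift all classes to $\mathcal{U}^{j,j}_{\vec{d}}$), we will write every term as $\beta_{*}$ of a polynomial in $\lambda_{1},\lambda_{2},\tilde{\lambda}_{1},\tilde{\lambda}_{2}$. With the convention that $(s)$ contributes to the first factor via $p$ and $(t)$ to the second via $q$, we have $e_{j}^{(s)}e_{j}^{(t)}=\beta_{*}(\lambda_{1}^{t}\lambda_{2}^{s})$. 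For the reversed order, after the swap of factors implicit in the commutator convention, the class of $e_{j}^{(t)}e_{j}^{(s)}$ is $\beta_{*}(\tilde{\lambda}_{1}^{s}\tilde{\lambda}_{2}^{t})$. Consequently the commutator $[e_j^{(s)},e_j^{(t)}]$ is given by $\beta_{*}(\lambda_{1}^{t}\lambda_{2}^{s}-\tilde{\lambda}_{1}^{s}\tilde{\lambda}_{2}^{t})$, and the left-hand side of the proposition corresponds to
\[
\beta_{*}\Bigl(\lambda_{1}^{t}\lambda_{2}^{s}(\lambda_{2}-\lambda_{1})+\tilde{\lambda}_{1}^{s}\tilde{\lambda}_{2}^{t}(\tilde{\lambda}_{2}-\tilde{\lambda}_{1})\Bigr).
\]
Before proceeding, I will double-check the signs and which of $\lambda,\tilde\lambda$ carries $s$ against the lemmas.

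The key input is Proposition \ref{Diag in cohomology}, which gives $\lambda_{1}-\tilde{\lambda}_{2}=\tilde{\lambda}_{1}-\lambda_{2}=[\text{Diag}]$. We rewrite $\lambda_{2}-\lambda_{1}=\lambda_{2}-\tilde{\lambda}_{2}-[\text{Diag}]$ and $\tilde{\lambda}_{2}-\tilde{\lambda}_{1}=\tilde{\lambda}_{2}-\lambda_{2}-[\text{Diag}]$. The terms proportional to $[\text{Diag}]$ then give
\[
-[\text{Diag}]\bigl(\lambda_1^t\lambda_2^s+\tilde\lambda_1^s\tilde\lambda_2^t\bigr).
\]
Pushing these forward, we use the analogue of \eqref{observation 1 in proof of serre relations}, $\beta_*([\text{Diag}]\gamma)=\beta_*(\gamma|_{\text{Diag}})$, together with Lemma \ref{theta and eta are iso onto delta for i=j.} and Corollary \ref{corollay about pushforward of diag being delta}. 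Since $\text{Diag}$ maps isomorphically onto $\Delta_X$ (respectively $\Delta_W$), the projection formula gives $\beta_{*}([\text{Diag}]\cdot\eta^{*}\gamma)=\delta\cdot(k_X\times p^X\times q^X)_{*}\gamma$, and similarly on the $W$ side. So these terms yield exactly $-\delta\cdot(e_j^{(s)}e_j^{(t)}+e_j^{(t)}e_j^{(s)})$.

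It remains to show that
\[
\beta_{*}\bigl((\lambda_{2}-\tilde{\lambda}_{2})(\lambda_{1}^{t}\lambda_{2}^{s}-\tilde{\lambda}_{1}^{s}\tilde{\lambda}_{2}^{t})\bigr)=0 .
\]
This is the main obstacle. The plan is to substitute $\lambda_{1}=\tilde{\lambda}_{2}+D$ and $\tilde{\lambda}_{1}=\lambda_{2}+D$, where $D=[\text{Diag}]$. At $D=0$ the bracket becomes $\tilde\lambda_2^t\lambda_2^s-\lambda_2^s\tilde\lambda_2^t=0$. All remaining terms are divisible by $D$, so they restrict to $\text{Diag}$, where $\lambda_{i}=\tilde{\lambda}_{i}$ by \eqref{observation 2 in proof of serre relations}. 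On $\text{Diag}$ the factor $(\lambda_{2}-\tilde{\lambda}_{2})$ vanishes, so these terms also contribute zero.

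This parallels the binomial-expansion argument in the proof of Proposition \ref{Serre relations proof }. One subtlety needs checking: the expansion produces powers $D^{i}$, and $\beta_{*}(D^{i}\gamma)=\alpha_{*}(D^{i-1}\gamma|_{\text{Diag}})$. This is fine, because the vanishing factor $(\lambda_2-\tilde\lambda_2)|_{\text{Diag}}=0$ kills each such term regardless of the power of $D$. If the sign bookkeeping of the swap $s^{*}$ in the convention does not match, I would instead express the left-hand side directly via $\alpha^1,\alpha^2$-type maps as in the Serre proof.
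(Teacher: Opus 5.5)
Your proposal is correct and follows essentially the same route as the paper. Both arguments lift both orderings to $\mathcal{U}^{j,j}_{\vec{d}}$ via $\beta$, trade $\delta$ for $[\text{Diag}]$ using Corollary \ref{corollay about pushforward of diag being delta}, and then conclude from Proposition \ref{Diag in cohomology} together with two facts: $[\text{Diag}]$ divides $\lambda_1^t\lambda_2^s-\tilde\lambda_1^s\tilde\lambda_2^t$, and $[\text{Diag}]\cdot(\lambda_2-\tilde\lambda_2)=0$.

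The only difference is bookkeeping. The paper moves the $\delta$-terms to opposite sides and proves an identity of classes on $\mathcal{U}^{j,j}_{\vec{d}}$, whereas you split off the $\delta$-terms and show that the remainder vanishes. In fact that remainder is already zero as a class on $\mathcal{U}^{j,j}_{\vec{d}}$, so your hedges about powers of $[\text{Diag}]$ and the swap $s^{*}$ are unnecessary.
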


\begin{proof}
    We have to show the following equality of operators:
   \begin{equation}\label{What must be shown in the proof of the ee 1 relation}
       e_{j}^{(s+1)}e_{j}^{(t)}-e_{j}^{(s)}e_{j}^{(t+1)}+\delta\cdot e_{j}^{(s)}e_{j}^{(t)}=e_{j}^{(t)}e_{j}^{(s+1)}-e_{j}^{(t+1)}e_{j}^{(s)} -\delta\cdot e_{j}^{(t)}e_{j}^{(s)}.
   \end{equation}
For simplicity of notation, we denote the respective pull-backs of $\delta\in H^{2}(C\times C)$ to $\mathcal{X}^{j,j}_{\vec{d}}$ and $\mathcal{W}^{j,j}_{\vec{d}}$, via $p^{X}\times q^{X}$ and $p^{W}\times q^{W}$, by $\delta$ as well. Then by Lemmas \ref{corollary on the operator ejej s,t} and \ref{corollary on the operator ejej t,s} the L.H.S. and R.H.S. of equation \ref{What must be shown in the proof of the ee 1 relation} are respectively given as correspondences by the following classes
on $F^{n}\text{Quot}_{\vec{d}}(V)\times F^{n}\text{Quot}_{\vec{d''}_{j,j}}(V)\times C\times C$:

\begin{equation}\label{correspondence of LHS in proof of ee relation 2}
 {\left(k_{X}\times p^{X}\times q^{X}\right)}_{*}\left({\lambda}_{1}^{t}{\lambda}_{2}^{s+1}-{\lambda}_{1}^{t+1}{\lambda}_{2}^{s}+\delta\cdot {\lambda}_{1}^{t}{\lambda}_{2}^{s}\right)
 \newline
 \end{equation}
 and
 \begin{equation}\label{correspondence of RHS in proof of ee relation 2}
 {\left(k_{W}\times p^{W}\times q^{W}\right)}_{*}\left(\tilde{\lambda}_{1}^{s+1}\tilde{\lambda}_{2}^{t}-\tilde{\lambda}_{1}^{s}\tilde{\lambda}_{2}^{t+1}-\delta\cdot \tilde{\lambda}_{1}^{s}\tilde{\lambda}_{2}^{t}\right).
\end{equation}

\noindent Now, using the projection formula, the fact that the maps $\eta^{jj}_{\vec{d}}$, $\theta^{jj}_{\vec{d}}$ are birational and corollary \ref{corollay about pushforward of diag being delta}; The class \ref{correspondence of LHS in proof of ee relation 2} can be re-expressed as:
\begin{equation}\label{correspondence of LHS in proof of ee relation 2 re-written}
 {\beta}_{*}\left({\lambda}_{1}^{t}{\lambda}_{2}^{s+1}-{\lambda}_{1}^{t+1}{\lambda}_{2}^{s}+[\text{Diag}]\cdot {\lambda}_{1}^{t}{\lambda}_{2}^{s}\right).
 \newline
 \end{equation}

 \noindent Similarly, the class \ref{correspondence of RHS in proof of ee relation 2} is equal to
\begin{equation}\label{correspondence of RHS in proof of ee relation 2 re-written}
 {\beta}_{*}\left(\tilde{\lambda}_{1}^{s+1}\tilde{\lambda}_{2}^{t}-\tilde{\lambda}_{1}^{s}\tilde{\lambda}_{2}^{t+1}-[\text{Diag}]\cdot \tilde{\lambda}_{1}^{s}\tilde{\lambda}_{2}^{t}\right).
 \end{equation}
 \noindent Therefore to prove the equality \ref{What must be shown in the proof of the ee 1 relation}, it suffices to show the following equality of cohomology classes on $\mathcal{U}^{j,j}_{\vec{d}}$:
\begin{equation}\label{What must be shown in the proof of the ee 1 relation, simplified}
{\lambda}_{1}^{t}{\lambda}_{2}^{s+1}-{\lambda}_{1}^{t+1}{\lambda}_{2}^{s}+[\text{Diag}]\cdot {\lambda}_{1}^{t}{\lambda}_{2}^{s}=
\tilde{\lambda}_{1}^{s+1}\tilde{\lambda}_{2}^{t}-\tilde{\lambda}_{1}^{s}\tilde{\lambda}_{2}^{t+1}-[\text{Diag}]\cdot \tilde{\lambda}_{1}^{s}\tilde{\lambda}_{2}^{t}.
\end{equation}
\noindent The L.H.S. of equation \ref{What must be shown in the proof of the ee 1 relation, simplified} is equal to 
\[
{\lambda}_{1}^{t}{\lambda}_{2}^{s}(\lambda_{2}-\lambda_{1}+[\text{Diag}]).
\]
By Proposition \ref{Diag in cohomology}, the above expression is equal to:
\begin{equation}\label{LHS OF What must be shown in the proof of the ee 1 relation, simplified}
    {\lambda}_{1}^{t}{\lambda}_{2}^{s}\left(\lambda_{2}-\tilde{\lambda}_{2}\right).
\end{equation}
\noindent Now, observe that
\[
[\text{Diag}]\mid {\lambda}_{1}^{t}{\lambda}_{2}^{s}-\tilde{\lambda}_{1}^{s}\tilde{\lambda}_{2}^{t}.
\]
Indeed, this follows from re-writing $\lambda_{1}$ and  $\tilde{\lambda}_{1}$ respectively as $\tilde{\lambda}_{2}+[\text{Diag}]$ and ${\lambda}_{2}+[\text{Diag}]$ followed by a binomial expansion. Also by \ref{observation 2 in proof of serre relations}, we have that
\[
[\text{Diag}]\cdot\left(\tilde{\lambda}_{2}-\lambda_{2}\right)=0
\]
and hence the expression \ref{LHS OF What must be shown in the proof of the ee 1 relation, simplified} is equal to 
\[
{\tilde{\lambda}^{s}_{1}}{\tilde{\lambda}^{t}_{2}}\left(\lambda_{2}-\tilde{\lambda}_{2}\right).
\]
Again, Proposition \ref{Diag in cohomology} implies that the above expression can be written as
\[
{\tilde{\lambda}^{s}_{1}}{\tilde{\lambda}^{t}_{2}}\left(\tilde{\lambda}_{1}-\tilde{\lambda}_{2}-[\text{Diag}]\right),
\]
which is precisely the R.H.S. of \ref{What must be shown in the proof of the ee 1 relation, simplified}, finishing the proof.
\end{proof}

\subsection{The $e,m$ and $f,m$ commutation relations: \eqref{em relations action} and \eqref{fm relations action}}
We will verify relation \ref{em relations action} and omit the similar proof of relation \ref{fm relations action}.\\\\
First note that if $i\neq j$ then $[m_{i}^{(s)}, e_{j}^{(t)}]=0$. Indeed, one checks using propositions \ref{class of correspondence} and \ref{composition of correspondences} that the compositions $m_{i}^{(s)} e_{j}^{(t)}$ and $e_{j}^{(t)}m_{i}^{(s)}$ are given by the same correspondences.\\\\
Let us set up some notations and conventions for this subsection. As before, let $\vec{d}=(d_1,...,d_{n})$ be an $n-$tuple of non-negative integers. For $j=1,...,n$, we define the $n-$tuple $\vec{d}'_{j}:=(d_1,...,d_{j-1},d_{j}+1,d_{j+1},...,d_{n})$ and the $(n+1)-$tuple $\vec{d}''_{j}:=(d_1,...,d_{j-1},d_{j},d_{j}+1,d_{j+1},...,d_{n})$. When $\vec{d}$ or $\vec{d}'_{j}$ is not non-decreasing, then the statements in this subsection will be vacuously true as the moduli space $F^{n}\text{Quot}_{\vec{d}'_{j}}\,(V)$ will become empty. We also follow the convention of the statement of Theorem \ref{Yangian relations satisfied}; The operators $m_{j}^{(s)}$ contribute to the first factor of $C\times C$ and the operators $e_{j}^{(t)}$ contributes to the second factor of $C\times C$.\\\\
To verify the relation in the case $i=j$, it would be convenient to arrange the operators $e_{j}^{(t)}$ and $m_{j}^{(s)}$ into power series, as in \eqref{currents}.
Then, relation \ref{em relations action} is equivalent to the following equality of power series:

\begin{equation}\label{em relations action current}
    m_{j}(w)e_{j}(z)=\left[e_{j}(z)m_{j}(w)\frac{w-z+\delta}{w-z}\right]_{w>>z\,\mid z^{<0},w^{\geq 0}},
\end{equation}
\newline
\noindent as an equality of operators $H^{*}(F^{n}\text{Quot}_{\vec{d}}\,(V))\rightarrow H^{*}(F^{n}\text{Quot}_{\vec{d}'_{j}}\,(V))[[z^{-1}]][w]$. In equation \ref{em relations action current}, the subscripts indicate that we expand the R.H.S. as a power series with the rule $w>>z$ and only keep terms with negative powers of $z$ and non-negative powers of $w$.\\\\
We will crucially use the geometry of the hyperquot scheme $F^{n+1}\text{Quot}_{\vec{d}''_{j}}$ and refer the reader to subsection \ref{subsection on moduli space: Fn+1 Quot} for conventions and notations. Let us define the map 
\[
\Psi:=\pi_{-}\times\pi_{+}\times Id_{C}\times \pi_{C} :F^{n+1}\text{Quot}_{\vec{d}''_{j}}\times C\rightarrow F^{n+1}\text{Quot}_{\vec{d}}\times F^{n+1}\text{Quot}_{\vec{d}'_{j}}\times C\times C.
\]
Then the following lemmas follow from propositions \ref{class of correspondence} and \ref{composition of correspondences}.

\begin{lem}\label{corollary on the operator e(z)m(w)}
     The linear operator
\[
    e_{j}(z)m_{j}(w):H^{*}(F^{n}\text{Quot}_{\vec{d}}(V))\rightarrow H^{*}(F^{n}\text{Quot}_{\vec{d}'_{j}}(V)\times C\times C)[[z^{-1}]][w]
\]
is given as a correspondence by the class $${\Psi}_{*}\left[\frac{c(\mathcal{E}_{j},w)}{c(\mathscr{L}_{j},z)}\right]$$ on $F^{n}\text{Quot}_{\vec{d}}(V)\times F^{n}\text{Quot}_{\vec{d}'_{j}}(V)\times C\times C$.
\newline
\end{lem}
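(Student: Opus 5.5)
The plan is to compute the composition $e_j(z)m_j(w)$ directly from the definitions, using Propositions \ref{class of correspondence} and \ref{composition of correspondences} together with the projection formula; no intersection-theoretic subtlety should arise. Since $m_j(w)$ applies first and contributes the first factor of $C\times C$, it is the operator $\gamma\mapsto c(\mathcal{E}_j,w)\cdot\rho^*(\gamma)$ on $F^n\text{Quot}_{\vec d}(V)\times C$. As a correspondence from $F^n\text{Quot}_{\vec d}(V)$ to $F^n\text{Quot}_{\vec d}(V)\times C$, it is given by the class $(\Delta_{F^n\text{Quot}_{\vec d}}\times Id_C)_*\,c(\mathcal{E}_j,w)$, which is supported on the graph-type embedding $F^n\text{Quot}_{\vec d}(V)\times C\hookrightarrow F^n\text{Quot}_{\vec d}(V)\times F^n\text{Quot}_{\vec d}(V)\times C$. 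By the definition \eqref{definition of e operator}, $e_j(z)$ is the correspondence $(\pi_-\times\pi_+\times\pi_C)_*\big(\sum_v \lambda_j^v z^{-v-1}\big)=(\pi_-\times\pi_+\times\pi_C)_*\big(1/(z-\lambda_j)\big)=(\pi_-\times\pi_+\times\pi_C)_*\big(1/c(\mathscr{L}_j,z)\big)$.

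Next I will compose the two correspondences in the order $e_j(z)\circ(m_j(w)\otimes Id)$, with $m_j$ tensored with the identity on the extra $C$-factor as in Definition \ref{Action defn}. One of the two classes is a pushforward from a diagonal embedding, so the composition amounts to pulling back along that diagonal. Concretely, the fibre product of $F^n\text{Quot}_{\vec d}(V)\times C$ (carrying the $m$-data, with the point $x\in C$ as the first factor) with $F^{n+1}\text{Quot}_{\vec d''_j}(V)$ over $F^n\text{Quot}_{\vec d}(V)$ via $\pi_-$ is $F^{n+1}\text{Quot}_{\vec d''_j}(V)\times C$. Here the first $C$ factor records $x$ and the second records $\pi_C$. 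The resulting map to $F^n\text{Quot}_{\vec d}\times F^n\text{Quot}_{\vec d'_j}\times C\times C$ is exactly $\Psi=\pi_-\times\pi_+\times Id_C\times\pi_C$.

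The transversality needed for Proposition \ref{composition of correspondences} is automatic: one factor is a diagonal of a smooth variety times a flat projection, so the codimensions add. The pulled-back classes are then $(\pi_-\times Id_C)^*c(\mathcal{E}_j,w)$, which is the class written $c(\mathcal{E}_j,w)$ on $F^{n+1}\text{Quot}_{\vec d''_j}(V)\times C$ under the paper's notational convention, and $1/c(\mathscr{L}_j,z)$. Their product pushed forward along $\Psi$ gives $\Psi_*\big[c(\mathcal{E}_j,w)/c(\mathscr{L}_j,z)\big]$.

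The only point needing care is the bookkeeping of which $C$ factor is which. I must check that the convention placing $m_j^{(s)}$ in the first factor and $e_j^{(t)}$ in the second matches the ordering $Id_C\times\pi_C$ in $\Psi$. I also need to confirm that $\mathcal{E}_j$ pulled back along $\pi_-$ is the $\mathcal{E}_j$ of the flag on $F^{n+1}\text{Quot}_{\vec d''_j}$ (the larger sheaf, not $\mathcal{F}_j$). Both follow from the fact that $\pi_-$ forgets $\mathcal{F}_j$.
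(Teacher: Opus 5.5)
Your proposal is correct and is essentially the paper's argument. The paper simply cites Propositions \ref{class of correspondence} and \ref{composition of correspondences}. You carry this out explicitly: you compose the diagonally supported correspondence $(\Delta\times Id_C)_*c(\mathcal{E}_j,w)$ with $(\pi_-\times\pi_+\times\pi_C)_*\bigl(1/(z-\lambda_j)\bigr)$, identify the intersection with $F^{n+1}\text{Quot}_{\vec d''_j}(V)\times C$, and identify the resulting map with $\Psi$. Your bookkeeping checks are also right: the $m$-factor sits in the $Id_C$ slot, and $\pi_-^*\mathcal{E}_j$ is the larger sheaf $\mathcal{E}_j$ rather than $\mathcal{F}_j$.
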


\begin{lem}\label{corollary on the operator m(w)e(z)}
         The linear operator
\[
    m_{j}(w)e_{j}(z):H^{*}(F^{n}\text{Quot}_{\vec{d}}(V))\rightarrow H^{*}(F^{n}\text{Quot}_{\vec{d}'_{j}}(V)\times C\times C)[[z^{-1}]][w]
\]
is given as a correspondence by the class $${\Psi}_{*}\left[\frac{c(\mathcal{F}_{j},w)}{c(\mathscr{L}_{j},z)}\right]$$ on $F^{n}\text{Quot}_{\vec{d}}(V)\times F^{n}\text{Quot}_{\vec{d}'_{j}}(V)\times C\times C$.
\newline
\end{lem}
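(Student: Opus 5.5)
The statement to prove is Lemma \ref{corollary on the operator m(w)e(z)}: the composition $m_{j}(w)e_{j}(z)$ is given by the class $\Psi_{*}\left[c(\mathcal{F}_{j},w)/c(\mathscr{L}_{j},z)\right]$. I would argue exactly as for Lemma \ref{corollary on the operator e(z)m(w)}, using Propositions \ref{class of correspondence} and \ref{composition of correspondences}.

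First, write each operator as a correspondence.

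\emph{The operator $e_{j}(z)$.} By definition \eqref{definition of e operator}, $e_{j}(z)$ is given by the class $(\pi_{-}\times\pi_{+}\times\pi_{C})_{*}\left[\frac{1}{z-\lambda_{j}}\right]$ on $F^{n}\text{Quot}_{\vec{d}}(V)\times F^{n}\text{Quot}_{\vec{d}'_{j}}(V)\times C$. This holds because $\sum_{v}\lambda_{j}^{v}z^{-v-1}=1/c(\mathscr{L}_{j},z)$.

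\emph{The operator $m_{j}(w)$.} On $F^{n}\text{Quot}_{\vec{d}'_{j}}(V)$, $m_{j}(w)$ is the correspondence supported on the graph $\Gamma=\{(x,x,c)\}\subseteq F^{n}\text{Quot}_{\vec{d}'_{j}}\times F^{n}\text{Quot}_{\vec{d}'_{j}}\times C$. Its class is $(\Delta\times\mathrm{Id}_{C})_{*}c(\mathcal{E}^{\vec{d}'_{j}}_{j},w)$, where $\mathcal{E}^{\vec{d}'_{j}}_{j}$ is the $j$-th universal sheaf on $F^{n}\text{Quot}_{\vec{d}'_{j}}(V)\times C$.

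Next, apply Proposition \ref{class of correspondence} on the triple product $F^{n}\text{Quot}_{\vec{d}}\times F^{n}\text{Quot}_{\vec{d}'_{j}}\times F^{n}\text{Quot}_{\vec{d}'_{j}}\times C\times C$. Here the first copy of $C$ carries the $m$-factor and the second carries the $e$-factor, following the convention of Theorem \ref{Yangian relations satisfied}. The two pulled-back supports are:
\begin{itemize}
\item $Z_{1}=F^{n+1}\text{Quot}_{\vec{d}''_{j}}\times F^{n}\text{Quot}_{\vec{d}'_{j}}\times C$, embedded via $(\pi_{-},\pi_{+},\cdot,\cdot,\pi_{C})$;
\item $Z_{2}=F^{n}\text{Quot}_{\vec{d}}\times\Gamma\times C$.
\end{itemize}
Both are smooth, hence regularly embedded. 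Their intersection is $F^{n+1}\text{Quot}_{\vec{d}''_{j}}\times C$, embedded by $(\pi_{-},\pi_{+},\pi_{+},\mathrm{Id}_{C},\pi_{C})$.

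The codimension check is a dimension count. The ambient space has dimension $rd_{n}+2r(d_{n}+\delta_{jn})+2$, where $\delta_{jn}$ accounts for the shift when $j=n$.
\begin{itemize}
\item $Z_{1}$ has dimension $r(d_{n}+\delta_{jn})+1+r(d_{n}+\delta_{jn})+1$, so its codimension is $rd_{n}$.
\item $Z_{2}$ has dimension $rd_{n}+r(d_{n}+\delta_{jn})+2$, so its codimension is $r(d_{n}+\delta_{jn})$.
\item $Z_{1}\cap Z_{2}$ has dimension $r(d_{n}+\delta_{jn})+1$, and its codimension is indeed the sum of the two.
\end{itemize}
Thus Proposition \ref{composition of correspondences} applies. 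The product of the two classes is the pushforward of $\frac{1}{z-\lambda_{j}}\cdot c(\pi_{+}^{*}\mathcal{E}^{\vec{d}'_{j}}_{j},w)$ from $F^{n+1}\text{Quot}_{\vec{d}''_{j}}\times C$. Projecting away the middle factor turns this pushforward into $\Psi_{*}$.

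The only real point, and the step I expect to need care, is identifying $(\pi_{+}\times\mathrm{Id}_{C})^{*}\mathcal{E}^{\vec{d}'_{j}}_{j}$ on $F^{n+1}\text{Quot}_{\vec{d}''_{j}}(V)\times C$. The map $\pi_{+}$ forgets the $k$-th sheaf $F_{k}$ of the flag, so the $j$-th sheaf of the image flag is $F_{j+1}$, i.e. the smaller sheaf, with $V/F_{j+1}$ of length $d_{j}+1$. In the notation of Subsection \ref{subsection on moduli space: Fn+1 Quot}, this pullback is exactly the universal sheaf $\mathcal{F}_{j}$, by universality of the flag. For contrast, in Lemma \ref{corollary on the operator e(z)m(w)} one instead pulls back via $\pi_{-}$ and obtains $\mathcal{E}_{j}$.

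This gives the class $\Psi_{*}\left[\frac{c(\mathcal{F}_{j},w)}{c(\mathscr{L}_{j},z)}\right]$, as claimed.
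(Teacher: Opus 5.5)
Your argument is the one the paper intends: it simply cites Propositions \ref{class of correspondence} and \ref{composition of correspondences}, and your key identification $(\pi_{+}\times\mathrm{Id}_{C})^{*}\mathcal{E}_{j}=\mathcal{F}_{j}$ is correct. There is one arithmetic slip in the codimension check. $Z_{1}$ has dimension $2r(d_{n}+\delta_{jn})+1$, not $2r(d_{n}+\delta_{jn})+2$, so its codimension is $rd_{n}+1$. With your numbers the codimensions sum to one less than $\text{codim}(Z_{1}\cap Z_{2})=rd_{n}+r(d_{n}+\delta_{jn})+1$; with the corrected value they add up exactly, so the hypothesis of Proposition \ref{composition of correspondences} does hold.
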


\begin{prop}\label{m,e relations}
The formula
\[
 {\left[m_{i}^{(s)}, e_{j}^{(t)}\right]}=\delta_{i,j}\cdot\delta\cdot\left(\sum_{l=0}^{s-1}e_{j}^{(t+l)}m_{i}^{(s-l-1)}(-1)^{l+1}\right).
\]
holds as an equality of operators
\[
H^{*}(F^{n}\text{Quot}(V))\rightarrow H^{*}(F^{n}\text{Quot}(V)\times C\times C)
\]
for all $i,j\in \{1,2,...,n\}$, $s,t\geq 0$. Here, we make the convention that the operators with $(s)$ in the superscript contribute to the first factor of $C\times C$ and the operators with $(t)$ in the superscript, contribute to the second factor of $C\times C$. The class $\delta\in H^{*}(F^{n}\text{Quot}(V)\times C\times C)$ is the pull-back of the diagonal class in $ H^{*}( C\times C)$ via the natural projection.
    
\end{prop}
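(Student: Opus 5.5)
The case $i\neq j$ has already been dealt with above: by Propositions \ref{class of correspondence} and \ref{composition of correspondences}, $m_i^{(s)}e_j^{(t)}$ and $e_j^{(t)}m_i^{(s)}$ are given by the same correspondence. So I will assume $i=j$ and prove the generating-series form \eqref{em relations action current}.

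By Lemmas \ref{corollary on the operator e(z)m(w)} and \ref{corollary on the operator m(w)e(z)}, the two sides are the correspondences
\[
\Psi_*\left[\frac{c(\mathcal{F}_{j},w)}{c(\mathscr{L}_{j},z)}\right]\quad\text{and}\quad \Psi_*\left[\frac{c(\mathcal{E}_{j},w)}{c(\mathscr{L}_{j},z)}\cdot\frac{w-z+\delta}{w-z}\right]_{w>>z\,\mid z^{<0},w^{\geq 0}}.
\]
Here $\delta$ is the class of the graph of $\pi_C$ on $F^{n+1}\text{Quot}_{\vec{d}''_{j}}\times C$, which is the pull-back under $\Psi$ of the diagonal of $C\times C$. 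The key input is the identity \eqref{short exact seq cohomology identity on Fn+1Quot x C}:
\[
c(\mathcal{F}_j,w)=c(\mathcal{E}_j,w)\,\frac{w-\lambda_j+\delta}{w-\lambda_j}.
\]
So it suffices to compare $\frac{1}{z-\lambda_j}\cdot\frac{w-\lambda_j+\delta}{w-\lambda_j}$ with $\left[\frac{1}{z-\lambda_j}\cdot\frac{w-z+\delta}{w-z}\right]_{w>>z\,\mid z^{<0},w^{\ge0}}$ after multiplying both by $c(\mathcal{E}_j,w)$ and applying $\Psi_*$.

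Step one is to use $\delta^2=0$ on $C\times C$ to write $\frac{w-z+\delta}{w-z}=1+\frac{\delta}{w-z}$, and similarly $\frac{w-\lambda_j+\delta}{w-\lambda_j}=1+\frac{\delta}{w-\lambda_j}$. The term $1$ gives $\Psi_*[c(\mathcal{E}_j,w)/(z-\lambda_j)]$ on both sides. For the $\delta$-terms, the main point is that $\delta\cdot\lambda_j$ can be traded only through the restriction to the graph, so I will instead work with the partial fraction identity
\[
\frac{1}{(z-\lambda)(w-z)}=\frac{1}{w-\lambda}\left(\frac{1}{z-\lambda}+\frac{1}{w-z}\right),
\]
which is exact in $\lambda$.

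Step two is to expand with $w>>z$ and truncate. The term $\frac{1}{(w-\lambda_j)(w-z)}$ contains only nonnegative powers of $z$, so the $z^{<0}$ projection kills it. What remains is $\delta\,c(\mathcal{E}_j,w)\frac{1}{(z-\lambda_j)(w-\lambda_j)}$, and it has to be cut to the part polynomial in $w$. This is the delicate step. My plan is to note that
\[
c(\mathcal{E}_j,w)\,\frac{w-\lambda_j+\delta}{w-\lambda_j}=c(\mathcal{F}_j,w)
\]
is a genuine polynomial in $w$. The expression $c(\mathcal{E}_j,w)\frac{\delta}{w-\lambda_j}$ is then $c(\mathcal{F}_j,w)-c(\mathcal{E}_j,w)$, which is also polynomial in $w$. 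Hence the $w^{\ge0}$ truncation acts trivially, and the two classes agree before pushforward. This gives \eqref{em relations action current}.

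Finally, to read off coefficients, I will extract $z^{-t-1}w^{r-s}$ and check that this reproduces the stated sum $\sum_{l}e^{(t+l)}m^{(s-l-1)}(-1)^{l+1}$, including its signs. This uses the expansion of $\delta/(w-z)$ and the sign conventions in $m_j(w)$ and the total Chern class. I expect this sign and index bookkeeping, together with keeping track of which $C$ factor $\lambda_j$ and $\delta$ live on, to be the main obstacle.
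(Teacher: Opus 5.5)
Your argument is correct and is essentially the paper's. Both proofs use the two correspondence lemmas together with $c(\mathcal{F}_j,w)=c(\mathcal{E}_j,w)\frac{w-\lambda_j+\delta}{w-\lambda_j}$. Both then reduce to the observation that the discrepancy $-\delta\,c(\mathcal{E}_j,w)/\bigl((w-\lambda_j)(w-z)\bigr)$ has no negative powers of $z$. Your partial-fraction split simply isolates that same term.

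The coefficient extraction you plan is asserted without proof in the paper, and it does reproduce the stated signs. One small slip: $\frac{w-z+\delta}{w-z}=1+\frac{\delta}{w-z}$ holds identically and needs no hypothesis. In fact $\delta^2=-K_C\cdot\delta\neq 0$ on $C\times C$ in general, so drop the appeal to $\delta^2=0$.
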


\begin{proof}
    We already addressed the case $i\neq j$, for the case $i=j$, let us verify the equivalent formula \ref{em relations action current}. By Lemmas \ref{corollary on the operator e(z)m(w)} and \ref{corollary on the operator m(w)e(z)}, it is enough to show that the coefficients of the variables $z^{k}w^{l}$, in the expression:
\begin{equation}\label{What must be shown in the proof of the em relation}
    \frac{c(\mathcal{F}_{j},w)}{c(\mathscr{L}_{j},z)}-\frac{c(\mathcal{E}_{j},w)}{c(\mathscr{L}_{j},z)}\cdot\frac{w-z+\delta}{w-z},
\end{equation}
    when expanded as a power series with the rule $w>>z$, are non-zero only if $k\geq0$ or $l<0$.\\\\
    By the identity \ref{short exact seq cohomology identity on Fn+1Quot x C}, we obtain that the expression \ref{What must be shown in the proof of the em relation} can be re-written as:
    
    \[
    \frac{c(\mathcal{E}_{j},w)}{c(\mathscr{L}_{j},z)}\left[\frac{w-\lambda_{j}+\delta}{w-\lambda_{j}}-\frac{w-z+\delta}{w-z}\right]
= -\frac{1}{w-z}\left[\frac{\delta\cdot c(\mathcal{E}_{j},w)}{w-\lambda_{j}}\right].
\]    
\newline
Then, by the definition of $\mathcal{G}_{j}$ (cf. equation \ref{definition of r-1 rk bundle G}) and the projection formula, the above expression is equal to 
\[
-\Delta_{*}\left(\frac{c(\mathcal{G}_{j},w)}{w-z}\right).
\]
This doesn't have any terms in negative powers of $z$, when expanded using the rule $w>>z$, which is what we wanted to prove.

\end{proof}

\subsection{The $e,f$ commutation relations: \eqref{ef relations action}}
Let us set up some notation for this subsection. Let $\vec{d}=(d_{1},...,d_{n})$ be an $n-$tuple of non-negative integers. For $j=1,...,n$, we define $n-$tuples $\vec{d}'_{j}:=(d_1,...,d_{j-1},d_{j}+1,d_{j+1},...,d_{n})$ and ${}^{'}\vec{d}_{j}:=(d_1,...,d_{j-1},d_{j}-1,d_{j+1},...,d_{n})$. We also define $(n+1)-$tuples  $\vec{d}''_{j}:=(d_1,...,d_{j-1},d_{j},d_{j}+1,d_{j+1},...,d_{n})$ and ${}^{''}\vec{d}_{j}:=(d_1,...,d_{j-1},d_{j}-1,d_{j},d_{j+1},...,d_{n})$.\\\\
When $i\neq j$, then one observes that $[e_{i}^{(s)}, f_{j}^{(t)}]=0$ for all $s,t$. This follows directly from Propositions \ref{class of correspondence} and \ref{composition of correspondences}, which imply that the compositions $e_{i}^{(s)}f_{j}^{(t)}$ and $f_{j}^{(t)} e_{i}^{(s)}$ are given by the same correspondences.\\\\
To tackle the $i=j$ case, as in the last subsection, it would be convenient to arrange the operators $e_{j}^{(s)}$ and $f_{j}^{(t)}$ into power series $e_{j}(w)$ and $f_{j}(z)$ (cf. \eqref{currents}). The proof will consist of two major steps:
\begin{enumerate}[align=left, widest=97,leftmargin=\parindent, labelsep=*]
    \item\emph{ } We will first show that the operator
    \[
    e_{j}(w)f_{j}(z)-f_{j}(z)e_{j}(w):H^{*}(F^{n}\text{Quot}_{\vec{d}}(V))\rightarrow H^{*}(F^{n}\text{Quot}_{\vec{d}}(V)\times C \times C)[[z^{-1},w^{-1}]]
    \]
\noindent is given as a correspondence by a class of the form $\Delta_{*}(\Phi)$ on $F^{n}\text{Quot}_{\vec{d}}(V)\times F^{n}\text{Quot}_{\vec{d}}(V)\times C\times C$, where 
\[
\Delta:F^{n}\text{Quot}_{\vec{d}}(V)\times C\hookrightarrow F^{n}\text{Quot}_{\vec{d}}(V)\times F^{n}\text{Quot}_{\vec{d}}(V) \times C \times C
\] 
is the diagonal embedding.\\\\
\item\emph{ } Part (1) would readily imply that the operator $e_{j}(w)f_{j}(z)-f_{j}(z)e_{j}(w)$ is given by pulling back  to $H^{*}(F^{n}\text{Quot}_{\vec{d}}(V)\times C \times C)[[z^{-1},w^{-1}]]$ and then multiplying by some class $\Theta$. Then to identify $\Theta$, one would just need to compute
\[
 \left[e_{j}(w)f_{j}(z)-f_{j}(z)e_{j}(w)\right](1).
\]
Where we denote the fundamental class of $F^{n}\text{Quot}_{\vec{d}}(V)$ by $1$. 
\newline
\end{enumerate}
\textbf{Step I:}
 Let us show that $e_{j}(w)f_{j}(z)-f_{j}(z)e_{j}(w)$ is given a correspondence by a class supported on the diagonal in $F^{n}\text{Quot}_{\vec{d}}(V)\times F^{n}\text{Quot}_{\vec{d}}(V)\times C\times C$.\\\\  
The following lemmas follow from Propositions \ref{class of correspondence}, \ref{composition of correspondences} and the dimension counts in Propositions \ref{Geometry of moduli space U tilde}-\ref{Geometry of moduli space $t$}. We borrow notation from subsection \ref{subsection: moduli spaces U,S,T} and crucially use the geometry of $\tilde{\mathcal{U}}_{\vec{d}}^{i}\,$, $\mathcal{S}_{\vec{d}}^{i}$ and $\mathcal{T}_{\vec{d}}^{i}$.

\begin{lem}\label{corollary on the operator f(z)e(w)}
    The operator \[
    f_{j}(z)e_{j}(w):H^{*}(F^{n}\text{Quot}_{\vec{d}}(V))\rightarrow H^{*}(F^{n}\text{Quot}_{\vec{d}}(V)\times C \times C)[[z^{-1},w^{-1}]]
    \] 
    is given as a correspondence by the class
    \[
    (k_{S}\times q^{S}\times p^{S})_{*}\left[\frac{1}{w-\lambda_{2}}\cdot \frac{1}{z-\lambda^{'}_{2}}\right]
    \]
    on $F^{n}\text{Quot}_{\vec{d}}(V)\times F^{n}\text{Quot}_{\vec{d}}(V)\times C\times C$.
    \newline
\end{lem}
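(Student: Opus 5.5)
This is a direct application of the formalism of correspondences (Propositions \ref{class of correspondence} and \ref{composition of correspondences}), in the same way as Lemmas \ref{lemma on the composition e_ie_i-1} and \ref{corollary on the operator ejej s,t}. The plan has four steps.

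First, write each factor as a correspondence. The operator $e_j(w)$ from $F^n\text{Quot}_{\vec{d}}(V)$ to $F^n\text{Quot}_{\vec{d}'_j}(V)\times C$ is given by the class $(\pi_-\times\pi_+\times\pi_C)_*\left[\frac{1}{w-\lambda_j}\right]$. Here $\pi_-\times\pi_+$ is the closed embedding of $F^{n+1}\text{Quot}_{\vec{d}''_j}(V)$ into $F^n\text{Quot}_{\vec{d}}(V)\times F^n\text{Quot}_{\vec{d}'_j}(V)$. Likewise, $f_j(z)$ from $F^n\text{Quot}_{\vec{d}'_j}(V)$ back to $F^n\text{Quot}_{\vec{d}}(V)\times C$ is given by the same space, with the roles of the two factors swapped, carrying the class $\frac{1}{z-\lambda}$.

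Second, compose on the triple product
\[
X\times Y\times Z=F^n\text{Quot}_{\vec{d}}(V)\times F^n\text{Quot}_{\vec{d}'_j}(V)\times F^n\text{Quot}_{\vec{d}}(V),
\]
keeping track of the two extra $C$ factors. Let $Z_1$ be the pullback of the $e$-correspondence from $X\times Y$, and $Z_2$ the pullback of the $f$-correspondence from $Y\times Z$. Both are smooth, since $F^{n+1}\text{Quot}$ is smooth, so both are regular embeddings into the smooth ambient space.

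Third, identify the intersection and check the codimension. Scheme-theoretically, $Z_1\cap Z_2$ parametrizes a flag $E_\bullet$, a flag $E'_\bullet$ agreeing with $E_\bullet$ away from index $i=j$, and a common $F_i\supseteq E_i,E'_i$ with $F_i/E_i\cong\mathbb{C}_p$ and $F_i/E'_i\cong\mathbb{C}_q$. This is exactly $\mathcal{T}^i_{\vec{d}}$ with $i=j$. The codimension hypothesis of Proposition \ref{composition of correspondences} reduces to a dimension count. Each $Z_k$ has dimension $\dim F^{n+1}\text{Quot}_{\vec{d}''_j}+\dim F^n\text{Quot}_{\vec{d}}$. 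Proposition \ref{Geometry of moduli space $t$} says $\mathcal{T}^i_{\vec{d}}$ has pure dimension $rd_n$ (or $r(d_n+1)$ when $i=n$), which is the expected value. So Proposition \ref{composition of correspondences} applies. This is the step I expect to be the main obstacle, because $\mathcal{T}$ is reducible; one must use that it is l.c.i. of pure expected dimension, so that excess intersection does not occur and the restricted classes multiply. Should the scheme-theoretic identification $Z_1\cap Z_2\cong\mathcal{T}^i_{\vec{d}}$ be in doubt, I would verify it on the moduli functor, as in the proof of Proposition \ref{lci ness of Y}.

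Fourth, pull back and push forward. The line bundle $\mathscr{L}_j$ from the $e$-side restricts to $\mathscr{L}_1$ (fibre $F_i/E_i$), and the one from the $f$-side restricts to $\mathscr{L}'_1$ (fibre $F_i/E'_i$). The support maps become $p^T$ and $q^T$, matching the convention that $e$ contributes the first $C$ factor. Pushing forward along the projection forgetting the middle factor gives the class
\[
(k_T\times p^T\times q^T)_*\left[\frac{1}{w-\lambda_1}\cdot\frac{1}{z-\lambda'_1}\right]
\]
for the operator $e_j(w)f_j(z)$.

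The displayed lemma, for $f_j(z)e_j(w)$, is obtained by the same procedure with the middle factor $F^n\text{Quot}_{{}'\vec{d}_j}(V)$. Now the intersection parametrizes a common subsheaf $\tilde F_i\subseteq E_i,E'_i$ with $E_i/\tilde F_i\cong\mathbb{C}_q$ and $E'_i/\tilde F_i\cong\mathbb{C}_p$, which is $\mathcal{S}^i_{\vec{d}}$. The dimension input is Proposition \ref{Geometry of moduli space $S$}, and the bundles restrict to $\mathscr{L}_2$ and $\mathscr{L}'_2$ with support maps $q^S$ and $p^S$. This yields the stated class $(k_S\times q^S\times p^S)_*\left[\frac{1}{w-\lambda_2}\cdot\frac{1}{z-\lambda'_2}\right]$.
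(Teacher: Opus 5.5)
Your strategy is the paper's: the lemma is proved there simply by invoking Propositions \ref{class of correspondence} and \ref{composition of correspondences} together with the dimension count for $\mathcal{S}^{j}_{\vec{d}}$. That is exactly your outline: two pulled-back copies of $F^{n+1}\text{Quot}_{\vec{d}''_{j}}(V)$ meeting properly, followed by restricting the $\lambda$'s and the support maps.

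However, you have the direction of the middle factor reversed, and as written the argument is internally inconsistent. Since $e_{j}$ raises $d_{j}$ (it replaces $E_{j}$ by a colength-one subsheaf), $f_{j}(z)e_{j}(w)$ factors through $F^{n}\text{Quot}_{\vec{d}'_{j}}(V)$. It does not factor through $F^{n}\text{Quot}_{{}^{'}\vec{d}_{j}}(V)$, as your last paragraph says.

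In fact, the triple product of your second step ($e$ on $X\times Y$, $f$ on $Y\times Z$, middle factor $\vec{d}'_{j}$) is already the setup for the displayed operator $f_{j}(z)e_{j}(w)$, i.e.\ first $e$, then $f$. There $Z_{1}\cap Z_{2}$ parametrizes a common \emph{subsheaf} $\tilde{F}_{j}\subseteq E_{j},E'_{j}$, so it is $\mathcal{S}^{j}_{\vec{d}}$, not $\mathcal{T}^{j}_{\vec{d}}$. The space $\mathcal{T}^{j}_{\vec{d}}$ (a common supersheaf) is the intersection for $e_{j}(w)f_{j}(z)$, whose middle factor is $F^{n}\text{Quot}_{{}^{'}\vec{d}_{j}}(V)$.

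The same flip makes your step-four formula the transpose of the correct class in Lemma \ref{corollary on the operator e(w)f(z)}. With $k_{T}$ recording the source flag first, the $f$-side bundle restricts to $\mathscr{L}_{1}$ (support $p$) and the $e$-side one to $\mathscr{L}^{'}_{1}$ (support $q$).

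Once the middle factor is corrected, your final identification for the statement is right and gives the stated class:
\begin{itemize}
\item $\mathscr{L}_{2}$ and $q^{S}$ come from the $e$-step $\tilde{F}_{j}\subseteq E_{j}$;
\item $\mathscr{L}^{'}_{2}$ and $p^{S}$ come from the $f$-step $\tilde{F}_{j}\subseteq E'_{j}$;
\item the intersection is proper because $\mathcal{S}^{j}_{\vec{d}}$ is l.c.i.\ and reduced of pure dimension $\dim F^{n}\text{Quot}_{\vec{d}'_{j}}(V)$.
\end{itemize}
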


\begin{lem}\label{corollary on the operator e(w)f(z)}
The operator \[
    e_{j}(w)f_{j}(z):H^{*}(F^{n}\text{Quot}_{\vec{d}}(V))\rightarrow H^{*}(F^{n}\text{Quot}_{\vec{d}}(V)\times C \times C)[[z^{-1},w^{-1}]]
    \] 
    is given as a correspondence by the class
    \[
    (k_{T}\times q^{T}\times p^{T})_{*}\left[\frac{1}{w-\lambda^{'}_{1}}\cdot \frac{1}{z-\lambda_{1}}\right]
    \]
    on $F^{n}\text{Quot}_{\vec{d}}(V)\times F^{n}\text{Quot}_{\vec{d}}(V)\times C\times C$.
\end{lem}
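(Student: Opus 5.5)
The plan is to compute $e_{j}(w)f_{j}(z)$ directly from the definitions and the formalism of correspondences. By \eqref{definition of f operator}, $f_{j}(z)$ is given by the correspondence $(\pi_{+}\times\pi_{-}\times\pi_{C})_{*}\left[\frac{1}{z-\lambda_{j}}\right]$ supported on $F^{n+1}\text{Quot}_{{}^{''}\vec{d}_{j}}(V)$, viewed in $F^{n}\text{Quot}_{\vec{d}}(V)\times F^{n}\text{Quot}_{{}^{'}\vec{d}_{j}}(V)\times C$. Similarly, $e_{j}(w)$ is given by $(\pi_{-}\times\pi_{+}\times\pi_{C})_{*}\left[\frac{1}{w-\lambda_{j}}\right]$, supported on the same space, viewed in $F^{n}\text{Quot}_{{}^{'}\vec{d}_{j}}(V)\times F^{n}\text{Quot}_{\vec{d}}(V)\times C$. (Here the power series $\sum_v \lambda^v/z^{v+1}=1/(z-\lambda)$.)

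By Proposition \ref{class of correspondence}, the composition is obtained by pulling both classes back to the triple product $F^{n}\text{Quot}_{\vec{d}}\times F^{n}\text{Quot}_{{}^{'}\vec{d}_{j}}\times F^{n}\text{Quot}_{\vec{d}}\times C\times C$ and multiplying. The two supports $Z_{1},Z_{2}$ are each (a product with) a Hyperquot scheme $F^{n+1}\text{Quot}_{{}^{''}\vec{d}_{j}}$, hence smooth, and their embeddings are regular. Their scheme-theoretic intersection is exactly $\mathcal{T}_{\vec{d}}^{j}$: the common middle flag has $j$-th sheaf $F_{j}$, and the outer flags have $E_{j}\subset F_{j}$ and $E'_{j}\subset F_{j}$ with colengths one at $p$ and $q$ respectively. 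The roles of $p$ and $q$ match the convention that $e$ contributes to the first factor (point $q$) and $f$ to the second (point $p$).

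To apply Proposition \ref{composition of correspondences} I need the transversality condition $\text{codim}(Z_1\cap Z_2)=\text{codim} Z_1+\text{codim} Z_2$. I would check this by a dimension count using Proposition \ref{Geometry of moduli space $t$}, which gives $\dim\mathcal{T}_{\vec{d}}^{j}=rd_{n}$ (or $r(d_n+1)$ if $j=n$), together with the known dimension $r d_n$ (respectively $r(d_n+1)$) of $F^{n+1}\text{Quot}_{{}^{''}\vec{d}_{j}}$ and the dimensions of the ambient Hyperquot schemes. This is a routine bookkeeping step.

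Granting it, the product class equals the pushforward from $\mathcal{T}_{\vec{d}}^{j}$ of the product of the restrictions of $\frac{1}{w-\lambda_{j}}$ and $\frac{1}{z-\lambda_{j}}$. The first restricts to the line bundle with fibre $F_{j}/E'_{j}$, so its class is $\lambda'_{1}$. The second restricts to the line bundle with fibre $F_{j}/E_{j}$, so its class is $\lambda_{1}$. Pushing forward along the projection forgetting the middle factor then yields $(k_{T}\times q^{T}\times p^{T})_{*}\left[\frac{1}{w-\lambda'_{1}}\cdot\frac{1}{z-\lambda_{1}}\right]$.

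The only genuinely delicate point is the excess-intersection check. $\mathcal{T}_{\vec{d}}^{j}$ is only l.c.i. (reducible) for $j<n$, so I must use the equidimensionality in Proposition \ref{Geometry of moduli space $t$} to ensure there is no excess component. The identification of the orientation conventions for the tautological bundles ($\mathscr{L}_j$ versus $\mathcal{O}(\pm1)$) only affects bookkeeping, since $\lambda_j$ is defined intrinsically as $c_1$ of the quotient line.
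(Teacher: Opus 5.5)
Your proposal is correct and is the argument the paper intends: it gives no written proof, only citing Propositions \ref{class of correspondence} and \ref{composition of correspondences} and the dimension count for $\mathcal{T}^{j}_{\vec{d}}$. Like the paper, you compose the two correspondences, identify the dimensionally transverse intersection with the equidimensional scheme $\mathcal{T}^{j}_{\vec{d}}$, and restrict the two copies of $\lambda_j$ to $\lambda'_1$ and $\lambda_1$. One bookkeeping slip: $F^{n+1}\text{Quot}_{{}^{''}\vec{d}_{j}}(V)$ has dimension $rd_{n}$ for every $j$; it is the intermediate space $F^{n}\text{Quot}_{{}^{'}\vec{d}_{j}}(V)$ whose dimension drops to $r(d_{n}-1)$ when $j=n$. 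With these values the expected dimension $2rd_{n}-\dim F^{n}\text{Quot}_{{}^{'}\vec{d}_{j}}(V)$ equals $\dim\mathcal{T}^{j}_{\vec{d}}$ in both cases.
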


\noindent Now, to show that  $e_{j}(w)f_{j}(z)-f_{j}(z)e_{j}(w)$ is given by a correspondence by a class supported on $\Delta$, by the excision long exact sequence, it is sufficient to show that classes in lemmas \ref{corollary on the operator f(z)e(w)} and \ref{corollary on the operator e(w)f(z)} restrict to the same class on the complement of $\Delta$ in $F^{n}\text{Quot}_{\vec{d}}(V)\times F^{n}\text{Quot}_{\vec{d}}(V)\times C\times C$. But this just follows from Lemmas $\ref{Lemma excision argument one}$ and $\ref{Lemma excision argument two}$.\\\\
\textbf{Step II:}
We will now show that 
\[
\left[e_{j}(w)f_{j}(z)-f_{j}(z)e_{j}(w)\right](1)=\delta\cdot\frac{h_{j}(w)-h_{j}(z)}{w-z}.
\]
Where
\[
h_{j}(x):=\frac{c(\mathcal{E}_{j+1},x)c(\mathcal{E}_{j-1},x+K_C)}{c(\mathcal{E}_{j},x)c(\mathcal{E}_{j},x+K_C)}
\]
and we make the convention that $c(\mathcal{E}_{n+1},x)=1$.
Step I would then immediately imply:

\begin{prop}\label{e,f relations}
The formulas
\[
    {[e_{j}^{(s)}, f_{j}^{(t)}]}=-\delta\cdot h_{_{j}}^{(s+t+1)} \text{ if $j\neq n$ .}
\]
\[
    {[e_{j}^{(s)}, f_{j}^{(t)}]}=-\delta\cdot h_{_{j}}^{(s+t-r+1)} \text{ if $j= n$.} 
\]
hold as equalities of operators
\[
H^{*}(F^{n}\text{Quot}(V))\rightarrow H^{*}(F^{n}\text{Quot}(V)\times C\times C)
\]
for all $i,j\in \{1,2,...,n\}$, $s,t\geq 0$. Here, we make the convention that the operators with $(s)$ in the superscript contribute to the first factor of $C\times C$ and the operators with $(t)$ in the superscript, contribute to the second factor of $C\times C$. The class $\delta\in H^{*}(F^{n}\text{Quot}(V)\times C\times C)$ is the pull-back of the diagonal class in $ H^{*}( C\times C)$ via the natural projection.
    
\end{prop}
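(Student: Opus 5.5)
The plan is to compute $\left[e_{j}(w)f_{j}(z)-f_{j}(z)e_{j}(w)\right](1)$ explicitly. By Step I this determines the whole commutator, and then we read off coefficients. Step I shows the commutator is given by a class $\Delta_{*}(\Phi)$ supported on the diagonal. Hence it acts as $\rho^{*}$ followed by multiplication by $\Theta:=\Phi$ (pushed to $F^{n}\text{Quot}_{\vec{d}}(V)\times C\times C$), and $\Theta$ is recovered by applying the commutator to the unit class $1$.

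To compute $f_{j}(z)e_{j}(w)(1)$, use the definitions directly. First, $e_{j}(w)(1)=(\pi_{+}\times\pi_{C})_{*}\left[\frac{1}{w-\lambda_{j}}\right]$. By Proposition \ref{pi+times piC as a projective bundle} and the pushforward formula \eqref{pushforward formula projective bundle}, this equals
\[
\left[-\frac{c(\mathcal{E}_{j-1},w+K_{C})}{c(\mathcal{E}_{j},w+K_{C})}\right]_{w<0},
\]
with the universal sheaves taken on $F^{n}\text{Quot}_{\vec{d}'_{j}}$. Next apply $f_{j}(z)$: pull back along $\pi_{+}$, multiply by $\frac{1}{z-\lambda_{j}}$, and push forward along $\pi_{-}\times\pi_{C}$.

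On $F^{n+1}\text{Quot}_{\vec{d}''_{j}}\times C$ (with second copy of $C$) we express the pulled-back bundles via \eqref{short exact seq cohomology identity on Fn+1Quot x C}. This gives $c(\mathcal{F}_{j},x)=c(\mathcal{E}_{j},x)\frac{x-\lambda_{j}+\delta}{x-\lambda_{j}}$, where $\delta$ is the diagonal between the new $C$ factor and $\pi_{C}$. After this substitution the integrand becomes a rational function in $\lambda_{j}$ whose coefficients are classes pulled back from $F^{n}\text{Quot}_{\vec{d}}\times C\times C$. We then push forward along $\pi_{-}\times\pi_{C}$ using Proposition \ref{pi-times piC as a projective bundle} and \eqref{pushforward formula projective bundle}. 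Since $\pi_{*}\left[\frac{1}{x-\lambda}\right]=\left[\frac{c(\mathcal{E}_{j+1},x)}{c(\mathcal{E}_{j},x)}\right]_{x<0}$, pushforwards of general rational expressions in $\lambda_{j}$ follow by partial fractions, i.e. a residue computation in a formal variable $x$ substituted for $\lambda_{j}$.

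The term $e_{j}(w)f_{j}(z)(1)$ is computed symmetrically. First $f_{j}(z)(1)=\left[\frac{c(\mathcal{E}_{j+1},z)}{c(\mathcal{E}_{j},z)}\right]_{z<0}$ on $F^{n}\text{Quot}_{{}^{'}\vec{d}_{j}}$. Then we pull back, multiply by $\frac{1}{w-\lambda}$, and push along $\pi_{+}\times\pi_{C}$. Here the analogous identity with $K_C$-shifts applies, since $\mathscr{L}_{j}=\mathcal{O}(-1)$ in Proposition \ref{pi+times piC as a projective bundle}. The relation between $\mathcal{E}_{j}$ and the smaller sheaf again contributes a factor $\frac{x-\lambda+\delta}{x-\lambda}$.

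In both computations the terms without $\delta$ agree: they are the generic, off-diagonal contributions, consistent with Step I. Multiplication by $\delta$ kills $\delta^{2}$ up to the $K_C$ term, since $\delta^{2}=\delta\cdot K_{C}$. The $\delta$-linear terms should then assemble into $\delta\cdot\frac{h_{j}(w)-h_{j}(z)}{w-z}$. This identification is where the two shifts $x$ and $x+K_{C}$ in $h_{j}$ arise: one from each projective bundle description.

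Finally, we expand $\frac{h_{j}(w)-h_{j}(z)}{w-z}=\sum_{s,t}h_{j}^{(s+t+1-r\delta_{j,n})}w^{-s-1}z^{-t-1}$ and compare coefficients. This gives the relation, including the degree shift by $r$ when $j=n$, because then $h_{n}$ starts at $z^{-r}$, with $c(\mathcal{E}_{n+1},x)=1$. The sign is fixed by the paper's convention for the commutator $[e,f]$ versus $ef-fe$.

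The main obstacle is the bookkeeping in the two residue computations: handling the truncations $[\cdot]_{<0}$ correctly and verifying that the $\delta$-free parts cancel exactly while the $\delta$-parts combine into the difference quotient of $h_{j}$. I would first carry this out for $n=1$, matching Marian--Neguț, and then check that the additional factors $c(\mathcal{E}_{j\pm1})$ are pulled back unchanged.
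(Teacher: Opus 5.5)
Your plan is essentially the paper's proof: take Step I as given, compute $\left[e_{j}(w)f_{j}(z)-f_{j}(z)e_{j}(w)\right](1)$ via the two virtual projective bundle pushforwards and the identity for $c(\mathcal{F}_{j},x)$, check that the $\delta$-free terms cancel, and then extract coefficients.

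Two sign slips need fixing. First, the identity you need is $\delta(\delta+K_{C})=0$, i.e.\ $\delta^{2}=-\delta\cdot K_{C}$, not $\delta^{2}=\delta\cdot K_{C}$; this is what lets you replace $\frac{\delta}{w-\lambda_{j}+K_{C}+\delta}$ by $\frac{\delta}{w-\lambda_{j}}$. Second, the expansion is $\frac{h_{j}(w)-h_{j}(z)}{w-z}=-\sum_{s,t}h_{j}^{(s+t+1-r\delta_{j,n})}w^{-s-1}z^{-t-1}$, with an overall minus sign. That minus sign, not a commutator convention, is the source of the $-\delta$ in the statement.
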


\begin{proof}
This computation is mutatis mutandis the one in \cite{MaNe1}.
We will borrow notation from subsections \ref{subsection on moduli space: Fn+1 Quot} and \ref{subsection: projective bundles}. For any space $X$, we will denote the pull-back of the canonical class on $C$ to the cohomology of $X\times C$ by $K_{C}$.\\\\
Let us first calculate $f_{j}(z)e_{j}(w)\cdot1$ on $F^{n}\text{Quot}_{\vec{d}}(V)$. Let
\[
\mathcal{E}_{n}\subseteq ...\mathcal{E}_{j+1}\subseteq\mathcal{F}_{j}\subseteq\mathcal{E}_{j-1}\subseteq...\mathcal{E}_1\subseteq \mathcal{E}_{0}=\pi^{*}(V).
\]
be the universal flag on $F^{n}\text{Quot}_{{\vec{d}'}_{j}}\,(V)$  and let
\[
\mathcal{E}_{n}\subseteq ...\mathcal{E}_{j+1}\subseteq\mathcal{F}_{j}\subseteq\mathcal{E}_{j}\subseteq\mathcal{E}_{j-1}\subseteq...\mathcal{E}_1\subseteq \mathcal{E}_{0}=\pi^{*}(V).
\]
be the universal flag on $F^{n+1}\text{Quot}_{{\vec{d}''}_{j}}\,(V)$. We will denote the first Chern class of the tautological line bundle $\mathscr{L}_{j}$ (cf. \ref{subsection on moduli space: Fn+1 Quot}) on $F^{n+1}\text{Quot}_{{\vec{d}''}_{j}}\,(V)$ by $\lambda_{j}$. \\\\ 
From formula \ref{pushforward formula projective bundle} and Proposition \ref{pi+times piC as a projective bundle}, we have
    
  \[
  f_{j}(z)e_{j}(w)\cdot 1=f_{j}(z)\cdot\left[1-\frac{c(\mathcal{E}_{j-1},w+K_{C})}{c(\mathcal{F}_{j},w+K_{C})}\right].
  \]
  \newline
  \[
  =(\pi_{-}\times \pi_{C})_{*}\left[\frac{1}{z-\lambda_{j}}\cdot\left(1-\frac{c(\mathcal{E}_{j-1},w+K_{C})}{c(\mathcal{F}_{j},w+K_{C})}\right)\right].
  \]
  \newline
  Then formula \ref{short exact seq cohomology identity on Fn+1Quot x C} implies that his expression is equal to

  \[
  (\pi_{-}\times \pi_{C})_{*}\left[\frac{1}{z-\lambda_{j}}\cdot\left[1-\frac{c(\mathcal{E}_{j-1},w+K_{C})}{c(\mathcal{E}_{j},w+K_{C})}\cdot\frac{w-\lambda_{j}+K_{C}}{w-\lambda_{j}+K_{C}+\delta}\right]\right]
  \]
  \newline
    \begin{multline*}
  =(\pi_{-}\times \pi_{C})_{*}\left[\frac{1}{z-\lambda_{j}}\cdot\left(1-\frac{c(\mathcal{E}_{j-1},w+K_{C})}{c(\mathcal{E}_{j},w+K_{C})}\right)+\right.\\\left.
  \delta\cdot\frac{c(\mathcal{E}_{j-1},w+K_{C})}{c(\mathcal{E}_{j},w+K_{C})}\cdot\frac{1}{(z-\lambda_{j})(w-\lambda_{j}+K_{C}+\delta)}\right].
  \end{multline*}
  \newline
  Then the identity $\delta(\delta+K_{C})=0$ in $H^{*}(C\times C)$, implies that
  
  \[
  \frac{\delta}{w-\lambda_{j}+K_{C}+\delta}=\frac{\delta}{w-\lambda_{j}},
  \]
  \newline
  using which we can simplify $f(z)e(w)\cdot 1$ to
  
  \begin{multline*}
  (\pi_{-}\times \pi_{C})_{*}\left[\frac{1}{z-\lambda_{j}}\cdot\left(1-\frac{c(\mathcal{E}_{j-1},w+K_{C})}{c(\mathcal{E}_{j},w+K_{C})}\right)+\right.\\\left.
  \frac{\delta}{z-w}\cdot\frac{c(\mathcal{E}_{j-1},w+K_{C})}{c(\mathcal{E}_{j},w+K_{C})}\cdot\left(\frac{1}{w-\lambda_{j}}-\frac{1}{z-\lambda_{j}}\right)\right].
  \end{multline*}
  \newline
  Then formula \ref{pushforward formula projective bundle} and Proposition \ref{pi-times piC as a projective bundle} yield:

  \begin{multline}\label{f(z)e(w).1}
f_{j}(z)e_{j}(w)\cdot 1= \left[\frac{c(\mathcal{E}_{j+1},z)}{c(\mathcal{E}_{j},z)}\cdot\left(1-\frac{c(\mathcal{E}_{j-1},w+K_{C})}{c(\mathcal{E}_{j},w+K_{C})}\right)+\right.\\ \left. \frac{\delta}{z-w}\cdot\frac{c(\mathcal{E}_{j-1},w+K_{C})}{c(\mathcal{E}_{j},w+K_{C})}\cdot\left(\frac{c(\mathcal{E}_{j+1},w)}{c(\mathcal{E}_{j},w)}-\frac{c(\mathcal{E}_{j+1},z)}{c(\mathcal{E}_{j},z)}\right)\right].
  \end{multline}
Now, let us compute $e_{j}(w)f_{j}(z)\cdot 1$. Let
\[
\mathcal{E}_{n}\subseteq ...\mathcal{E}_{j+1}\subseteq\tilde{\mathcal{F}}_{j}\subseteq\mathcal{E}_{j-1}\subseteq...\mathcal{E}_1\subseteq \mathcal{E}_{0}=\pi^{*}(V).
\]
be the universal flag on $F^{n}\text{Quot}_{\,{{}^{'}\vec{d}}_{j}}\,(V)$  and let
\[
\mathcal{E}_{n}\subseteq ...\mathcal{E}_{j+1}\subseteq\mathcal{E}_{j}\subseteq\tilde{\mathcal{F}}_{j}\subseteq\mathcal{E}_{j-1}\subseteq...\mathcal{E}_1\subseteq \mathcal{E}_{0}=\pi^{*}(V).
\]
be the universal flag on $F^{n+1}\text{Quot}_{\,{}^{''}{\vec{d}}_{j}}\,(V)$. Again, we will denote the first Chern class of the tautological line bundle on $F^{n+1}\text{Quot}_{{}^{''}{\vec{d}}_{j}}\,(V)$ by $\lambda_{j}$.\\\\
Using formula \ref{pushforward formula projective bundle} and proposition \ref{pi-times piC as a projective bundle}, we have
\[
e_{j}(w)f_{j}(z)\cdot 1=e_{j}(w)\left[\frac{c(\mathcal{E}_{j+1},z)}{c(\tilde{\mathcal{F}_{j}},z)}\right]
\]
\[
=(\pi_{+}\times\pi_{C})_{*}\left[\frac{1}{w-\lambda_{j}}\cdot \frac{c(\mathcal{E}_{j+1},z)}{c(\tilde{\mathcal{F}_{j}},z)}\right].
\]
By formula \ref{short exact seq cohomology identity on Fn+1Quot x C}, this is equal to

\[
(\pi_{+}\times\pi_{C})_{*}\left[\frac{1}{w-\lambda_{j}}\cdot \frac{c(\mathcal{E}_{j+1},z)}{c({\mathcal{E}_{j}},z)}\cdot\frac{z-\lambda_{j}+\delta}{z-\lambda_{j}}\right]
\]
\newline
\[
=(\pi_{+}\times\pi_{C})_{*}\left[\frac{1}{w-\lambda_{j}}\cdot \frac{c(\mathcal{E}_{j+1},z)}{c({\mathcal{E}_{j}},z)}+\frac{\delta}{z-w}\cdot\frac{c(\mathcal{E}_{j+1},z)}{c({\mathcal{E}_{j}},z)}\cdot\left(\frac{1}{w-\lambda_{j}}-\frac{1}{z-\lambda_{j}}\right)\right].
\]
\newline
Applying formula \ref{pushforward formula projective bundle} and Proposition \ref{pi+times piC as a projective bundle}, we obtain

\begin{multline}\label{e(w)f(z).1}
e_{j}(w)f_{j}(z)\cdot 1=\left(1-\frac{c(\mathcal{E}_{j-1},w+K_{C})}{c(\mathcal{E}_{j},w+K_{C})}\right)\cdot \frac{c(\mathcal{E}_{j+1},z)}{c({\mathcal{E}_{j}},z)}\\+\frac{\delta}{z-w}\cdot\frac{c(\mathcal{E}_{j+1},z)}{c({\mathcal{E}_{j}},z)}\cdot\left(\frac{c(\mathcal{E}_{j-1},z+K_{C})}{c(\mathcal{E}_{j},z+K_{C})}-\frac{c(\mathcal{E}_{j-1},w+K_{C})}{c(\mathcal{E}_{j},w+K_{C})}\right).
  \end{multline}

Subtracting the expression  \ref{f(z)e(w).1} from the expression \ref{e(w)f(z).1} gives:

\begin{multline*}
\left[e_{j}(w)f_{j}(z)-f_{j}(z)e_{j}(w)\right]\cdot 1= \\
\frac{\delta}{w-z}\cdot\left[\frac{c(\mathcal{E}_{j+1},w)c(\mathcal{E}_{j-1},w+K_C)}{c(\mathcal{E}_{j},w)c(\mathcal{E}_{j},w+K_C)}-\frac{c(\mathcal{E}_{j+1},z)c(\mathcal{E}_{j-1},z+K_C)}{c(\mathcal{E}_{j},z)c(\mathcal{E}_{j},z+K_C)}\right],
\end{multline*}
which is what we wanted to prove.
\end{proof}

\section{A Natural Basis for the Cohomology}\label{section: natural basis}

Let us recall from the introduction (cf. equation \ref{Operators a's definition}), the operators
\[
  a_{k}^{(v)}\colon H^{*}(F^{n}\text{Quot}(V))\rightarrow H^{*}(F^{n}\text{Quot}(V)\times C),
\]
that are defined for $k\in\{1,...,n\}$ and $v\in\{0,...,r-1\}$.
Let $\gamma_1,\gamma_{2},...,\gamma_{2g+2}$ be a basis of $H^{*}(C)$. We may then define
\[
a_{k}^{(v)}(\gamma_{i})\colon H^{*}(F^{n}\text{Quot}(V))\rightarrow H^{*}(F^{n}\text{Quot}(V))
\]
for $k\in\{1,...,n\}, v\in\{0,...,r-1\}$ and $i\in\{1,...,2g+2\}$ (cf. equation \ref{operators coloured with cohomology classes T(gamma)}).\\\\
The goal of this section is to describe the graded vector space $H^{*}(F^{n}\text{Quot}(V))$ with the aid of the operators $a_{k}^{(v)}(\gamma_{i})$. More precisely, we will prove Theorem \ref{a's commute intro}, that the operators $a_{k}^{(v)}(\gamma_{i})$ commute, and prove Theorem \ref{basis of cohomology intro}, that the set

\[
\mathcal{B}:=\left\{a_{k_l}^{(v_l)}(\gamma_{i_l})...a_{k_2}^{(v_2)}(\gamma_{i_2})a_{k_1}^{(v_1)}(\gamma_{i_1})|0\rangle\right\}_{l\geq 0}\subseteq H^{*}(F^{n}\text{Quot}(V))
\]
\newline
forms a basis for $H^{*}(F^{n}\text{Quot}(V))$. Here, the symbol $|0\rangle$ denotes a fixed generator of the one dimensional $\mathbb{Q}-$vector space $H^{*}(F^{n}\text{Quot}_{(0,...,0)}(V)).$ These results will immediately yield Corollary \ref{description as graded vec space}, that is the following isomorphism of graded vector spaces
\[
H^{*}(F^{n}\text{Quot}(V))\cong \text{Sym}\left(\bigoplus\limits^{i=1,...,2g+2}_{\substack{{k=1,...,n} \\ {v=0,...,r-1}}}\mathbb{Q}\cdot a_{k}^{(v)}(\gamma_{i})\right) |0\rangle.
\]
\newline
The proofs of the Theorems \ref{a's commute intro} and \ref{basis of cohomology intro} will be carried out in four major steps:
\begin{enumerate}[align=left, widest=99,leftmargin=\parindent, labelsep=*]
\item \emph{} We impose a lexicographic order on the operators $a_{k}^{(v)}$ and declare $a_{k^{'}}^{(v^{'})}(\gamma_{i^{'}})> a_{k}^{(v)}(\gamma_{i})$ if $k^{'}>k$ or $k^{'}=k,$ $v^{'}>v$ or $k^{'}=k,$ $v^{'}=v,$ $i^{'}>i$. Then, we will show that the elements
\[
a_{k_t}^{(v_t)}(\gamma_{i_t})...a_{k_2}^{(v_2)}(\gamma_{i_2})a_{k_1}^{(v_1)}(\gamma_{i_1})|0\rangle\in H^{*}(F^{n}\text{Quot}(V)),
\]
for arbitrary $t$, with the property that 
\[
a_{k_1}^{(v_1)}(\gamma_{i_1})\leq a_{k_2}^{(v_2)}(\gamma_{i_2})\leq ...\leq a_{k_t}^{(v_t)}(\gamma_{i_t}),
\]
are linearly independent. 
\item \emph{ }We will show that the cohomology of $H^{*}(F^{n}\text{Quot}_{\vec{d}}(V))$ is generated as a ring, by so-called universal classes; This is equivalent to saying that any class in $H^{*}(F^{n}\text{Quot}_{\vec{d}}(V))$ can be written as a linear combination of elements of the form 
\[
m_{i_{t}}^{(k_{t})}(\gamma_{j_{t}}) m_{i_{t-1}}^{(k_{t-1})}(\gamma_{j_{t-1}})... m_{i_{1}}^{k_{1}}(\gamma_{j_{1}})[F^{n}\text{Quot}_{\vec{d}}(V)],
\]
for various $t\geq0$.
\\
\item \emph{ }Using (2), we will show that the elements of the form
\[
a_{k_l}^{(v_l)}(\gamma_{i_l})...a_{k_2}^{(v_2)}(\gamma_{i_2})a_{k_1}^{(v_1)}(\gamma_{i_1})|0\rangle,
\]
with arbitrary $l$ and $n\geq k_{l}\geq k_{l-1}\geq...\geq k_{1}\geq 1$, span $H^{*}(F^{n}\text{Quot}(V))$. Note that a priori, this collection of elements is possibly larger than the one considered in step $(1)$.\\
\item\emph{ }Finally, we will crucially use step (3) to show Theorem \ref{a's commute intro}. In turn, this implies that the spanning set considered in step (3) is exactly equal to the linearly independent set of step (1) and hence yields a basis for $H^{*}(F^{n}\text{Quot}(V))$.\\  

\end{enumerate}
Before we carry out the above steps, let us set-up some definitions and prove some important formulas;\\\\
Given non-negative integers $m\leq n$, and an $m-$tuple $\vec{e}=(e_{1},...,e_{m})$, we can realise $\vec{e}$ as an $n-$tuple by defining the $n-$tuple $r(\vec{e})$ as:
\[
r(\vec{e}):=(e_{1},...,e_{m},e_{m},...,e_{m}).
\]
For example, if $m=3$, $n=6$ and $\vec{e}=(3,5,9)$, then $r(\vec{e})=(3,5,9,9,9,9)$. In this way, there is an obvious identification
\begin{equation}\label{identification of hyperquot schemes}
    F^{m}\text{Quot}_{\vec{e}}\,(V)\cong F^{n}\text{Quot}_{r(\vec{e})}(V).
\end{equation}
Then we define $\mathcal{H}_{m}$ to be the following subspace of $H^{*}(F^{n}\text{Quot}(V))$:

\[
\mathcal{H}_{m}:= \bigoplus\limits_{\vec{e}=(e_{1},...,e_{m})\in \mathbb{N}^{m}}H^{*}(F^{n}\text{Quot}_{r(\vec{e})}(V)).
\]
\newline
Note that the identification \ref{identification of hyperquot schemes} implies
\begin{equation}\label{identification of Hm}
\mathcal{H}_{m}\cong H^{*}(F^{m}\text{Quot}(V))
\end{equation}
and that we have an increasing filtration
\begin{equation}\label{filtration of cohomology of Hyperquaot}
H^{*}(F^{n}\text{Quot}_{(0,...,0)}(V))=\mathcal{H}_{0}\subseteq \mathcal{H}_{1}\subseteq...\subseteq\mathcal{H}_{n}=H^{*}(F^{n}\text{Quot}(V)).
\end{equation}
Let us make note of an easy but very useful observation:
\begin{rem}\label{remark on am}
    Under the identification \ref{identification of Hm}, the action of the operator $a_{m}^{(v)}$ on\\
    $H^{*}(F^{m}\text{Quot}(V))$ is intertwined with the action of the restriction to $\mathcal{H}_{m}$ of the operator  $a_{m}^{(v)}$ on $H^{*}(F^{n}\text{Quot}(V))$.
\end{rem}

\noindent Now, for $k=1,...,n$ and $i\geq 0$, let us define operators
\[
b_{k}^{(i)}:{H^{*}(F^{n}\text{Quot}(V))}\rightarrow H^{*}(F^{n}\text{Quot}(V)\times C).
\]
 Let $\vec{d}=(d_1,...,d_{n})$ be an $n-$tuple of non-negative integers and for $1\leq k\leq n$, let $\vec{d}_{k,n}$ be the $n-$tuple $(d_1,...,d_{k-1},d_{k}+1,...,d_{n}+1)$. Then, let us recall the moduli space $\mathcal{Z}_{\vec{d}}^{k,n}$, which fits into the Diagram \ref{Diagram for Moduli space: Z}. The space $\mathcal{Z}_{\vec{d}}^{k,n}$ carries the tautological line bundle $\mathscr{L}_{k}$ (cf. \eqref{construction of line bundle L}), whose first Chern class $\lambda_{k}$ can be used to define

\[
b_{k}^{(i)}:= (q_{-}\times q_{C})_{*}(\lambda_{k}^{i}\cdot q_{+}^{*}(-)).
\]
\newline
In the spirit of Remark \ref{remark on am}, we make the following useful note:
\begin{rem}\label{remark on bm}
    Under the identification \eqref{identification of Hm}, the operator ${b_{k}^{(i)}}_{|_{\mathcal{H}_{k}}}$ is equal to the operator $f_{k}^{(i)}$ on $H^{*}(F^{k}\text{Quot}(V))$.
\end{rem}

\noindent Let us now prove a key proposition, which is the analogue of Proposition $2$, \cite{MaNe1} for Hyperquot schemes:

\begin{prop}\label{multiplication by chern classes formula}
 Recall the universal flag \ref{Universal flag} on $F^{n}\text{Quot}_{\vec{d}}\,(V)$. The operator $H^{*}(F^{n}\text{Quot}_{\vec{d}}\,(V))\rightarrow H^{*}(F^{n}\text{Quot}_{\vec{d}}\,(V)\times C)$, given by pull-back and multiplication by the class 
\[
c_{k}\left((\mathcal{E}_{n-1}-\mathcal{E}_{n})\otimes \omega_{C}^{-1}\right)
\]
is equal to the operator:
\[
\sum_{i=0}^{r-1}\left[a_{n}^{(i)}f_{n}^{(r+k-i-2)}\right]_{|_{\Delta}}(-1)^{i-k}.
\]
 Here, the products $a_{n}^{(*)}f_{n}^{(*)}$ are operators 
 \[
 H^{*}(F^{n}\text{Quot}_{\vec{d}}\,(V))\rightarrow H^{*}(F^{n}\text{Quot}_{\vec{d}}\,(V)\times C\times C)
 \] 
 which we then restrict to the diagonal
 \[
 \Delta:F^{n}\text{Quot}_{\vec{d}}\,(V)\times C\hookrightarrow F^{n}\text{Quot}_{\vec{d}}\,(V)\times C\times C.
 \]
\end{prop}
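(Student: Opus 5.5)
We will compute the composition $a_{n}(w)f_{n}(z)$ as a correspondence, restrict it to the diagonal of $C\times C$, and compare with the generating series of $c_k\left((\mathcal{E}_{n-1}-\mathcal{E}_{n})\otimes\omega_C^{-1}\right)$. By \eqref{equality in cohomology:a=em}, $a_n(w)$ is given by the class $c(\mathcal{G}^{n,n}_{\vec d},w)=c({\mathcal{E}_n}_{|_\Delta},w)/(w-\lambda_n)$ on $\mathcal{Z}^{n,n}_{\vec e}$. For $k=n$, the space $\mathcal{Z}^{n,n}$ is exactly $F^{n+1}\text{Quot}$ correspondence defining $e_n$. So the composite $f_n(z)$ followed by $a_n$ is supported on the fibre product of two copies of $F^{n+1}\text{Quot}_{{}^{''}\vec d_n}$ over $F^n\text{Quot}_{{}^{'}\vec d_n}\times C$, which is precisely the space $\mathfrak{Z}_{\vec d}$. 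Restricting to the diagonal of $C\times C$ is automatic there, since both support points coincide (the map $t_C$).

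Concretely, by Propositions \ref{class of correspondence} and \ref{composition of correspondences}, $\left[a_n(w)f_n(z)\right]_{|_\Delta}$ is given by the class
\[
(k_1\times k_2\times t_C)_*\left(\frac{c(\mathcal{G}_2,w)}{z-\lambda_1}\right)
\]
on $F^n\text{Quot}_{\vec d}\times F^n\text{Quot}_{\vec d}\times C$. Here $\lambda_1$ comes from the $f_n$ leg and $\mathcal{G}_2$ from the $a_n$ leg. The roles of the two legs need to be checked carefully with the orientation conventions of $\pi_\pm$. The key input is then \eqref{equation that will yield multiplication operators}. Expanding
\[
\frac{c(\mathcal{G}_2,\lambda_1)}{z-\lambda_1}=\sum_{i}(-1)^i c_i(\mathcal{G}_2)\frac{\lambda_1^{r-1-i}}{z-\lambda_1},
\]
the identity gives
\[
\sum_i(-1)^i\,(k_1\times k_2\times t_C)_*\!\left(c_i(\mathcal{G}_2)\frac{\lambda_1^{r-1-i}}{z-\lambda_1}\right)
=\Delta_*\!\left(1-\frac{c(\mathcal{E}_{n-1},z+K_C)}{c(\mathcal{E}_n,z+K_C)}\right).
\]
Extracting the coefficient of $z^{-m-1}$ turns $\lambda_1^{r-1-i}/(z-\lambda_1)$ into $\lambda_1^{r-1-i+m}$, that is, into $f_n^{(r-1-i+m)}$ composed with $a_n^{(i)}$. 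The right-hand side becomes pull-back and multiplication by the coefficient of $z^{-m-1}$ in $-c(\mathcal{E}_{n-1},z+K_C)/c(\mathcal{E}_n,z+K_C)$.

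It then remains to identify this coefficient with $\pm c_{m+1}\left((\mathcal{E}_{n-1}-\mathcal{E}_n)\otimes\omega_C^{-1}\right)$, taking $k=m+1$, so that the index becomes $r+k-i-2$ as claimed. With the convention \eqref{total chern class definition}, the ratio $c(A,z+K_C)/c(B,z+K_C)$ for equal-rank $A,B$ is $\sum_k z^{-k}(-1)^k c_k\left((A-B)\otimes\omega_C^{-1}\right)$, because $z+K_C$ shifts roots by $-K_C$. This determines the sign $(-1)^{i-k}$; I would verify it on the degree-zero term.

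The main obstacle is the first step: justifying that the composite correspondence really is pushed forward from $\mathfrak{Z}_{\vec d}$ with the integrand $c(\mathcal{G}_2,w)/(z-\lambda_1)$. This requires a transversality/dimension check for Proposition \ref{composition of correspondences}, using that $\mathfrak{Z}_{\vec d}$ is smooth of the expected dimension $r(d_n+1)-1$. It also requires matching $\mathcal{G}^{n,n}$ with $\mathcal{G}_2$ and the $f_n$ line bundle with $\mathscr{L}_1$ after restriction to the diagonal.
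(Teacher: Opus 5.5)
Your proposal follows the paper's proof. The paper also reads identity \eqref{equation that will yield multiplication operators} on $\mathfrak{Z}_{\vec d}$ as the correspondence for $\rho^{*}-\bigl[a_n(z)f_n(z)_{|_{\Delta}}\bigr]_{z<0}$, via $c(\mathcal{G}_2,\lambda_1)/(z-\lambda_1)=\bigl[c(\mathcal{G}_2,z)/(z-\lambda_1)\bigr]_{z<0}$; this is exactly your expansion. You are more explicit than the paper about which leg carries $\lambda_1$ and which carries $\mathcal{G}_2$, and your assignment is right. You are also more explicit about the dimension check needed to restrict to the diagonal of $C\times C$.

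One caution about the sign you postpone. Your own expansion of the ratio, which is correct, gives
\[
\sum_{i=0}^{r-1}(-1)^{i}\bigl[a_n^{(i)}f_n^{(r+k-i-2)}\bigr]_{|_{\Delta}}=-(-1)^{k}\,c_k\bigl((\mathcal{E}_{n-1}-\mathcal{E}_n)\otimes\omega_C^{-1}\bigr).
\]
So the coefficient that comes out is $(-1)^{i-k+1}$, not the $(-1)^{i-k}$ you say it determines. This is not a flaw in your method; the displayed statement is off by an overall sign.

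You can confirm this at $r=n=1$. There $F^{1}\text{Quot}_d=C^{(d)}$ and $a_1^{(0)}=e_1^{(0)}$. A direct computation shows $[e_1^{(0)}f_1^{(0)}]_{|_{\Delta}}$ is multiplication by the universal divisor $\mathcal{D}\subset C^{(d)}\times C$, and $[\mathcal{D}]=+c_1\bigl((\mathcal{E}_0-\mathcal{E}_1)\otimes\omega_C^{-1}\bigr)$. The stated formula instead predicts $-[\mathcal{D}]$. The paper's own coefficient extraction also yields $(-1)^{i-k+1}$.

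The discrepancy is harmless for the later span argument, which only needs $c_k$ to be a linear combination of the operators $[a_n^{(i)}f_n^{(j)}]_{|_{\Delta}}$. You should still not claim to have derived $(-1)^{i-k}$.
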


\begin{proof}
We will use notations and conventions from Subsection \ref{subsection on diagonal embeddings} pertaining to the the moduli space $\mathfrak{Z}_{\vec{d}}$. Let us re-write equation \ref{equation that will yield multiplication operators} as:

\begin{multline}\label{equation that will yield multiplication operators in the proof}
\left[\Delta_{F^{n}\text{Quot}_{{\vec{d}}}\,(V)}\times {C}\right]-\left(k_{1}\times k_{2}\times t_{C}\right)_{*}\left(\frac{c(\mathcal{G}_{2},\lambda_{1})}{z-\lambda_{1}}\right)\\=\left(\Delta_{F^{n}\text{Quot}_{{\vec{d}}}\,(V)}\times Id_{C}\right)_{*}\left(\frac{c(\mathcal{E}_{n-1},z+K_{C})}{c(\mathcal{E}_{n},z+K_{C})}\right).
\end{multline}
Also note that

\[
\frac{c(\mathcal{G}_{2},\lambda_{1})}{z-\lambda_{1}}=\left[\frac{c(\mathcal{G}_{2},z)}{z-\lambda_{1}}\right]_{z<0}\in H^{*}(\mathfrak{Z}_{\vec{d}}\,)[[z^{-1}]].
\]
\newline
\noindent Here the $z<0$ in the subscript means that we only keep those terms in the power series, that are in negative powers of $z$.\\\\
So it follows from propositions \ref{composition of correspondences} and \ref{class of correspondence}, that the L.H.S. of equation \ref{equation that will yield multiplication operators in the proof} is the class that defines the following operator $$H^{*}(F^{n}\text{Quot}_{\vec{d}}\,(V))
\rightarrow H^{*}(F^{n}\text{Quot}_{\vec{d}}\,(V)\times C)[[z^{-1}]]$$ as a correspondence:
\[
\text{Pull-back}-\left[a(z)f(z)_{|_{\Delta}}\right]_{z<0}.
\]
\newline
Whereas, the R.H.S. of the equation \ref{equation that will yield multiplication operators in the proof} is the class which defines the operator $H^{*}(F^{n}\text{Quot}_{\vec{d}}\,(V))\rightarrow H^{*}(F^{n}\text{Quot}_{\vec{d}}\,(V)\times C)[[z^{-1}]]$ given by pulling back and multiplication by

\[
\frac{c(\mathcal{E}_{n-1},z+K_{C})}{c(\mathcal{E}_{n},z+K_{C})}, 
\]
\newline
as a correspondence.\\\\
Finally, extracting the $z^{-k}$ coefficient finishes the proof.
\end{proof}

\subsection{Step I : Linear independence}
In this subsection, we will show that the following subset of $H^{*}(F^{n}\text{Quot}(V))$ is linearly independent:
\begin{equation}\label{linearly independent subset definition}
    \mathfrak{A}:=\left\{a_{k_t}^{(v_t)}(\gamma_{i_t})...a_{k_2}^{(v_2)}(\gamma_{i_2})a_{k_1}^{(v_1)}(\gamma_{i_1})|0\rangle\mid t\geq0\text{ and }  a_{k_1}^{(v_1)}(\gamma_{i_1})\leq ...\leq a_{k_t}^{(v_t)}(\gamma_{i_t})\right\}.
\end{equation}
Since the moduli space $\mathcal{Z}_{\vec{d}}^{n,n}$ (cf. Subsection \ref{Moduli space: The one key}) is just the hyperquot scheme $F^{n+1}\text{Quot}_{\vec{d}_{n}''}(V)$ (cf. Subsection \ref{subsection on moduli space: Fn+1 Quot}), the identity \ref{equality in cohomology:a=em} immediately implies the following equality of operators:
\begin{equation}\label{an=me}
a_{n}(z)=[e_{n}(z)m_{n}(z)]_{|_{\Delta}} :  H^{*}(F^{n}\text{Quot}(V))\rightarrow H^{*}(F^{n}\text{Quot}(V)\times C). 
\end{equation}
 Then, using the commutation relations in Theorem \ref{Yangian relations satisfied} and Proposition \ref{multiplication by chern classes formula}, we obtain the following proposition, whose proof we omit, since it identically follows the proof of Proposition $3$ in \cite{MaNe1}:
\begin{prop}\label{f,a commutation proposition}
    For $i,j\in\{0,...,r-1\}$, we have the following identities of linear maps
    \[
H^{*}(F^{n}\text{Quot}(V))\rightarrow H^{*}(F^{n}\text{Quot}(V)\times C\times C):
\]

    \[
    \left[f_{n}^{(j)},a_{n}^{(i)}\right]=\delta\cdot\begin{cases}
        (-1)^{i}\delta_{i+j,r-1}+\sum_{s=0}^{i-1}(-1)^{i-s}a_{n}^{(s)}f_{n}^{(i+j-s-1)},\,\,\,\,\,\,\,\text{ if }i+j\leq r-1\\\\
        (-1)^{i-s+1}\sum_{s=i}^{r-1}a_{n}^{(s)}f_{n}^{(i+j-s-1)},\,\,\,\,\,\,\,\,\,\,\,\,\,\,\,\,\,\,\,\,\,\,\,\,\,\,\,\,\,\,\,\,\,\,\,\,\,\,\,\,\text{ if }i+j\geq r
    \end{cases}.
    \]
    We make the convention that $a_{n}^{(i)}$ and $f_{n}^{(j)}$ respectively contribute to the first and second factor of $C\times C$ on both sides of the above equalities. Also, note that in the above equalities $\delta$ denotes the diagonal class in $F^{n}\text{Quot}(V)\times C\times C$, whereas $\delta_{i+j,r-1}$ denotes the Kronecker Delta. 
    \newline
\end{prop}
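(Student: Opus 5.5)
We will derive the commutator $[f_n^{(j)},a_n^{(i)}]$ purely algebraically from the Yangian relations of Theorem \ref{Yangian relations satisfied}, using the identity \eqref{an=me}, $a_n(z)=[e_n(z)m_n(z)]_{|_{\Delta}}$, to write $a_n^{(i)}$ as a finite combination of products $e_n^{(\cdot)}m_n^{(\cdot)}$ restricted to the diagonal. Concretely, comparing coefficients in $a_n(z)=\sum_v(-1)^va_n^{(v)}z^{r-1-v}$ with $e_n(z)m_n(z)=\sum_{u,t}(-1)^te_n^{(u)}m_n^{(t)}z^{r-t-u-1}$ and keeping only the polynomial part gives $a_n^{(i)}=\sum_{t=0}^{i}(-1)^{t-i}\,[e_n^{(i-t)}m_n^{(t)}]_{|_\Delta}$. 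The plan is then to compute $[f_n^{(j)},e_n^{(u)}m_n^{(t)}]=[f_n^{(j)},e_n^{(u)}]m_n^{(t)}+e_n^{(u)}[f_n^{(j)},m_n^{(t)}]$, following Proposition 3 of \cite{MaNe1}, with the only change that the Cartan current is now $h_n(z)=c(\mathcal{E}_{n-1},z+K_C)/\big(c(\mathcal{E}_n,z)c(\mathcal{E}_n,z+K_C)\big)$.

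In generating-function form, I will compute $[f_n(w),a_n(z)]$. Relation \eqref{ef relations action} gives $[f_n(w),e_n(z)]=\delta\,\frac{h_n(z)-h_n(w)}{z-w}$ (up to sign conventions), and relation \eqref{fm relations action}, written as in \eqref{em relations action current}, gives $f_n(w)m_n(z)=\big[m_n(z)f_n(w)\tfrac{z-w+\delta}{z-w}\big]$ with the appropriate truncation. Multiplying out and restricting to the diagonal produces two kinds of terms. The first is $\delta\cdot h_n(z)m_n(z)/(z-w)$ type terms, where $h_n(z)m_n(z)$ simplifies to $c(\mathcal{E}_{n-1},z+K_C)/c(\mathcal{E}_n,z+K_C)$. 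The second is terms of the form $\delta\cdot a_n(z)f_n(w)/(z-w)$, coming from the $m$–$f$ correction once it is reassembled via \eqref{an=me}. At this point I would invoke Proposition \ref{multiplication by chern classes formula}, which expresses multiplication by $c_k((\mathcal{E}_{n-1}-\mathcal{E}_n)\otimes\omega_C^{-1})$ as $\sum_i(-1)^{i-k}[a_n^{(i)}f_n^{(r+k-i-2)}]_{|_\Delta}$. This converts the first kind of term into quadratic $af$ expressions as well, except for the constant coefficient $c_0=1$, which produces the Kronecker term $(-1)^i\delta_{i+j,r-1}$.

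Finally, I will extract the coefficient of $z^{r-1-i}w^{-j-1}$ and split according to whether $i+j\le r-1$ or $i+j\ge r$. This split reflects whether the polynomial truncation in $z$ keeps the low-index part $\sum_{s<i}$ or the high-index part $\sum_{s\ge i}$ of the resulting sum. The identity $\delta(\delta+K_C)=0$ lets us replace the $z+K_C$ shifts by $z$ whenever the expression is multiplied by $\delta$, and this is what makes the $\omega_C^{-1}$ twist in Proposition \ref{multiplication by chern classes formula} match.

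The main obstacle is this bookkeeping step: tracking the truncations $[\cdot]_{z<0}$ and $[\cdot]_{z\ge0}$, the diagonal restrictions and the factor conventions, so that the two regimes come out exactly as stated with the correct signs. I would handle it by first verifying the case $r=1$, where $a_n=a_n^{(0)}$, and then organizing the general case as a residue computation in $z$.
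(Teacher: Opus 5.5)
Your plan is the paper's route. The paper omits the proof and defers to Proposition 3 of \cite{MaNe1}, combining $a_n(z)=[e_n(z)m_n(z)]_{|_{\Delta}}$ from \eqref{an=me}, the relations \eqref{ef relations action} and \eqref{fm relations action}, and Proposition \ref{multiplication by chern classes formula}. That is exactly your decomposition, and carrying out the coefficient extraction does reproduce both cases of the statement. For the record, the sign is $[f_n(w),e_n(z)]=-\delta\,\frac{h_n(z)-h_n(w)}{z-w}$, and for $0\le j\le r-1$ only the $h_n(z)m_n(z)$ part contributes because $h_n=O(z^{-r})$.

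One correction: your claim that $\delta(\delta+K_C)=0$ lets you replace $z+K_C$ by $z$ under $\delta$ is false. It is also unnecessary.
\begin{itemize}
\item It is false because $\delta\cdot K_C=-\delta^{2}\neq 0$ when $g\neq 1$, so $\delta\cdot c_k(A\otimes\omega_C^{-1})\neq\delta\cdot c_k(A)$ in general.
\item It is unnecessary because $h_n(z)m_n(z)=c(\mathcal{E}_{n-1},z+K_C)/c(\mathcal{E}_n,z+K_C)$ already has coefficients $(-1)^k c_k\big((\mathcal{E}_{n-1}-\mathcal{E}_n)\otimes\omega_C^{-1}\big)$. These are exactly the twisted classes appearing in Proposition \ref{multiplication by chern classes formula}.
\end{itemize}
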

\noindent For $1\leq k\leq n$, by restricting the action of $a_k$ and $b_{k}$ to $\mathcal{H}_{k}$, remarks \ref{remark on am}, \ref{remark on bm} and proposition \ref{f,a commutation proposition} yield:

\begin{prop}\label{b,a commutation proposition}
    For $i,j\in\{0,...,r-1\}$ and $1\leq k\leq n$, we have the following identities of linear maps
    \[
\mathcal{H}_{k}\rightarrow H^{*}(F^{n}\text{Quot}(V)\times C\times C):
\]

    \[
    \left[b_{k_{|_{\mathcal{H}_{k}}}}^{(j)},a_{k_{|_{\mathcal{H}_{k}}}}^{(i)}\right]=\delta\cdot\begin{cases}
        (-1)^{i}\delta_{i+j,r-1}+\sum_{s=0}^{i-1}(-1)^{i-s}a_{k_{|_{\mathcal{H}_{k}}}}^{(s)}b_{k_{|_{\mathcal{H}_{k}}}}^{(i+j-s-1)},\,\,\,\,\,\,\,\text{ if }i+j\leq r-1\\\\
        (-1)^{i-s+1}\sum_{s=i}^{r-1}a_{k_{|_{\mathcal{H}_{k}}}}^{(s)}b_{k_{|_{\mathcal{H}_{k}}}}^{(i+j-s-1)},\,\,\,\,\,\,\,\,\,\,\,\,\,\,\,\,\,\,\,\,\,\,\,\,\,\,\,\,\,\,\,\,\,\,\,\,\,\,\,\,\text{ if }i+j\geq r
    \end{cases}.
    \]
   we make the convention that $a_{k_{|_{\mathcal{H}_{k}}}}^{(i)}$ and $b_{k_{|_{\mathcal{H}_{k}}}}^{(j)}$ respectively contribute to the first and second factor of $C\times C$ on both sides of the above equalities.  
\end{prop}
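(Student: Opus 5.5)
The plan is to deduce Proposition \ref{b,a commutation proposition} from Proposition \ref{f,a commutation proposition}, applied to the smaller Hyperquot scheme $F^{k}\text{Quot}(V)$ and transported through the identification \eqref{identification of Hm}. Concretely, fix $k$ and replace $n$ by $k$ in Proposition \ref{f,a commutation proposition}. Its hypotheses only require Theorem \ref{Yangian relations satisfied} and Proposition \ref{multiplication by chern classes formula} for $F^{k}\text{Quot}(V)$, and both were proved for arbitrary $n$. This gives the stated identities for $[f_{k}^{(j)},a_{k}^{(i)}]$ as operators $H^{*}(F^{k}\text{Quot}(V))\to H^{*}(F^{k}\text{Quot}(V)\times C\times C)$, with $a_k$ and $f_k$ the operators defined on $F^{k}\text{Quot}(V)$ itself.

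Next I transport these identities to $\mathcal{H}_k\subseteq H^{*}(F^{n}\text{Quot}(V))$. By Remark \ref{remark on am}, $a^{(i)}_{k}$ on $F^{n}\text{Quot}(V)$ restricted to $\mathcal{H}_k$ is intertwined with $a^{(i)}_{k}$ on $F^{k}\text{Quot}(V)$. By Remark \ref{remark on bm}, $b^{(j)}_{k}$ restricted to $\mathcal{H}_k$ agrees with $f^{(j)}_{k}$ on $F^{k}\text{Quot}(V)$.

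I will check that both operators preserve $\mathcal{H}_k$, so that the compositions $b\circ a$ and $a\circ b$ on $\mathcal{H}_k$ are compositions of the restricted operators. For $a_k$ this is immediate: $\vec d_{k,n}$ adds $1$ to every entry from $k$ onward, so $r(\vec e)$ goes to $r(\vec e')$. For $b_k$, the source of $q_-$ has $\vec d$ with $\vec d_{k,n}=r(\vec e)$, hence $\vec d=r(\vec e'')$ as well. With the operators $C$-factors placed as in the statement, each composite on $\mathcal{H}_k$ matches the corresponding composite on $H^{*}(F^{k}\text{Quot}(V))$. The right-hand sides, namely the Kronecker term times the pullback and the $a\,b$ products, match term by term.

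The one point needing care, which I expect to be the main obstacle, is verifying the remarks at the level of correspondences, in particular Remark \ref{remark on bm}. Under \eqref{identification of hyperquot schemes}, with all $E_{k+1}=\dots=E_n=E_k$, the space $\mathcal{Z}^{k,n}_{\vec d}$ restricted to these components should coincide with $F^{k+1}\text{Quot}_{\vec e''_k}(V)$. Indeed, $F_k\subset E_k$ of colength one with $F_l=F_k$ forces all $F_l$ to equal $F_k$. Under this identification $\lambda_k$ becomes the tautological class and $(q_-,q_C,q_+)$ become $(\pi_-,\pi_C,\pi_+)$. I will verify this on closed points and in families, using the universal property of the Quot functor. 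Once this is done, the proposition follows formally.
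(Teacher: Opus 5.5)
Your proposal is correct and follows the paper's own argument: the paper obtains this statement in one line by applying Proposition~\ref{f,a commutation proposition} to $F^{k}\text{Quot}(V)$ and transporting it to $\mathcal{H}_k$ via Remarks~\ref{remark on am} and~\ref{remark on bm}. Your extra checks spell out what the paper leaves to the reader: that $a_k$ and $b_k$ preserve $\mathcal{H}_k$, and that $\mathcal{Z}^{k,n}_{r(\vec e)}\cong F^{k+1}\text{Quot}_{\vec e''_k}(V)$ compatibly with $q_\pm$, $q_C$ and $\lambda_k$, which holds because equal colengths force $E_k=\dots=E_n$ and $F_k=\dots=F_n$, also in families.
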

\noindent Before we proceed, let us make note of a useful fact that we will use several times without explicit reference. Let $\gamma_{1}\in H^{*}(C^{m})$ and $\gamma_{2}\in H^{*}(C^{l})$ be cohomology classes for some $l,m>0$
and suppose that we have two linear operators that are given by correspondences:
\begin{multline*}
T_{1}:H^{*}(F^n\text{Quot}(V))\rightarrow H^{*}(F^n\text{Quot}(V)\times C^{m})\\
\text{ and }T_{2}:H^{*}(F^n\text{Quot}(V))\rightarrow H^{*}(F^n\text{Quot}(V)\times C^{l}).
\end{multline*}
Then it is not hard to observe that
\[
T_{1}(\gamma_{1})T_{2}(\gamma_{2})=\left(T_{1} T_{2}\right)(\gamma_{1}\boxtimes \gamma_{2}).
\]
Please refer to \eqref{operators coloured with cohomology classes T(gamma)} for a reminder on notation.\\
\begin{lem}\label{lemma in proof of linear independence}
     Let us fix $u>0$ and $k\in\{0,...,n-1\}$. Let $\tilde{\gamma}\in H^{*}{(C^{2u})}$, $\nu\in \mathcal{H}_{k}$ and $p_{1},...,p_{u},q_{1},...,q_{u}\in\{0,...,r-1\}$. Suppose $\psi$ is an operator that is a product of $2u$ terms, consisting of all $a_{k+1}^{(p_{j})}$ and $b_{k+1}^{(q_{j})}$ for $j=1,...,u$, in any order:
     \begin{enumerate}
         \item  If $\sum\limits_{j=1}^{u}(p_{j}+q_{j})< u(r-1)$, then
    \[
    \psi(\tilde{\gamma})\nu= 0.
    \]
    \item If $\sum\limits_{j=1}^{u}(p_{j}+q_{j})= u(r-1)$ and if we write $\psi=\psi_{1}\left(b_{k+1}^{(q_{j})} a_{k+1}^{(p_{i})} \right)\psi_{2}$ for some operators $\psi_{1}$ and $\psi_{2}$ and some $i,j\in\{1,...,u\}$, then
    \[
    \left (\psi_{1}\left(b_{k+1}^{(q_{j})} a_{k+1}^{(p_{i})} \right)\psi_{2}\right)(\tilde{\gamma})\nu=\begin{cases}
        \left (\psi_{1}\left(a_{k+1}^{(p_{i})}  b_{k+1}^{(q_{j})}\right)\psi_{2}\right)(\tilde{\gamma})\nu,\\\text{ if }p_{i}+q_{j}\neq r-1.\\\\
       \left (\psi_{1}\left( a_{k+1}^{(p_{i})}  b_{k+1}^{(q_{j})}\right)\psi_{2}\right)(\tilde{\gamma})\nu+\left (\psi_{1}\cdot\delta\cdot\psi_{2}\right)(\tilde{\gamma})\nu,\\\text{ if }p_{i}+q_{j}= r-1.
    \end{cases}.
    \]
     \end{enumerate}
\end{lem}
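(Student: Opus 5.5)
Part (2) is a direct application of the commutation relation of Proposition \ref{b,a commutation proposition}, and part (1) will follow by induction on $u$ using a degree count together with the fact that $b_{k+1}$ kills the relevant "bottom" of the filtration. Throughout, $\nu\in\mathcal{H}_{k}$, so the $(k+1)$-st through $n$-th entries of $\vec{d}$ are all equal to $d_k$. Each $a_{k+1}$ raises $d_{k+1},\dots,d_n$ by one, and each $b_{k+1}$ lowers them by one. Since there are $u$ of each, the product $\psi$ returns to the same graded piece. Every intermediate vector produced while applying $\psi$ to $\nu$ lies in $\mathcal{H}_{k+1}$, because entries $k+1,\dots,n$ stay equal. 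Hence Proposition \ref{b,a commutation proposition} applies at every stage.

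\textbf{Part (2).} Write $\psi=\psi_1(b^{(q_j)}_{k+1}a^{(p_i)}_{k+1})\psi_2$ and apply the relation of Proposition \ref{b,a commutation proposition} to the vector $\psi_2(\cdot)\nu\in\mathcal{H}_{k+1}$. The commutator $[b^{(q_j)},a^{(p_i)}]$ equals $\delta$ times two kinds of terms:
\begin{enumerate}
\item the scalar term $(-1)^{p_i}\delta_{p_i+q_j,r-1}$;
\item terms $a^{(s)}b^{(p_i+q_j-s-1)}$.
\end{enumerate}
Each term of the second kind replaces the pair $(p_i,q_j)$ by a pair with index sum $p_i+q_j-1$. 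The resulting product then has total index sum $u(r-1)-1<u(r-1)$, so part (1) kills it. A small check is needed: the upper index $p_i+q_j-s-1$ may exceed $r-1$, so part (1) should be stated and proved allowing $b$-superscripts in all of $\mathbb{Z}_{\ge0}$. The scalar term produces $\psi_1\delta\psi_2$ exactly when $p_i+q_j=r-1$, up to the sign $(-1)^{p_i}$, which I would match with the paper's sign normalisation of $\delta$. The sign bookkeeping, and the identification of $\delta$ after coloring by $\tilde\gamma$ via $T_1(\gamma_1)T_2(\gamma_2)=(T_1T_2)(\gamma_1\boxtimes\gamma_2)$, is routine.

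\textbf{Part (1).} I would induct on $u$, and for fixed $u$ on the number of inversions, meaning the number of $b$'s standing to the right of some $a$. If the rightmost factor of $\psi$ is a $b_{k+1}$, it acts on $\nu\in\mathcal{H}_{k}$, where $d_{k+1}=d_k$. The space $\mathcal{Z}^{k+1,n}$ then requires $F_{k+1}$ of colength $d_k-1$ inside $E_k$, which is impossible, so the term vanishes. Otherwise I commute the rightmost $b$ past the $a$ immediately to its right using Proposition \ref{b,a commutation proposition}. This yields three kinds of terms:
\begin{enumerate}
\item the reordered product, which has fewer inversions and the same index sum, so it vanishes by the inner induction;
\item terms $a^{(s)}b^{(\cdot)}$ with index sum lowered by one, which vanish by the same induction;
\item possibly a $\delta$-term, which occurs only when $p_i+q_j=r-1$.
\end{enumerate}
In the $\delta$-term, the pair is removed and the remaining $u-1$ pairs have total index sum $<u(r-1)-(r-1)=(u-1)(r-1)$, so it vanishes by the outer induction. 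For the base case $u=0$ the hypothesis is vacuous, and for $u=1$ the argument above reduces directly to $b$ acting on $\mathcal{H}_k$.

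\textbf{Main obstacle.} The delicate point is whether the second case of Proposition \ref{b,a commutation proposition}, which has terms $a^{(s)}$ with $s\ge i$ and lower $b$-index $i+j-s-1$, could have large $b$-index when $i+j\ge r$. In fact the lower index drops, so the index sum strictly decreases in every term. I would also double-check that $b$-operators with index beyond $r-1$ fit the same induction; this holds because the vanishing argument for $b$ on $\mathcal{H}_k$ does not depend on the superscript.
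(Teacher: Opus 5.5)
Your argument is the paper's proof written out in more detail. It uses the same vanishing $b_{k+1}^{(m)}\nu=0$ for $\nu\in\mathcal{H}_k$ (the target tuple would have $d_{k+1}<d_k$). Part (1) is the same double induction, on $u$ and on the number of $a_{k+1}$'s standing between $\nu$ and the rightmost $b_{k+1}$, driven by Proposition \ref{b,a commutation proposition}. Part (2) is likewise one application of that commutator plus part (1).

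Your proposed extension of part (1) to $b$-superscripts $\geq r$ is unnecessary. In both cases of Proposition \ref{b,a commutation proposition} the new index $i+j-s-1$ lies in $\{0,\dots,r-2\}$. The extension would also not be justified as you suggest: the vanishing on $\mathcal{H}_k$ does not depend on the superscript, but the commutation step does, and Proposition \ref{b,a commutation proposition} only covers superscripts $\le r-1$.

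The one step that does not go through is your reconciliation of the sign. The scalar term of $[b^{(q_j)}_{k+1},a^{(p_i)}_{k+1}]$ is $(-1)^{p_i}\delta$. No normalisation of $\delta$ can turn this into $+\delta$, because the sign depends on $p_i$.

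Concretely, take $n=1$, $r=2$, $\nu=|0\rangle$. Then $\mathcal{Z}^{1,1}_{(0)}=\mathbb{P}_C(V)$ and $c_1(\mathcal{G}^{1,1}_{(0)})=c_1(V)-\lambda_1$. Hence $b_1^{(0)}a_1^{(1)}|0\rangle=-\delta$, while $a_1^{(1)}b_1^{(0)}|0\rangle=0$.

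So your argument, and equally the paper's one-line proof, establishes part (2) with $(-1)^{p_i}\,\psi_1\cdot\delta\cdot\psi_2$ in place of $\psi_1\cdot\delta\cdot\psi_2$. The printed statement drops this sign. That is harmless, because the proof of Proposition \ref{Linear Independence} only uses the outcome up to $\pm$.
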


\begin{proof}
    \begin{enumerate}
        \item Let us make a couple of observations. First, note that for any $m\geq 0$, we have that $b_{k+1}^{(m)} \nu=0$.
        Also note that Proposition \ref{b,a commutation proposition} implies that the commutator $$\left[b_{k+1_{|_{\mathcal{H}_{k+1}}}}^{(j)},a_{k+1_{|_{\mathcal{H}_{k+1}}}}^{(i)}\right]$$ is equal to $\delta$ times, a linear combination of $$a_{k+1_{|_{\mathcal{H}_{k+1}}}}^{(i')} b_{k+1_{|_{\mathcal{H}_{k+1}}}}^{(j')}$$ with $i'+j'<i+j$, plus or minus $\delta_{i+j,r-1}$. Then part (1) of the lemma follows from a double induction; By inducting on $u$ and on the number of $a_{k+1}^{(*)}$ terms between $\nu$ and the rightmost $b_{k+1}^{(*)}$ in the expression for $\psi$ as a product of the $a_{k+1}^{(*)}$'s and $b_{k+1}^{(*)}$'s.
        \item Part (2) of the lemma follows from part (1) and Proposition \ref{b,a commutation proposition}.
        \end{enumerate}
\end{proof}

\noindent We are now ready to prove the following proposition:

\begin{prop}\label{Linear Independence}
    The set $\mathfrak{A}$ (cf. equation \ref{linearly independent subset definition}) is a linearly independent subset of $H^{*}(F^{n}\text{Quot}(V))$.
\end{prop}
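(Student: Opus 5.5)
We follow the strategy of Marian--Neguț: we build a pairing that is upper-triangular with respect to the order on $\mathfrak{A}$, using the operators $b_k^{(j)}$ as "dual" annihilators. The argument is an induction on the filtration $\mathcal{H}_0\subseteq\cdots\subseteq\mathcal{H}_n$. Each element of $\mathfrak{A}$ has the form $X_n\cdots X_1|0\rangle$, where $X_k$ is the (ordered) block of factors $a_k^{(*)}(\gamma_*)$. Since $a_k$ raises $d_k,\dots,d_n$ by one and leaves $d_1,\dots,d_{k-1}$ fixed, the block $X_k\cdots X_1|0\rangle$ lies in $\mathcal{H}_k$, and the degree $\vec d$ records the number of $a_k$-factors for each $k$. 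It therefore suffices to show the following: if $\nu_\alpha\in\mathcal{H}_{k}$ are linearly independent, and $\{P_\beta\}$ are ordered monomials in the $a_{k+1}^{(*)}(\gamma_*)$ of fixed length $u$, then the elements $P_\beta\nu_\alpha$ are linearly independent. Iterating this over $k=0,\dots,n-1$ proves the proposition.

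For this inductive step we fix $u$ and introduce a dual monomial. Given an ordered monomial $P=a_{k+1}^{(p_u)}(\gamma_{i_u})\cdots a_{k+1}^{(p_1)}(\gamma_{i_1})$, let $Q$ be the product of $b_{k+1}^{(r-1-p_j)}(\gamma_{i_j}^\vee)$, where $\gamma^\vee$ denotes the Poincaré dual basis element. We then compute $Q\,P'\,\nu$ for ordered $P'$ of length $u$ using Lemma~\ref{lemma in proof of linear independence}. If $\sum(p'_j+q_j)<u(r-1)$, part (1) of the lemma kills the term. If the sum equals $u(r-1)$, part (2) lets us move every $b$ past every $a$ until each $b$ hits $\nu$, where it vanishes because $b_{k+1}\mathcal{H}_k=0$. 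The only surviving terms come from the pairings $p'_i+q_j=r-1$, each contributing a factor $\delta$. In this way $QP'\nu$ becomes a sum over perfect matchings between the $a$'s and $b$'s with complementary indices, and each matching contributes $\pm\int_C\gamma_{i'}\gamma^\vee_{j}$ times $\nu$. Here we use that $T_1(\gamma_1)T_2(\gamma_2)=(T_1T_2)(\gamma_1\boxtimes\gamma_2)$ and that $\delta$ restricted through $\gamma\boxtimes\gamma'$ gives $\int\gamma\gamma'$.

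This gives a pairing matrix $\langle Q_\beta, P_{\beta'}\rangle$, which acts on $\nu$ as a scalar. Whenever the multiset of indices $p'$ differs from $r-1-q$ the entry is $0$ (or the term is killed by part (1)), and on matching multisets it is a permanent-type sum of $\int\gamma\gamma^\vee$, equal to $\pm$ (a product of multiplicities) times $\delta_{\beta\beta'}$. We order the monomials by their total index $\sum p_j$ so that the matrix is triangular with nonzero diagonal. A linear relation $\sum c_{\alpha\beta}P_\beta\nu_\alpha=0$ would then give, after applying $Q_{\beta_0}$ for the minimal $\beta_0$ with some $c_{\alpha\beta_0}\neq0$, a relation among the $\nu_\alpha$, which contradicts the induction hypothesis.

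The main obstacle is the bookkeeping of signs and of the super-commutativity coming from odd classes $\gamma_i\in H^1(C)$. The matching sum must not cancel: for repeated odd classes the ordered monomials behave like exterior powers, and one has to check that the diagonal entry is a nonzero signed factorial. A second obstacle is checking that the sub-leading terms in Proposition~\ref{b,a commutation proposition} really only produce lower-sum terms, which part (1) then discards. We would handle the first by computing with the dual-basis convention $\int\gamma_i\gamma_j^\vee=\delta_{ij}$ and tracking the Koszul signs explicitly in the case $u=1$ first, and then proceed by induction on $u$.
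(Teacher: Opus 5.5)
Your strategy is the paper's: induct along $\mathcal{H}_0\subseteq\cdots\subseteq\mathcal{H}_n$, pair a monomial in the $a_{k+1}^{(p)}$ against the lowering monomial $\prod_j b_{k+1}^{(r-1-p_j)}$, and use Lemma \ref{lemma in proof of linear independence} with $b_{k+1}\mathcal{H}_k=0$ to reduce to intersection numbers on $C^u$. Your dual classes $\gamma^\vee$ play the role of the paper's arbitrary $\phi$ together with non-degeneracy of the pairing on $H^*(C^l)$. Two points need correcting, however.

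\emph{The extraction step runs in the wrong direction.} For $Q$ dual to $P$, the lemma only controls $QP'\nu$ when $\sum_j p'_j\le\sum_j p_j$, since only then is the total at most $u(r-1)$. For $P'$ of larger total index, the correction terms $a^{(s)}b^{(\cdot)}$ in Proposition \ref{b,a commutation proposition} need not die. For example, take $r=2$. The commutators $[b^{(0)},a^{(1)}]=-\delta(1+a^{(0)}b^{(0)})$ and $[b^{(1)},a^{(0)}]=\delta$ show that $b^{(1)}_{k+1}b^{(0)}_{k+1}a^{(1)}_{k+1}a^{(1)}_{k+1}\nu$ is $\pm$ the class of the small diagonal of $C^4$ times $\nu$, which is nonzero. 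Yet $b^{(1)}b^{(0)}$ is the dual of $a^{(0)}a^{(1)}$. So your claim that the entry vanishes whenever the multisets differ is false, and the matrix is only triangular. You must apply $Q_{\beta_0}$ for $\beta_0$ of \emph{maximal} $\sum p_j$ among the monomials with nonzero coefficient. This is exactly why the paper isolates the leading partition (largest sum, then lexicographically largest). With a minimal $\beta_0$, the higher monomials contribute terms you cannot control.

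\emph{The check you plan for odd classes cannot succeed.} Take a block of $m\ge 2$ identical factors $a^{(p)}_{k+1}(\gamma)$ with $\gamma\in H^1(C)$. The $m!$ perfect matchings contribute with the signs of the corresponding permutations, and they cancel. This reflects a genuine vanishing: by Theorem \ref{a's commute intro} and the Koszul sign of the swap on $C\times C$, one has $a^{(p)}_{k}(\gamma)a^{(p)}_{k}(\gamma)=0$ for odd $\gamma$. So those elements of $\mathfrak{A}$ are zero.

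The set $\mathfrak{A}$ therefore has to be read with the super-symmetric convention, in which odd factors are not repeated. This is what the factor $(1+\cdots)^{2g}$ in the Poincaré series encodes. The paper's proof also passes over this point when it writes the outcome as a single $\sum\pm c_\alpha(\int\gamma_\alpha\phi)\beta_\alpha$. Under that convention, the diagonal entries are $\pm\prod m!$ taken over repeated even factors. With this convention and $\beta_0$ chosen maximal, your argument goes through and coincides with the paper's.
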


\begin{proof}
We proceed by induction on $k$ to show that 
\[
\mathfrak{A}\,\cap \mathcal{H}_{k}
\]
is a linearly independent subset of $H^{*}(F^{n}\text{Quot}(V))$ for all $k=0,1,...,n$. The case $k=0$ is clear. assuming that the statement holds for some $k\in\{0,...,n-1\}$, let us prove that 
\[
\mathfrak{A}\,\cap \mathcal{H}_{k+1}
\]
is a linearly independent subset of $H^{*}(F^{n}\text{Quot}(V))$.\\\\
To this end, let us consider a non-trivial linear relation of the elements in \\
$\mathfrak{A}\,\cap \mathcal{H}_{k+1}$ and arrive at a contradiction. Indeed, we can in particular consider a non-trivial linear relation of elements of $\mathfrak{A}\,\cap H^{*}(F^{n}\text{Quot}_{\vec{d}}\,(V))$ for some $\vec{d}$, such that $H^{*}(F^{n}\text{Quot}_{\vec{d}}\,(V))\subseteq \mathcal{H}_{k+1}$. Observe that every non-zero term of such a linear relation is of the following form for some fixed $l\geq 0$ depending on $\vec{d}$:
\[
c\cdot a_{k+1}^{(j_{1})}(\gamma_{1})a_{k+1}^{(j_{2})}(\gamma_{2})...a_{k+1}^{(j_{l})}(\gamma_{l})\beta,
\]
where $c\in\mathbb{Q},\,j_{1}\geq j_{2}...\geq j_{l}$ and $\beta\in \mathfrak{A}\,\cap \mathcal{H}_{k}$. Since the operators $a_{j}^{(v)}$ are given by correspondences, the above term is equal to:
\[
c\cdot a_{k+1}^{(j_{1})}a_{k+1}^{(j_{2})}...a_{k+1}^{(j_{l})}(\gamma_{1}\boxtimes...\boxtimes \gamma_{l})\beta,
\]
where we make the convention that $a_{k+1}^{(j_{i})}$ contributes to the $i^{th}$ copy of $C$ in $C^{l}$. Let us then re-write any non-trivial linear relation of elements in $\mathfrak{A}\,\cap H^{*}(F^{n}\text{Quot}_{\vec{d}}\,(V))$ as:
\begin{multline}\label{linear relation in proof of linear independence}
\sum\limits_{\beta_{\alpha}\in\,\mathfrak{A}\,\cap \mathcal{H}_{k} } c_{\alpha}\cdot a_{k+1}^{(i_{1})}a_{k+1}^{(i_{2})}...a_{k+1}^{(i_{l})}(\gamma_{\alpha})\beta_{\alpha}\\= \sum\limits_{\beta_{\alpha}\in\,\mathfrak{A}\,\cap \mathcal{H}_{k}}\left(\sum\limits_{\lambda=r-1\geq i_{1}'\geq ...\geq i_{l}'\geq 0} c_{\alpha}^{\lambda}\cdot a_{k+1}^{(i'_{1})}a_{k+1}^{(i'_{2})}...a_{k+1}^{(i'_{l})}(\gamma')\beta_{\alpha}\right),
\end{multline}
such that the left hand side of the above equation is a sum over only elements of $\mathfrak{A}\,\cap \mathcal{H}_{k}$ and the right hand side is a sum over $\mathfrak{A}\,\cap \mathcal{H}_{k}$ as well as $l-$tuples $r-1\geq i_{1}'\geq ...\geq i_{l}'\geq 0$; With either $i_1+...+i_{l}>i'_1+...+i'_{l}$ or $i_1+...+i_{l}=i'_1+...+i'_{l}$ but the partition $i_{1}\geq ...\geq i_{l}$ is greater than the partition $i_{1}'\geq ...\geq i_{l}'$, in lexicographic order.\\\\
Let $\phi\in H^{*}(C^{l})$, and consider the operator
\begin{equation}\label{lowering operator in proof of linear independence}
    b_{k+1}^{(r-i_{l}-1)}...b_{k+1}^{(r-i_{1}-1)}(\phi).
\end{equation}
Then using Lemma \ref{lemma in proof of linear independence}, one observes that
    the operator \ref{lowering operator in proof of linear independence} annihilates the R.H.S. of equation \ref{linear relation in proof of linear independence} and that the operator \ref{lowering operator in proof of linear independence} applied to the L.H.S. of equation \ref{linear relation in proof of linear independence} is a linear combination of the form:
\begin{equation}\label{expression in claim}
    \sum\limits_{\beta_{\alpha}\in\,\mathfrak{A}\,\cap \mathcal{H}_{k} } \pm \,c_{\alpha}\cdot\left(\int\gamma_{\alpha}\cdot\phi\right)\cdot\beta_{\alpha}.
\end{equation}

\noindent Then due to the non-degeneracy of the intersection pairing on $H^{*}(C^{l})$, with a suitable choice of $\phi$, we can ensure that the expression \ref{expression in claim} is non-zero. However, this yields a contradiction since the $\beta_{\alpha}\in \mathfrak{A}\,\cap \mathcal{H}_{k}$ are linearly independent by the induction hypothesis. 
\end{proof}

\subsection{Step II: Tautological generation of cohomology}
In this subsection, we will show that the cohomology of $F^{n}\text{Quot}_{\vec{d}}\,(V)$ is generated by tautological classes. Let us first explain what this means.\\\\
Recall that $F^{n}\text{Quot}_{\vec{d}}\,(V)\times C$ carries a universal flag (cf. \eqref{Universal flag}):
\[
\mathcal{E}_{n}\subseteq \mathcal{E}_{n-1}\subseteq...\mathcal{E}_1\subseteq \mathcal{E}_{0}=\pi^{*}(V)
\]
and has natural projection maps $\rho$ and $\pi$ to $F^{n}\text{Quot}_{\vec{d}}\,(V)$ and $C$ respectively. Tautological classes on $F^{n}\text{Quot}_{\vec{d}}\,(V)$ are classes of the form 
\begin{equation}\label{Tautological class definition}
\rho_{*}(c_{k}(\mathcal{E}_{i})\cdot \pi^{*}(\gamma)),
\end{equation}
with $i\in \{1,...,n\},\,k\in\{0,...,r\}, \gamma\in H^{*}(C)$. Note that in-terms of the Yangian action, the class \ref{Tautological class definition} can be written as:
\[
m_{i}^{(k)}(\gamma)\left[F^{n}\text{Quot}_{\vec{d}}\,(V)\right].
\]
 Tautological generation of the cohomology of $F^{n}\text{Quot}_{\vec{d}}\,(V)$ means that the classes of the form \ref{Tautological class definition} generate $H^{*}(F^{n}\text{Quot}_{\vec{d}}\,(V))$ as a ring. To show this, we will make use of the following general fact:
 \begin{lem}\label{lemma on kunneth components of diagonal}
     Let $X\rightarrow S$ be a smooth morphism of smooth projective varieties. Let $\Delta$ denote the diagonal of $X\times_{S}X$ and let $p_{1},p_{2}:X\times_{S}X\rightarrow X$ be the projection to the two factors. If we can write
     \[
     [\Delta]=\sum_{i=1}^{l}p_{1}^{*}(a_{i})\cdot p_{2}^{*}(b_i),
     \]
     for some $l>0$ and cohomology classes $a_{i},b_{i}\in H^{*}(X)$, then $H^{*}(X)$ is generated as a $H^{*}(S)$ module by the classes $a_{i}$.
 \end{lem}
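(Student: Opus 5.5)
We prove Lemma \ref{lemma on kunneth components of diagonal} by the standard relative "diagonal decomposition" argument, realising the identity map of $H^{*}(X)$ as the correspondence given by $[\Delta]$ on $X\times_{S}X$. Write $f:X\rightarrow S$ for the given smooth morphism. Since $f$ is smooth, $X\times_{S}X$ is a smooth projective variety, and $p_{1},p_{2}$ are smooth and proper. The diagonal $\Delta\cong X$ is a section of $p_{1}$ (and of $p_{2}$), so $p_{2}\circ\iota_{\Delta}=\mathrm{id}_{X}=p_{1}\circ\iota_{\Delta}$, where $\iota_{\Delta}:X\hookrightarrow X\times_{S}X$ is the diagonal embedding.

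The key identity is that for every $x\in H^{*}(X)$,
\[
x={p_{1}}_{*}\left([\Delta]\cdot p_{2}^{*}(x)\right).
\]
This follows from the projection formula:
\[
{p_{1}}_{*}\left(\iota_{\Delta*}(1)\cdot p_{2}^{*}x\right)={p_{1}}_{*}\iota_{\Delta*}\left(\iota_{\Delta}^{*}p_{2}^{*}x\right)=(p_{1}\circ\iota_{\Delta})_{*}(x)=x.
\]
Substituting the assumed decomposition of $[\Delta]$ and applying the projection formula for $p_{1}$ gives
\[
x=\sum_{i=1}^{l}a_{i}\cdot {p_{1}}_{*}p_{2}^{*}(b_{i}\cdot x).
\]

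It remains to identify ${p_{1}}_{*}p_{2}^{*}(y)$, for $y\in H^{*}(X)$, with a class pulled back from $S$. Consider the fibre square formed by $p_{1},p_{2},f,f$. Because $f$ is smooth (hence flat) and proper, proper–flat base change in cohomology gives
\[
{p_{1}}_{*}p_{2}^{*}(y)=f^{*}f_{*}(y).
\]
Hence $x=\sum_{i}a_{i}\cdot f^{*}\left(f_{*}(b_{i}x)\right)$. This exhibits $x$ as an $H^{*}(S)$-linear combination of the $a_{i}$, where $H^{*}(S)$ acts on $H^{*}(X)$ via $f^{*}$ and cup product, which is the claim.

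The only point requiring care is the base change identity ${p_{1}}_{*}p_{2}^{*}=f^{*}f_{*}$. The cleanest justification is on the level of correspondences: $p_{1}\times p_{2}$ realises $X\times_{S}X$ as the regular (l.c.i.) pullback of the diagonal of $S\times S$ under $f\times f:X\times X\rightarrow S\times S$. This is of expected codimension $\dim S$ since $f$ is smooth, so Proposition \ref{composition of correspondences} (equivalently Fulton's Theorem 6.2) applies. Alternatively, for a smooth proper $f$ the base change $f^{*}f_{*}={p_{1}}_{*}p_{2}^{*}$ holds by the topological base change theorem for the locally trivial (Ehresmann) fibration $f$. Signs from odd-degree classes are harmless, since they only affect the coefficients, which remain in $f^{*}H^{*}(S)$.
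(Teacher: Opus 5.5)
Your proof is correct. The paper gives no proof of this lemma; it simply quotes it as a ``general fact'' in the proof of Proposition \ref{Tautological generation}. Your argument is the standard one that fills it in: you realise $\mathrm{id}_{H^*(X)}$ as the correspondence $[\Delta]$ on $X\times_S X$, expand with the projection formula to get $x=\sum_i a_i\cdot {p_1}_*p_2^*(b_i x)$, and then use the base change ${p_1}_*p_2^*=f^*f_*$ for the smooth proper $f$.

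One small point of precision concerns the route through Proposition \ref{composition of correspondences}. The identity $[X\times_S X]=(f\times f)^*[\Delta_S]$ alone does not finish the job: you still need the (easy, K\"unneth) check that this class induces $f^*f_*$. The cleaner way is to treat $f^*f_*$ directly as the composition of the graph correspondence $[\Gamma_f]$ with its transpose. Their intersection in $X\times S\times X$ is $X\times_S X$, which has the expected codimension $2\dim S$. Your alternative justification via Ehresmann and integration along the fibre is already enough by itself.
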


 \noindent Now let us prove the tautological generation of the cohomology of $F^{n}\text{Quot}_{\vec{d}}\,(V)$, we will use definitions and notations from Subsection \ref{subsection on diagonal embeddings}.

\begin{prop}\label{Tautological generation}
    The cohomology of $F^{n}\text{Quot}_{\vec{d}}\,(V)$ is generated as a ring by classes of the form \ref{Tautological class definition}.
\end{prop}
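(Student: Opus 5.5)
We argue by induction on $n$. For $n=1$ the space $F^{1}\text{Quot}_{d_1}(V)=\text{Quot}_{d_1}(V)$ and the statement is the tautological generation result for Quot schemes on curves (which can also be rederived by the argument below, with $S$ a point). For the inductive step we use the forgetful map $\varepsilon:F^{n}\text{Quot}_{\vec{d}}(V)\rightarrow F^{n-1}\text{Quot}_{\vec{d}_{\downarrow}}(V)$ of \eqref{forgetful map FnQuot to Fn-1}. It is a smooth morphism between smooth projective varieties whose fibres are Quot schemes, and its self-fibre product is $\mathfrak{P}_{\vec{d}}$ by \eqref{self-fibre product}. We apply Lemma \ref{lemma on kunneth components of diagonal} with $X=F^{n}\text{Quot}_{\vec{d}}(V)$ and $S=F^{n-1}\text{Quot}_{\vec{d}_{\downarrow}}(V)$. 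It gives that $H^{*}(X)$ is generated as an $H^{*}(S)$-module by the classes $a_i$ appearing in a Künneth-type decomposition of $[\Delta]\in H^{*}(\mathfrak{P}_{\vec{d}})$. By induction, $H^{*}(S)$ is generated by tautological classes of $\mathcal{E}_1,\dots,\mathcal{E}_{n-1}$, and these pull back under $\varepsilon$ to tautological classes on $X$. It therefore suffices to show that the $a_i$ can be chosen to be polynomials in tautological classes of $X$.

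To produce such a decomposition we use Proposition \ref{proposition to be used in tautological generation}. The diagonal $\Delta\subseteq\mathfrak{P}_{\vec{d}}$ is the zero locus of a regular section of the vector bundle $W:=\rho_{*}\mathcal{H}om(\mathcal{E}_{n},\mathscr{F}'_{n})$ of rank $r(d_n-d_{n-1})$, which equals the codimension of $\Delta$. Hence $[\Delta]=c_{\text{top}}(W)$. Since the higher direct images vanish, Grothendieck–Riemann–Roch expresses the Chern character of $W$ as
\[
\text{ch}(W)=\rho_{*}\left(\text{ch}(\mathcal{E}_{n}^{\vee})\cdot\left(\text{ch}(\mathcal{E}_{n-1})-\text{ch}(\mathcal{E}'_{n})\right)\cdot\text{td}(C)\right).
\]
Here $\mathcal{E}_n$ is pulled back via $p_1$, $\mathcal{E}'_n$ via $p_2$, and $\mathcal{E}_{n-1}$ via either projection. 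We then apply the Künneth decomposition of the diagonal class of $C\times C$, or equivalently expand each $\text{ch}_k$ of a universal sheaf on $X\times C$ as $\sum_j \alpha_{k,j}\otimes\gamma_j$ with $\alpha_{k,j}=\rho_*(\text{ch}_k\cdot\pi^*\gamma_j^\vee)$, which are tautological. This writes $\text{ch}(W)$, and hence $c_{\text{top}}(W)$, as a polynomial in classes $p_1^{*}(\text{tautological})$ and $p_2^{*}(\text{tautological})$.

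Multiplying out, we obtain $[\Delta]=\sum_i p_1^{*}(a_i)\,p_2^{*}(b_i)$ with each $a_i$ a product of tautological classes on $X$. The classes coming from $\mathcal{E}_{n-1}$ are pulled back from $S$ and can be absorbed on either side. Lemma \ref{lemma on kunneth components of diagonal} combined with the inductive hypothesis then finishes the proof. Since the classes $\rho_*(\text{ch}_k(\mathcal{E}_i)\pi^*\gamma)$ and $\rho_*(c_k(\mathcal{E}_i)\pi^*\gamma)$ generate the same subring (Newton identities together with the Künneth decomposition), the result holds with Chern classes as in \eqref{Tautological class definition}.

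The main obstacle is this last bookkeeping step. We must check that the Künneth splitting of pushforwards along $C$ genuinely yields products of classes of the form \eqref{Tautological class definition}, given that a class on $X\times C$ is first decomposed into components before being pushed forward, and that the conversion between Chern characters and Chern classes stays within the tautological subring. I would handle this by first proving that every Künneth component of any polynomial in $c_k(\mathcal{E}_i)$ on $X\times C$ lies in the tautological subring, and only then applying the GRR formula above.
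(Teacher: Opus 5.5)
Your proposal is correct and follows the paper's proof essentially verbatim. The ingredients are the same: induction on $n$ via the smooth forgetful map $\varepsilon$; Lemma \ref{lemma on kunneth components of diagonal} applied to $\mathfrak{P}_{\vec{d}}=X\times_S X$; the identity $[\Delta]=c_{\text{top}}(\rho_*\mathcal{H}om(\mathcal{E}_n,\mathscr{F}'_n))$ from Proposition \ref{proposition to be used in tautological generation}; and Grothendieck--Riemann--Roch, using the vanishing of higher direct images. The differences are cosmetic: the paper starts the induction at the trivial case $n=0$, so your separate $n=1$ base case is not needed, and it only says ``a standard GRR argument'' where you correctly spell out the Künneth and Chern-character bookkeeping.
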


\begin{proof}
To show that the cohomology of $F^{n}\text{Quot}_{\vec{d}}\,(V)$ is tautologically generated for all $n\geq 0$ and all $n-$tuples $\vec{d}$, let us induct on $n$. The case $n=0$ is trivial so let us assume that tautological generation holds for $H^{*}(F^{n-1}\text{Quot}_{\vec{f}}\,(V))$ for all $(n-1)-$ tuples of non-negative integers $\vec{f}$.\\\\
Let $\vec{d}_{\downarrow}$ be the $(n-1)-$ tuple $(d_{1},...,d_{n-1}).$ Recall the map \ref{forgetful map FnQuot to Fn-1}: $$\varepsilon:F^{n}\text{Quot}_{{\vec{d}}}\,(V)\rightarrow F^{n-1}\text{Quot}_{{\vec{d}_{\downarrow}}}(V),$$
and the moduli space \ref{Moduli space : mathfrak P}, that is the fibre product
\[
\mathfrak{P}_{\vec{d}}=F^{n}\text{Quot}_{{\vec{d}}}\,(V)\times_{F^{n-1}\text{Quot}_{{\vec{d}_{\downarrow}}}(V)} F^{n}\text{Quot}_{{\vec{d}}}\,(V)
\]
of $\varepsilon$ with itself. Let $p_{1},p_{2}:\mathfrak{P}_{\vec{d}}\rightarrow F^{n}\text{Quot}_{{\vec{d}}}\,(V)$ be the projection maps to the two factors. In order to prove that the cohomology of $F^{n}\text{Quot}_{{\vec{d}}}\,(V)$ is tautologically generated, it follows from the induction hypothesis and Lemma \ref{lemma on kunneth components of diagonal} that it is sufficient to show that 
\[
[\Delta]=\sum_{i=1}^{l}p_{1}^{*}(a_{i})\cdot p_{2}^{*}(b_i)
\]
and that the $a_{i}$'s or the $b_{i}$'s are of the form \ref{Tautological class definition}. Let $\rho:\mathfrak{P}_{\vec{d}}\times C\rightarrow \mathfrak{P}_{\vec{d}}$ be the natural projection and recall the vector bundle
\[
\rho_{*}\mathcal{H}om(\mathcal{E}_{n},\mathscr{F}^{'}_{n})
\]
on $\mathfrak{P}_{\vec{d}}$ (cf. Proposition \ref{proposition to be used in tautological generation}).
Also, Proposition \ref{proposition to be used in tautological generation} implies that
\[
[\Delta] = c_{r(d_{n}-d_{n-1})}\left(\rho_{*}\mathcal{H}om\left(\mathcal{E}_{n},\mathscr{F}^{'}_{n}\right)\right)
\]
and since $R^{i}\rho_{*}\mathcal{H}om(\mathcal{E}_{n},\mathscr{F}^{'}_{n})=0$ for $i>0$ (cf. Proposition \ref{proposition to be used in tautological generation}), we can conclude by a standard Grothendieck-Riemann-Roch argument applied to the coherent sheaf $\mathcal{H}om(\mathcal{E}_{n},\mathscr{F}^{'}_{n})$ and the morphism $\rho$. 
\end{proof}

\subsection{Step III: Span}\label{subsection on span}
In this subsection we will show that the set of elements 
\[
\mathfrak{S}:=\left\{a_{k_l}^{(v_l)}(\gamma_{i_l})...a_{k_2}^{(v_2)}(\gamma_{i_2})a_{k_1}^{(v_1)}(\gamma_{i_1})|0\rangle\mid l\geq 0 \text{ and } n\geq k_{l}\geq k_{l-1}\geq...\geq k_{1}\geq 1 \right \},
\]
spans $H^{*}(F^{n}\text{Quot}(V)).$ It is clear that the set $\mathfrak{S}$ contains the linearly independent set $\mathfrak{A}$ (cf. the set \ref{linearly independent subset definition}). Note that Theorem \ref{a's commute intro}, which we would prove in the next subsection, would then imply that $\mathfrak{A}=\mathfrak{S}$ and that this set gives a basis for $H^{*}(F^{n}\text{Quot}(V))$.\\\\
Let us make note of a useful proposition that follows directly from Proposition \ref{multiplication by chern classes formula} and Remarks \ref{remark on am} and \ref{remark on bm}:
\begin{prop}\label{multiplication by chern classes prop am bm version}
    Let $k>0$ and $j\in \{0,...,n\}$, then the operator\\
    $\mathcal{H}_{j}\rightarrow H^{*}(F^{n}\text{Quot}_{\vec{d}}\,(V)\times C)$, given by pull-back and multiplication by the class 
\[
c_{k}\left((\mathcal{E}_{j-1}-\mathcal{E}_{j})\otimes \omega_{C}^{-1}\right)
\]
is equal to the operator:
\[
\sum_{i=0}^{r-1}\left[a_{j_{|_{\mathcal{H}_{j}}}}^{(i)}b_{j_{|_{\mathcal{H}_{j}}}}^{(r+k-i-2)}\right]_{|_{\Delta}}(-1)^{i-k}.
\]
\end{prop}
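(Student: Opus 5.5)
This statement is the $\mathcal{H}_{j}$-restricted version of Proposition \ref{multiplication by chern classes formula}, which proves the same formula for $j=n$ on all of $H^{*}(F^{n}\text{Quot}(V))$. The plan is to reduce to that case through the identification \eqref{identification of Hm}, $\mathcal{H}_{j}\cong H^{*}(F^{j}\text{Quot}(V))$, induced by $F^{j}\text{Quot}_{\vec e}(V)\cong F^{n}\text{Quot}_{r(\vec e)}(V)$.

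First, apply Proposition \ref{multiplication by chern classes formula} with $n$ replaced by $j$. On $H^{*}(F^{j}\text{Quot}(V))$ it says that pulling back and multiplying by $c_k((\mathcal{E}_{j-1}-\mathcal{E}_j)\otimes\omega_C^{-1})$ equals
\[
\sum_{i=0}^{r-1}\left[a_{j}^{(i)}f_{j}^{(r+k-i-2)}\right]_{|_{\Delta}}(-1)^{i-k}.
\]
Second, transport this identity across the identification. Under $F^{j}\text{Quot}_{\vec e}(V)\cong F^{n}\text{Quot}_{r(\vec e)}(V)$, the universal sheaves $\mathcal{E}_{j-1},\mathcal{E}_{j}$ on the left match the sheaves with the same indices on the right. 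The remaining sheaves $\mathcal{E}_{j+1},\dots,\mathcal{E}_n$ on the right are all equal to $\mathcal{E}_j$. So the multiplication operator on $\mathcal{H}_j$ corresponds exactly to the multiplication operator on $H^{*}(F^{j}\text{Quot}(V))$, and the left-hand sides agree.

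Third, match the right-hand sides using Remarks \ref{remark on am} and \ref{remark on bm}. These say that $a_j^{(i)}$ on $H^{*}(F^{j}\text{Quot}(V))$ is intertwined with $a_j^{(i)}|_{\mathcal{H}_j}$, and that $f_j^{(\cdot)}$ on $H^{*}(F^{j}\text{Quot}(V))$ equals $b_j^{(\cdot)}|_{\mathcal{H}_j}$. Two things need checking here. The composite $a_jb_j$ must stay inside the image of $\mathcal{H}_j$: $b_j$ lowers $d_j,\dots,d_n$ simultaneously, so it preserves the pattern $r(\vec e)$, and $a_j$ raises $d_j,\dots,d_n$ together, so it preserves it as well. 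The restriction to the diagonal $\Delta\subset C\times C$ must also commute with the identification, which is immediate since the identification only acts on the first factor.

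The step I expect to need the most care is the geometric content of Remark \ref{remark on bm}. One has to check that when $d_j=\dots=d_n$, the correspondence $\mathcal{Z}^{j,n}$ carrying the line bundle $\mathscr{L}_j$ really is identified with the hyperquot correspondence $F^{j+1}\text{Quot}$ defining $f_j$. The point is that if $E_j=\dots=E_n$, then forcing $F_j=\dots=F_n$ must give an isomorphism of schemes, not merely a bijection on points. If that identification holds scheme-theoretically, which the paper asserts in the remarks, the proposition is just a formal transport of Proposition \ref{multiplication by chern classes formula}.
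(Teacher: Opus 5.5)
Your proposal is correct and is essentially the paper's own argument: the paper just notes that this proposition follows directly from Proposition~\ref{multiplication by chern classes formula} (applied to $F^{j}\text{Quot}(V)$) together with Remarks~\ref{remark on am} and~\ref{remark on bm}, transported along the identification $\mathcal{H}_{j}\cong H^{*}(F^{j}\text{Quot}(V))$. The checks you flag are exactly what the paper leaves implicit: that $a_j$ and $b_j$ preserve the pattern $r(\vec e)$, and that for $d_j=\dots=d_n$ the space $\mathcal{Z}^{j,n}_{\vec d}$ is scheme-theoretically the hyperquot correspondence defining $f_j$ (because nested subsheaves of equal colength in a flat family coincide); note also that the statement is only meaningful for $j\geq 1$.
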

\noindent Finally, we now prove the following proposition, which immediately implies that the set $\mathfrak{S}$ spans $H^{*}(F^{n}\text{Quot}(V))$:

\begin{prop}\label{Span}
    Given $j=0,...,n-1$, the action of the operators $a_{j+1}^{(v)}(\gamma_{i})$, for $v=0,...,r-1$ and $i=1,...,2g+2$, on $\mathcal{H}_{j}$ generates $\mathcal{H}_{j+1}$.
\end{prop}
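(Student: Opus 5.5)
We mimic the span argument of \cite{MaNe1} (Step III there), working inside $\mathcal{H}_{j+1}\cong H^{*}(F^{j+1}\text{Quot}(V))$ by Remarks \ref{remark on am} and \ref{remark on bm}. Write $\mathcal{M}\subseteq\mathcal{H}_{j+1}$ for the span of all elements $a_{j+1}^{(v_l)}(\gamma_{i_l})\cdots a_{j+1}^{(v_1)}(\gamma_{i_1})\nu$ with $\nu\in\mathcal{H}_{j}$. We aim to show $\mathcal{M}=\mathcal{H}_{j+1}$ by induction on $d_{j+1}-d_{j}$, i.e.\ on the last entry of the $(j+1)$-tuple relative to the previous one. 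The base case $d_{j+1}=d_{j}$ is exactly $\mathcal{H}_{j}$ under the identification \eqref{identification of hyperquot schemes}.

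By Proposition \ref{Tautological generation} it suffices to show that $\mathcal{M}$ contains the fundamental classes $[F^{j+1}\text{Quot}_{\vec{d}}(V)]$ and is stable under multiplication by all tautological classes $\rho_{*}(c_{k}(\mathcal{E}_{i})\pi^{*}\gamma)$. For $i\le j$, multiplication by $c_{k}(\mathcal{E}_{i})$-type classes commutes with $a_{j+1}^{(v)}$ up to terms again in $\mathcal{M}$. This is because on $\mathcal{Z}^{j+1,j+1}$ the bundles $\mathcal{E}_{i}$ for $i\le j$ agree on both sides of the correspondence, so these classes commute with $a_{j+1}$ outright. Since $\mathcal{H}_{j}$ is closed under such multiplication (as it is the full cohomology of $F^{j}\text{Quot}$, pulled back), these factors can be pushed through to $\nu$. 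For $\mathcal{E}_{j+1}$ we instead use the relation $c(\mathcal{E}_{j+1})=c(\mathcal{E}_{j})\cdot c(\mathcal{E}_{j+1}-\mathcal{E}_{j})$ and reduce to multiplication by $c_{k}((\mathcal{E}_{j}-\mathcal{E}_{j+1})\otimes\omega_C^{-1})$. The twist by $\omega_C$ only reshuffles Chern classes with $K_C$-factors, which is a triangular change.

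The key input is Proposition \ref{multiplication by chern classes prop am bm version}: multiplication by $c_{k}((\mathcal{E}_{j}-\mathcal{E}_{j+1})\otimes\omega_C^{-1})$ equals $\sum_i \pm[a_{j+1}^{(i)}b_{j+1}^{(r+k-i-2)}]_{|\Delta}$. Apply it to an element $x=a_{j+1}^{(v_l)}\cdots a_{j+1}^{(v_1)}(\gamma)\nu\in\mathcal{M}$. We commute $b_{j+1}$ to the right using Proposition \ref{b,a commutation proposition}, where each commutator produces $\delta\cdot(\pm1$ or $a b$ with smaller total index$)$. Then we use $b_{j+1}\nu=0$ for $\nu\in\mathcal{H}_j$. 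Everything produced is a polynomial in the $a_{j+1}$'s, possibly with diagonal restrictions (which just change the cohomology class inserted on $C^{l}$), applied to $\nu$, and so lies in $\mathcal{M}$. For the fundamental classes, we note $a_{j+1}^{(r-1)}(1)$ applied to $[F^{j+1}\text{Quot}_{\vec{d}}]$ (or $|\text{pt}\rangle$-type class) gives a nonzero multiple of the next fundamental class up to lower terms. Alternatively, we can avoid this by using the first step of Lemma \ref{lemma on kunneth components of diagonal} more carefully, namely that $H^{*}$ of $F^{j+1}\text{Quot}_{\vec{d}}$ is generated as an $H^{*}(F^{j}\text{Quot}_{\vec{d}_\downarrow})$-module by polynomials in the Chern classes of $\mathcal{E}_{j+1}$. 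Either way, the base of the induction is realized via the diagonal degree count, with $\mathcal{Z}^{j+1,j+1}$ mapping generically finitely.

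The main obstacle is the bookkeeping in the commutation step. We must show that the terms $\delta\cdot a b$ from Proposition \ref{b,a commutation proposition} terminate, which we get by induction on the total index $i+j$ together with the number of $a$'s, exactly as in Lemma \ref{lemma in proof of linear independence}(1). We must also check that the degree shift in $\vec{d}$ is consistent, since $b$ lowers the last entries and $a$ raises them. If this becomes unwieldy, we would fall back on a dimension count: combining Step I with the Poincaré polynomial from \cite{MoRi} shows $\dim\mathcal{M}\ge\dim\mathcal{H}_{j+1}$ degree by degree, which forces equality.
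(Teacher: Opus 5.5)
Your main route is sound, and it differs from the paper's proof in one step.

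\textbf{Shared ingredients.} Both arguments use tautological generation (Proposition \ref{Tautological generation}). Both move the multiplication operators $m_i$ with $i\le j$ through to $\mathcal{H}_j$; your direct geometric reason, that $\mathcal{E}_i$ agrees on both sides of the $a_{j+1}$ correspondence for $i\le j$, is correct. Both replace multiplication by $c_k((\mathcal{E}_j-\mathcal{E}_{j+1})\otimes\omega_C^{-1})$ with $\sum_i\pm[a_{j+1}^{(i)}b_{j+1}^{(r+k-i-2)}]_{|_{\Delta}}$ via Proposition \ref{multiplication by chern classes prop am bm version}.

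\textbf{Where you differ: disposing of $b_{j+1}$.} The paper never commutes $b$ past $a$. In $a_{j+1}b_{j+1}(\cdots)$ applied to a class of degree $\vec d_{j+1,n}$, the piece $b_{j+1}(\cdots)$ already lies in $H^{*}(F^{n}\text{Quot}_{\vec d}(V))$, where $d_{j+1}-d_j$ is one smaller. The induction hypothesis puts it in the span, and the leftmost $a_{j+1}$ keeps it there.

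You instead normal-order with Proposition \ref{b,a commutation proposition}. Each correction term either deletes the $b$ (the $\delta_{i+j,r-1}$ term) or moves it one slot to the right. So a plain induction on the number of $a$'s to the right of $b$, together with $b_{j+1}\nu=0$, shows that $b_{j+1}a_{j+1}\cdots a_{j+1}\nu$ is a combination of products of one fewer $a$'s, with diagonal insertions, applied to $\nu$. There is no termination issue.

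This gives an explicit normal-ordering algorithm, but it needs the $[b,a]$ relation for all $b$-indices. Proposition \ref{multiplication by chern classes prop am bm version} produces $b^{(r+k-i-2)}$ with index up to $2r-2$, since you need $k$ up to $r$ to reach every $c_k(\mathcal{E}_{j+1})$. Proposition \ref{f,a commutation proposition}, however, is stated only for $j\in\{0,\dots,r-1\}$. The relation does hold for all $j\ge 0$, by the same derivation from Theorem \ref{Yangian relations satisfied} and Proposition \ref{multiplication by chern classes formula}, but you must say so. The paper's route avoids this input entirely and is more economical.

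\textbf{Fundamental-class step: wrong operator.} $a_{j+1}^{(r-1)}(1)[F^{n}\text{Quot}_{\vec d}(V)]$ has cohomological degree $2(r-1)$, so it cannot produce the fundamental class. Everything is homogeneous, so there are no ``lower terms'' either. The correct identity is
\[
a_{j+1}^{(0)}(1)[F^{n}\text{Quot}_{\vec d}(V)]=(d_{j+1}-d_j+1)\,[F^{n}\text{Quot}_{\vec d_{j+1,n}}(V)],
\]
and it holds exactly. This is because $c_0(\mathcal{G})=1$, and $q_+\colon\mathcal{Z}^{j+1,n}_{\vec d}\to F^{n}\text{Quot}_{\vec d_{j+1,n}}(V)$ is generically finite of that degree. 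Indeed, a generic $F_j/F_{j+1}$ is supported at $d_{j+1}-d_j+1$ distinct points, each giving one choice of $E_{j+1}$. This is the generic-finiteness mechanism you mention at the end, just with the wrong superscript.

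\textbf{Your alternatives.} Module generation over $H^{*}(F^{j}\text{Quot})$ does not remove the need for the fundamental class. The dimension-count fallback is logically valid. Step I places $\mathfrak{A}\cap H^{*}(F^{n}\text{Quot}_{\vec d}(V))$ inside your span, and its graded count matches the product formula. But it imports the Betti numbers from \cite{MoRi}, which the paper instead derives as a corollary of this proposition.
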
 

\begin{proof} 
    For all non-decreasing sequences on non-negative integers $\vec{d}=(d_{1},...,d_{n})$, with $d_{j+1}=d_{j+2}=...=d_{n}$, we must show that $H^{*}(F^{n}\text{Quot}_{\vec{d}}\,(V))$ is spanned by classes of the form 
    \begin{equation}\label{what we want to prove in the proof of span}
            a_{j+1}^{(v_{{l}})}(\gamma_{s_{l}}) a_{j+1}^{(v_{{l-1}})}(\gamma_{s_{l-1}}) ... a_{j+1}^{(v_{{1}})}(\gamma_{s_{1}}) w_{j}.
    \end{equation}
\noindent With $l\geq 0$ and $w_{j}\in \mathcal{H}_{j}$. \\\\
To prove this, let us induct on $d_{j+1}-d_{j}$. The case $d_{j+1}-d_{j}=0$ is clear since in this case $H^{*}(F^{n}\text{Quot}_{\vec{d}}\,(V))\subseteq \mathcal{H}_{j}$.\\\\
Now, assuming that for some $m\geq 0$, that the cohomology of all $F^{n}\text{Quot}_{\vec{d}}\,(V)$, with $\vec{d}$ such that $d_{j+1}=d_{j+2}=...=d_{n}$, is spanned by classes of the form \ref{what we want to prove in the proof of span}, whenever $d_{j+1}-d_{j}=m$; Let us prove that $H^{*}(F^n\text{Quot}_{\vec{d}_{j+1,n}}(V))$ is generated by classes of the form \eqref{what we want to prove in the proof of span}, where
\[
\vec{d}_{j+1,n}=(d_{1},..,d_{j},d_{j+1}+1,...,d_{n}+1).
\]
\newline
First, let us show that the fundamental class $$[F^n\text{Quot}_{\vec{d}_{j+1,n}}(V)]$$ can be written as a linear combination of classes of the form \eqref{what we want to prove in the proof of span}. Observe that $\mathcal{Z}^{j+1,n}_{\vec{d}}(V)$ is a Hyperquot scheme, since $d_{j+1}=...=d_{n}$, and is therefore, generically a $(d_{j+1}-d_{j}+1):1$ cover of $F^n\text{Quot}_{\vec{d}_{j+1,n}}(V)$. This implies that 

\[
[F^n\text{Quot}_{\vec{d}_{j+1,n}}(V)]=\frac{1}{(d_{j+1}-d_{j}+1)}\cdot\left( a_{j+1}^{(0)}[F^n\text{Quot}_{\vec{d}}\,(V)]\right).
\]
\newline
Then, by the induction hypothesis, since $[F^n\text{Quot}_{\vec{d}}\,(V)]$ can be written as a linear combination of classes of the form \eqref{what we want to prove in the proof of span}, so can $[F^n\text{Quot}_{\vec{d}_{j+1,n}}(V)].$\\\\
We showed in Proposition \ref{Tautological generation} that any class in $H^{*}(F^n\text{Quot}_{\vec{d}_{j+1,n}}(V))$ is a linear combination of classes of the form
\[
m_{i_{t}}^{(k_{t})}(\gamma_{y_{t}}) m^{(k_{t-1})}_{i_{t-1}}(\gamma_{y_{t-1}})...m_{i_{1}}^{(k_{1})}(\gamma_{y_{1}})[F^{n}\text{Quot}_{\vec{d}_{j+1,n}}(V)].
\]
for various arbitrary $t$'s. We can then re-write the above expression as a linear combination of expressions of the form
\[
m_{i_{t}}^{(k_{t})}(\gamma_{y_{t}})m^{(k_{t-1})}_{i_{t-1}}(\gamma_{y_{t-1}})...m_{i_{1}}^{(k_{1})}(\gamma_{y_{1}}) a_{j+1}^{(v_{{l}})}(\gamma_{s_{l}}) a_{j+1}^{(v_{{l-1}})}(\gamma_{s_{l-1}}) ... a_{j+1}^{(v_{{1}})}(\gamma_{s_{1}}) w_{j}
\]
with $l=d_{j+1}-d_{j}$ and $w_{j}\in \mathcal{H}_{j}$. In the above expression, whenever some $i<j+1$, it follows from the equality \eqref{an=me}, Remark \ref{remark on am} and the relations in Theorem \ref{Yangian relations satisfied}, that we can commute the $m_{i}^{(*)}(*)$ past the other $m$'s and the $a_{j+1}^{(*)}(*)$'s. Therefore, any class in $H^{*}(F^{n}\text{Quot}_{\vec{d}_{j+1,n}}(V))$ is a linear combination of classes of the form 
\[
m_{j+1}^{(k_{p})}(\gamma_{y_{p}}) m^{(k_{p-1})}_{j+1}(\gamma_{y_{p-1}})... m_{j+1}^{(k_{1})}(\gamma_{y_{1}}) a_{j+1}^{(v_{{l}})}(\gamma_{s_{l}}) a_{j+1}^{(v_{{l-1}})}(\gamma_{s_{l-1}}) a_{j+1}^{(v_{{1}})}(\gamma_{s_{1}}) w_{j}'
\]
for arbitrary $p$ and with $w_{j}'\in \mathcal{H}_{j}$.\\\\
 Then, using the multiplicativity of the total Chern class in exact sequences and Proposition \ref{multiplication by chern classes prop am bm version}, we may write each $m_{j+1}^{(*)}(*)$ as a polynomial in the operators $m_{j}^{(*)}(*)$ and $a_{j+1}^{(*)}(*)b^{(*)}_{j+1}(*)$. As before, we may commute the various $m_{j}^{(*)}(*)$ past the other $m_{j+1}^{(*)}(*)$'s, $a_{j+1}^{(*)}(*)$ and $b_{j+1}^{(*)}$'s. Hence, we obtain that any class in $H^{*}(F^{n}\text{Quot}_{\vec{d}_{j+1,n}}(V))$ is a linear combination of classes of the form:
 \begin{equation}\label{penultimate expression in the proof of span}
   a_{j+1}^{(y_{{q}})}(\gamma_{z_{q}}) b_{j+1}^{(y'_{q})}(\gamma_{x_{q}}) ... a_{j+1}^{(y_{{1}})}(\gamma_{z_{1}}) b_{j+1}^{(y'_{1})}(\gamma_{x_{1}}) a_{j+1}^{(v_{{l}})}(\gamma_{s_{l}})  a_{j+1}^{(v_{{l-1}})}(\gamma_{s_{l-1}}) ... a_{j+1}^{(v_{{1}})}(\gamma_{s_{1}})w_{j}''.  
 \end{equation}
\noindent for various arbitrary $q$'s and with $w_{j}''\in \mathcal{H}_{j}$.
 However,
 \[
 b_{j+1}^{(y'_{q})}(\gamma_{x_{q}}) a_{j+1}^{(y_{{q-1}})}(\gamma_{z_{q-1}})...a_{j+1}^{(y_{{1}})}(\gamma_{z_{1}})b_{j+1}^{(y'_{1})}(\gamma_{x_{1}})  a_{j+1}^{(v_{{l}})}(\gamma_{s_{l}}) a_{j+1}^{(v_{{l-1}})}(\gamma_{s_{l-1}}) ... a_{j+1}^{(v_{{1}})}(\gamma_{s_{1}})w_{j}''.
 \]
 is a class in $H^{*}(F^{n}\text{Quot}_{\vec{d}}\,(V))$, which by the induction hypothesis can be written as a linear combination of classes of the form \eqref{what we want to prove in the proof of span} and hence, so can the expression \eqref{penultimate expression in the proof of span}, which finishes the proof.
\end{proof}

\subsection{Step IV: Commutativity of the operators \text{$a_{k}^{(v)}$}}\label{subsection on the commutativity o operator a's}
In this subsection, we will prove Theorem \ref{a's commute intro}, that the operators $a_{k}^{(v)}$ commute, for all $k=1,...,n$ and $v=0,...,r-1$. Together with Propositions \ref{Span} and \ref{Linear Independence}, this yields Theorem \ref{basis of cohomology intro} as well.\\\\
We will crucially use the geometry of the moduli spaces \ref{The moduli space: U}, \ref{The moduli space: W} and \ref{The moduli space: X} and therefore will use notations and conventions from Subsections \ref{subsection: moduli spaces U,X,W}-\ref{Subsection: Divisors}. Let us recall some of the notation;  Let $\vec{d}=(d_1,...,d_{n})$ be an $n-$tuple of non-negative integers. For $1\leq i\leq n$, recall $n-$tuples $\vec{d'}_{i,n}:=(d_1,...,d_{i-1},d_{i}+1,...,d_{n}+1)$ and $\vec{d''}_{i,n}:=(d_1,...,d_{i-1},d_{i}+1,...,d_{n-1}+1,d_{n}+2)$.\\\\
To prove the desired commutativity, for $k\in\{1,...,n\}$ it would be convenient to sometimes arrange the operators $a_{k}^{(v)}$ into the polynomial \ref{a current}:
\[
    a_{k}(w):=\sum\limits_{v=0}^{r-1}(-1)^{v}a_{k}^{(v)}w^{r-1-v}.
\]
The following lemmas are consequences of Propositions \ref{class of correspondence}, \ref{composition of correspondences} and the dimension computations in Propositions \ref{Geometry of moduli space $X$} and \ref{Geometry of moduli space $W$}. 

\begin{lem}\label{lemma on an.ak composition}
    The linear operator
\[
    a_{n}(z) a_{k}(w):H^{*}(F^{n}\text{Quot}_{\vec{d}}\,(V))\rightarrow H^{*}(F^{n}\text{Quot}_{\vec{d''}_{k,n}}(V)\times C\times C)[w,z]
\]
is given as a correspondence by the class 
\[
\left(k_{X}\times q^{X}\times p^{X}\right)_{*}\left(c(\mathcal{G}_{3},w)\cdot c(\mathcal{G}_{2},z) \right)
\] 
on $F^{n}\text{Quot}_{\vec{d}}(V)\times F^{n}\text{Quot}_{\vec{d''}_{k,n}}(V)\times C\times C$. Note that in the case $k=n$, we have $\mathcal{G}_{3}=\mathcal{G}_{1}$.
\end{lem}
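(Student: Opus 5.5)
We compute the composition $a_n(z)a_k(w)$ with Propositions \ref{class of correspondence} and \ref{composition of correspondences}, in the same way as Lemmas \ref{corollary on the operator ejej s,t} and \ref{corollary on the operator e_is[e_i,e_i-1]}.

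First, $a_k(w)$ goes from $F^{n}\text{Quot}_{\vec{d}}(V)$ to $F^{n}\text{Quot}_{\vec{d'}_{k,n}}(V)\times C$. Its class is the pushforward of $c(\mathcal{G}^{k,n}_{\vec{d}},w)$ from $\mathcal{Z}^{k,n}_{\vec{d}}$. Second, $a_n(z)$ goes from $F^{n}\text{Quot}_{\vec{d'}_{k,n}}(V)$ to $F^{n}\text{Quot}_{\vec{d''}_{k,n}}(V)\times C$. Its class is the pushforward of $c(\mathcal{G}^{n,n},z)$ from $\mathcal{Z}^{n,n}_{\vec{d'}_{k,n}}=F^{n+1}\text{Quot}$.

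Pull both back to the product
\[
Z:=F^{n}\text{Quot}_{\vec{d}}(V)\times F^{n}\text{Quot}_{\vec{d'}_{k,n}}(V)\times F^{n}\text{Quot}_{\vec{d''}_{k,n}}(V)\times C\times C,
\]
taking the first copy of $C$ to record $q$ and the second to record $p$. The two pullbacks are supported on smooth subvarieties $Z_1\cong \mathcal{Z}^{k,n}_{\vec{d}}\times F^{n}\text{Quot}_{\vec{d''}_{k,n}}(V)\times C$ and $Z_2\cong F^{n}\text{Quot}_{\vec{d}}(V)\times C\times F^{n+1}\text{Quot}$. Both are regularly embedded because they are smooth subvarieties of a smooth variety.

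The scheme-theoretic intersection $Z_1\cap Z_2$ is exactly $\mathcal{X}^{k,n}_{\vec{d}}$. A point of $Z_1\cap Z_2$ is a triple of flags $E_\bullet$, $F_\bullet$, $H_\bullet$ such that:
\begin{itemize}
\item[(i)] $F_\bullet$ is obtained from $E_\bullet$ by length-one modifications at a common point $q$ in positions $k,\dots,n$;
\item[(ii)] $H_\bullet$ differs from $F_\bullet$ only in position $n$, by a length-one modification at $p$.
\end{itemize}
This is diagram \eqref{The moduli space: X} with $j=n$, and the embedding is $i_X\times q^X\times p^X$.

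Next comes the codimension check. By Proposition \ref{proposition: smoothness and dimension of moduli space Z}, $\dim Z_1=r(d_n+1)+r(d_n+2)+1$ and $\dim Z_2=rd_n+1+r(d_n+2)$. By Proposition \ref{Geometry of moduli space $X$}, $\dim \mathcal{X}^{k,n}_{\vec{d}}=r(d_n+2)$. Write $D=\dim Z=rd_n+r(d_n+1)+r(d_n+2)+2$. Then
\[
\operatorname{codim} Z_1+\operatorname{codim} Z_2=(rd_n+1)+(r(d_n+1)+1)=D-r(d_n+2),
\]
so the codimensions add. Proposition \ref{composition of correspondences} therefore says that the product of the two pullbacks is the pushforward from $\mathcal{X}^{k,n}_{\vec{d}}$ of the product of the restrictions of $c(\mathcal{G}^{k,n},w)$ and $c(\mathcal{G}^{n,n},z)$.

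These restrictions are identified as follows. Pulled back along $\mathcal{X}^{k,n}_{\vec{d}}\to \mathcal{Z}^{k,n}_{\vec{d}}$, the bundle $\mathcal{G}^{k,n}$ is the kernel of $\mathcal{E}_k|_{\Gamma_q}\to E_k/F_k$, which is $\mathcal{G}_3$ by definition. Pulled back along $\mathcal{X}^{k,n}_{\vec{d}}\to F^{n+1}\text{Quot}$, the bundle $\mathcal{G}^{n,n}$ is the kernel of $\mathcal{F}_n|_{\Gamma_p}\to F_n/H_n$, which is $\mathcal{G}_2$. When $k=n$ both definitions of $\mathcal{G}_3$ and $\mathcal{G}_1$ use $E_n/F_n$, so $\mathcal{G}_3=\mathcal{G}_1$.

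Finally, Proposition \ref{class of correspondence} pushes forward along the projection that forgets the middle factor $F^{n}\text{Quot}_{\vec{d'}_{k,n}}(V)$. The composite map $\mathcal{X}^{k,n}_{\vec{d}}\to F^{n}\text{Quot}_{\vec{d}}(V)\times F^{n}\text{Quot}_{\vec{d''}_{k,n}}(V)\times C\times C$ is $k_X\times q^X\times p^X$, which gives the stated class.

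The main step to get right is checking that the intersection is scheme-theoretically $\mathcal{X}^{k,n}_{\vec{d}}$ rather than just set-theoretically. This holds because both conditions are defined through the same universal flags on the fine moduli spaces, so the fibre product represents precisely the functor of diagram \eqref{The moduli space: X}. A second point needing care is keeping track of which copy of $C$ carries $q$ and which carries $p$.
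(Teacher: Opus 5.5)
Your proposal is correct and follows the paper's own justification, which derives this lemma in one line from Propositions \ref{class of correspondence} and \ref{composition of correspondences} together with the dimension count for $\mathcal{X}^{k,n}_{\vec{d}}$. You have made explicit the steps the paper leaves implicit: the intersection $Z_1\cap Z_2\cong\mathcal{X}^{k,n}_{\vec{d}}$ of the expected codimension, the codimension check, the identification of the restricted bundles with $\mathcal{G}_3$ and $\mathcal{G}_2$, and the bookkeeping of the two copies of $C$.
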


\begin{lem}\label{lemma on ak.an composition}
     The linear operator
\[
    a_{k}(w)a_{n}(z):H^{*}(F^{n}\text{Quot}_{\vec{d}}\,(V))\rightarrow H^{*}(F^{n}\text{Quot}_{\vec{d''}_{k,n}}(V)\times C\times C)[w,z]
\]
is given as a correspondence by the class 
\[
\left(k_{W}\times q^{W}\times p^{W}\right)_{*}\left(c(\mathcal{G}_{3},w)\cdot c(\tilde{\mathcal{G}}_{1},z) \right)
\]
on $F^{n}\text{Quot}_{\vec{d}}(V)\times F^{n}\text{Quot}_{\vec{d''}_{k,n}}(V)\times C\times C$ if $k\neq n$. If $k=n$, then the operator
\[
a_{n}(w) a_{n}(z):H^{*}(F^{n}\text{Quot}_{\vec{d}}\,(V))\rightarrow H^{*}(F^{n}\text{Quot}_{\vec{d''}_{k,n}}(V)\times C\times C)[w,z]
\]
is given as a correspondence by the class
\[
\left(k_{W}\times q^{W}\times p^{W}\right)_{*}\left(c(\tilde{\mathcal{G}}_{2},w)\cdot c(\tilde{\mathcal{G}}_{1},z) \right)
\]
on $F^{n}\text{Quot}_{\vec{d}}(V)\times F^{n}\text{Quot}_{\vec{d''}_{n,n}}(V)\times C\times C$.
\end{lem}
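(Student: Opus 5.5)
The plan is to compute the composition $a_{k}(w)a_{n}(z)$ with the composition formalism of Propositions \ref{class of correspondence} and \ref{composition of correspondences}, in the same way as Lemma \ref{lemma on an.ak composition}. First, apply $a_{n}(z)$. It is the correspondence $(q_{+}\times q_{C})_{*}(c(\mathcal{G}^{n,n},z)\cdot q_{-}^{*}(-))$ supported on $\mathcal{Z}^{n,n}_{\vec{d}}$, which is the Hyperquot scheme $F^{n+1}\text{Quot}_{\vec{d}''_{n}}$. Then apply $a_{k}(w)$, which is supported on $\mathcal{Z}^{k,n}_{\vec{d}'_{n}}$. The composition is computed in the triple product $F^{n}\text{Quot}_{\vec{d}}\times F^{n}\text{Quot}_{\vec{d}'_{n}}\times F^{n}\text{Quot}_{\vec{d''}_{k,n}}$, enlarged by the relevant copies of $C$.

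Set-theoretically, the intersection of the two pulled-back supports consists of: the flag $E$, a flag $F'$ with $F'_{n}\subseteq E_{n}$ of colength one at $p$, and a flag $H$ with $H_{l}\subseteq F'_{l}$ of colength one at a common point $q$ for $l=k,\dots,n$. This is exactly diagram \eqref{The moduli space: W} with $i=k$, $j=n$, where $F_{l}=H_{l}$ for $l<n$. The next step is to check that this intersection is scheme-theoretically $\mathcal{W}^{k,n}_{\vec{d}}$. That identification comes from the description of $\mathcal{W}$ as $Z_{1}\cap Z_{2}$ in the proof of Proposition \ref{Geometry of moduli space $W$}: $Z_{1}$ is a product of Hyperquot schemes and $Z_{2}$ is a product of $\mathcal{Z}$ with a Hyperquot scheme. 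The support classes are fundamental classes of these smooth subvarieties, pulled back along projections. Both $i_{1},i_{2}$ are regular embeddings because the ambient spaces are smooth. The codimension hypothesis of Proposition \ref{composition of correspondences} holds by the dimension count in Proposition \ref{Geometry of moduli space $W$}, since $\dim \mathcal{W}^{k,n}_{\vec{d}}=r(d_{n}+2)$ is the expected dimension.

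Proposition \ref{composition of correspondences} then expresses the composite class as the pushforward along $k_{W}\times q^{W}\times p^{W}$ of the product of the pulled-back Chern-class insertions. Two identifications remain for these insertions.
\begin{enumerate}
\item The bundle $\mathcal{G}^{n,n}$ on the first support is the kernel of $\mathcal{E}_{n}|_{\Gamma_{p}}\to E_{n}/F'_{n}$. It pulls back to $\tilde{\mathcal{G}}_{1}$ on $\mathcal{W}$.
\item The bundle $\mathcal{G}^{k,n}$ on the second support is the kernel of the restriction of the $k$-th sheaf of the middle flag to $\Gamma_{q}$, mapping onto $F'_{k}/H_{k}$.
\end{enumerate}
For (2) we split into cases. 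If $k<n$, the $k$-th sheaf of the middle flag is $E_{k}$ and $F'_{k}/H_{k}=E_{k}/F_{k}$, so the bundle pulls back to $\mathcal{G}_{3}$. If $k=n$, it is $F'_{n}$ with quotient $F'_{n}/H_{n}$, so the bundle pulls back to $\tilde{\mathcal{G}}_{2}$. Both pullback statements follow because the kernels are formed by restricting universal sheaves to graphs of the support maps, and these constructions commute with pullback along the projections. The support maps to $C\times C$ are $q^{W}$ for $a_{k}$ and $p^{W}$ for $a_{n}$, which gives the factor ordering $k_{W}\times q^{W}\times p^{W}$ in the statement.

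I expect the main obstacle to be the scheme-theoretic identification of the fibre product with $\mathcal{W}^{k,n}_{\vec{d}}$, together with verifying the codimension equality. $\mathcal{W}$ is singular when $k<n$, so this has to go through its l.c.i.\ realisation in Proposition \ref{Geometry of moduli space $W$}, rather than through a smoothness argument.
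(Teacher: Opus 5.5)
Your proposal is correct and is essentially the paper's argument. The paper simply cites Propositions \ref{class of correspondence} and \ref{composition of correspondences} together with the dimension count for $\mathcal{W}^{k,n}_{\vec{d}}$, and you unpack this faithfully: you identify the intersection of the two supports with $\mathcal{W}^{k,n}_{\vec{d}}=Z_{1}\cap Z_{2}$ from the l.c.i.\ argument, check the expected dimension $r(d_{n}+2)$, and match the pulled-back kernels ($\tilde{\mathcal{G}}_{1}$ at $p^{W}$ for $a_{n}(z)$; $\mathcal{G}_{3}$ for $k<n$, resp.\ $\tilde{\mathcal{G}}_{2}$ for $k=n$, at $q^{W}$ for $a_{k}(w)$).
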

\noindent In the above lemmas we make the convention that the operators in the polynomials $a_{k}(w)$ and $a_{n}(z)$ respectively contribute to the first and second copies of $C$, in the product $C\times C$.\\\\
\noindent Now, observe that:
\begin{multline*}
\left(k_{X}\times p^{X}\times q^{X}\right)\circ \,\eta^{k,n}_{\vec{d}}=\left(k_{W}\times p^{W}\times q^{W}\right)\circ\,\theta^{k,n}_{\vec{d}}\colon\\\,\mathcal{U}^{k,n}_{\vec{d}}
\rightarrow F^{n}\text{Quot}_{\vec{d}}(V)\times F^{n}\text{Quot}_{\vec{d''}_{k,n}}(V)\times C\times C.
\end{multline*}
Let us denote the above map by $\beta$.

\begin{prop}\label{Proposition: an,ak commute}
For all $s,t\in \{0,...,r-1\}$ and $k\in\{1,...,n\}$, we have that 
\[
a_{n}^{(t)} a_{k}^{(s)}= a_{k}^{(s)} a_{n}^{(t)}
\]
as operators $H^{*}(F^{n}\text{Quot}_{\vec{d}}\,(V))\rightarrow H^{*}(F^{n}\text{Quot}_{\vec{d''}_{k,n}}(V)\times C\times C)$, with the convention that the operator $a_{k}^{(s)}$ contributes to the first factor of $C\times C$ and operator $a_{n}^{(t)}$ contributes to the second factor of $C\times C$.
\end{prop}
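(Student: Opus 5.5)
We reduce the commutativity to an identity of classes on the smooth variety $\mathcal{U}^{k,n}_{\vec{d}}$, exactly as in the proofs of Propositions \ref{Serre relations proof } and \ref{e_i,e_i relations}. The plan is to show the polynomial identity $a_n(z)a_k(w)=a_k(w)a_n(z)$ as operators with values in $H^{*}(F^{n}\text{Quot}_{\vec{d''}_{k,n}}(V)\times C\times C)[w,z]$, and then extract coefficients of $w^{r-1-s}z^{r-1-t}$.

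\textbf{Step 1: pull back to $\mathcal{U}^{k,n}_{\vec{d}}$.} By Lemmas \ref{lemma on an.ak composition} and \ref{lemma on ak.an composition}, both compositions are pushforwards of Chern-class products from $\mathcal{X}^{k,n}_{\vec{d}}$ and $\mathcal{W}^{k,n}_{\vec{d}}$ respectively. The maps $\eta^{k,n}_{\vec{d}}$ and $\theta^{k,n}_{\vec{d}}$ are birational, and $\mathcal{X}$, $\mathcal{W}$ are irreducible of the same dimension (Propositions \ref{Geometry of moduli space $X$}, \ref{Geometry of moduli space $W$}). So $\eta_*[\mathcal{U}]=[\mathcal{X}]$ and $\theta_*[\mathcal{U}]=[\mathcal{W}]$, and the projection formula rewrites both classes as $\beta_*$ of pulled-back classes on $\mathcal{U}^{k,n}_{\vec{d}}$. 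The bundles $\mathcal{G}_2,\mathcal{G}_3$ on $\mathcal{X}$ and $\tilde{\mathcal{G}}_1,\tilde{\mathcal{G}}_2,\mathcal{G}_3$ on $\mathcal{W}$ pull back to the bundles of the same names on $\mathcal{U}$.

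\textbf{Step 2, case $k\neq n$.} It suffices to show
\[
\beta_*\bigl(c(\mathcal{G}_3,w)\,(c(\mathcal{G}_2,z)-c(\tilde{\mathcal{G}}_1,z))\bigr)=0 .
\]
By Corollary \ref{corollary (use: proof of commutativity of a k neq n): difference of c(G,z)}, the difference equals
\[
[\text{Exp}]\cdot c({\mathcal{E}_n}_{|_{\Gamma_p}},z)/(c(\mathscr{L}_1,z)c(\mathscr{L}_2,z)).
\]
Each factor here, together with $c(\mathcal{G}_3,w)$, is pulled back along $\eta$ from $\mathcal{X}$: this holds for $\mathscr{L}_1$, $\mathscr{L}_2$, $\mathcal{E}_n$, $\mathcal{G}_3$ and $p$. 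Hence the class is $[\text{Exp}]\cdot\eta^*(\xi)$, and $\beta_*$ of it equals ${(k_X\times\cdots)}_*(\xi\cdot\eta_*[\text{Exp}])$. Since Exp is the exceptional locus of $\eta$, a $\mathbb{P}^1$-bundle over the codimension-two locus $\mathfrak{D}$, we have $\eta_*[\text{Exp}]=0$ for dimension reasons. One must also make sure the power series in $z^{-1}$ is harmless: the coefficients are polynomial in the pulled-back classes, and only finitely many terms survive against $c(\cdot,z)$ polynomials, so this is fine.

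\textbf{Step 3, case $k=n$.} Here $\mathcal{U}^{n,n}$ is as in Subsection \ref{subsection: U,X,W case:i=j}. We need
\[
\beta_*\bigl(c(\mathcal{G}_1,w)c(\mathcal{G}_2,z)-c(\tilde{\mathcal{G}}_1,z)c(\tilde{\mathcal{G}}_2,w)\bigr)=0 ,
\]
noting that the variables swap roles between the two lemmas, consistently with the convention on factors of $C\times C$. By Corollary \ref{corollary (use: proof of commutativity of a k=n): differences of c(G,z)c(G,w)}, the difference is a sum of two terms supported on Exp. The first term is $\eta^*$ of a class on $\mathcal{X}$ times $[\text{Exp}]$, since it involves only untilded $\mathscr{L}_i$ and $\mathcal{E}_n$ restricted along the graphs. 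The second term is $\theta^*$ of a class on $\mathcal{W}$ times $[\text{Exp}]$. Exp is also exceptional for $\theta$: it is the preimage of the locus $\{E_n\subseteq F_{n-1}\}$ with $p=q$, which has codimension $2$ in $\mathcal{W}$. Therefore both $\eta_*[\text{Exp}]$ and $\theta_*[\text{Exp}]$ vanish, and each term dies under $\beta_*$ by factoring $\beta$ through the appropriate map.

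\textbf{Main obstacle.} The delicate point is justifying that $[\text{Exp}]$ pushes forward to zero under both $\eta$ and $\theta$, that is, that the image of Exp has codimension at least two. For $\eta$ this follows from the codimension-two statement for $\mathfrak{D}$. For $\theta$ I would check it via the description of the non-isomorphism locus $I$ in the proof of Proposition \ref{Geometry of moduli space $W$}, intersected with $p=q$.

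A second subtlety is checking that every Chern class appearing genuinely descends along the relevant map. For example, ${\mathcal{E}_n}_{|_{\Gamma_p}}$ descends to $\mathcal{X}$ because $p=p^X\circ\eta$, and $\tilde{\mathscr{L}}_1,\tilde{\mathscr{L}}_2$ descend to $\mathcal{W}$. This holds by construction but must be verified carefully.
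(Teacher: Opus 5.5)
Your proposal is essentially the paper's proof: both compositions are rewritten as $\beta_*$ of classes on $\mathcal{U}^{k,n}_{\vec{d}}$, and the differences of Chern polynomials are replaced by the $[\text{Exp}]$-supported expressions from the two corollaries on $\mathcal{U}^{i,j}_{\vec{d}}$. These terms then vanish because Exp is contracted by $\eta^{k,n}_{\vec{d}}$, and also by $\theta^{n,n}_{\vec{d}}$ when $k=n$. One correction concerns how you propose to check that last point for $k=n$: $\mathcal{W}^{n,n}_{\vec{d}}$ contains no sheaf $F_{n-1}$, and the locus $I$ from the proof of irreducibility of $\mathcal{W}^{i,j}_{\vec{d}}$ only exists when $i<j$. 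The image of Exp under $\theta^{n,n}_{\vec{d}}$ is instead $\{p=q,\ E_n/H_n\cong\mathbb{C}_p^{\oplus 2}\}$; over it $\theta^{n,n}_{\vec{d}}$ has $\mathbb{P}^1$-fibres (the choices of $F_n$), and it has codimension two because the isomorphism $\xi$ carries it onto $\mathfrak{D}$.
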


\begin{proof}
    Let us treat the cases $k\neq n$ and $k=n$ separately.\\\\
    \textbf{Case I: k$\neq $n:}
    From Lemmas \ref{lemma on an.ak composition} and \ref{lemma on ak.an composition}, it suffices to show that the following expression in $H^*(F^{n}\text{Quot}_{\vec{d}}(V)\times F^{n}\text{Quot}_{\vec{d''}_{k,n}}(V)\times C\times C)$ is equal to $0$:
    \begin{equation}\label{equation: expression which should be 0 an ak case.}
\left(k_{X}\times q^{X}\times p^{X}\right)_{*}\left(c(\mathcal{G}_{3},w)\cdot c(\mathcal{G}_{2},z) \right)-\left(k_{W}\times q^{W}\times p^{W}\right)_{*}\left(c(\mathcal{G}_{3},w)\cdot c(\tilde{\mathcal{G}}_{1},z) \right) .
    \end{equation}
\noindent Then, the projection formula, along with the fact that $\eta_{\vec{d}}^{k,n}$ and $\theta_{\vec{d}}^{k,n}$ are birational morphisms, allows us to re-write expression \ref{equation: expression which should be 0 an ak case.} as 
\[
\beta_{*}\left(c(\mathcal{G}_{3},w)\cdot c(\mathcal{G}_{2},z)-c(\mathcal{G}_{3},w)\cdot c(\tilde{\mathcal{G}}_{1},z) \right)
\]
\begin{multline*}
=\beta_{*}\left(c(\mathcal{G}_{3},w)\cdot\left[ c(\mathcal{G}_{2},z)- c(\tilde{\mathcal{G}}_{1},z)\right] \right)=\\
(k_{X}\times p^{X}\times q^{X})_{*}\circ \left(\,{\eta^{k,n}_{\vec{d}}}\right)_{*}\left(c(\mathcal{G}_{3},w)\cdot\left[ c(\mathcal{G}_{2},z)- c(\tilde{\mathcal{G}}_{1},z)\right] \right).
\end{multline*}
By the projection formula, this is equal to:
\[
(k_{X}\times p^{X}\times q^{X})_{*}\left[c(\mathcal{G}_{3},w)\cdot {\eta^{k,n}_{\vec{d}}}_{*} \left( c(\mathcal{G}_{2},z)- c(\tilde{\mathcal{G}}_{1},z) \right) \right].
\]
Then, using Corollary \ref{corollary (use: proof of commutativity of a k neq n): difference of c(G,z)}, we can re-write the above expression as:
\[
(k_{X}\times p^{X}\times q^{X})_{*}\left[c(\mathcal{G}_{3},w)\cdot {\eta^{k,n}_{\vec{d}}}_{*}  \left([\text{Exp}]\cdot\frac{c\left({\mathcal{E}_{n}}_{_{|_{\Gamma_{p}}}},z\right)}{c(\mathscr{L}_{1},z)\cdot c(\mathscr{L}_{2},z)}\right) \right].
\]
\newline
\[
=(k_{X}\times p^{X}\times q^{X})_{*}\left[c(\mathcal{G}_{3},w) \cdot\frac{c\left(\mathcal{E}_{{n}_{_{|_{\Gamma_{p}}}}},z\right)}{c(\mathscr{L}_{1},z)\cdot c(\mathscr{L}_{2},z)} \cdot {\eta^{k,n}_{\vec{d}}}_{*} \left[\text{Exp}\right] \right].
\]
\newline
Again, the last line follows from the projection formula. Since Exp is the exceptional divisor of $\eta^{k,n}_{\vec{d}}$, we have that ${\eta^{k,n}_{\vec{d}}}_{*} \text{Exp}=0$ and hence the above expression is equal to $0$. This finishes the proof in the case $k\neq n$.\\\\
\textbf{Case II: k$= $n:}
Using Lemmas \ref{lemma on an.ak composition} and \ref{lemma on ak.an composition}, it is enough to show that the following expression is equal to 0:
\[
\left(k_{X}\times q^{X}\times p^{X}\right)_{*}\left(c(\mathcal{G}_{1},w)\cdot c(\mathcal{G}_{2},z) \right)-\left(k_{W}\times q^{W}\times p^{W}\right)_{*}\left(c(\tilde{\mathcal{G}}_{2},w)\cdot c(\tilde{\mathcal{G}}_{1},z) \right). \\\\
\]
By the projection formula, we can re-write this expression as 
\[
\beta_{*}\left(c(\mathcal{G}_{1},w)c(\mathcal{G}_{2},z)-c(\tilde{\mathcal{G}}_{1},z)c(\tilde{\mathcal{G}}_{2},w)\right).
\]
Then, Corollary \ref{corollary (use: proof of commutativity of a k=n): differences of c(G,z)c(G,w)} implies that the expression is equal to 
\[
\beta_{*}\left([\text{Exp}]\cdot\frac{c\left({\mathcal{E}_{n}}_{_{|_{\Gamma_{p}}}},z\right)\cdot c\left({\mathcal{E}_{n}}_{_{|_{\Gamma_{q}}}},w\right)}{c(\mathscr{L}_{1},w)\cdot c(\mathscr{L}_{1},z)\cdot c(\mathscr{L}_{2},z)}- [\text{Exp}]\cdot\frac{c\left({\mathcal{E}_{n}}_{_{|_{\Gamma_{p}}}},z\right)\cdot c\left({\mathcal{E}_{n}}_{_{|_{\Gamma_{q}}}},w\right)}{c(\tilde{\mathscr{L}}_{1},z)\cdot c(\tilde{\mathscr{L}}_{1},w)\cdot c(\tilde{\mathscr{L}}_{2},w)}\right)
\]
\[
=(k_{X}\times p^{X}\times q^{X})_{*}\circ \left(\,{\eta^{n,n}_{\vec{d}}}\right)_{*}\left([\text{Exp}]\cdot\frac{c\left({\mathcal{E}_{n}}_{_{|_{\Gamma_{p}}}},z\right)\cdot c\left({\mathcal{E}_{n}}_{_{|_{\Gamma_{q}}}},w\right)}{c(\mathscr{L}_{1},w)\cdot c(\mathscr{L}_{1},z)\cdot c(\mathscr{L}_{2},z)}\right)-\]
\[(k_{W}\times p^{W}\times q^{W})_{*}\circ \left(\,{\theta^{n,n}_{\vec{d}}}\right)_{*}\left([\text{Exp}]\cdot\frac{c\left({\mathcal{E}_{n}}_{_{|_{\Gamma_{p}}}},z\right)\cdot c\left({\mathcal{E}_{n}}_{_{|_{\Gamma_{q}}}},w\right)}{c(\tilde{\mathscr{L}}_{1},z)\cdot c(\tilde{\mathscr{L}}_{1},w)\cdot c(\tilde{\mathscr{L}}_{2},w)}\right).
\]
By the projection formula, this is equal to 
\[
(k_{X}\times p^{X}\times q^{X})_{*} \left(\left(\,{\eta^{n,n}_{\vec{d}}}_{*}[\text{Exp}]\right)\cdot\frac{c\left({\mathcal{E}_{n}}_{_{|_{\Gamma_{p}}}},z\right)\cdot c\left({\mathcal{E}_{n}}_{_{|_{\Gamma_{q}}}},w\right)}{c(\mathscr{L}_{1},w)\cdot c(\mathscr{L}_{1},z)\cdot c(\mathscr{L}_{2},z)}\right)-\]

\[(k_{W}\times p^{W}\times q^{W})_{*}\left(\left(\,{\theta^{n,n}_{\vec{d}}}_{*}[\text{Exp}]\right)\cdot\frac{c\left({\mathcal{E}_{n}}_{_{|_{\Gamma_{p}}}},z\right)\cdot c\left({\mathcal{E}_{n}}_{_{|_{\Gamma_{q}}}},w\right)}{c(\tilde{\mathscr{L}}_{1},z)\cdot c(\tilde{\mathscr{L}}_{1},w)\cdot c(\tilde{\mathscr{L}}_{2},w)}\right).
\]
\newline
Since Exp is contracted by both ${\eta^{n,n}_{\vec{d}}}$ and ${\theta^{n,n}_{\vec{d}}}$, the above expression is equal to $0$ and we are done.
\end{proof}

\noindent The following proposition is clearly a consequence of Proposition \ref{Proposition: an,ak commute} and Remark $\ref{remark on am}$:
\begin{prop}\label{proposition: al ak commute restricted to Hl}
       For all $s,t\in \{0,...,r-1\}$ and $1\leq k\leq l \leq n\}$, we have that 
\[
{a_{l}}_{_{|_{\mathcal{H}_l}}}^{(t)} {a_{k}}_{_{|_{\mathcal{H}_l}}}^{(s)}= {a_{k}}_{_{|_{\mathcal{H}_l}}}^{(s)} {a_{l}}_{_{|_{\mathcal{H}_l}}}^{(t)}
\]
as operators $\mathcal{H}_{l}\rightarrow H^{*}(F^{n}\text{Quot}(V)\times C\times C)$.
\end{prop}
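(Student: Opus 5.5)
The plan is to reduce the statement on $\mathcal{H}_l$ to Proposition \ref{Proposition: an,ak commute} applied to the smaller Hyperquot scheme $F^{l}\text{Quot}(V)$, via the identification \eqref{identification of Hm} $\mathcal{H}_l\cong H^{*}(F^{l}\text{Quot}(V))$. Concretely, for $\vec{e}=(e_1,\dots,e_l)$ we have $F^{l}\text{Quot}_{\vec{e}}(V)\cong F^{n}\text{Quot}_{r(\vec{e})}(V)$, where the sheaves $E_{l+1},\dots,E_n$ are forced to equal $E_l$.

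The first step is a geometric compatibility check. For $k\le l$, and for a point of $\mathcal{Z}^{k,n}_{r(\vec{e})}$, the target flag has lengths $\vec{e}_{k,n}$ with $(\vec{e}_{k,n})_{l+1}=\dots=(\vec{e}_{k,n})_{n}=e_l+1$. Hence it again lies in the image of $F^{l}\text{Quot}$, and the lower sheaves $F_{l+1},\dots,F_n$ are forced to equal $F_l$. This identifies $\mathcal{Z}^{k,n}_{r(\vec{e})}\cong\mathcal{Z}^{k,l}_{\vec{e}}$, where the latter space is built inside $F^{l}\text{Quot}$, with last index $l$. The identification is compatible with $q_\pm$, with $q_C$, and with the bundle $\mathcal{G}_k$, since $\mathcal{G}_k$ only involves $\mathcal{E}_k$ and $\mathscr{L}_k$.

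It follows that $a_k^{(s)}$ on $F^{n}\text{Quot}$ preserves $\mathcal{H}_l$ for every $k\le l$, and that its restriction is intertwined with the operator $a_k^{(s)}$ on $F^{l}\text{Quot}(V)$. This is Remark \ref{remark on am} extended from $k=l$ to all $k\le l$, and the argument is the same as for that remark.

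Next, apply Proposition \ref{Proposition: an,ak commute} with $n$ replaced by $l$. It gives $a_l^{(t)}a_k^{(s)}=a_k^{(s)}a_l^{(t)}$ on $H^{*}(F^{l}\text{Quot}(V))$ as maps into $H^{*}(F^{l}\text{Quot}(V)\times C\times C)$, with the stated convention on the factors of $C\times C$. Because both $a_k$ and $a_l$ preserve $\mathcal{H}_l$, transporting this equality through the intertwining identifications gives the claimed equality of operators $\mathcal{H}_l\to H^{*}(F^{n}\text{Quot}(V)\times C\times C)$. The compositions are taken inside $\mathcal{H}_l\otimes H^*(C\times C)$, so each composite lands in $H^{*}(F^{n}\text{Quot}_{r(\vec{e}'')}(V)\times C\times C)$.

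The only point that needs care is the first step. One has to check that the isomorphism $\mathcal{Z}^{k,n}_{r(\vec{e})}\cong\mathcal{Z}^{k,l}_{\vec{e}}$ is scheme-theoretic, so that the correspondence classes agree and not just their sets of points. This follows because in families the conditions $E_{m}=E_l$ and $F_{m}=F_l$ for $m>l$ are closed conditions that are automatically satisfied, by the length count of the quotients. Once this is established, the rest is formal.
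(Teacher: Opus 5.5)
Your proof is correct and is the paper's argument: the paper deduces the statement directly from Proposition~\ref{Proposition: an,ak commute}, applied with $n$ replaced by $l$, together with Remark~\ref{remark on am}. Your extra step is to check that $\mathcal{Z}^{k,n}_{r(\vec{e})}\cong\mathcal{Z}^{k,l}_{\vec{e}}$, so that $a_k$ preserves $\mathcal{H}_l$ and is intertwined with its $F^{l}\text{Quot}(V)$ counterpart for every $k\le l$ and not only for $k=l$; this is the extension of that remark which the paper uses implicitly without stating it.
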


\noindent Finally, we can use Proposition \ref{Span}, along with the above proposition to prove Theorem \ref{a's commute intro}, that the above commutation holds not only when restricted to $\mathcal{H}_{l}$ but on the whole of $H^{*}(F^{n}\text{Quot}(V))$:

\begin{prop}\label{proposition: a's commute}
 For all $s,t\in \{0,...,r-1\}$ and $1\leq k\leq l \leq n\}$, we have
\[
a_{l}^{(t)} a_{k}^{(s)}= a_{k}^{(s)} a_{l}^{(t)}
\]
as operators $H^{*}(F^{n}\text{Quot}(V))\rightarrow H^{*}(F^{n}\text{Quot}(V)\times C\times C)$, with the convention that the operator $a_{k}^{(s)}$ contributes to the first factor of $C\times C$ and operator $a_{l}^{(t)}$ contributes to the second factor of $C\times C$.
\end{prop}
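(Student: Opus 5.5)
We will argue by descending induction using the filtration $\mathcal{H}_{0}\subseteq\mathcal{H}_{1}\subseteq\dots\subseteq\mathcal{H}_{n}$. Fix $k\leq l$. By Proposition \ref{proposition: al ak commute restricted to Hl}, the commutator $[a_{l}^{(t)},a_{k}^{(s)}]$ vanishes on $\mathcal{H}_{l}$. It therefore suffices to show, by induction on $m\geq l$, that it vanishes on $\mathcal{H}_{m}$. For $m=n$ this is the theorem.

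For the induction step, assume the commutator vanishes on $\mathcal{H}_{m}$ for some $m$ with $l\leq m<n$. By Proposition \ref{Span}, $\mathcal{H}_{m+1}$ is spanned by vectors of the form $a_{m+1}^{(v_{1})}(\gamma_{1})\cdots a_{m+1}^{(v_{p})}(\gamma_{p})w$ with $w\in\mathcal{H}_{m}$. Equivalently, it is spanned by $(a_{m+1}^{(v_{1})}\cdots a_{m+1}^{(v_{p})})(\gamma)w$ with $\gamma\in H^{*}(C^{p})$, using the fact that colouring is compatible with composition. So the step reduces to moving $a_{l}^{(t)}$ and $a_{k}^{(s)}$ to the right past every $a_{m+1}^{(*)}$, so that they act on $w$ directly, and then applying the hypothesis on $\mathcal{H}_{m}$.

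The needed input is that $a_{m+1}^{(v)}$ commutes with $a_{l}^{(t)}$ and with $a_{k}^{(s)}$ as operators on all of $H^{*}(F^{n}\text{Quot}(V))$, with $l,k\leq m<m+1$. This is again an instance of the statement being proved, with the pair $(k',l')=(l,m+1)$ or $(k,m+1)$. We therefore organise the argument as a downward induction on the smaller index. First we prove $[a_{n}^{(*)},a_{k}^{(*)}]=0$ globally for all $k$; this is exactly Proposition \ref{Proposition: an,ak commute}, which is already global since $\mathcal{H}_{n}$ is everything. Next we treat pairs with larger index $l=n-1$, then $l=n-2$, and so on. When handling the pair $(k,l)$ with $l<n$, every commutation $[a_{m+1},a_{l}]$ and $[a_{m+1},a_{k}]$ needed for $m+1>l$ involves a larger maximum index. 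So the outer induction is on $n-l$, and the inner induction climbs through $\mathcal{H}_{l}\subseteq\dots\subseteq\mathcal{H}_{n}$.

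With these commutations in hand, $a_{l}^{(t)}a_{k}^{(s)}$ applied to the spanning vector equals $(a_{m+1}\cdots a_{m+1})$ applied to $a_{l}^{(t)}a_{k}^{(s)}w$, and likewise with the order of $a_{l}^{(t)}$ and $a_{k}^{(s)}$ swapped. Here the factors of $C$ are permuted and handled by pulling back along the appropriate permutation of $C^{p+2}$. The two results agree because $a_{l}^{(t)}a_{k}^{(s)}w=a_{k}^{(s)}a_{l}^{(t)}w$ for $w\in\mathcal{H}_{m}$.

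The main obstacle is bookkeeping rather than geometry. One must check that the nested induction is well-founded, in particular that Proposition \ref{Span} produces only $a_{m+1}$ with $m+1>l$. One must also track the $C$-factor conventions carefully, so that commuting correspondence operators with coloured classes $\gamma\in H^{*}(C^{p})$ is legitimate and the relabelling of factors is consistent.
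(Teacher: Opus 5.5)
Your proposal is correct and follows the paper's argument. The paper runs a descending induction on the larger index $l$, with base case Proposition \ref{Proposition: an,ak commute}; it writes each element as a product of operators $a_{k'}$ with $k'>l$ applied to $w_l\in\mathcal{H}_l$, commutes $a_l^{(t)}a_k^{(s)}$ past these by the induction hypothesis, and concludes with Proposition \ref{proposition: al ak commute restricted to Hl}. Your inner induction over $\mathcal{H}_{l}\subseteq\dots\subseteq\mathcal{H}_{n}$ is the paper's one-shot use of the spanning set $\mathfrak{S}$, unrolled one filtration step at a time. One wording slip: you say the induction is downward on the \emph{smaller} index, but, as the rest of your paragraph makes clear, it is downward on the larger index $l$.
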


\begin{proof}
    Let us prove the proposition by descending induction on $l$. The base case $l=n$ is exactly Proposition \ref{Proposition: an,ak commute}. Let us now prove the proposition for arbitrary $l\in {1,...,n-1}$, assuming that the proposition holds for $l+1$.\\\\
    To show that $a_{k}^{(s)}$ and $a_{l}^{(t)}$ commute on $H^{*}(F^{n}\text{Quot}(V))$, indeed by Proposition \ref{Span} it suffices to show that 
    \[
    a_{l}^{(t)} a_{k}^{(s)} w= a_{k}^{(s)} a_{l}^{(t)} w
    \]
    for all $w$ in the spanning set $\mathfrak{S}$, in step $3$. Observe that any $w\in \mathfrak{S}$ can be written as 
    \[
    w=a_{k_{m}}^{(v_m)}(\gamma_{i_l})...a_{k_2}^{(v_2)}(\gamma_{i_2})a_{k_1}^{(v_1)}(\gamma_{i_1}) w_{l}
    \]
    for some $m\geq 0$, $w_{l}\in \mathcal{H}_{l}$ and $n\geq k_{m}\geq k_{m-1}\geq...\geq k_{1}\geq l+1$. By the induction hypothesis we have that 

    \[
     a_{l}^{(t)}a_{k}^{(s)}w=a_{k_{m}}^{(v_m)}(\gamma_{i_l})...a_{k_2}^{(v_2)}(\gamma_{i_2})a_{k_1}^{(v_1)}(\gamma_{i_1})\left(a_{l}^{(t)}a_{k}^{(s)}\right)   w_{l}.
    \]
\newline
Proposition \ref{proposition: al ak commute restricted to Hl} then implies that:

\[
a_{l}^{(t)}a_{k}^{(s)} w=a_{k_{m}}^{(v_m)}(\gamma_{i_l})...a_{k_2}^{(v_2)}(\gamma_{i_2})a_{k_1}^{(v_1)}(\gamma_{i_1})\left( a_{k}^{(s)} a_{l}^{(t)}\right)  w_{l}.
\]
\newline
Finally, by the induction hypothesis again, we obtain:

\[
a_{l}^{(t)} a_{k}^{(s)} w=\left( a_{k}^{(s)} a_{l}^{(t)}\right)  a_{k_{m}}^{(v_m)}(\gamma_{i_l})...a_{k_2}^{(v_2)}(\gamma_{i_2})a_{k_1}^{(v_1)}(\gamma_{i_1})  w_{l}= a_{k}^{(s)} a_{l}^{(t)} w.
\]
\newline
which finishes the proof.
\end{proof}

\newpage
\bibliographystyle{plain} 
\bibliography{refs}

\end{document}